\documentclass[11pt,reqno]{amsart}

\usepackage[T1]{fontenc}
\usepackage[utf8]{inputenc}
\usepackage{lmodern}
\usepackage{comment}
\usepackage{amsmath,amssymb,amsthm,mathtools}
\usepackage{mathrsfs}
\usepackage{xcolor}

\usepackage[colorlinks=true,citecolor=blue,linkcolor=blue,urlcolor=blue]{hyperref}

\numberwithin{equation}{section}
\newtheorem{theorem}{Theorem}[section]
\newtheorem{lemma}[theorem]{Lemma}
\newtheorem{corollary}[theorem]{Corollary}
\newtheorem{proposition}[theorem]{Proposition}

\newtheorem*{theoremA}{Theorem A}
\newtheorem*{theoremB}{Theorem B}
\newtheorem*{theoremC}{Theorem C}
\newtheorem*{theoremD}{Theorem D}
\theoremstyle{definition}
\newtheorem{definition}[theorem]{Definition}
\newtheorem{remark}[theorem]{Remark}

\DeclareMathOperator{\dist}{dist}

\newcommand{\Scal}{\mathcal S}

\begin{document}

\title[Singular Dimension Descent]{Positive Scalar Curvature Obstructions via Singular Dimension Descent}
\author[Y. Bi]{Yuchen Bi}
\address[Yuchen Bi]{Mathematical Institute, Department of Pure Mathematics, University of Freiburg, Ernst-Zermelo-Stra{\ss}e 1, D-79104 Freiburg im Breisgau, Germany}
\email{yuchen.bi@math.uni-freiburg.de}

\author[J. Zhu]{Jintian Zhu}
\address[Jintian Zhu]{Institute for Theoretical Sciences, Westlake University, 600 Dunyu Road, 310030, Hangzhou, Zhejiang, People's Republic of China}
\email{zhujintian@westlake.edu.cn}
\hypersetup{pdftitle={Positive Scalar Curvature Obstructions via Singular Dimension Descent},pdfauthor={}}

\subjclass[2020]{Primary 53C21; Secondary 53C23, 49Q05}

\begin{abstract}
In light of recent advances in conformal blow-up methods for the positive mass theorem, including He--Shi--Yu, Bi--Hao--He--Shi--Zhu, and Brendle--Wang, we develop a Schoen--Yau type singular dimension descent method for positive scalar curvature obstructions in arbitrary dimensions.  We prove obstructions to positive scalar curvature on enlargeable manifolds and establish the corresponding cubical width inequalities and two-systole estimates.  The method also applies to enlargeable AM--PI spaces, giving a positive scalar curvature obstruction when the singular set has Assouad codimension greater than \(3-2/n\).
\end{abstract}

\maketitle

\section{Introduction}
\label{sec:introduction}

The Geroch conjecture says that the torus \(\mathbb T^n\) admits no metric of positive scalar curvature.  Schoen--Yau \cite{SchoenYau1979b} proved this for \(n\le 7\) using minimal hypersurface descent, and Gromov--Lawson \cite{GromovLawson1980} proved the all-dimensional case using the spin method. 

In this paper, we consider the corresponding extension of the Geroch theorem to the overtorical setting.  Following Gromov \cite{Gromov2018}, a closed oriented \(n\)-manifold is called \emph{overtorical} if it admits a continuous map
\[
   F:M\longrightarrow \mathbb T^n
\]
of nonzero degree.  We prove the following result.

\begin{theoremA}
Let \(M^n\) be a closed oriented overtorical manifold.  Then \(M\) admits no metric of positive scalar curvature.
\end{theoremA}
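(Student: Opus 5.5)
We argue by contradiction, using a Schoen--Yau type descent through a chain of area-minimizing hypersurfaces that is allowed to be singular. The singularities are handled by conformal blow-up, in the spirit of He--Shi--Yu, Bi--Hao--He--Shi--Zhu and Brendle--Wang.

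Suppose \(g\) is a metric on \(M\) with \(R_g>0\), and fix \(F:M\to\mathbb T^n\) with \(\deg F\neq 0\). Pulling back the coordinate circle classes gives \(\theta_1,\dots,\theta_n\in H^1(M;\mathbb Z)\) whose cup product is nonzero on \([M]\). Let \(\Sigma_{n-1}\) be an area-minimizing integral current representing the Poincaré dual of \(\theta_n\). Restricting \(\theta_1,\dots,\theta_{n-1}\) to \(\Sigma_{n-1}\) again gives a nonzero cup product, so the construction can be iterated. The plan is to build a descending chain
\[
M=\Sigma_n\supset\Sigma_{n-1}\supset\cdots\supset\Sigma_2 .
\]
Each \(\Sigma_{k-1}\) minimizes a weighted area \(\int\rho_k\,d\mathcal H^{k-1}\) inside \(\Sigma_k\). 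The weights \(\rho_k\) are products of the first eigenfunctions of the stability operators, as in Schoen--Yau's warped-product descent. The contradiction at the bottom is the usual one: a weighted stable \(2\)-dimensional slice, carrying a class with nonzero cup product, cannot exist under positive scalar curvature. Equivalently, stability of \(\Sigma_2\) together with \(R>0\) forces \(\chi(\Sigma_2)>0\), while \(\Sigma_2\) carries a map of nonzero degree to \(\mathbb T^2\), which rules out a sphere.

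The new input needed in high dimensions is the treatment of the singular set \(S_k\) of each \(\Sigma_k\). By Federer dimension reduction, \(S_k\) has codimension at least \(7\). To run the next minimization on \(\Sigma_k\setminus S_k\), I would use the stability inequality of \(\Sigma_k\) to construct a positive supersolution \(u_k\) of the conformal Laplacian, with \(u_k\to\infty\) near \(S_k\). Concretely, I would solve for a Green's-type function with respect to a measure supported near \(S_k\) and exploit the codimension bound to get integrability. The conformally blown-up metric \(u_k^{4/(k-2)}g_k\) is then complete, or at least has the singular set pushed to infinity, and it retains positive (weighted) scalar curvature. In this complete setting, area-minimizing hypersurfaces in the restricted homology class exist by a standard exhaustion argument. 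The homological nontriviality then survives, because a cycle avoiding \(S_k\) can be found: a set of codimension at least \(7\) does not disconnect and does not carry the relevant homology classes.

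The main obstacle is this blow-up step. Three things have to hold at once:
\begin{itemize}
\item the blow-up factor must be compatible with the weights \(\rho_k\), so that the warped scalar curvature quantity stays positive;
\item the minimizers in the blown-up metric must not escape to infinity;
\item the minimizers must remain nontrivial, i.e.\ the intersection number with the dual classes must be preserved.
\end{itemize}
I would first establish a capacity-type estimate: an Assouad or Minkowski codimension \(>2\) bound on \(S_k\) gives cutoff functions with small energy. I would then use a monotonicity or volume-comparison argument in the blown-up metric to confine minimizers to a compact region. Iterating the construction down to dimension \(2\) gives the contradiction, which proves Theorem~A.
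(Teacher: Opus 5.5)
Your proposal has a genuine gap, and it sits exactly where the paper's new ideas enter: continuing the descent past the singular set.

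\textbf{The topological step fails.} The claim that a set of codimension at least $7$ ``does not carry the relevant homology classes'' is false. A codimension-$7$ set can contain loops on which $\theta_{k-1}$ is nonzero, and then the dual class need not have any compact representative in the regular part. The paper's model makes this concrete: a minimizer in $\mathbb T^9$ with support $\mathbb T^8$ and singular set the image of the $1$-skeleton has $H_7(\operatorname{reg}T;\mathbb Z)=0$. So no compact cycle in $\operatorname{reg}T$ can continue the descent.

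You are therefore forced to use noncompact representatives, such as a transverse level set of a torus coordinate in the regular part, whose closure runs into $S_k$. In the blown-up complete metric such a hypersurface is noncompact, reaches the blown-up infinity, and typically has infinite area. It cannot be ``confined to a compact region,'' and exhausting by area minimizers gives no control on escape to infinity or on preservation of intersection numbers. There is no standard exhaustion argument to appeal to here.

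The paper resolves this with four linked devices, none of which appear in your outline:
\begin{enumerate}
\item It slices by good level sets $\tau_j^{-1}(a)$, which meet the old singular set in a set of dimension at most $m-8$.
\item It uses a \emph{cylindrical} blow-up rate, comparable to $d(\cdot,S)^{-2}g$. This makes the closure of every finite-width band around the slice meet the old singular set only at points of the slice (finite-width localization).
\item It replaces area minimizers by weighted $\mu$-bubbles in such a band, with the $\tan$-potential acting as a barrier. These stay at bounded distance from the slice, so the degree is preserved; it is computed by intersection with fibers $\Gamma_y$ over target points outside a small exceptional set.
\item It passes to covers of $\mathbb T^n$ producing cubical maps with arbitrarily large widths. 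This is what absorbs the band error $(1-\lambda)\pi^2/L^2$.
\end{enumerate}
Without the covers, the method gives only the width inequality $\sigma\le 4\pi^2(1-\lambda)\sum_i d_i^{-2}$, not a contradiction with $R_g>0$.

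\textbf{The blow-up step also fails in high dimensions.} The ``compatibility with the weights'' that you list as an obstacle is precisely where a new descent datum is required. Completeness of $u^{4/(k-2)}g_k$ needs $u$ to blow up roughly like $d(\cdot,S_k)^{-(k-2)/2}$. A positive supersolution of the conformal Laplacian can essentially blow up at most like $d^{-(c-2)}$ near a set of codimension $c$. For codimension-$7$ singular sets this is impossible once $k>12$. (This is the case $\lambda=1/k$ of the paper's rate, $\tau_k(1/k)=(k-2)/2$, which must lie below $\operatorname{codim}_A(S_k)-2$.)

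The paper instead carries the weighted scalar curvature $\Scal_f^{m,\lambda}$ with $\lambda\le\Lambda_n$, allowing negative effective dimension. For the exponents $\alpha_m(\lambda)$ and $\overline\alpha_m(\lambda)$, the conformal law has no $|dw|^2$ term. The required blow-up rate $\tau_m(\lambda)$ or $\overline\tau_m(\lambda)$ then lies below $\operatorname{codim}_A(S)-2$. The eigenfunction weights are handled in three moves:
\begin{itemize}
\item blowing up at the level of the stability quadratic form;
\item taking an Allegretto--Piepenbrink supersolution $\psi>0$ on the complete blown-up $\mu$-bubble;
\item absorbing $-\log\psi$ into $f$, which returns a pointwise bound for $\Scal_{f_{m-1}}^{m-1,\lambda}(g_{m-1})$.
\end{itemize}

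Finally, Federer's Hausdorff-dimension bound does not suffice to build the Green-potential supersolution $-L_\omega\Theta\ge A\Theta^{1+\alpha}-B$ with $\Theta\gtrsim d^{-\tau}$. That construction needs the scale-uniform packing estimates of Naber--Valtorta.
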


A basic family of examples comes from connected sums.  If \(N\) is any closed oriented \(n\)-manifold, then
\[
    \mathbb T^n\# N
\]
is overtorical, since collapsing the \(N\)-summand gives a degree-one map to \(\mathbb T^n\).  Thus Theorem A, together with Lohkamp's compactification argument \cite{Lohkamp1999}, implies the corresponding positive mass theorem in dimension \(n\). 

The proof belongs to the Schoen--Yau minimal hypersurface tradition \cite{SchoenYau1979a,SchoenYau1979b}.  In that argument one constructs a suitable area-minimizing hypersurface.  Its stability inequality, together with the Gauss equation, passes the scalar-curvature inequality to the hypersurface, and the construction is then repeated in one lower dimension.  The iteration ends with a contradiction to the Gauss--Bonnet theorem. When the ambient dimension is at most seven, the minimizing hypersurfaces are smooth and this descent argument can be carried out directly.  In higher dimensions they may have singular sets.  An inductive use of the method must then control the singular set just produced, as well as possible contacts with singular sets from earlier steps. Schoen and Yau \cite{SchoenYau2022} proposed a singular dimension descent for extending the minimal hypersurface method through these singular sets. Lohkamp developed another treatment of minimal hypersurface singularities, see \cite{LohkampSecret}.

Another approach to the higher-dimensional problem is to avoid singularities in the descent by perturbing the ambient metric so that the minimizing hypersurfaces are smooth. Such generic regularity was established by Smale~\cite{Smale1993} in ambient dimension \(8\). Chodosh--Mantoulidis--Schulze~\cite{ChodoshMantoulidisSchulze} extended this to dimensions \(9\) and \(10\), and Chodosh--Mantoulidis--Schulze--Wang~\cite{ChodoshMantoulidisSchulzeWang} to dimension \(11\). For an excellent survey of this approach, see~\cite[Section~14]{ChodoshICM}.

The approach of the present paper is a different singular descent strategy, closely related to the recent conformal blow-up methods for the positive mass theorem. He--Shi--Yu \cite{HeShiYu2026}  used conformal blow-up to continue the Schoen--Yau dimension reduction on asymptotically flat manifolds with arbitrary ends \cite{LesourdUngerYau,ZhuArbitraryEnds}.  Bi--Hao--He--Shi--Zhu \cite{BiHaoHeShiZhu2026} further developed this approach and proved the Riemannian positive mass theorem up to dimension \(19\). The same work also established the Geroch conjecture up to dimension \(12\). Brendle--Wang \cite{BrendleWang2026} extended the positive mass theorem to arbitrary dimensions by introducing a new descent datum that is stable under more general conformal changes.

In this approach, one constructs an area-minimizing hypersurface, or more generally a soap bubble  (\(\mu\)-bubble), in the regular part.  If a singular set is formed, one applies a conformal blow-up along this set and pushes it to infinity. The next dimension reduction step then proceeds on the resulting complete smooth manifold, completely avoiding any interaction with the old singular set.

For the overtorical problem, the descent cannot be carried out purely in the regular part.  This obstacle is illustrated by the following simple model. Suppose that a first descent step on \(\mathbb T^9\) produces an area-minimizing current \(T\) in a coordinate \(\mathbb T^8\)-class.  Imagine we are in the bad situation where \(\operatorname{spt}T\) is homeomorphic to \(\mathbb T^8\), and its singular set \(S\) is the image of the \(1\)-skeleton in the standard cubical quotient
\[
   \mathbb T^8=[0,1]^8/\!\sim .
\]
Although \(T\) still represents the required nonzero homology class in \(\mathbb T^9\), its regular part 
\(\operatorname{reg}T=\operatorname{spt}T\setminus S\) satisfies
\[
   H_7(\operatorname{reg}T;\mathbb Z)=0 .
\]
This means that the regular part alone cannot preserve the necessary topological information from which to start the next minimizing step.

To overcome this issue, we combine {\it transversality} with {\it cylindrical conformal blow-ups} to carry the singular set through the descent. Returning to the example, choose a regular level set
\[
   L=\{\theta=a\}\subset \mathbb T^9
\]
of another torus coordinate, transverse to both \(\operatorname{reg}T\) and \(S\).  Then \(L\cap\operatorname{reg}T\) is a smooth \(7\)-dimensional hypersurface in \(\operatorname{reg}T\), and its closure meets \(S\) only in finitely many points.

After this transverse choice, we blow up the metric along \(S\) via a conformal deformation.  The novelty here is the {\it cylindrical rate} of the conformal factor.  Near \(S\), the resulting metric is locally comparable to \(d(\cdot,S)^{-2}g\). The packing estimates of Naber--Valtorta~\cite{NaberValtorta2020} play an essential role here, particularly in proving the upper bound. This specific rate simultaneously pushes \(S\) to infinite distance and makes \(\operatorname{reg}T\) complete, while preserving the weighted scalar-curvature lower bound up to a prescribed small loss.

In the resulting metric, \(L\cap\operatorname{reg}T\) is proper. We take a finite metric band around it and solve a weighted \(\mu\)-bubble problem there. This can keep both curvature and topological information. On the one hand, the stability inequality passes the same lower bound to the \(\mu\)-bubble, with an additional error bounded by the inverse square of the band width. After passing to a suitable lift, the width can be made arbitrarily large, so this error is arbitrarily small. On the other hand, roughly speaking, the closure of this $\mu$-bubble is homologous to $L$.

The \(\mu\)-bubble may still have singular set but with non-decreasing codimension. Since it was constructed in the blown-up band, its closure can approach the old singular set only at the finitely many points fixed in the previous step. New singular set may form in the $\mu$-bubble as well. In this example, the \(\mu\)-bubble is \(7\)-dimensional, so its new singular set is at most countable. We can therefore choose a further regular level set of another torus coordinate avoiding both the old intersection points and the new singular set. We then blow up these new singularities in the same way, again preserving the weighted scalar-curvature lower bound up to an arbitrarily small error. The chosen level set avoids all singularities carried from the previous steps, so the next stage is the usual smooth Schoen--Yau descent. We can then iterate the argument.

We now specify the weighted scalar curvature used in the descent. It is close in spirit to the descent datum of Brendle--Wang~\cite{BrendleWang2026}.  For an \(m\)-dimensional Riemannian manifold \((X^m,g)\) and a parameter \(\lambda\ne 1/m\), set
\[
  a_m(\lambda)=\frac{1-(m-1)\lambda}{1-m\lambda},
  \qquad
  \Scal_f^{m,\lambda}(g)
  =
  R_g+2\Delta_g f-a_m(\lambda)|df|_g^2 .
\]
For \(\lambda=1/m\), we always take \(f\) to be constant and set
\[
   \Scal_f^{m,1/m}(g):=R_g .
\]
If \(\lambda\ne0\), write
\[
   k=\lambda^{-1}-m .
\]
Then
\[
  a_m(\lambda)=\frac{k+1}{k},
  \qquad
  \Scal_f^{m,\lambda}(g)
  =
  R_g+2\Delta_g f-\frac{k+1}{k}|df|_g^2 .
\]
When \(k\) is a positive integer, this is the scalar curvature of the warped product
\[
   g+e^{-2f/k}h
\]
with scalar-flat \(k\)-dimensional fiber. In this case \(\lambda^{-1}=m+k\) is the total dimension. More generally,
\(\lambda^{-1}\) may be viewed as a formal effective dimension, so that \(\lambda<0\) corresponds to negative effective dimension.

The same method proves a cubical width inequality over enlargeable bases.  Recall that a complete oriented Riemannian manifold \((Y^r,h_Y)\) is enlargeable, in the sense of Gromov--Lawson~\cite{GromovLawson1980}, if for every \(\varepsilon>0\), there are a Riemannian cover \(\widehat Y\to Y\) and a smooth \(\varepsilon\)-Lipschitz map
\[
   \widehat Y\to S^r
\]
which is constant outside a compact set and has nonzero degree.

Write \(I=[-1,1]\) and set
\[
  \Lambda_n:=
  \begin{cases}
    \displaystyle \frac1n, & n\le13,\\[0.8em]
    \displaystyle
    \frac{51-2n}{23n+26}, & n\ge14 .
  \end{cases}
\]

\begin{theoremB}\label{thm:complete-enlargeable-base}
Let \(Y^r\) be either a point, in which case \(r=0\), or a complete oriented enlargeable Riemannian \(r\)-manifold.  Let \(k\ge0\), set \(n=r+k\), and let \((X^n,\partial X,g)\) be a complete oriented smooth Riemannian manifold with boundary.  Let
\[
   F=(F_Y,\tau_1,\ldots,\tau_k):X\to Y\times I^k
\]
be proper, continuous, and smooth on \(X^\circ\), and of nonzero relative degree as a map
\[
   (X,\partial X)\to(Y\times I^k,Y\times\partial I^k).
\]
Assume \(F_Y\) is globally Lipschitz.  For \(i=1,\ldots,k\), set
\[
   B_i^\pm
   =
   F^{-1}\bigl(Y\times I^{i-1}\times\{\pm1\}\times I^{k-i}\bigr),
   \qquad
   d_i=\operatorname{dist}_g(B_i^-,B_i^+).
\]
Assume \(\lambda\le\Lambda_n\), and \(\Scal_f^{n,\lambda}(g)\ge\sigma\) on \(X^\circ\).  Then
\[
   \sigma
   \le
   4\pi^2(1-\lambda)\sum_{i=1}^{k}d_i^{-2}.
\]
For \(k=0\), the sum is empty, and the conclusion is \(\sigma\le0\).
\end{theoremB}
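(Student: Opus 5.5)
We argue by contradiction, assuming \(\sigma>4\pi^2(1-\lambda)\sum_i d_i^{-2}\), and run the singular dimension descent sketched in the introduction. Each step consumes one coordinate and keeps track of a triple \((\Sigma^m,g_m,f_m)\) with parameter \(\lambda_m\). Here \(\Sigma^m\) is the regular part of an \(m\)-dimensional weighted \(\mu\)-bubble, \(g_m\) is a complete metric obtained by cylindrical conformal blow-up along the singular set, and \(\lambda_m\) is updated by a rule under which \(\lambda_m\le\Lambda_m\) is preserved. This rule is what the threshold \(\Lambda_n\) is designed for: for \(n\le 13\) we use the plain dimension \(1/n\), and for \(n\ge14\) the bound \((51-2n)/(23n+26)\) comes from the Naber--Valtorta-type codimension and packing bounds on the singular sets created along the way. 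The bookkeeping for \(\Scal_f^{m,\lambda}\) follows Brendle--Wang. Writing a stable hypersurface as a warped-product slice with weight \(u\), we set \(f_{m-1}=f_m|_\Sigma+\) a multiple of \(\log u\), and we check that the stability inequality together with the Gauss equation gives
\[
\Scal_{f_{m-1}}^{m-1,\lambda_{m-1}}(g_{m-1})\ \ge\ \Scal_{f_m}^{m,\lambda_m}(g_m)-(\text{band error})-(\text{blow-up error}).
\]
The blow-up error can be made arbitrarily small because of the cylindrical rate \(d(\cdot,S)^{-2}\).

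The first \(k\) steps deal with the cube directions. At step \(i\) we use the function \(\tau_i\), smoothed to be Lipschitz with constant \(\approx 2/d_i\). On the band between \(B_i^-\) and \(B_i^+\) we solve a weighted \(\mu\)-bubble problem with the standard prescription \(h=-c\tan(\pi c'\,\cdot)\). This is the usual band-width estimate: the warped \(\mu\)-bubble inequality uses up the amount \(4\pi^2(1-\lambda)d_i^{-2}\) of the curvature lower bound, where the factor \(1-\lambda\) comes from the effective dimension \(\lambda^{-1}\). What remains is still positive by the contradiction hypothesis, and it is transferred to the next slice. On the topological side, we choose the level sets \(\{\tau_i=a\}\) transverse to the regular part and to the singular strata produced earlier, as in the introduction. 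The closure of the bubble is then homologous to \(F^{-1}(Y\times I^{k-1})\), so the restricted map keeps nonzero degree onto \(Y\times I^{k-i}\). The closures meet the old singular sets only in sets of the controlled codimension, and this is exactly what lets the blow-up make every piece complete while keeping the degree.

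After \(k\) steps we are left with \(\sigma'>0\) and a degree-nonzero proper map to \(Y\), which leads to \(\sigma'\le0\). If \(Y\) is a point, this is a descent down to dimension \(\le2\), where Gauss--Bonnet (or Theorem A, which is the case \(Y=\mathbb T^n\)) gives the contradiction. If \(Y\) is enlargeable, we pass to a cover \(\widehat Y\) carrying an \(\varepsilon\)-Lipschitz map of nonzero degree to \(S^r\). Since \(F_Y\) is globally Lipschitz, composing gives a map of the pulled-back cover that we turn into coordinates on cubes \(I^r\) whose side distances are all \(\gtrsim 1/(\varepsilon\,\mathrm{Lip}F_Y)\). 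Running the same band argument \(r\) more times gives \(\sigma'\le C\,r\,\varepsilon^2\). Letting \(\varepsilon\to0\) gives the contradiction. Throughout we use that the \(\mu\)-bubble error is the inverse square of the band width, which can be made large by passing to lifts.

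The main obstacle is the regularity and analysis step. We need the conformal factor \(\approx d(\cdot,S)^{-1}\) to be admissible, meaning that the Naber--Valtorta packing bounds give \(\int\) control so that \(\Scal\) drops only by \(\delta\). We also need to show that the codimension of the accumulated singular set stays large enough that transversality still separates strata at every step. This is where the restriction \(\lambda\le\Lambda_n\) enters for \(n\ge14\). To handle it, we first verify that the admissible range of \(\lambda_m\) is invariant under the update rule, using a Hardy-type inequality with the optimal constant on cones over the singular set.
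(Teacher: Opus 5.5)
\textbf{There is a genuine gap, and it lies in where you build the $\mu$-bubble.} You solve the bubble problem on the whole band between the faces $B_i^\pm$ and then assert that its closure meets the old singular set only in a set of controlled codimension. Choosing the level $\{\tau_i=a\}$ transverse to the old singular set controls the \emph{slice}, not the bubble. On an unlocalized band the bubble may accumulate on $D_m$ anywhere in $\{|\tau_i|<1\}$. The exceptional target set then becomes the projection of $F_m(D_m)$ rather than its slice, so it loses one codimension per step. Already at the next stage $\dim_{\mathcal H}D_{m-1}\le m-8$ fails. After a few more steps the exceptional set can separate or fill the target, and the degree over its complement becomes meaningless.

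The paper's key device is \emph{finite-width localization} (Lemma~\ref{lem:finite-width-localization}). The band is taken inside the $g_m$-neighborhood $N_W$ of the good slice, with $W<\frac{1-|a|}{2}d_j$. The cylindrical bound $g_m\ge c_p\,d_g(\cdot,p)^{-2}g$ then forces any curve that starts at $g$-distance $\delta$ from $p\in D_m$ and leaves a fixed ball around $p$ to have $g_m$-length $\gtrsim\log(1/\delta)$. Hence $\overline{N_W}\cap D_m\subset D_m\cap\tau_j^{-1}(a)$, which has $\dim_{\mathcal H}\le m-8$ by slicing. This localization, together with completeness, is the real use of the cylindrical rate. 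The smallness of the curvature loss comes instead from taking $w=1+\varepsilon\Theta$ with $\varepsilon\to0$ and $-L_\omega\Theta\ge A\Theta^{1+\alpha}-B$ (Lemma~\ref{lem:conformal-absorption-section3}).

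\textbf{You also misplace $\Lambda_n$.} In the paper $\lambda$ is never updated. The drop in dimension is absorbed by $a_{m-1}(\lambda)=2-1/a_m(\lambda)$ (Lemma~\ref{lem:positive-supersolution}). Since $\Lambda_m$ is nonincreasing in $m$, no invariance argument and no Hardy inequality are needed. On a singular bubble, stability only gives nonnegativity of the quadratic form $Q_{a_m(\lambda),\sigma_L}$. So one first needs the quadratic-form (Brendle--Wang) blow-up of Proposition~\ref{prop:qf-conformal-blowup-section3}. Only then does Allegretto--Piepenbrink on the complete blown-up regular part give $\psi$ and the pointwise bound with $f_{m-1}=\bar f_{m-1}-\log\psi$. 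That blow-up requires $\overline\tau_{m-1}(\lambda)<5\le\tau_{\rm pack}(D^\Sigma)$, which is exactly $\lambda<(51-2m)/(23m+26)$, worst at $m=n$. It has nothing to do with keeping codimensions large, since those are $\ge7$ for every $\lambda$.

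\textbf{Your endgame for $Y=\{\mathrm{pt}\}$ is wrong.} The low-dimensional pieces still have faces mapped to $\partial I^m$ with uncontrolled boundary curvature, so Gauss--Bonnet yields no contradiction. Citing Theorem~A is circular, since it is deduced from Theorem~B with $Y=\mathbb T^n$, $k=0$. The paper instead descends to a one-dimensional datum and compares $F'$ with $\Phi=2(1-\lambda)a\tan(as)$ (Lemma~\ref{lem:one-dimensional-case}).

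\textbf{Your order of steps leaves a further point unaddressed.} The paper performs the enlargeable reduction first, on the smooth $X$, which turns everything into a pure cube problem on $I^n$. Treating the cube directions first forces you to restart a cube descent from a singular $r$-dimensional datum whose map to $Y$ is proper only off $F_r(D_r)$, and you do not handle this.
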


Taking \(k=0\) gives the positive scalar curvature obstruction for enlargeable manifolds, and taking \(Y=\mathbb T^n\) in this case gives Theorem~A.

The next consequence is a \(2\)-systole estimate. Let \(\omega_{S^2}\in H^2(S^2;\mathbb Z)\) be the generator satisfying
\[
   \int_{S^2}\omega_{S^2}=1 .
\]
For a map \(\rho:X\to S^2\), set \(u_\rho=\rho^*\omega_{S^2}\in H^2(X;\mathbb Z)\). For a compact Riemannian manifold \(X\), define
\[
   \operatorname{sys}_2(X,u_\rho;g)
   =
   \inf\{\mathbf M_g(T):T\text{ is an integral }2\text{-cycle in }X,\
   \langle u_\rho,[T]\rangle\ne0\}.
\]

\begin{theoremC}\label{thm:two-systole}
Let \((X^n,\partial X,g)\) be a compact oriented smooth Riemannian manifold with boundary, \(n\ge2\), and let
\[
   F=(\rho,\tau_1,\ldots,\tau_{n-2}):X\to S^2\times I^{n-2}
\]
be continuous, smooth on \(X^\circ\), and of nonzero relative degree as a map
\[
   (X,\partial X)\to(S^2\times I^{n-2},S^2\times\partial I^{n-2}).
\]
For \(i=1,\ldots,n-2\), set
\[
   B_i^\pm
   =
   F^{-1}\bigl(S^2\times I^{i-1}\times\{\pm1\}\times I^{n-2-i}\bigr),
   \qquad
   d_i=\operatorname{dist}_g(B_i^-,B_i^+).
\]
Assume \(\lambda\le\Lambda_n\), and \(\Scal_f^{n,\lambda}(g)\ge\sigma\).  Denote \(\sigma_*= \sigma-4\pi^2(1-\lambda)\sum_{i=1}^{n-2}d_i^{-2}\). If \(\sigma_*>0\), then
\[
   \operatorname{sys}_2(X,u_\rho;g)\le\frac{8\pi}{\sigma_*}.
\]
\end{theoremC}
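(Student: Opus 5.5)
We will deduce Theorem C from the same singular descent machinery that proves Theorem B, run through the \(n-2\) interval coordinates \(\tau_1,\dots,\tau_{n-2}\) but not through the \(S^2\)-factor. The descent then ends at a two-dimensional object, and Gauss--Bonnet on that surface gives the area bound.

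\textbf{Descent through the interval factors.} Start from \((X,g,f)\) with \(\Scal_f^{n,\lambda}(g)\ge\sigma\). At the \(i\)-th step, take a transverse regular level set of \(\tau_i\) in the current (possibly blown-up) space. Solve the weighted \(\mu\)-bubble problem in the band between \(B_i^-\) and \(B_i^+\), with the standard warping choice \(h(t)=-\tfrac{2\pi}{d_i}\tan(\pi t/d_i)\) suitably scaled. The stability inequality then passes the weighted bound to the bubble with the loss \(4\pi^2(1-\lambda)d_i^{-2}\). The parameter \(\lambda\) is updated to the value for the next dimension, and the condition \(\lambda\le\Lambda_n\) keeps the updated parameters inside the admissible range. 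Singular sets of each bubble are handled by the cylindrical conformal blow-up along the singular set. This blow-up costs an arbitrarily small \(\varepsilon\) in the lower bound. Transversality guarantees that later level sets avoid earlier singularities, except at controlled points.

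The homological bookkeeping carries along the map \(\rho\). At each stage, the slice \(\Sigma_i\) maps to \(S^2\times I^{n-2-i}\) with nonzero relative degree. The reason is that the bubble is homologous, in the closure sense, to the level set \(L\), exactly as in the proof of Theorem B.

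\textbf{Final step.} After \(n-2\) steps we obtain a closed, possibly punctured, complete surface \(\Sigma^2\). It carries a positive function (the product of the warping and conformal factors) and satisfies a weighted inequality of the form
\[
  -\Delta_\Sigma u+\tfrac12 K_\Sigma u \ \ge\ \tfrac12(\sigma_*-\varepsilon)u
\]
in the spectral sense, possibly with a gradient term. Here \(\rho|_\Sigma\) has nonzero degree, so \(\langle u_\rho,[\Sigma]\rangle\ne0\).

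We then prove an area bound on \(\Sigma\). Test the stability form with the constant function \(1\) in the appropriate weighted sense; after integrating the gradient term by parts, one gets a weighted Gauss--Bonnet bound of the form \(\tfrac12\sigma_*\,|\Sigma|\le 2\pi\chi\le 4\pi\). On a punctured blown-up surface one needs the ends to be cusp-like, with zero boundary contribution. This gives \(|\Sigma|_{\bar g}\le 8\pi/\sigma_*\) in the weighted or conformal metric.

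\textbf{Main obstacle.} The area bound above holds for the deformed metrics, but the systole is measured in the original \(g\). We must therefore compare the mass of \(\Sigma\), as an integral 2-cycle in \(X\), with the weighted area. The plan is to use warped-product mass rather than the induced area. At each step, minimize the weighted area \(\int e^{\cdot}\,d\mathcal H^{m-1}\) with the warping factors normalized to be \(\le1\), or equivalently compare via the \(\lambda\)-dependent Brendle--Wang weighting. The hope is that the final weighted area dominates the \(g\)-area of the closure of \(\Sigma\). If this fails, the fallback is to take \(\varepsilon\to0\) and the conformal factors equal to \(1\) away from shrinking neighborhoods of the singular sets, so the area discrepancy vanishes in the limit. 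In either case, the closure of \(\Sigma\) is an integral 2-cycle pairing nontrivially with \(u_\rho\). Letting \(\varepsilon\to0\) then gives \(\operatorname{sys}_2(X,u_\rho;g)\le 8\pi/\sigma_*\).
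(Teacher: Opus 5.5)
Your skeleton matches the paper. You run the controlled descent of Theorem~B through the $n-2$ interval coordinates only (Corollary~\ref{cor:terminal-Y-datum} with $Y=S^2$, $r=2$) and finish with Gauss--Bonnet on the terminal surface.

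\textbf{The gap.} It is exactly the step you call the main obstacle. You leave the comparison with the original metric as a hope, and neither remedy works.
\begin{itemize}
\item The weighted-area idea is misdirected. The Gauss--Bonnet bound concerns the \emph{unweighted} area of the terminal metric $g_2$. Integrating the pointwise bound $\Scal^{2,\lambda}_{f_2}(g_2)\ge\sigma_*-\eta$ over a closed component kills $2\Delta f_2$, and $-a_2(\lambda)|df_2|^2\le0$ because $a_2(\lambda)>0$. So the weight never has to be compared with anything.
\item The fallback ($\varepsilon\to0$ with conformal factors close to $1$ away from shrinking neighborhoods) is not an argument. The later bubbles depend on the blown-up metrics, so nothing controls how much of the terminal surface lies where $w$ is large.
\end{itemize}

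\textbf{The missing observation.} There is a one-line monotonicity already built into the construction. Every change of metric in the descent is of one of two kinds:
\begin{itemize}
\item restriction to a submanifold, or
\item a conformal change by $w^{\overline\alpha_{m-1}(\lambda)}$ with $w=1+\varepsilon\Theta\ge1$, $\Theta\ge0$ and $\overline\alpha_{m-1}(\lambda)>0$.
\end{itemize}
The Allegretto--Piepenbrink step $f_{m-1}=\bar f_{m-1}-\log\psi$ alters only the weight, not the metric. Hence inductively $g_m\ge g|_{X_m^\circ}$; the descent already uses this to keep $\operatorname{Lip}_{g_m}(\tau_i)\le 2/d_i$. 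Therefore $\operatorname{Area}_g(\Sigma_0)\le\operatorname{Area}_{g_2}(\Sigma_0)$, and the discrepancy has the favorable sign everywhere. No limiting argument is needed.

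\textbf{Smaller corrections.}
\begin{itemize}
\item The terminal surface is never punctured. Since $\dim_{\mathcal H}D_m\le m-7$, one has $D_2=\emptyset$, and properness over the compact $S^2$ makes $X_2$ a smooth closed surface. Your cusp-end discussion is unnecessary.
\item You must pass to a single component $\Sigma_0$ on which $\rho$ has nonzero degree. For a disconnected surface, $2\pi\chi\le4\pi$ can fail. For $\Sigma_0$, positivity in $(\sigma_*-\eta)\operatorname{Area}_{g_2}(\Sigma_0)\le4\pi\chi(\Sigma_0)$ forces $\Sigma_0\cong S^2$, giving $\operatorname{Area}_{g_2}(\Sigma_0)\le 8\pi/(\sigma_*-\eta)$.
\item If you work with the quadratic form $Q^{g_\Sigma,F}_{a_3(\lambda),\sigma}$ instead of the pointwise bound, the right test function is $e^{F/2}$, not $1$; otherwise the weight $e^{-F}$ sits on $R$. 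This works because $a_3(\lambda)\ge\tfrac12$.
\item Your displayed inequality should have $K_\Sigma u$, i.e.\ $\tfrac12R_\Sigma u$, rather than $\tfrac12K_\Sigma u$, to produce the constant $8\pi$.
\item $\lambda$ is not updated between dimensions. It stays fixed, and only the coefficient $a_m(\lambda)$ changes with $m$.
\end{itemize}
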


In this singular dimension descent, \(\mu\)-bubbles are naturally treated as local almost-manifold PI spaces (AM--PI spaces).  This viewpoint leads to a positive scalar curvature obstruction for such singular spaces themselves. Roughly speaking, an AM--PI space is a metric measure space
\[
   X=\mathcal R\sqcup\mathcal S
\]
whose regular part \(\mathcal R\) is a smooth Riemannian manifold with metric \(g\), and \(X\) is locally Ahlfors regular and supports a local Poincar\'e inequality.  The precise definition is given in Section~\ref{sec:conformal-blowup-exceptional}. As part of the definition, we assume that the underlying measure restricted to \(\mathcal R\) takes the form \(e^{-f}d\mu_g\) for some smooth function \(f\).  For a closed set \(\mathcal S\subset X\), write
\[
   c_A^X(\mathcal S)=n-\dim_A^X(\mathcal S)
\]
for its Assouad codimension. As in the smooth manifold case, we call \(X\) enlargeable if for every \(\varepsilon>0\), some cover of \(X\) admits an \(\varepsilon\)-Lipschitz map to \(S^n\), constant outside a compact set, of nonzero degree.

\begin{theoremD}
Let \(n\geq 3\) and
\[
   X=\mathcal R\sqcup\mathcal S
\]
be a compact connected oriented enlargeable \(n\)-dimensional AM--PI space. Assume that \(f\) is bounded on \(\mathcal R\).

If
\[
   c_A^X(\mathcal S)>3-\frac2n,
\]
then, for every \(\lambda\leq 1/n\), the weighted scalar curvature \(\Scal_f^{n,\lambda}(g)\) cannot be nonnegative on \(\mathcal R\) and positive on a nonempty open subset of \(\mathcal R\).

If, in addition,
\[
   c_A^X(\mathcal S)>3-\frac1{n-1},
\]
then \(\Scal_f^{n,\lambda}(g)\ge0\) on \(\mathcal R\) implies that  \(f\) is constant and that the length space \( X^\ell=\overline{(\mathcal R,d_g^\ell)}\) is a compact flat manifold.
\end{theoremD}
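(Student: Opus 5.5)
We plan to reduce Theorem D to the machinery behind Theorem B. The idea is to treat the AM--PI space as the first stage of the singular descent: a singular \(n\)-dimensional space whose singular set is then pushed to infinity by a cylindrical conformal blow-up. Theorem B is then run on the blown-up regular part.

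\textbf{Step 1: conformal blow-up.} Using the Assouad codimension hypothesis \(c_A^X(\mathcal S)>3-2/n\), we construct a conformal factor \(u\) on \(\mathcal R\) comparable to \(d(\cdot,\mathcal S)^{-1}\), built from a Whitney-type covering of \(X\setminus\mathcal S\) and smoothed. The codimension bound plays the role that the Naber--Valtorta packing estimates play for \(\mu\)-bubbles: it controls sums of the form \(\sum_j r_j^{-2}\cdot r_j^{n}\) over dyadic annuli around \(\mathcal S\). The new metric \(\tilde g=u^{4/(n-2)}g\), suitably combined with a shift of the weight \(f\) to keep track of the measure, should have three properties: it is complete on \(\mathcal R\); the weighted scalar curvature \(\Scal^{n,\lambda}_{\tilde f}(\tilde g)\) stays \(\ge -\delta\) with \(\delta\) arbitrarily small; and it stays \(\ge c>0\) on a fixed compact piece of the open set where \(\Scal\) is positive. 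The threshold \(3-2/n\) should come precisely from the integrability needed in the conformal Laplacian error term \(|\nabla\log u|^2\) weighted by the \(\lambda\)-dependent coefficient. Boundedness of \(f\) is used so that the measure \(e^{-f}d\mu_g\), and hence the Ahlfors regularity and Poincaré inequality, are comparable to the Riemannian ones. This comparability is needed to pass from capacity and Assouad estimates to cutoff functions with small energy.

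\textbf{Step 2: transfer enlargeability.} Given \(\varepsilon\), we take the cover \(\widehat X\) and the \(\varepsilon\)-Lipschitz map \(\widehat X\to S^n\) of nonzero degree. We lift the blow-up to \(\widehat{\mathcal R}\). Since the map is Lipschitz for \(d_g\) and we may choose \(\tilde g\ge g\) (with \(u\ge 1\)), the map is still \(\varepsilon\)-Lipschitz on \((\widehat{\mathcal R},\tilde g)\). It also has nonzero degree on the regular part, because \(\mathcal S\) has codimension \(>2\) and so does not disturb fundamental classes. We then apply Theorem B with \(k=0\), in the form where \(Y=S^n\)-enlargeability holds only up to a fixed scale, or directly apply the band/\(\mu\)-bubble descent used in its proof. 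This forces \(\sigma\le0\) in any region of large width, and the strictly positive bump of size \(c\) contradicts this once \(\varepsilon\) and \(\delta\) are small. Here we need \(\lambda\le 1/n\le\Lambda_n\).

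\textbf{Step 3: rigidity.} Under \(c_A>3-1/(n-1)\) and \(\Scal\ge0\), the strict part of the theorem shows that \(\Scal\) cannot be perturbed to become positive anywhere. A conformal or weighted Ricci-flow-type deformation argument then forces \(\Scal\equiv0\), \(\nabla f=0\), and \(\mathrm{Ric}\equiv0\) on \(\mathcal R\). Concretely, the first eigenvalue of the weighted conformal Laplacian is zero and a Kazdan--Warner type perturbation along \(-\mathrm{Ric}\) is available. The stronger codimension bound is what makes the eigenfunction argument work: it is needed for \(W^{1,2}\) removability of \(\mathcal S\) for the \((n-1)\)-dimensional traces used there. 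Finally, since \(\mathcal R\) is Ricci-flat with a small singular set, the length completion \(X^\ell\) is a noncollapsed Ricci limit space with a singular set of codimension \(>2\). Cheeger--Colding theory makes it a smooth Ricci-flat manifold, and enlargeability then gives flatness by the Cheeger--Gromoll splitting on the universal cover.

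The main obstacle is Step 1. We need to show that the cylindrical-rate blow-up loses only \(\delta\) in the weighted scalar curvature using only the Assouad codimension. We would first try writing \(u=\sum_j \phi_j\) over a Whitney decomposition and estimating \(\Delta u\) against \(|\nabla u|^2\) annulus by annulus. The hope is that the gap \(c_A-(3-2/n)\) yields a geometric decay.
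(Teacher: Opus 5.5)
Your proposal has genuine gaps in both halves: the obstruction argument in Steps 1--2 does not produce a contradiction and cannot reach the threshold $3-\frac2n$, and Step 3 has no valid route from Ricci-flatness on $\mathcal R$ to flatness of $X^\ell$.

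\textbf{Obstruction part (Steps 1--2).} Your Step 2 does not yield a contradiction, because the cube/band inequality uses only a \emph{global} lower bound. Suppose that after the blow-up you have $\Scal^{n,\lambda}_{\widetilde f}(\widetilde g)\ge-\delta$ everywhere and $\ge c$ only on a compact piece. On the cover, that piece only lifts to a family of compact sets inside the wide region $\Phi^{-1}(C)$. The descent then gives only $-\delta\le4\pi^2(1-\lambda)\sum_iD_i^{-2}$, which is consistent.

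The missing step is to make the curvature \emph{uniformly} positive on the compact space before blowing up. The paper takes a bump $W=b\chi\le V_0$, sets $a=a_n(\bar\lambda)$, and lets $u$ be the first eigenfunction of $-\frac4a(\Delta_g-a\langle\nabla f,\nabla\cdot\rangle_g)+W$ on $X$. Its eigenvalue $\eta_0$ is positive by connectedness, and $0<c_0\le u\le C_0$ by Harnack. With $\psi=-\frac2a\log u$ one gets $\Scal^{n,\bar\lambda}_{f+\psi}(g)\ge\eta_0$ on all of $\mathcal R$. Moreover $f+\psi$ stays bounded, so the drift weight $e^{-\gamma_n(\bar\lambda)(f+\psi)}$ remains admissible. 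Only then is a blow-up that loses $\zeta<\eta_0$ useful.

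Your Step 1 also cannot work as sketched, and the threshold is decided there. For $\sigma>0$, a factor $w\to\infty$ multiplies the curvature by $w^{-\alpha}\to0$. Losing only $\zeta$ therefore forces a superlinear inequality $-Lw\gtrsim w^{1+\alpha}$ near $\mathcal S$. A Whitney-smoothed power of $d(\cdot,\mathcal S)$ has Laplacian of size $d^{-\tau-2}$ but no sign, so annulus-by-annulus estimates cannot produce this. The paper (Proposition~\ref{prop:global-packing-green-blowup-section3}) instead sums Green potentials of $-L_\omega+\delta$ over separated nets of $\mathcal S$ at all scales $s$, with weights $s^{-\tau-2}$. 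This gives $-L_\omega\Theta\ge A\Theta^{1+\alpha}-B$ and $\Theta\gtrsim d^{-\tau}$, and the construction requires $\tau<c_A-2$.

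The exponents also matter. The paper chooses $\alpha_n(\lambda),\beta_n(\lambda)$ so that the $|dw|^2$-term cancels exactly. The cylindrical rate $w^{\alpha}\sim d^{-2}$ then forces $\tau=2/\alpha_n(\lambda)=(n-2)/(1+q_n(\lambda))$. With your Yamabe exponent $4/(n-2)$ this is $\tau=(n-2)/2$, i.e.\ $c_A>(n+2)/2$. With $u\sim d^{-1}$ your metric is not even complete for $n\ge5$.

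The value $3-\frac2n$ appears only after replacing $\lambda$ by some $\bar\lambda\le\lambda$. This is legitimate because $a_n$ is increasing, so $\Scal^{n,\bar\lambda}_f\ge\Scal^{n,\lambda}_f$. As $\bar\lambda\to-\infty$, $q_n\to n-1$ and $\tau_n(\bar\lambda)\downarrow1-\frac2n$. The same lowering is needed to get $\bar\lambda<\Lambda_n$: your claim $1/n\le\Lambda_n$ fails for $n\ge14$.

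\textbf{Rigidity part (Step 3).} Your route to $df\equiv0$, $R_g\equiv0$ and $\operatorname{Ric}_g\equiv0$ is close to the paper's. There, $df\equiv0$ again comes from lowering $\lambda$. $\operatorname{Ric}_g\equiv0$ comes from a perturbation $g+th$, $h\ge0$, supported in a ball of $\mathcal R$, which raises the first eigenvalue of $-\frac4a\Delta+R$.

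The passage to flatness, however, is a gap. Nothing makes $X^\ell$ a Ricci limit space: you know $\operatorname{Ric}=0$ only on $\mathcal R$ and have no smooth approximating sequence. Establishing a synthetic Ricci bound across $\mathcal S$ is exactly what has to be proved. Even for a noncollapsed limit, a singular set of codimension $>2$ would not make the space smooth, and Cheeger--Gromoll needs a smooth complete manifold.

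The paper proceeds as follows.
\begin{enumerate}
\item Punctured quasiconvexity shows that $X^\ell$ is locally bi-Lipschitz to $X$, so it is still AM--PI, and that it satisfies Sobolev-to-Lipschitz.
\item It proves the QL gradient bound. For harmonic $u$, Cheng--Yau gives $|\nabla u|\lesssim d(\cdot,\mathcal S)^{-1}$. The improved Kato inequality makes $|\nabla u|^q$ a subsolution for $q>\frac{n-2}{n-1}$, and this extends across $\mathcal S$ once $c_A>2+q$. This, and not trace removability in an eigenfunction argument, is where $3-\frac1{n-1}$ enters.
\item The Honda--Sun criterion gives $RCD(0,n)$. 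Mondino--Wei then splits the universal cover as $Z\times\mathbb R^k$ with $Z$ compact, and enlargeability forces $Z$ to be a point.
\end{enumerate}
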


\begin{remark}
Consider the torical band
\[
   \left(
   \left(-\frac{\pi}{n},\frac{\pi}{n}\right)\times\mathbb T^{n-1},
   \; dt^2+\cos^{4/n}\!\left(\frac{nt}{2}\right)g_{\mathbb T^{n-1}}
   \right).
\]
It has scalar curvature \(n(n-1)\) and width \(2\pi/n\).  Near either end, with \(\rho=\pi/n-|t|\), the metric is asymptotic to
\[
   d\rho^2+\rho^{4/n}g_{\mathbb T^{n-1}},
\]
and the volume of an end-neighborhood is of order \(\rho^{3-2/n}\).  This suggests \(3-2/n\) is the natural threshold for the obstruction part.
\end{remark}

\begin{remark}
Around the time when the first version of this paper appeared, Wang--Wang--Xie \cite{WangWangXie2026} independently obtained related positive scalar curvature obstructions for \(L^\infty\)-metrics on tori, using spinorial methods.  In their setting, the size of the singular set is measured by Hausdorff dimension.  It is natural to ask whether the Assouad dimension hypothesis in Theorem~D can be weakened to a Hausdorff dimension hypothesis.
\end{remark}

The paper is organized as follows.  Section~\ref{sec:weighted-scalar-curvature} computes the conformal transformation law for the weighted scalar curvature and the descent inequality for weighted \(\mu\)-bubbles. Section~\ref{sec:conformal-blowup-exceptional} constructs conformal blow-ups near singular sets, preserving the weighted scalar curvature lower bound up to an arbitrarily small loss.  Section~\ref{sec:cube-inequality} proves Theorem~B in the case \(Y=\{\mathrm{pt}\}\).  Section~\ref{sec:enlargeable-bases} proves Theorems~A,~B, and~C. Section~\ref{sec:ampi-obstruction-rigidity} treats the positive scalar curvature obstruction and flat rigidity theorem for AM--PI spaces, completing the proof of Theorem~D.

\subsection*{Acknowledgements}
The second-named author is partially supported by National Key R\&D Program of China 2023YFA1009900, NSFC Grant No. 12401072, Zhejiang Provincial Natural Science Foundation of China under Grant No. LQKWL26A0101 and the start-up fund from Westlake University.

\section{Weighted scalar curvature}
\label{sec:weighted-scalar-curvature}

For any real coefficient \(a\), write
\begin{equation}\label{eq:coefficient-scalar-expression}
  \mathscr S_a^g(f):=R_g+2\Delta_g f-a|df|_g^2 .
\end{equation}
For \(\lambda<1/m\), set
\begin{equation}\label{eq:weighted-scalar-curvature}
  a_m(\lambda)
  :=
  \frac{1-(m-1)\lambda}{1-m\lambda},
  \qquad
  \Scal_f^{m,\lambda}(g)
  :=
  \mathscr S_{a_m(\lambda)}^g(f).
\end{equation}
The two useful identities used below are
\begin{equation}\label{eq:parameter-shift}
  a_{m-1}(\lambda)
  =
  2-\frac1{a_m(\lambda)}
\end{equation}
and
\begin{equation}\label{eq:a-difference}
  a_m(\lambda)-a_{m-1}(\lambda)
  =
  \frac{\lambda^2}{(1-m\lambda)(1-(m-1)\lambda)}
  \ge0 .
\end{equation}
Thus a lower bound for \(\Scal_f^{m,\lambda}(g)\) also gives the same lower bound for \(\mathscr S_{a_{m-1}(\lambda)}^g(f)\).

\subsection{Conformal changes}

Let \((N^m,g)\) be smooth, \(m\ge3\), and let \(w>0\).  We consider
\[
   \widetilde g=w^\alpha g,
   \qquad
   \widetilde f=f+\beta\log w .
\]
All quantities on the right hand side are computed with respect to \(g\).

\begin{lemma}\label{lem:general-conformal-formula-weighted-scalar}
For any real \(a\),
\begin{equation}\label{eq:general-conformal-formula-f}
\begin{split}
w^\alpha\mathscr S_a^{\widetilde g}(\widetilde f)
&=
\mathscr S_a^g(f)
+\bigl(2\beta-(m-1)\alpha\bigr)w^{-1}\Delta_g w  \\
&\quad
+\bigl(\alpha(m-2)-2a\beta\bigr)
 w^{-1}\langle dw,df\rangle_g  \\
&\quad
+\Bigl[
 (m-1)\alpha-\frac{(m-1)(m-2)}4\alpha^2
 -2\beta+\alpha\beta(m-2)-a\beta^2
 \Bigr]w^{-2}|dw|_g^2 .
\end{split}
\end{equation}
\end{lemma}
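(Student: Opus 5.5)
This is a direct computation, which we organize through the substitution \(w=e^{u}\), i.e. \(u=\log w\). Then \(\widetilde g=e^{2\varphi}g\) with \(\varphi=\tfrac{\alpha}{2}u\), and \(\widetilde f=f+\beta u\). We will use three standard conformal formulas in dimension \(m\). The first is
\[
R_{\widetilde g}=e^{-2\varphi}\bigl(R_g-2(m-1)\Delta_g\varphi-(m-2)(m-1)|d\varphi|_g^2\bigr).
\]
The second is the Laplacian transformation on functions,
\[
\Delta_{\widetilde g}h=e^{-2\varphi}\bigl(\Delta_g h+(m-2)\langle d\varphi,dh\rangle_g\bigr).
\]
The third is the norm rescaling \(|dh|^2_{\widetilde g}=e^{-2\varphi}|dh|_g^2\). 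Since \(e^{2\varphi}=w^\alpha\), multiplying \(\mathscr S_a^{\widetilde g}(\widetilde f)\) by \(w^\alpha\) removes every conformal prefactor. This gives
\[
w^\alpha\mathscr S_a^{\widetilde g}(\widetilde f)=R_g-2(m-1)\Delta\varphi-(m-1)(m-2)|d\varphi|^2+2\Delta\widetilde f+2(m-2)\langle d\varphi,d\widetilde f\rangle-a|d\widetilde f|^2 .
\]

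Next we substitute \(\varphi=\tfrac\alpha2u\) and \(\widetilde f=f+\beta u\) and expand everything in the basis \(\Delta u\), \(\langle du,df\rangle\), \(|du|^2\), together with the terms \(R_g+2\Delta f-a|df|^2=\mathscr S_a^g(f)\).

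The \(\Delta u\) coefficient is \(-(m-1)\alpha+2\beta\).

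The \(\langle du,df\rangle\) coefficient is \((m-2)\alpha-2a\beta\). Here \((m-2)\alpha\) comes from the mixed term \(2(m-2)\cdot\tfrac\alpha2\langle du,df\rangle\), and \(-2a\beta\) comes from expanding \(|d\widetilde f|^2\).

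The \(|du|^2\) coefficient is \(-\tfrac{(m-1)(m-2)}4\alpha^2+(m-2)\alpha\beta-a\beta^2\).

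Finally we convert back to \(w\) using \(du=w^{-1}dw\) and \(\Delta u=w^{-1}\Delta w-w^{-2}|dw|^2\). The \(\Delta u\) term then contributes \((2\beta-(m-1)\alpha)w^{-1}\Delta w\). It also adds \(\bigl((m-1)\alpha-2\beta\bigr)w^{-2}|dw|^2\) to the gradient-squared term. Collecting, the coefficient of \(w^{-2}|dw|^2\) is exactly the bracket
\[
(m-1)\alpha-\tfrac{(m-1)(m-2)}4\alpha^2-2\beta+\alpha\beta(m-2)-a\beta^2
\]
of \eqref{eq:general-conformal-formula-f}. The \(\langle dw,df\rangle\) and \(\Delta w\) coefficients match the statement as well.

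The only step needing care is bookkeeping of signs and factors. In particular, we must remember the \(-w^{-2}|dw|^2\) correction coming from \(\Delta\log w\), and we must use the correct sign convention \(\Delta_{\widetilde g}h=e^{-2\varphi}(\Delta h+(m-2)\langle d\varphi,dh\rangle)\) for the Laplacian under conformal change. As a check on these conventions, we will verify the case \(\beta=0\), \(\alpha=4/(m-2)\): the formula must then reduce to the classical identity \(w^{4/(m-2)}R_{\widetilde g}=R_g-\tfrac{4(m-1)}{m-2}w^{-1}\Delta w\), with the \(|dw|^2\) coefficient vanishing. It does: the bracket becomes \(\tfrac{4(m-1)}{m-2}-\tfrac{4(m-1)}{m-2}=0\).
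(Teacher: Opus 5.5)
Your computation is correct and follows essentially the paper's route. Both write $\widetilde g=e^{2\varphi}g$ with $\varphi=\frac{\alpha}{2}\log w$, apply the standard conformal laws for $R$, $\Delta$ and $|d\cdot|^2$, and collect coefficients; the only difference is that you collect in $u=\log w$ first and convert at the end via $\Delta u=w^{-1}\Delta w-w^{-2}|dw|^2$, whereas the paper writes each transformed term directly in terms of $w$.
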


\begin{proof}
Write
\[
   \widetilde g=e^{2\varphi}g,
   \qquad
   \varphi=\frac{\alpha}{2}\log w .
\]
The scalar curvature transformation formula gives
\[
\begin{split}
  w^\alpha R_{\widetilde g}
  &=
  R_g-(m-1)\alpha w^{-1}\Delta_g w  \\
  &\quad+
  \left(
  (m-1)\alpha
  -
  \frac{(m-1)(m-2)}4\alpha^2
  \right)w^{-2}|dw|_g^2 .
\end{split}
\]
Moreover,
\[
\begin{split}
  w^\alpha\Delta_{\widetilde g}\widetilde f
  &=
  \Delta_g f
  +\beta w^{-1}\Delta_g w
  +\frac{\alpha(m-2)}2w^{-1}\langle dw,df\rangle_g  \\
  &\quad+
  \left(
  -\beta+\frac{\alpha\beta(m-2)}2
  \right)w^{-2}|dw|_g^2 ,
\end{split}
\]
and
\[
  w^\alpha |d\widetilde f|_{\widetilde g}^2
  =
  |df|_g^2
  +2\beta w^{-1}\langle dw,df\rangle_g
  +\beta^2w^{-2}|dw|_g^2 .
\]
Substituting these identities into
\[
   \mathscr S_a^{\widetilde g}(\widetilde f)
   =
   R_{\widetilde g}
   +2\Delta_{\widetilde g}\widetilde f
   -a|d\widetilde f|_{\widetilde g}^2
\]
gives \eqref{eq:general-conformal-formula-f}.
\end{proof}

We now specialize to \(a=a_m(\lambda)\).  For \(\lambda<1/m\), define
\[
  q_m(\lambda)
  :=
  \sqrt{
  \frac{(m-1)(1-(m-1)\lambda)}
  {1-\lambda}
  } .
\]
Then \(q_m(\lambda)>1\), and
\[
  a_m(\lambda)
  =
  \frac{(m-2)q_m(\lambda)^2}
  {(m-1)(q_m(\lambda)^2-1)} .
\]
Set
\begin{equation}\label{eq:optimal-alpha-beta}
  \alpha_m(\lambda)
  :=
  \frac{2(1+q_m(\lambda))}{m-2},
  \qquad
  \beta_m(\lambda)
  :=
  \frac{m-1}{m-2}
  \left(
  q_m(\lambda)-\frac1{q_m(\lambda)}
  \right).
\end{equation}
Put
\begin{equation}\label{eq:pointwise-K-gamma}
   \mathcal K_m(\lambda)
   :=
   \frac{\alpha_m(\lambda)(m-1)}{q_m(\lambda)},
   \qquad
   \gamma_m(\lambda)
   :=
   \frac{2}{\mathcal K_m(\lambda)} .
\end{equation}
We write
\begin{equation}\label{eq:lambda-drift-operator}
  L_{f,\lambda}^{(m)}u
  :=
  \Delta_g u
  -
  \gamma_m(\lambda)
  \langle\nabla u,\nabla f\rangle_g .
\end{equation}

\begin{proposition}\label{prop:lambda-conformal-formula}
Let \(\lambda< 1/m\), and set
\[
   \alpha=\alpha_m(\lambda),
   \qquad
   \beta=\beta_m(\lambda),
   \qquad
   q=q_m(\lambda).
\]
If
\[
   \widetilde g=w^\alpha g,
   \qquad
   \widetilde f=f+\beta\log w ,
\]
then
\begin{equation}\label{eq:lambda-conformal-formula}
  w^\alpha
  \Scal_{\widetilde f}^{m,\lambda}(\widetilde g)
  =
  \Scal_f^{m,\lambda}(g)
  -
  \mathcal K_m(\lambda)
  w^{-1}L_{f,\lambda}^{(m)}w.
\end{equation}
\end{proposition}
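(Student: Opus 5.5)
This follows directly from Lemma~\ref{lem:general-conformal-formula-weighted-scalar} with \(a=a_m(\lambda)\), \(\alpha=\alpha_m(\lambda)\), \(\beta=\beta_m(\lambda)\). Comparing \eqref{eq:general-conformal-formula-f} with \eqref{eq:lambda-conformal-formula}, and recalling \(L_{f,\lambda}^{(m)}w=\Delta_g w-\gamma_m\langle\nabla w,\nabla f\rangle_g\) with \(\gamma_m=2/\mathcal K_m\), three coefficient identities must hold:
\[
  2\beta-(m-1)\alpha=-\mathcal K_m,\qquad
  \alpha(m-2)-2a\beta=2,
\]
\[
  (m-1)\alpha-\tfrac{(m-1)(m-2)}4\alpha^2-2\beta+\alpha\beta(m-2)-a\beta^2=0 .
\]
The second identity is what \(\gamma_m\mathcal K_m=2\) requires.

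The plan is to parametrize everything by \(q=q_m(\lambda)\). We have \(\alpha=2(1+q)/(m-2)\), \(\beta=\frac{m-1}{m-2}(q^2-1)/q\), and \(a=\frac{(m-2)q^2}{(m-1)(q^2-1)}\); the last expression is assumed from the text, but can be checked from the definition of \(q\).

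For the first identity,
\[
2\beta-(m-1)\alpha=\frac{2(m-1)}{m-2}\Bigl(\frac{q^2-1}{q}-(1+q)\Bigr)=-\frac{2(m-1)(1+q)}{(m-2)q}=-\frac{(m-1)\alpha}{q}=-\mathcal K_m .
\]

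For the second, \(2a\beta=2q\), so
\[
\alpha(m-2)-2a\beta=2+2q-2q=2 .
\]

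The third identity is the one real computation, and I expect it to be the main obstacle, though it is routine. Here \(a\beta^2=(m-1)(q^2-1)/(m-2)\). Multiply through by \((m-2)/(m-1)\) and substitute. The terms become \(2(1+q)\), \(-(1+q)^2\), \(-2(q^2-1)/q\), \(+2(1+q)(q^2-1)/q\), and \(-(q^2-1)\). The third and fourth terms combine to \(2(q^2-1)\). The total is then
\[
2(1+q)-(1+q)^2+(q^2-1)=(1+q)\bigl(2-(1+q)+(q-1)\bigr)=0 .
\]

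Substituting the three identities into \eqref{eq:general-conformal-formula-f} then gives \eqref{eq:lambda-conformal-formula} directly. The hypothesis \(\lambda<1/m\) guarantees \(q>1\), so \(\beta\) and \(a\) are well defined and \(\mathcal K_m>0\). The hypothesis \(m\ge3\) makes \(\alpha\) and \(\beta\) finite.
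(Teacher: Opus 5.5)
Your proof is correct and follows the paper's argument exactly. You specialize Lemma~\ref{lem:general-conformal-formula-weighted-scalar} to $a=a_m(\lambda)$, verify $2\beta-(m-1)\alpha=-\mathcal K_m(\lambda)$ and $\alpha(m-2)-2a\beta=2$ (so that, via $\gamma_m\mathcal K_m=2$, the first-order terms assemble into $-\mathcal K_m(\lambda)w^{-1}L^{(m)}_{f,\lambda}w$), and check that the $w^{-2}|dw|_g^2$ coefficient vanishes; your algebra checks out, including the assumed identity $a_m(\lambda)=\frac{(m-2)q^2}{(m-1)(q^2-1)}$, which follows from $q^2-1=\frac{(m-2)(1-m\lambda)}{1-\lambda}$.
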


\begin{proof}
With the above choice of \(\alpha\) and \(\beta\),
\[
  2\beta-(m-1)\alpha
  =
  -\frac{\alpha(m-1)}{q},
\]
and
\[
  \alpha(m-2)-2a_m(\lambda)\beta=2.
\]
It remains only to check that the coefficient of \(w^{-2}|dw|^2\) in \eqref{eq:general-conformal-formula-f} vanishes.   The first two terms give
\[
  (m-1)\alpha-\frac{(m-1)(m-2)}4\alpha^2
  =
  -\frac{m-1}{m-2}(q^2-1),
\]
while the next two terms give
\[
  -2\beta+\alpha\beta(m-2)
  =
  \frac{2(m-1)}{m-2}(q^2-1).
\]
Thus the four terms not involving \(a_m(\lambda)\beta^2\) combine to
\[
  \frac{m-1}{m-2}(q^2-1).
\]
On the other hand,
\[
  a_m(\lambda)\beta^2
  =
  \frac{m-1}{m-2}(q^2-1).
\]
Hence the coefficient of \(w^{-2}|dw|^2\) is zero.  Substituting the two remaining coefficients into \eqref{eq:general-conformal-formula-f} gives \eqref{eq:lambda-conformal-formula}.
\end{proof}

We shall also use a quadratic-form version of the conformal formula.  This quadratic form is the descent datum used in Brendle--Wang \cite{BrendleWang2026}.  

Let \((N^m,g)\) be smooth, \(m\ge3\).  For a smooth function \(f\), a constant \(\sigma\), and a real number \(a\), define
\begin{equation}\label{eq:quadratic-form-descent-datum}
   Q_{a,\sigma}^{g,f}(\varphi)
   :=
   \int_N e^{-f}|\nabla\varphi|_g^2\,d\mu_g
   +
   \frac12
   \int_N e^{-f}
   \bigl(\mathscr S_a^g(f)-\sigma\bigr)\varphi^2\,d\mu_g .
\end{equation}

Assume \(\lambda<\frac1{m+1}\). Define
\begin{equation}\label{eq:qbar-m-lambda}
   \overline q_m(\lambda)
   :=
   \sqrt{
      \frac{m(1-(m-1)\lambda)}
           {2(1-\lambda)}
   } .
\end{equation}
Set
\begin{equation}\label{eq:qf-alpha-beta}
   \overline\alpha_m(\lambda)
   :=
   \frac{2(1+\overline q_m(\lambda))}{m-2},
   \qquad
   \overline\beta_m(\lambda)
   :=
   \frac{\overline q_m(\lambda)}
        {2a_{m+1}(\lambda)-1}.
\end{equation}

Put
\begin{equation}\label{eq:qf-s-N}
   \overline s_m(\lambda)
   :=
   \frac12
   \left(
      \frac{\overline\alpha_m(\lambda)(m-2)}2
      -
      \overline\beta_m(\lambda)
   \right),
   \qquad
   \overline{\mathcal K}_m(\lambda)
   :=
   \frac{\overline\alpha_m(\lambda)m}{2}
   -
   \overline\beta_m(\lambda).
\end{equation}
 We write
\begin{equation}\label{eq:qf-drift-operator}
   \overline L_{f,\lambda}^{(m)}u
   :=
   \Delta_g u
   -
   \overline\gamma_m(\lambda)
   \langle\nabla u,\nabla f\rangle_g,\qquad \overline\gamma_m(\lambda)
   :=
   \frac{1}{\overline{\mathcal K}_m(\lambda)}.
\end{equation}

\begin{proposition}
\label{prop:qf-conformal-formula}
Let \(w>0\) be smooth, and set
\[
   \widetilde g=w^{\overline\alpha_m(\lambda)}g,
   \qquad
   \widetilde f=f+\overline\beta_m(\lambda)\log w .
\]
Then, for every \(\varphi\in C_c^\infty(N)\),
\begin{equation}\label{eq:qf-conformal-formula}
\begin{split}
Q_{a_{m+1}(\lambda),\sigma}^{\widetilde g,\widetilde f}
\left(w^{-\overline s_m(\lambda)}\varphi\right)  = &
Q_{a_{m+1}(\lambda),\sigma}^{g,f}(\varphi)
-
\frac{\overline{\mathcal K}_m(\lambda)}2
\int_N e^{-f}
w^{-1}\overline L_{f,\lambda}^{(m)}w\,\varphi^2\,d\mu_g  \\
&
-
\frac12
\int_N e^{-f}\sigma
\bigl(w^{\overline\alpha_m(\lambda)}-1\bigr)
\varphi^2\,d\mu_g .
\end{split}
\end{equation}
\end{proposition}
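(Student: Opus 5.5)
The plan is to expand both sides of \eqref{eq:qf-conformal-formula} with Lemma~\ref{lem:general-conformal-formula-weighted-scalar} for the potential term, and to handle the Dirichlet term by integration by parts. Abbreviate \(\alpha=\overline\alpha_m(\lambda)\), \(\beta=\overline\beta_m(\lambda)\), \(s=\overline s_m(\lambda)\), \(a=a_{m+1}(\lambda)\), \(\mathcal K=\overline{\mathcal K}_m(\lambda)\).

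\emph{Step 1: volume weight.} Since \(d\mu_{\widetilde g}=w^{m\alpha/2}d\mu_g\) and \(e^{-\widetilde f}=e^{-f}w^{-\beta}\), we have
\[
e^{-\widetilde f}d\mu_{\widetilde g}=e^{-f}w^{m\alpha/2-\beta}d\mu_g .
\]
Because \(|\nabla\psi|^2_{\widetilde g}=w^{-\alpha}|\nabla\psi|_g^2\), the gradient term of \(Q^{\widetilde g,\widetilde f}\) carries the weight \(w^{(m-2)\alpha/2-\beta}=w^{2s}\) by \eqref{eq:qf-s-N}. Hence with \(\psi=w^{-s}\varphi\) the gradient term becomes
\[
\int e^{-f}w^{2s}|\nabla(w^{-s}\varphi)|^2 .
\]
The potential term has weight \(w^{m\alpha/2-\beta-2s}=w^{\alpha}\). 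This is exactly what is needed to cancel the \(w^{-\alpha}\) hidden in \(w^\alpha\mathscr S_a^{\widetilde g}(\widetilde f)\). The \(\sigma\) part therefore produces \(-\tfrac12\int e^{-f}\sigma w^\alpha\varphi^2\), which after comparison with \(-\tfrac12\int e^{-f}\sigma\varphi^2\) gives the last term of \eqref{eq:qf-conformal-formula}.

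\emph{Step 2: Dirichlet term.} Expand
\[
w^{2s}|\nabla(w^{-s}\varphi)|^2=|\nabla\varphi|^2-2s\,w^{-1}\varphi\langle\nabla w,\nabla\varphi\rangle+s^2w^{-2}|\nabla w|^2\varphi^2 .
\]
Write the cross term as \(-s\,w^{-1}\langle\nabla w,\nabla\varphi^2\rangle\) and integrate by parts against \(e^{-f}d\mu_g\); this is legitimate because \(\varphi\) has compact support. The result is
\[
s\int e^{-f}\varphi^2\bigl(w^{-1}\Delta w-w^{-2}|\nabla w|^2-w^{-1}\langle\nabla w,\nabla f\rangle\bigr).
\]
So the Dirichlet part contributes \(\int e^{-f}|\nabla\varphi|^2\) plus \(\varphi^2\)-weighted terms with coefficients \(s\), \(s^2-s\) and \(-s\) on \(w^{-1}\Delta w\), \(w^{-2}|dw|^2\) and \(w^{-1}\langle dw,df\rangle\) respectively.

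\emph{Step 3: collecting coefficients.} By Lemma~\ref{lem:general-conformal-formula-weighted-scalar} (with \(m\) the dimension and coefficient \(a\)), \(\tfrac12 w^\alpha\mathscr S_a^{\widetilde g}(\widetilde f)\) contributes
\begin{itemize}
\item \(\tfrac12(2\beta-(m-1)\alpha)\) on \(w^{-1}\Delta w\),
\item \(\tfrac12(\alpha(m-2)-2a\beta)\) on \(w^{-1}\langle dw,df\rangle\),
\item \(\tfrac12[\cdots]\) on \(w^{-2}|dw|^2\).
\end{itemize}
Three checks remain.
\begin{enumerate}
\item The \(\Delta w\) coefficient equals \(-\mathcal K/2\). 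Indeed
\[
s+\beta-\tfrac{(m-1)\alpha}{2}=\tfrac{(m-2)\alpha}{4}+\tfrac{\beta}{2}-\tfrac{(m-1)\alpha}{2}=-\tfrac12\bigl(\tfrac{m\alpha}{2}-\beta\bigr),
\]
which is an identity requiring no choice of parameters.
\item The \(\langle dw,df\rangle\) coefficient equals \(+\tfrac{\mathcal K}{2}\overline\gamma_m=\tfrac12\). This means \(-s+\tfrac{\alpha(m-2)}2-a\beta=\tfrac12\), i.e. \(\tfrac{\alpha(m-2)}{4}+\beta(\tfrac12-a)=\tfrac12\). With \(\alpha(m-2)/4=(1+\overline q)/2\) this reduces to \(\overline q/2=\beta(a-\tfrac12)\). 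That holds precisely by the definition \(\beta=\overline q/(2a-1)\) in \eqref{eq:qf-alpha-beta}.
\item The \(w^{-2}|dw|^2\) coefficient vanishes:
\[
s^2-s+\tfrac12\Bigl[(m-1)\alpha-\tfrac{(m-1)(m-2)}4\alpha^2-2\beta+\alpha\beta(m-2)-a\beta^2\Bigr]=0 .
\]
\end{enumerate}

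\emph{Main obstacle.} The third check is the only real computation. I would set \(x=(m-2)\alpha/2=1+\overline q\), so that \(s=(x-\beta)/2\), and expand everything as a polynomial in \(\overline q\) and \(\beta\). Then I would substitute \(a=\tfrac12+\overline q/(2\beta)\) to eliminate \(a\). The \(\beta^2\)-terms should then combine with the linear \(\overline q\beta\) terms, leaving an identity of the form \(\overline q^2\cdot(\text{const})=\beta^2(\ldots)\). Finally I would use \(\overline q^2=\tfrac{m(1-(m-1)\lambda)}{2(1-\lambda)}\) together with \(a_{m+1}(\lambda)=\tfrac{1-m\lambda}{1-(m+1)\lambda}\), so that \(2a-1=\tfrac{1-(m-1)\lambda}{1-(m+1)\lambda}\), to verify it; this is where the hypothesis \(\lambda<1/(m+1)\) enters. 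If the bookkeeping resists, I would check the identity first at \(\lambda=0\), where \(a=1\), \(\beta=\overline q\) and \(\overline q^2=m/2\), to fix signs, and then do the general \(\lambda\) case symbolically.
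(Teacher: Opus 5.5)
Your proposal is correct and follows the paper's proof essentially line by line. It uses the same weight bookkeeping \(e^{-\widetilde f}d\mu_{\widetilde g}=e^{-f}w^{m\alpha/2-\beta}d\mu_g\), the same integration by parts giving the coefficients \(s\), \(-s\), \(s^2-s\), and the same three coefficient checks via Lemma~\ref{lem:general-conformal-formula-weighted-scalar}. For the third check, which you only planned, expanding with \(x=1+\overline q\) gives the residual \(\frac{m}{4(m-2)}(1-\overline q^2)+\frac{\overline q\beta}{2}-\frac{(2a-1)\beta^2}{4}=\frac{m}{4(m-2)}(1-\overline q^2)+\frac{\overline q^2}{4(2a-1)}\), and this vanishes by the formulas for \(\overline q^2\) and \(2a-1\), exactly as in the paper.
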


\begin{proof}
Write
\[
   \overline q=\overline q_m(\lambda),
   \qquad
   \overline\alpha=\overline\alpha_m(\lambda),
   \qquad
   \overline\beta=\overline\beta_m(\lambda),
   \qquad
   \overline s=\overline s_m(\lambda),
   \qquad
   \overline{\mathcal K}=\overline{\mathcal K}_m(\lambda),
\]
and \(a=a_{m+1}(\lambda)\).  By definition,
\[
   2\overline s
   =
   \frac{\overline\alpha(m-2)}2-\overline\beta .
\]
Thus
\[ \int_N e^{-\widetilde f}
\left|
\nabla_{\widetilde g}
\left(w^{-\overline s}\varphi\right)
\right|_{\widetilde g}^2
d\mu_{\widetilde g}  =\int_N e^{-f} w^{2\overline s}
\left| \nabla_g \left(w^{-\overline s}\varphi\right) \right|_g^2 d\mu_g .
\]
Expanding the integrand,
\[
   w^{2\overline s}
   \left|
   \nabla\left(w^{-\overline s}\varphi\right)
   \right|^2
   =
   |\nabla\varphi|^2
   -
   \overline s w^{-1}\langle dw,\nabla(\varphi^2)\rangle
   +
   \overline s^2w^{-2}|dw|^2\varphi^2 .
\]
Integrating the middle term by parts with respect to \(e^{-f}d\mu_g\), we get
\begin{equation}\label{eq:qf-gradient-part}
\begin{split}
&\int_N e^{-\widetilde f}
\left|
\nabla_{\widetilde g}
\left(w^{-\overline s}\varphi\right)
\right|_{\widetilde g}^2
d\mu_{\widetilde g}\\
=& \int_N e^{-f}|\nabla\varphi|^2\,d\mu_g +
\int_N e^{-f}
\left[
   \overline s w^{-1}\Delta_g w
   -
   \overline s w^{-1}\langle dw,df\rangle_g
   +
   (\overline s^2-\overline s)w^{-2}|dw|^2
\right]\varphi^2\,d\mu_g .
\end{split}
\end{equation}

For the zero-order part, note that
\[
   \frac{\overline\alpha m}{2}
   -
   \overline\beta
   -
   2\overline s
   =
   \overline\alpha .
\]
Therefore
\[ \frac12 \int_N e^{-\widetilde f} \bigl( \mathscr S_a^{\widetilde g}(\widetilde f)-\sigma\bigr) w^{-2\overline s}\varphi^2\,d\mu_{\widetilde g} = \frac12 \int_N e^{-f}
\left( w^{\overline\alpha} \mathscr S_a^{\widetilde g}(\widetilde f) - \sigma w^{\overline\alpha}\right)\varphi^2\,d\mu_g . \]
Subtracting the original zero-order term gives
\[\frac12 \int_N e^{-f}\left( w^{\overline\alpha}\mathscr S_a^{\widetilde g}(\widetilde f)
   -\mathscr S_a^g(f)\right)\varphi^2\,d\mu_g -\frac12 \int_N e^{-f}\sigma\bigl(w^{\overline\alpha}-1\bigr)\varphi^2\,d\mu_g .\]
We now apply the general conformal formula \eqref{eq:general-conformal-formula-f}, in dimension \(m\), with \(a=a_{m+1}(\lambda)\).

The coefficient of \(w^{-1}\Delta_gw\) in the total quadratic form is
\[
   \overline s
   +
   \frac12\bigl(2\overline\beta-(m-1)\overline\alpha\bigr)
   =
   -\frac12
   \left(
      \frac{\overline\alpha m}{2}-\overline\beta
   \right)
   =
   -\frac{\overline{\mathcal K}}{2}.
\]
The coefficient of \(w^{-1}\langle dw,df\rangle_g\) is
\[
   -\overline s
   +
   \frac12\bigl(\overline\alpha(m-2)-2a\overline\beta\bigr).
\]
Using
\[
   \overline\alpha(m-2)=2(1+\overline q),
   \qquad
   \overline\beta=\frac{\overline q}{2a-1},
\]
this coefficient becomes
\[
   -\frac12(1+\overline q-\overline\beta)
   +
   (1+\overline q)-a\overline\beta
   =
   \frac12 .
\]
Thus the first-order terms combine to
\[
   -\frac{\overline{\mathcal K}}{2}
   w^{-1}
   \left(
      \Delta_g w
      -
      \frac1{\overline{\mathcal K}}
      \langle dw,df\rangle_g
   \right)
   =
   -\frac{\overline{\mathcal K}}{2}
   w^{-1}\overline L_{f,\lambda}^{(m)}w .
\]

It remains to check the \(w^{-2}|dw|^2\)-coefficient.  Since
\[
   \overline\alpha(m-2)=2(1+\overline q),
   \qquad
   \overline\beta=\frac{\overline q}{2a-1},
   \qquad
   \overline s=\frac12(1+\overline q-\overline\beta),
\]
the coefficient is
\[
\begin{split}
\mathcal C
&=
\overline s^2-\overline s
+
\frac12
\left[
 (m-1)\overline\alpha
 -\frac{(m-1)(m-2)}4\overline\alpha^2
 -2\overline\beta
 +\overline\alpha\overline\beta(m-2)
 -a\overline\beta^2
\right]  \\
&=
\frac{m}{4(m-2)}(1-\overline q^2)
+
\frac{\overline q^2}{4(2a-1)} .
\end{split}
\]
Using
\[
   2a-1=\frac{1-(m-1)\lambda}{1-(m+1)\lambda},
   \qquad
   \overline q^2=
   \frac{m(1-(m-1)\lambda)}{2(1-\lambda)},
\]
we get
\[
   1-\overline q^2
   =
   -\frac{(m-2)(1-(m+1)\lambda)}{2(1-\lambda)},
   \qquad
   \frac{\overline q^2}{2a-1}
   =
   \frac{m(1-(m+1)\lambda)}{2(1-\lambda)} .
\]
Therefore \(\mathcal C=0\). This proves \eqref{eq:qf-conformal-formula}.
\end{proof}

\begin{remark}\label{rem:weighted-to-ordinary-scalar}
For \(\widehat g=e^{-2f/(m-1)}g\), a direct conformal calculation gives
\[
  R_{\widehat g}
  =
  e^{2f/(m-1)}
  \left(
    \Scal_f^{m,\lambda}(g)
    +
    \frac{1-\lambda}{(m-1)(1-m\lambda)}|df|_g^2
  \right)
\]
for finite \(\lambda\).  Thus, when \(\lambda<\frac{1}{m}\) or \(\lambda\ge1\), positivity of \(\Scal_f^{m,\lambda}(g)\) implies positive scalar curvature of
\(\widehat g\).  The same conclusion holds at \(\lambda=\infty\) under the convention \(a_m(\infty)=(m-1)/m\).
\end{remark}

\subsection{Stability under Descent}
\label{subsec:stability-under-descent}

Let \((N^m,g)\) be smooth, and let
\[
   \Sigma^{m-1}\subset N
\]
be an oriented two-sided smooth hypersurface, with chosen unit normal \(\nu\). We use the convention $H=\mathrm{div}_\Sigma\nu$ for mean curvature. Let \(f\in C^\infty(N)\), set
\[
   F=f|_\Sigma,
\]
and let \(\Phi\) be smooth near \(\Sigma\).  Assume
\begin{equation}\label{eq:weighted-mu-stationary}
   H-\langle\nabla f,\nu\rangle=\Phi
\end{equation}
on \(\Sigma\), and assume the weighted stability inequality
\begin{equation}\label{eq:weighted-stability-inequality}
\begin{split}
\int_\Sigma e^{-F}|\nabla^\Sigma\varphi|^2
\ge
\int_\Sigma e^{-F}
\bigl(
  \operatorname{Ric}(\nu,\nu)
  +|A_\Sigma|^2
  +D^2f(\nu,\nu)
  +\langle\nabla\Phi,\nu\rangle
\bigr)\varphi^2
\end{split}
\end{equation}
for all \(\varphi\in C_c^\infty(\Sigma)\).  

\begin{proposition}\label{prop:smooth-descent-pointwise}
Assume \(\lambda<1/m\) and
\[
   \mathscr S_{a_m(\lambda)}^g(f)\ge \sigma
\]
near \(\Sigma\).  Define
\begin{equation}\label{eq:descended-sigma-lambda}
  \sigma_\Sigma
  :=
  \sigma|_\Sigma
  +2\langle\nabla\Phi,\nu\rangle
  +\frac{1}{1-\lambda}\Phi^2 .
\end{equation}
Then the quadratic form
\begin{equation}\label{eq:descended-schrodinger-form}
\begin{split}
Q_\Sigma(\varphi)
:=
\int_\Sigma e^{-F}|\nabla^\Sigma\varphi|^2
+
\frac12\int_\Sigma e^{-F}
\bigl(
  \mathscr S_{a_m(\lambda)}^{g_\Sigma}(F)-\sigma_\Sigma
\bigr)\varphi^2
\end{split}
\end{equation}
is nonnegative on \(C_c^\infty(\Sigma)\). 
\end{proposition}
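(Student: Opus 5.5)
The plan is the classical Schoen--Yau rearrangement of the stability inequality \eqref{eq:weighted-stability-inequality} via the Gauss equation, done pointwise on \(\Sigma\). Write \(a=a_m(\lambda)\) and \(t=\langle\nabla f,\nu\rangle\), so that \eqref{eq:weighted-mu-stationary} reads \(H=\Phi+t\). It suffices to show the pointwise inequality
\[
   \operatorname{Ric}(\nu,\nu)+|A_\Sigma|^2+D^2f(\nu,\nu)+\langle\nabla\Phi,\nu\rangle
   \ge
   -\tfrac12\bigl(\mathscr S_a^{g_\Sigma}(F)-\sigma_\Sigma\bigr)
\]
on \(\Sigma\). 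Inserting it into \eqref{eq:weighted-stability-inequality} then gives \(Q_\Sigma(\varphi)\ge0\) directly; no integration by parts is needed, since \(Q_\Sigma\) keeps \(\Delta_\Sigma F\) pointwise.

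First I would rewrite the left-hand side using two standard identities:
\[
   \operatorname{Ric}(\nu,\nu)+|A|^2=\tfrac12\bigl(R_g-R_{g_\Sigma}+|A|^2+H^2\bigr),
   \qquad
   D^2f(\nu,\nu)=\Delta_g f-\Delta_\Sigma F-Ht .
\]
Twice the left-hand side then becomes
\[
   R_g+2\Delta_g f-R_{g_\Sigma}-2\Delta_\Sigma F+|A|^2+H^2-2Ht+2\langle\nabla\Phi,\nu\rangle .
\]
Next I would use the hypothesis in the form
\[
   R_g+2\Delta_gf\ge\sigma+a|df|^2=\sigma+a|\nabla^\Sigma F|^2+at^2 .
\]
The term \(a|\nabla^\Sigma F|^2\) combines with \(-R_{g_\Sigma}-2\Delta_\Sigma F\) to give exactly \(-\mathscr S_a^{g_\Sigma}(F)\). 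Hence twice the left-hand side is at least
\[
   \sigma-\mathscr S_a^{g_\Sigma}(F)+2\langle\nabla\Phi,\nu\rangle+|A|^2+H^2-2Ht+at^2 .
\]

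The only real step is the algebraic bound
\[
   |A|^2+H^2-2Ht+at^2\ge\frac{1}{1-\lambda}\Phi^2 .
\]
I would use \(|A|^2\ge H^2/(m-1)\) and \(H=\Phi+t\), which with \(c=m/(m-1)\) reduces the left-hand side to \(c\Phi^2+2(c-1)\Phi t+(c-2+a)t^2\), and then minimize in \(t\). One computes
\[
   c-2+a=\frac{1-\lambda}{(m-1)(1-m\lambda)}>0
\]
for \(\lambda<1/m\), so the minimum is attained and equals
\[
   \Bigl(c-\frac{(c-1)^2}{c-2+a}\Bigr)\Phi^2 .
\]
Here \((c-1)^2/(c-2+a)=\frac{1-m\lambda}{(m-1)(1-\lambda)}\), so the coefficient is
\[
   \frac{m(1-\lambda)-(1-m\lambda)}{(m-1)(1-\lambda)}=\frac1{1-\lambda}.
\]
This matches \eqref{eq:descended-sigma-lambda} exactly. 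Combining the steps gives the desired pointwise inequality, hence \(Q_\Sigma\ge0\).

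The main obstacle is only bookkeeping: keeping \(a_m(\lambda)\), rather than \(a_{m-1}(\lambda)\), on \(\Sigma\), and verifying that the positivity condition \(c-2+a>0\) holds precisely in the range \(\lambda<1/m\).
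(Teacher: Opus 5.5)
Your proposal is correct and follows essentially the same route as the paper. You use the Gauss equation and the restriction formula for \(D^2f(\nu,\nu)\), absorb \(a|\nabla^\Sigma F|^2\) into \(\mathscr S_a^{g_\Sigma}(F)\) via the hypothesis, and apply \(|A_\Sigma|^2\ge H^2/(m-1)\). Your minimization of the quadratic in \(t=\langle\nabla f,\nu\rangle\) is exactly the paper's completion of the square with \(B_{m,\lambda}=a_m(\lambda)-\frac{m-2}{m-1}\).
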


\begin{proof}
Set
\[
   u=\langle\nabla f,\nu\rangle .
\]
By the stability inequality \eqref{eq:weighted-stability-inequality}, it is enough to prove
\begin{equation}\label{eq:I-estimate-needed}
I:=
\operatorname{Ric}(\nu,\nu)
+|A_\Sigma|^2
+D^2f(\nu,\nu)
+\langle\nabla\Phi,\nu\rangle 
+\frac12\mathscr S_{a_m(\lambda)}^{g_\Sigma}(F)
\ge
\frac12\sigma_\Sigma .
\end{equation}

With our convention for \(H\), the Gauss equation and the restriction formula are
\[
   R_{g_\Sigma}
   =
   R_g-2\operatorname{Ric}(\nu,\nu)+H^2-|A_\Sigma|^2
\]
and
\[
   \Delta_\Sigma F
   =
   \Delta_g f-D^2f(\nu,\nu)-Hu .
\]
Combining these identities gives
\begin{equation}\label{eq:I-expanded}
I=
\frac12(R_g+2\Delta_g f)
-\frac{a_m(\lambda)}2|\nabla^\Sigma F|^2
+\frac12|A_\Sigma|^2
+\frac12H^2-Hu
+\langle\nabla\Phi,\nu\rangle .
\end{equation}
Using
\[
   R_g+2\Delta_g f-a_m(\lambda)|\nabla f|^2\ge\sigma
\]
and
\[
   |\nabla f|^2=|\nabla^\Sigma F|^2+u^2,
\]
we get
\[
  \frac12(R_g+2\Delta_g f)
  -\frac{a_m(\lambda)}2|\nabla^\Sigma F|^2
  \ge
  \frac\sigma2+\frac{a_m(\lambda)}2u^2 .
\]
Since \(|A_\Sigma|^2\ge H^2/(m-1)\), it follows that
\begin{equation}\label{eq:I-lower-before-square}
I\ge
\frac\sigma2+\langle\nabla\Phi,\nu\rangle
+\frac{a_m(\lambda)}2u^2
+\frac{m}{2(m-1)}H^2-Hu .
\end{equation}
Using \(H=u+\Phi\), the last three terms are
\[
\frac{a_m(\lambda)}2u^2+\frac{m}{2(m-1)}(u+\Phi)^2-(u+\Phi)u .
\]
Set
\[
  B_{m,\lambda}
  :=
  a_m(\lambda)-\frac{m-2}{m-1}
  =
  \frac{1-\lambda}{(m-1)(1-m\lambda)}
  >0 .
\]
Then
\begin{equation}\label{eq:lambda-square-completion-correct}
\begin{split}
&\frac{a_m(\lambda)}2u^2
+\frac{m}{2(m-1)}(u+\Phi)^2
-(u+\Phi)u  \\
=&
\frac{B_{m,\lambda}}2
\left(
  u+\frac{\Phi}{(m-1)B_{m,\lambda}}
\right)^2
+\frac{1}{2(1-\lambda)}\Phi^2 .
\end{split}
\end{equation}
Therefore
\[
   I
   \ge
   \frac\sigma2
   +\langle\nabla\Phi,\nu\rangle
   +\frac{1}{2(1-\lambda)}\Phi^2
   =
   \frac12\sigma_\Sigma .
\]
This proves the nonnegativity of \(Q_\Sigma\).
\end{proof}

For a smooth manifold \((Y,g)\), a smooth function \(f\), and a parameter \(\lambda\leq 1/m\), define
\begin{equation}\label{eq:weighted-schrodinger-operator}
  \mathcal L_{m,\lambda,\sigma}^{g,f}(\cdot)
  :=
  -\Delta_g(\cdot)+\langle\nabla f,\nabla(\cdot)\rangle_g
  +\frac12\bigl(\mathscr S_{a_m(\lambda)}^g(f)-\sigma\bigr)(\cdot).
\end{equation}
Here \(m\) is only a parameter to determine the coefficient \(a_m(\lambda)\); the manifold \(Y\) need not have dimension \(m\).

\begin{lemma}\label{lem:positive-supersolution}
Let \(\psi>0\) be smooth.  Set
\[
  h=\log\psi,
  \qquad
  f_\psi=f-h=f-\log\psi .
\]
Then
\begin{equation}\label{eq:positive-supersolution-identity}
\begin{split}
\mathscr S_{a_{m-1}(\lambda)}^g(f_\psi)-\sigma 
=&
2\left(
  -\Delta_g h-|dh|_g^2+
  \langle df,dh\rangle_g
  +\frac12(\mathscr S_{a_m(\lambda)}^g(f)-\sigma)
  \right)  \\
&+
\frac1{a_m(\lambda)}
\left|dh+\bigl(a_m(\lambda)-1\bigr)df\right|_g^2 .
\end{split}
\end{equation}
Hence, if we suppose \(\mathcal L_{m,\lambda,\sigma}^{g,f}\psi\ge0\), then
\begin{equation}\label{eq:positive-supersolution-gives-pointwise}
  \mathscr S_{a_{m-1}(\lambda)}^g(f_\psi)\ge\sigma .
\end{equation}
\end{lemma}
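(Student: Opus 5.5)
The plan is to verify \eqref{eq:positive-supersolution-identity} by direct expansion of both sides, organized by the three quadratic gradient invariants \(|dh|_g^2\), \(\langle df,dh\rangle_g\), \(|df|_g^2\). The key algebraic input will be the parameter shift \eqref{eq:parameter-shift}. Throughout, write \(a=a_m(\lambda)\) and \(b=a_{m-1}(\lambda)=2-1/a\).

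First I expand the left-hand side. Since \(f_\psi=f-h\), we have \(\Delta_g f_\psi=\Delta_g f-\Delta_g h\) and \(|df_\psi|^2=|df|^2-2\langle df,dh\rangle+|dh|^2\). Therefore
\[
\mathscr S_b^g(f_\psi)-\sigma=R_g+2\Delta_gf-2\Delta_gh-b|df|^2+2b\langle df,dh\rangle-b|dh|^2-\sigma .
\]
On the right-hand side, the bracket contributes \(-2\Delta_gh-2|dh|^2+2\langle df,dh\rangle+R_g+2\Delta_gf-a|df|^2-\sigma\). The square term contributes
\[
\tfrac1a|dh|^2+\tfrac{2(a-1)}a\langle dh,df\rangle+\tfrac{(a-1)^2}a|df|^2 .
\]
The terms \(R_g\), \(\Delta_g f\), \(\Delta_g h\) and \(\sigma\) match immediately.

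Next I compare the three gradient coefficients using \(b=2-1/a\):
\begin{itemize}
\item For \(|dh|^2\), the left side gives \(-b\) and the right side gives \(-2+1/a\); these agree.
\item For \(\langle df,dh\rangle\), the left side gives \(2b=4-2/a\) and the right side gives \(2+2(a-1)/a=4-2/a\); these agree.
\item For \(|df|^2\), the left side gives \(-b\) and the right side gives \(-a+(a-1)^2/a=-2+1/a\); these agree.
\end{itemize}
This proves the identity. The only care needed is bookkeeping of signs, so I expect no real obstacle here.

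For the consequence, note that \(h=\log\psi\) gives \(\Delta_g\psi/\psi=\Delta_gh+|dh|^2\) and \(d\psi/\psi=dh\). Hence the bracket in \eqref{eq:positive-supersolution-identity} equals exactly \(\psi^{-1}\mathcal L_{m,\lambda,\sigma}^{g,f}\psi\), which is \(\ge0\) by assumption. The remaining term is nonnegative as soon as \(a_m(\lambda)>0\). This holds for \(\lambda<1/m\): the denominator \(1-m\lambda\) is positive, and the numerator \(1-(m-1)\lambda\) is also positive, since it is positive for \(\lambda\le0\) and exceeds \(1-m\lambda\) for \(\lambda>0\).

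The boundary case \(\lambda=1/m\) needs separate treatment, since there \(f\) is constant by convention and \(a_m\) is not defined. In that case I would either interpret the statement via the limit \(a\to\infty\), where \(b\to2\) and the square term becomes \(a|df|^2\to0\) for constant \(f\) while \(\frac1a|dh|^2\to0\), or simply restrict to \(\lambda<1/m\), which is the range in which \(a_m(\lambda)\) was defined in \eqref{eq:weighted-scalar-curvature}. Either way, this gives \eqref{eq:positive-supersolution-gives-pointwise}.
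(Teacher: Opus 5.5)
Your proposal is correct and follows the paper's route: expand \(f_\psi=f-h\), use \(a_{m-1}(\lambda)=2-1/a_m(\lambda)\) to match the gradient terms, identify the bracket with \(\psi^{-1}\mathcal L_{m,\lambda,\sigma}^{g,f}\psi\), and conclude from \(a_m(\lambda)>0\). Your coefficient-by-coefficient comparison is only a different bookkeeping of the same computation, and your explicit check that \(a_m(\lambda)>0\) for \(\lambda<1/m\), together with your treatment of the case \(\lambda=1/m\) (where the square term vanishes because \(f\) is constant), supplies details the paper leaves implicit.
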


\begin{proof}
Expanding \(f_\psi=f-h\) gives
\[
\begin{split}
\mathscr S_{a_{m-1}(\lambda)}^g(f_\psi)-\sigma
=&
\mathscr S_{a_m(\lambda)}^g(f)-\sigma
-2\Delta_g h  \\
&+
\bigl(a_m(\lambda)-a_{m-1}(\lambda)\bigr)|df|_g^2
+2a_{m-1}(\lambda)\langle df,dh\rangle_g
-a_{m-1}(\lambda)|dh|_g^2 .
\end{split}
\]
Using \(a_{m-1}(\lambda)=2-a_m(\lambda)^{-1}\), the last three terms can be written as
\[
  -2|dh|_g^2
  +2\langle df,dh\rangle_g
  +
  \frac1{a_m(\lambda)}
  \left|dh+\bigl(a_m(\lambda)-1\bigr)df\right|_g^2 .
\]
This gives \eqref{eq:positive-supersolution-identity}. Since \(\psi=e^h\), we have
\[
  \psi^{-1}\mathcal L_{m,\lambda,\sigma}^{g,f}\psi
  =
  -\Delta_g h-|dh|_g^2+
  \langle df,dh\rangle_g
  +\frac12\bigl(\mathscr S_{a_m(\lambda)}^g(f)-\sigma\bigr).
\]
If \(\mathcal L_{m,\lambda,\sigma}^{g,f}\psi\ge0\), then the first term on the right-hand side of \eqref{eq:positive-supersolution-identity} is nonnegative. Since \(a_m(\lambda)>0\), the square term is also nonnegative, and the conclusion follows.
\end{proof}

\section{Conformal blow-up near singular sets}
\label{sec:conformal-blowup-exceptional}

\subsection{AM--PI structure, weights, and packing}

Throughout this section we assume \(m\ge3\).  We use the almost-manifold convention as in Bi--Hao--He--Shi--Zhu~\cite{BiHaoHeShiZhu2026}.  Let \((X,d)\) be a complete  
metric space with a regular-singular decomposition
\[
   X=\mathcal R\sqcup\mathcal S,
\]
where \(\mathcal R\) is an \(m\)-dimensional smooth manifold equipped with a Riemannian metric \(g\), and \(\mathcal S\) is closed.
 
For an open set \(U\subset\mathcal R\), let \(d_{\ell,d}^{U}\) be the length metric on \(U\) induced by \(d\).  We assume that, for every
\(p\in\mathcal R\), there is an open neighborhood \(U\subset\mathcal R\) of \(p\) such that
\[
   d_{\ell,d}^{U}=d_g^U,
\]
where \(d_g^U\) is the intrinsic Riemannian distance in \(U\).

Let \(\omega\in C^\infty(\mathcal R)\) be positive, and define a measure \(\mu_\omega\) on \(X\) by
\[
   \mu_\omega(A)=\int_{A\cap\mathcal R}\omega\,d\mu_g ,
\]
where \(d\mu_g\) is the Riemannian volume measure of \((\mathcal R,g)\). In particular, \(\mu_\omega(\mathcal S)=0\).

\begin{definition}\label{def:admissible-weight-section3}
We say that \(\omega\) is an admissible weight, and write
\[
   \omega\in\mathcal W_{\rm adm}(X),
\]
if 
\[
   (X,d,\mathcal R,g,\mu_\omega)
\]
has AM--PI structure in the following compact-local sense.  For every compact \(K\Subset X\), there exist constants
\[
   C_K\ge1,
   \qquad
   \lambda_K\ge1,
   \qquad
   r_K>0,
\]
such that, for every \(x\in K\) and \(0<r<r_K\),
\begin{equation}\label{eq:local-ahlfors-section3}
   C_K^{-1}r^m
   \le
   \mu_\omega(B_r(x))
   \le
   C_Kr^m ,
\end{equation}
and for every Lipschitz function \(\varphi\) on \(B_{\lambda_K r}(x)\),
\begin{equation}\label{eq:local-pi-section3}
   \frac{1}{\mu_\omega(B_r(x))}
   \int_{B_r(x)}
   |\varphi-\varphi_{B_r(x),\omega}|\,d\mu_\omega
   \le
   C_Kr
   \left(
      \frac{1}{\mu_\omega(B_{\lambda_K r}(x))}
      \int_{B_{\lambda_K r}(x)\cap\mathcal R}
      |\nabla_g\varphi|^2\,d\mu_\omega
   \right)^{1/2}.
\end{equation}
Here
\[
   \varphi_{B_r(x),\omega}
   =
   \frac{1}{\mu_\omega(B_r(x))}
   \int_{B_r(x)}\varphi\,d\mu_\omega .
\]
When an admissible weight has been fixed, we say that \(X=\mathcal R\sqcup\mathcal S\) is an AM--PI space.
\end{definition}

For \(\omega\in\mathcal W_{\rm adm}(X)\), set
\[
   L_\omega u
   =
   \omega^{-1}\operatorname{div}_g(\omega\nabla_g u)
   \qquad\text{on }\mathcal R .
\]
Then, for \(u\in C^\infty(\mathcal R)\) and \(\phi\in C_c^\infty(\mathcal R)\),
\[
   \int_{\mathcal R}
   \langle\nabla_g u,\nabla_g\phi\rangle_g\,d\mu_\omega
   =
   \int_{\mathcal R}(-L_\omega u)\phi\,d\mu_\omega .
\]
\begin{lemma}\label{lem:bounded-weight-preserves-ampi-section3}
Let \(\omega\in\mathcal W_{\rm adm}(X)\), and let \( \eta\in C^\infty(\mathcal R)\), \(\eta>0 \). Assume  for every compact \(K\Subset X\), there are constants
\(0<c_{\eta,K}\le C_{\eta,K}<\infty\) such that
\[
   c_{\eta,K}\le \eta\le C_{\eta,K}
   \qquad\text{on }K\cap\mathcal R .
\]
Then \(\eta\omega\in\mathcal W_{\rm adm}(X)\).
\end{lemma}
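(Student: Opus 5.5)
The plan is to fix a compact \(K\Subset X\) and produce constants \(C_K',\lambda_K',r_K'\) for \(\mu_{\eta\omega}\) from those of \(\mu_\omega\). The basic observation is that bounds on \(\eta\) over a compact set immediately compare the two measures. To make this usable for balls centered in \(K\), first enlarge \(K\). Since \(X\) is complete and locally compact (local Ahlfors regularity of a doubling-type measure gives local compactness, or we simply work with the closed \(1\)-neighbourhood if it is compact), choose a compact \(K'\supset \overline{B_{1}(K)}\). If compactness of neighborhoods is in doubt, shrink: take \(K'=\overline{B_{\delta}(K)}\) with \(\delta\) small so that it is compact, which follows from local Ahlfors regularity on \(K\). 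On \(K'\cap\mathcal R\) we have \(c\le\eta\le C\) with \(c=c_{\eta,K'}\) and \(C=C_{\eta,K'}\). Let \(C_{K'},\lambda_{K'},r_{K'}\) be the constants for \(\omega\) on \(K'\). Set \(r_K'=\min\{r_{K'},\delta/(2\lambda_{K'})\}\), so that every ball \(B_{\lambda_{K'}r}(x)\) with \(x\in K\) and \(r<r_K'\) lies in \(K'\).

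For Ahlfors regularity, note that \(\mu_{\eta\omega}\) and \(\mu_\omega\) agree up to factors \(c,C\) on subsets of \(K'\), because \(\mathcal S\) is null for both measures. This gives
\[
c\,C_{K'}^{-1}r^m\le\mu_{\eta\omega}(B_r(x))\le C\,C_{K'}r^m .
\]

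For the Poincaré inequality, use the standard fact that replacing the mean by any constant changes the \(L^1\)-oscillation by at most a factor \(2\):
\[
\int_B|\varphi-\varphi_{B,\eta\omega}|\,d\mu_{\eta\omega}\le 2\int_B|\varphi-\varphi_{B,\omega}|\,d\mu_{\eta\omega}\le 2C\int_B|\varphi-\varphi_{B,\omega}|\,d\mu_\omega .
\]
Now apply \eqref{eq:local-pi-section3} for \(\omega\). Then convert the normalized averages back, using \(\mu_\omega(B)\le c^{-1}\mu_{\eta\omega}(B)\) on the left, and
\[
\int_{B_{\lambda r}}|\nabla\varphi|^2d\mu_\omega\le c^{-1}\int_{B_{\lambda r}}|\nabla\varphi|^2d\mu_{\eta\omega},\qquad \mu_\omega(B_{\lambda r})\ge C^{-1}\mu_{\eta\omega}(B_{\lambda r})
\]
on the right. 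This yields the inequality for \(\eta\omega\) with \(\lambda_K'=\lambda_{K'}\) and
\[
C_K'=\max\bigl\{C C_{K'}/c,\; 2C C_{K'}\,c^{-1}\sqrt{C/c}\bigr\}.
\]

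The only genuine point is the containment step, namely that balls of radius \(\lambda_{K'}r\) around points of \(K\) stay in a compact set where the bounds on \(\eta\) hold. I expect this to be the main obstacle. I would handle it by the compact-neighbourhood choice above, using the lower Ahlfors bound together with completeness (total boundedness of small closed balls, since disjoint small balls have measure bounded below and the upper bound controls the total measure), and shrinking \(r_K'\) accordingly. Everything else is bookkeeping of constants.
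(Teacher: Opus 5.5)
Your proposal is correct and follows the paper's proof. Both arguments compare $\mu_{\eta\omega}$ with $\mu_\omega$ using the bounds on $\eta$, replace the $\eta\omega$-mean by the $\omega$-mean at the cost of a factor $2$, apply \eqref{eq:local-pi-section3} for $\omega$, and compare the measures again on the enlarged ball.

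The one addition is your enlargement of $K$ to a compact $K'$ containing all balls $B_{\lambda_{K'}r}(x)$ with $x\in K$; the paper leaves this step implicit. The local compactness it needs does follow from completeness and the compact-local Ahlfors bounds, but one extra observation is required. Applying the definition to compact sets $\{x\}\cup\{y_j\}$ with $y_j\to x$ yields uniform lower Ahlfors bounds for all centers near $x$, not only for centers in $K$. That uniformity is exactly what your total-boundedness argument uses.
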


\begin{proof}
On every compact \(K\Subset X\), the measures \(\mu_{\eta\omega}=\eta\mu_\omega\) and \(\mu_\omega\) are comparable.  Hence the local Ahlfors estimates follow immediately.

For the Poincar\'e inequality, let \(B=B_r(x)\) and \(\lambda B=B_{\lambda_K r}(x)\) be a ball on which the AM--PI constants for
\(d\mu_\omega\) are fixed.  Denote averages with respect to \(d\mu_\omega\) and \(d\mu_{\eta\omega}\) by \(u_{B,\omega}\) and \(u_{B,\eta\omega}\).  By the triangle inequality,
\[
   \int_B |u-u_{B,\eta\omega}|\,d\mu_{\eta\omega}
   \le
   2\int_B |u-u_{B,\omega}|\,d\mu_{\eta\omega}
   \le
   2C_{\eta,K}\int_B |u-u_{B,\omega}|\,d\mu_\omega .
\]
Dividing by \(\mu_{\eta\omega}(B)\ge c_{\eta,K}\mu_\omega(B)\), applying the Poincar\'e inequality for \(d\mu_\omega\), and using comparability again on \(\lambda B\), we obtain
\[
   \frac{1}{\mu_{\eta\omega}(B)}
   \int_B |u-u_{B,\eta\omega}|\,d\mu_{\eta\omega}
   \le
   Cr
   \left(
      \frac{1}{\mu_{\eta\omega}(\lambda B)}
      \int_{\lambda B\cap\mathcal R}
      |\nabla_g u|^2\,d\mu_{\eta\omega}
   \right)^{1/2}.
\]
Thus \(\eta\omega\) is admissible.
\end{proof}

We now record the local packing condition used below.

\begin{definition}\label{def:local-packing-condition-section3}
Let \(Z\subset X\) be closed and let \(0\le d\le m\).  We write \(\mathsf P_d(Z)\) if, for every compact \(K\Subset X\), there exist constants \(C_K<\infty\) and \(r_K>0\) such that
\begin{equation}\label{eq:local-packing-condition-section3}
   \#\bigl(P_s\cap B_X(x,R)\bigr)
   \le
   C_K\left(\frac Rs\right)^d
\end{equation}
whenever \(x\in K\), \(0<s<R<r_K\), and \(P_s\subset Z\cap K\) is \(s\)-separated.  If \(d=m-c\), we say that \(Z\) has local packing codimension \(c\). If we want to emphasize the ambient space $X$, we write \(\mathsf P_d^X(Z)\).
\end{definition}

Set
\begin{equation}\label{eq:packing-threshold-section3}
   d_{\rm pack}(Z)
   =
   \inf\bigl\{d: \mathsf P_d(Z)\text{ holds}\bigr\},
   \qquad
   \tau_{\rm pack}(Z)
   =
   m-2-d_{\rm pack}(Z).
\end{equation}
This is the packing formulation of the Assouad dimension \cite{Assouad1983}.  Thus
\[
   \dim_A(Z)=d_{\rm pack}(Z),
   \qquad
   \operatorname{codim}_A(Z)=m-\dim_A(Z).
\]
If we want to emphasize the ambient space $X$, we also write \(\dim_A^X(Z)\) and \(\tau_{\rm pack}^X(Z)\).

\begin{lemma}\label{lem:packing-zero-capacity-section3}
If \(\mathcal S\) satisfies \(\mathsf P_{m-2}(\mathcal S)\), then for every \(\omega\in\mathcal W_{\rm adm}(X)\) and every \(K\Subset U\Subset X\) there exist \(\chi_j\in W^{1,2}_0(U)\), \(0\le\chi_j\le1\), such that \(\chi_j=1\) near \(\mathcal S\cap K\) and
\[
   \int_U |\nabla\chi_j|^2\,d\mu_\omega\longrightarrow0 .
\]
In particular, \(\operatorname{Cap}_{2,\omega}(\mathcal S\cap K,U)=0\).
\end{lemma}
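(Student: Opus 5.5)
The plan is to build the cutoffs $\chi_j$ by hand from maximal packings of $\mathcal S\cap K$ at dyadic scales. For each small scale, use $\mathsf P_{m-2}(\mathcal S)$ to count the balls, the upper Ahlfors bound \eqref{eq:local-ahlfors-section3} for $\mu_\omega$ to bound their measure, and the Lipschitz bound on each bump. A single scale gives energy only $O(1)$ in the critical codimension $2$, so a logarithmic averaging over many scales is needed to push the energy to zero.

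\textbf{Setup.} Fix $K\Subset U\Subset X$ and let $r_0>0$ be smaller than $\operatorname{dist}(K,X\setminus U)/4$ and than the constants $r_{K'}$ from Definition~\ref{def:admissible-weight-section3} and Definition~\ref{def:local-packing-condition-section3}, where $K'=\overline{N_{r_0}(K)}$ is compact. For $s=2^{-i}r_0$, let $P_s\subset\mathcal S\cap K$ be a maximal $s$-separated set. By $\mathsf P_{m-2}$, applied with a fixed $R=r_0/2$ after covering $K$ by finitely many balls of radius $R$, we get $\#P_s\le C s^{-(m-2)}$. Define
\[
\eta_s(y)=\max_{p\in P_s}\min\{1,\max\{0,\,2-d(y,p)/s\}\}.
\]
Then $\eta_s=1$ on $N_s(\mathcal S\cap K)$, $\eta_s$ is supported in $N_{2s}(\mathcal S\cap K)\subset U$, and it is $s^{-1}$-Lipschitz for $d$.

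\textbf{Logarithmic averaging.} For $N$ large, set
\[
\chi=\frac1N\sum_{i=N}^{2N-1}\eta_{2^{-i}r_0}, \qquad \chi_N=\min\{1,2\chi\}.
\]
This is Lipschitz, takes values in $[0,1]$, and equals $1$ on $N_{2^{-2N}r_0}(\mathcal S\cap K)$. The gradient estimate should come from the annular structure. Write $A_i=N_{2\cdot 2^{-i}r_0}\setminus N_{2^{-i}r_0}$. On $\mathcal R$, $|\nabla_g\eta_s|\le s^{-1}$ a.e., supported in $A_i$, by the locality assumption $d^U_{\ell,d}=d_g^U$, which makes $d$-Lipschitz functions $g$-Lipschitz with the same local constant. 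A point lies in only boundedly many of the $A_i$, so
\[
|\nabla\chi|^2\le \frac{C}{N^2}\sum_i s_i^{-2}\mathbf 1_{A_i}.
\]
The measure bound is $\mu_\omega(N_{2s}(\mathcal S\cap K))\le \#P_s\cdot C s^m\le C s^2$, using Ahlfors regularity and the fact that $\mu_\omega(\mathcal S)=0$. Each scale therefore contributes $O(1)$, and
\[
\int_U|\nabla\chi_N|^2\,d\mu_\omega\le \frac{C}{N^2}\cdot N=\frac{C}{N}\to 0.
\]
This is the standard log-cutoff for codimension $2$.

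\textbf{Membership in $W^{1,2}_0(U)$ and capacity.} Since $\chi_N$ is Lipschitz with compact support in $U$, it belongs to $W^{1,2}_0(U)$ in the AM--PI sense. If the space is defined as the closure of Lipschitz functions with compact support in $U$, this is immediate. Otherwise, approximate by smooth functions on $\mathcal R$ and use $\mu_\omega(\mathcal S)=0$. The capacity statement then follows directly from the definition.

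\textbf{Main obstacle.} The main difficulty is the bookkeeping that relates the metric $d$ on $X$ to the gradient on $\mathcal R$. One must justify that $|\nabla_g\eta|\le \operatorname{Lip}_d\eta$ on $\mathcal R$ via the local length-metric identification, and make sure the overlap of annuli across the averaged scales is uniformly bounded. The counting itself is routine.
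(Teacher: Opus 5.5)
Your strategy is the paper's. You use a logarithmic cutoff spread over about $N$ dyadic scales, the tube bound $\mu_\omega(N_\rho(\mathcal S\cap K))\le C\rho^2$ from $\mathsf P_{m-2}$ and the upper Ahlfors bound (this is \eqref{eq:packing-tube-volume-section3}), and a dyadic summation that gives energy $O(1/N)$. Your average of ramps is a piecewise-linear discretization of the paper's $\chi_\varepsilon=\log(\rho_0/r_{\mathcal S})/l_\varepsilon$.

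One step, however, is false for the bumps you actually defined. You set $\eta_s=\psi\bigl(\min_{p\in P_s}d(\cdot,p)/s\bigr)$ with $\psi(u)=\min\{1,\max\{0,2-u\}\}$. Its gradient lives on the ramp $\{s<\min_{p\in P_s}d(\cdot,p)<2s\}$, and this set is \emph{not} contained in $A_i$. A maximal $s$-separated set can have neighbouring points almost $2s$ apart: take $\mathcal S\cap K$ a segment in $\mathbb R^m$ and $P_s$ spaced $2s-\delta$ apart. Then points at distance about $\sqrt{s\delta}\ll s$ from $\mathcal S\cap K$ lie on the ramp. So the bounded-overlap bookkeeping you flagged as the main obstacle is exactly what fails: a point can lie on the ramps of all $N$ scales. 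For the same reason, $\eta_s=1$ on $N_s(\mathcal S\cap K)$ is false. You only have $\eta_s>1-t/s$ on $N_t(\mathcal S\cap K)$. Thanks to your truncation $\min\{1,2\chi\}$, this still gives $\chi_N=1$ on $N_{2^{-2N}r_0}(\mathcal S\cap K)$.

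The repair is short, and either of two fixes works.

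\begin{itemize}
\item Define $\eta_s=\psi\bigl(\operatorname{dist}(\cdot,\mathcal S\cap K)/s\bigr)$ and use $P_s$ only in the volume estimate. Then $\nabla\eta_{s_i}$ is supported in $\{s_i\le\operatorname{dist}(\cdot,\mathcal S\cap K)\le 2s_i\}$, and each point lies in at most two of these sets, so your computation goes through as written.
\item Keep your $\eta_s$ and note that $\nabla\eta_s(y)\ne0$ forces $\operatorname{dist}(y,\mathcal S\cap K)\le\min_p d(y,p)<2s$. A geometric sum over the scales $s_i\ge \operatorname{dist}(y,\mathcal S\cap K)/2$ then gives $|\nabla\chi_N(y)|\le C\bigl(N\operatorname{dist}(y,\mathcal S\cap K)\bigr)^{-1}$. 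Moreover, $\nabla\chi_N=0$ a.e.\ on $N_{2^{-2N}r_0}(\mathcal S\cap K)$, and $\chi_N=0$ outside $N_{2^{1-N}r_0}(\mathcal S\cap K)$. Splitting into dyadic shells and using the tube bound yields $\int_U|\nabla\chi_N|^2\,d\mu_\omega\le CN^{-2}\cdot N$. This is precisely the paper's gradient bound $|\nabla\chi_\varepsilon|\le(l_\varepsilon r_{\mathcal S})^{-1}$ followed by its dyadic sum.
\end{itemize}

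With either fix, the rest of your argument is fine. That includes the counting, the comparison of $d$-Lipschitz constants with $g$-gradients via $d\le d^U_{\ell,d}=d^U_g$, and membership in $W^{1,2}_0(U)$.
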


\begin{proof}
Fix \(K\Subset U\).  If \(\mathcal S\cap K=\varnothing\), there is nothing to prove. Choose \(\rho_0>0\) so that the \(2\rho_0\)-neighborhood of \(K\) is contained in \(U\) and the local Ahlfors and packing estimates apply there.  Then
\begin{equation}\label{eq:packing-tube-volume-section3}
   \mu_\omega\bigl(\{x\in U:\operatorname{dist}(x,\mathcal S\cap K)<\rho\}\bigr)
   \le C\rho^2,
   \qquad 0<\rho<\rho_0 .
\end{equation}

For \(0<\varepsilon<\rho_0/4\), set
\[
   l_\varepsilon=\log(\rho_0/\varepsilon),
   \qquad
   r_\mathcal S(x)=\operatorname{dist}(x,\mathcal S\cap K),
\]
and define
\[
   \chi_\varepsilon(x)
   =
   \begin{cases}
      1, & r_\mathcal S(x)\le\varepsilon,\\ \frac{\log(\rho_0/r_\mathcal S(x))}{l_\varepsilon},
         & \varepsilon<r_\mathcal S(x)<\rho_0,\\
      0, & r_\mathcal S(x)\ge\rho_0 .
   \end{cases}
\]
Then \(\chi_\varepsilon\in W^{1,2}_0(U)\), \(\chi_\varepsilon=1\) near \(\mathcal S\cap K\), and, on \(\mathcal R\),
\[
   |\nabla\chi_\varepsilon|
   \le
   \frac{1}{l_\varepsilon r_\mathcal S}
   \qquad\text{a.e. on }\{\varepsilon<r_\mathcal S<\rho_0\}.
\]
Let \(\rho_j=2^{-j}\rho_0\) and \(J_\varepsilon = \left\lceil \log_2\frac{\rho_0}{\varepsilon}\right\rceil\). Then
\[
   \rho_{J_\varepsilon}\le\varepsilon<\rho_{J_\varepsilon-1},
   \qquad
   J_\varepsilon\le C l_\varepsilon .
\]
Using \eqref{eq:packing-tube-volume-section3} on the dyadic annuli
\[
   \rho_{j+1}<r_\mathcal S\le\rho_j,
   \qquad
   j=0,\ldots,J_\varepsilon-1,
\]
gives
\[
   \int_U |\nabla\chi_\varepsilon|^2\,d\mu_\omega
   \le
   \frac{C}{l_\varepsilon^2}
   \sum_{j=0}^{J_\varepsilon-1}
   \rho_{j+1}^{-2}
   \mu_\omega\bigl(\{r_\mathcal S<\rho_j\}\bigr)  
   \le
   \frac{C}{l_\varepsilon}
   \longrightarrow0 .
\]
\end{proof}

\subsection{Conformal blow-up potentials}

Throughout this subsection we fix an admissible weight \(\omega\in\mathcal W_{\rm adm}(X)\) and work on the almost-manifold
\[
   (X,d,\mathcal R,g,\mu_\omega),
\]
which has AM--PI structure in the sense of Definition~\ref{def:admissible-weight-section3}. Throughout the arguments below, we assume \(m>2\). 

For \(\delta\ge0\), write
\[
   P_{\omega,\delta}:=-L_\omega+\delta .
\]

\begin{lemma}\label{Lem:Green representation}
Let \(U\Subset X\) be open, and assume \(\mathcal S\cap U\) has zero \(2\)-capacity. There exists \(\rho_U>0\) such that, for every \(\delta\ge 0\) and every ball
\[
\Omega=B_r(x_0)\Subset U,\qquad 0<r<\rho_U,
\]
the operator \(P_{\omega,\delta}\) admits a nonnegative symmetric Dirichlet Green function \(G_{\Omega,\delta}^\omega\).

Moreover, if \(f\in\operatorname{Lip}(\Omega)\) and \(u\in W_0^{1,2}(\Omega)\) is the weak solution of 
\[
P_{\omega,\delta}u=f\qquad\text{in }\Omega,
\]
then
\[
u(p)=\int_\Omega G_{\Omega,\delta}^\omega(p,y)f(y)d\mu_\omega(y),\qquad p\in\Omega.
\]
\end{lemma}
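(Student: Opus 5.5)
The plan is to build the Green function from Dirichlet form theory on the weighted space, using the AM--PI structure and the zero capacity of the singular set. Set \(\mathcal H=W^{1,2}_0(\Omega)\). Because \(\operatorname{Cap}_{2,\omega}(\mathcal S\cap U)=0\), this space can be taken as the closure of \(C_c^\infty(\Omega\cap\mathcal R)\) in the norm \(\bigl(\int|\nabla\varphi|^2+\varphi^2\,d\mu_\omega\bigr)^{1/2}\). The cutoffs \(\chi_j\) of Lemma~\ref{lem:packing-zero-capacity-section3}, or directly the capacity-zero hypothesis, show that functions vanishing near \(\mathcal S\) are dense.

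On \(\mathcal H\) consider the form
\[
\mathcal E_\delta(u,v)=\int_\Omega\langle\nabla u,\nabla v\rangle\,d\mu_\omega+\delta\int_\Omega uv\,d\mu_\omega .
\]
The first step is coercivity of \(\mathcal E_\delta\) uniformly in \(\delta\ge0\). Choose \(\rho_U\) smaller than the constants \(r_K\) for \(K=\overline U\), and small enough that \(\mu_\omega(X\setminus B_{r}(x_0))\) is not exhausted. The local Ahlfors regularity \eqref{eq:local-ahlfors-section3} and the Poincaré inequality \eqref{eq:local-pi-section3} then give a Sobolev-type inequality. The standard Hajłasz--Koskela argument yields
\[
\|u\|_{L^{2^*}}\le C\|\nabla u\|_{L^2},\qquad 2^*=\frac{2m}{m-2},
\]
for \(u\in\mathcal H\), since \(u\) vanishes on the complement of \(\Omega\). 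In particular the Dirichlet Poincaré inequality holds, so Lax--Milgram gives a bounded, symmetric, positive solution operator \(T_\delta:L^2\to\mathcal H\).

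The second step is the Green function itself. Moser iteration, which is available in PI spaces (Saloff-Coste, Sturm), gives local boundedness, the Harnack inequality and Hölder continuity of weak solutions. Following the Grüter--Widman construction, one defines
\[
G^\rho(\cdot,y)=T_\delta\!\left(\mu_\omega(B_\rho(y))^{-1}\mathbf 1_{B_\rho(y)}\right).
\]
The Sobolev inequality gives uniform weak-\(L^{m/(m-2)}\) bounds and gradient bounds in \(L^{q}\) for \(q<m/(m-1)\). Passing to the limit \(\rho\to0\) produces \(G_{\Omega,\delta}^\omega(\cdot,y)\). Its nonnegativity follows from the weak maximum principle, obtained by testing with \(u^-\). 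Symmetry follows from the symmetry of \(\mathcal E_\delta\), comparing \(G^\rho(x,y)\) with \(G^{\rho'}(y,x)\) in the double limit.

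The third step is the representation formula. For Lipschitz \(f\), the solution \(u=T_\delta f\) satisfies
\[
\int u\,\mu_\omega(B_\rho(p))^{-1}\mathbf 1_{B_\rho(p)}\,d\mu_\omega=\int G^\rho(\cdot,p)\,f\,d\mu_\omega
\]
by symmetry of \(T_\delta\). Let \(\rho\to0\). Continuity of \(u\) (De Giorgi--Nash--Moser) turns the left-hand side into \(u(p)\), and the \(L^1\) convergence of \(G^\rho\) turns the right-hand side into \(\int_\Omega G_{\Omega,\delta}^\omega(p,y)f(y)\,d\mu_\omega(y)\). If \(p\in\mathcal S\), we interpret \(u(p)\) through the continuous representative.

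The main obstacle is verifying that the PI-space elliptic theory applies. The form is defined only on \(\mathcal R\), while the Poincaré inequality is stated for Lipschitz functions on balls of \(X\). Zero capacity is what makes the relevant class of Lipschitz functions dense in \(W^{1,2}\), so the form is regular and strongly local on \(X\). The smallness of \(\rho_U\) is needed so that the Sobolev constants are uniform on all the balls involved.
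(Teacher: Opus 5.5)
Your proposal is correct and follows essentially the same route as the paper: Lax--Milgram for the form $\mathcal E_\delta$ on $W_0^{1,2}(\Omega)$, approximate Green functions obtained by applying the resolvent to normalized bumps at the pole, truncation plus the Sobolev inequality for uniform $W^{1,s}$ bounds with $s<m/(m-1)$, a weak limit, the maximum principle for nonnegativity, and self-adjointness together with De Giorgi--Nash--Moser continuity for symmetry and for the representation formula. The differences are cosmetic. You use normalized indicators where the paper uses Lipschitz bumps $\rho_k$, and you prove symmetry before the representation formula. Your ordering is forced by your pole convention $G^\rho(\cdot,y)$, since symmetry is what turns $\int G(y,p)f(y)\,d\mu_\omega$ into $\int G(p,y)f(y)\,d\mu_\omega$. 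The paper instead takes the pole in the first slot, gets the representation formula directly, and then derives symmetry from it.
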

\begin{proof}
Since \(\mathcal S\cap U\) has zero \(2\)-capacity, the AM--PI space agrees locally with the usual PI-space.  Hence the local theory on PI spaces applies here; see textbooks in this direction, for example, by Bj\"orn--Bj\"orn~\cite{BjornBjorn2011} and by Heinonen--Koskela--Shanmugalingam-Tyson \cite{HKST2015}. Choose \(\rho_U>0\) so that the local Sobolev and Poincar\'e inequalities and the De Giorgi--Nash--Moser estimates hold uniformly at scales below \(\rho_U\) in \(U\).

Consider the Dirichlet form
\[
\mathcal E_{\Omega,\delta}(v,\varphi)=\int_{\Omega\cap\mathcal R}\langle\nabla_gv,\nabla_g\varphi\rangle_gd\mu_\omega+\delta\int_\Omega v\varphi d\mu_\omega
\]
on \(W_0^{1,2}(\Omega)\). By the Lax--Milgram theorem, for every \(h\in L^2(\Omega)\) there is a unique \(R_{\Omega,\delta}h\in W_0^{1,2}(\Omega)\) satisfying
\[
\mathcal E_{\Omega,\delta}(R_{\Omega,\delta}h,\varphi)=\int_\Omega h\varphi d\mu_\omega,\qquad \varphi\in W_0^{1,2}(\Omega).
\]
The operator \(R_{\Omega,\delta}\) is self-adjoint on \(L^2(\Omega)\).

Fix \(p\in\Omega\). Choose nonnegative functions \(\rho_k\in\operatorname{Lip}_c(\Omega)\) such that 
\[
\operatorname{supp}\rho_k\subset B_{\varepsilon_k}(p),\qquad\int_\Omega\rho_kd\mu_\omega=1,\qquad\varepsilon_k\longrightarrow 0,
\]
and set \(G_k=R_{\Omega,\delta}\rho_k\). By the maximum principle, \(G_k\ge 0\).

For \(t>0\), let \(T_t(G_k)=\min\{G_k,t\}\). Then 
\[
\int_{\Omega\cap\mathcal R}|\nabla_gT_t(G_k)|^2d\mu_\omega+\delta\int_\Omega G_kT_t(G_k)d\mu_\omega=\int_\Omega\rho_kT_t(G_k)d\mu_\omega\le t.
\]
In particular, \(\int_{\Omega\cap\mathcal R}|\nabla_gT_t(G_k)|^2d\mu_\omega\le t\). By the Sobolev inequality,
\[
\mu_\omega(\{G_k\ge t\})\le Ct^{-\frac{m}{m-2}}.
\]
Thus, for \(t,\lambda>0\),
\[
\begin{aligned}
    \mu_\omega(\{|\nabla_gG_k|\ge\lambda\})\le&\mu_\omega(\{G_k\ge t\})+\lambda^{-2}\int_{\{G_k<t\}\cap\mathcal R}|\nabla_gG_k|^2d\mu_\omega\\
    \le&Ct^{-\frac{m}{m-2}}+\lambda^{-2}t.
\end{aligned}
\]
Taking \(t=\lambda^{\frac{m-2}{m-1}}\), we obtain
\[
\mu_\omega(\{|\nabla_gG_k|\ge\lambda\})\le C\lambda^{-\frac{m}{m-1}}.
\]
It follows that
\[
\|G_k\|_{W^{1,s}(\Omega)}\le C_s,\qquad 1<s<\frac{m}{m-1}.
\]
After passing to a subsequence, there is a nonnegative function
\[
G_{\Omega,\delta}^\omega(p,\cdot)\in W_0^{1,s}(\Omega),\qquad 1<s<\frac{m}{m-1},
\]
such that
\[
G_k\rightharpoonup G_{\Omega,\delta}^\omega(p,\cdot)\qquad\text{weakly in }W_0^{1,s}(\Omega).
\]
On every compact subset of \(\Omega\setminus\{p\}\), the functions \(G_k\) are \(P_{\omega,\delta}\)-harmonic for all sufficiently large \(k\). The convergence is therefore locally uniform on \(\Omega\setminus\{p\}\).

Passing to the limit in the weak equation gives
\[
\int_{\Omega\cap\mathcal R}\left\langle\nabla_yG_{\Omega,\delta}^\omega(p,y),\nabla_y\varphi(y)\right\rangle_g d\mu_\omega(y)+\delta\int_\Omega G_{\Omega,\delta}^\omega(p,y)\varphi(y)d\mu_\omega(y)=\varphi(p)
\]
for every \(\varphi\in\operatorname{Lip}_c(\Omega)\). Thus \(G_{\Omega,\delta}^\omega(p,\cdot)\) is a Dirichlet Green function with pole at \(p\).

Let \(f\in\operatorname{Lip}(\Omega)\) and \(u=R_{\Omega,\delta}f\). The De Giorgi--Nash--Moser estimate gives a continuous representative of \(u\). By self-adjointness,
\[
\begin{aligned}
    \int_\Omega G_{\Omega,\delta}^\omega(p,y)f(y)d\mu_\omega(y)=&\lim_{k\to\infty}\int_\Omega G_k(y)f(y)d\mu_\omega(y)\\
    =&\lim_{k\to\infty}\int_\Omega\rho_k(y)u(y)d\mu_\omega(y)=u(p).
\end{aligned}
\]
Finally, let \(p\ne y\) and approximate \(\delta_y\) by functions \(\rho_{k,y}\) as above. Then
\[
R_{\Omega,\delta}\rho_{k,y}(p)=\int_\Omega G_{\Omega,\delta}^\omega(p,z)\rho_{k,y}(z)d\mu_\omega(z).
\]
The right-hand side converges to \(G_{\Omega,\delta}^\omega(p,y)\), while the left-hand side converges to \(G_{\Omega,\delta}^\omega(y,p)\). Hence \(G_{\Omega,\delta}^\omega\) is symmetric.
\end{proof}

\begin{lemma}\label{lem:local-green-estimate-section3}
Let \(U\) be as in Lemma~\ref{Lem:Green representation}, and fix \(\delta\ge 0\). There exist \(r_U>0\), \(C_U\ge 1\) and \(\Lambda_U>10\) such that if \(\Omega=B_{\Lambda_UR}(x_0)\Subset U\) and \(0<R<r_U\), then
\begin{equation}\label{eq:local-green-estimate-section3}
C_U^{-1}d(p,y)^{2-m}\le G_{\Omega,\delta}^\omega(p,y)\le C_Ud(p,y)^{2-m}
\end{equation}
for \(p\in B_R(x_0)\), \(y\in B_R(x_0)\cap\mathcal R\), and \(0<d(p,y)<2R\).
\end{lemma}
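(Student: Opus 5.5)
The plan is to reduce to the classical Grigor'yan--Saloff-Coste / Sturm two-sided Green function estimates on PI spaces. The key inputs are local Ahlfors regularity with exponent $m>2$ and the local Poincar\'e inequality. Together they give volume doubling, Sobolev inequalities and the elliptic Harnack inequality for $P_{\omega,\delta}$ at scales below some $r_U$ in $U$. The zero-capacity of $\mathcal S\cap U$ (from Lemma~\ref{lem:packing-zero-capacity-section3} or the hypothesis of Lemma~\ref{Lem:Green representation}) lets us treat the Dirichlet form as the standard Cheeger-type form on the PI space. The zero-order term $\delta$ is harmless at small scales: on balls of radius $\le\Lambda_U r_U$ with $\delta(\Lambda_U r_U)^2\ll1$ it acts as a bounded perturbation. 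So I would first prove the estimate for $\delta=0$ and then compare.

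\textbf{Upper bound.} Fix $p\in B_R(x_0)$ and set $\rho=d(p,y)<2R$. The function $v=G^\omega_{\Omega,\delta}(p,\cdot)$ is nonnegative and $P_{\omega,\delta}$-harmonic on $B_{\rho/2}(y)$. By Moser's mean value inequality,
\[ v(y)\le C\,\frac{1}{\mu_\omega(B_{\rho/2}(y))}\int_{B_{\rho/2}(y)}v\,d\mu_\omega. \]
The average is bounded using the weak-type estimate from the proof of Lemma~\ref{Lem:Green representation}, localized at scale $\rho$. The truncation argument gives $\mathcal E(\min\{v,t\})\le t$. Combining this with the Sobolev inequality on $\Omega$ gives $\mu_\omega(\{v\ge t\})\le Ct^{-m/(m-2)}$. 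Plugging in $\mu_\omega(B_{\rho/2})\sim\rho^m$ and $t\sim\rho^{2-m}$ yields $v(y)\le C\rho^{2-m}$. The Sobolev constant on $\Omega$ depends only on the Ahlfors and PI constants, which is why we need $\Lambda_U R<r_U$ with uniform constants. The symmetry from Lemma~\ref{Lem:Green representation} lets us place the singular point on either side.

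\textbf{Lower bound.} This step needs the ball $\Omega=B_{\Lambda_U R}(x_0)$ to be much larger than $B_R(x_0)$, so that the Dirichlet boundary is far away. I would use capacity. Let $A=B_{2\rho}(p)\setminus B_\rho(p)$. By a standard capacitary argument, $\min_{\partial B_\rho(p)}G\le C\,\mathrm{Cap}(\overline{B_\rho(p)},\Omega)^{-1}$ and $\max_{\partial B_\rho(p)}G\ge c\,\mathrm{Cap}(\overline{B_\rho(p)},\Omega)^{-1}$. This holds because $G(p,\cdot)$ restricted to level sets gives the capacitary potential: $\int\nabla G\cdot\nabla\varphi=\varphi(p)$, with $\varphi$ chosen as a truncation of $G$. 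In Ahlfors $m$-regular PI spaces with $m>2$, for $\rho\le 4R$ and $\Omega\supset B_{\Lambda_U R}(p)$, we have $\mathrm{Cap}(B_\rho,\Omega)\sim\rho^{m-2}$. The lower capacity bound comes from Poincar\'e/Sobolev. The upper bound comes from the test function $\min\{1,(2-d(p,\cdot)/\rho)_+\}$; the Ahlfors bounds control the gradient integral. Harnack's inequality on the annulus $A$ (connected at scale $\rho$ by chain arguments in PI spaces, after enlarging $\lambda_K$) compares max and min, giving $G(p,y)\ge c\rho^{2-m}$ for $y$ with $d(p,y)=\rho$. The choice of $\Lambda_U>10$ ensures the annuli avoid $\partial\Omega$, so Harnack chains stay interior. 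The $\delta$-term only changes constants, since $\delta\rho^2$ is small; more precisely, $G_{\Omega,\delta}\ge G_{\Omega,0}-\delta\int G_{\Omega,0}G_{\Omega,\delta}$, and the resolvent error is $O(\delta R^2)\rho^{2-m}$.

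\textbf{Main obstacle.} I expect the main difficulty to be the lower bound: establishing a Harnack chain in annuli for a merely locally PI space, and handling the singular set, where harmonicity is only known weakly across $\mathcal S$. The zero-capacity hypothesis is precisely what makes $G$ a genuine weak solution across $\mathcal S$, so De Giorgi--Nash--Moser applies on whole balls. The restriction $y\in\mathcal R$ then just gives pointwise values via continuity.
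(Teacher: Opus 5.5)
Your argument is correct. For the lower bound and for the passage to \(\delta>0\) it is the paper's argument. The paper also uses:
\begin{itemize}
\item the level-set identity \(t\operatorname{Cap}_\omega(\{G\ge t\},\Omega)=1\), which it quotes from Bj\"orn--Bj\"orn--Lehrb\"ack rather than deriving by truncation;
\item the two-sided ball capacity bound \(\operatorname{Cap}_\omega(\overline{B_r(p)},\Omega)\sim r^{m-2}\);
\item the maximum principle outside \(B_{2\rho}(p)\);
\item Harnack on the annulus \(B_{2\rho}(p)\setminus\overline{B_{\rho/2}(p)}\).
\end{itemize}
It then treats \(\delta>0\) by \(G_{\Omega,\delta}\le G_{\Omega,0}\) and the resolvent identity, with error \(C\delta R^2d(p,y)^{2-m}\), shrinking \(r_U\).

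Where you genuinely differ is the upper bound. The paper gets it from the same two ingredients. The minimum principle on \(B_{\rho/2}(p)\setminus\{p\}\) gives \(B_{\rho/2}(p)\subset\{G\ge m_\rho\}\), hence \(1/m_\rho\ge\operatorname{Cap}_\omega(B_{\rho/2}(p),\Omega)\ge c\rho^{m-2}\). Annular Harnack then gives \(G(y)\le H_U m_\rho\).

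You instead follow the Gr\"uter--Widman route:
\begin{itemize}
\item the weak-type bound \(\mu_\omega(\{G\ge t\})\le Ct^{-m/(m-2)}\), with constant independent of \(R\) because exact \(m\)-Ahlfors regularity makes the Sobolev inequality on \(W^{1,2}_0(\Omega)\) scale-invariant;
\item the layer-cake estimate \(\int_{B_{\rho/2}(y)}G\,d\mu_\omega\le Ct\rho^m+Ct^{-2/(m-2)}\lesssim\rho^2\) at \(t=\rho^{2-m}\);
\item the mean-value inequality on the pole-free ball \(B_{\rho/2}(y)\).
\end{itemize}
This gives an upper bound that needs only an interior mean-value inequality, with no annular Harnack. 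It also applies directly to \(G_{\Omega,\delta}\), which is a nonnegative \(L_\omega\)-subsolution away from \(p\); the truncation estimate survives because the \(\delta\)-term has a favorable sign. The paper's version is more economical, since both bounds come out of one mechanism.

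One detail you leave implicit. The resolvent step needs \(G_{\Omega,0}(q,z)\le Cd(q,z)^{2-m}\) for all \(z\in\Omega\), not only for \(d(q,z)<2R\). Near \(\partial\Omega\) your mean-value ball may leave \(\Omega\). So either extend \(G\) by zero as a subsolution, or do as the paper does: use the maximum principle on \(\Omega\setminus\overline{B_R(q)}\) together with \(d(q,z)\le(\Lambda_U+1)R\).
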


\begin{proof}
Let \(\rho_U>0\) be as in Lemma~\ref{Lem:Green representation}. Choose \(r_U>0\) and \(\Lambda_U>10\), with \(\Lambda_Ur_U<\rho_U\), so that, whenever
\[
   0<r<r_U,
   \qquad
   B_{\Lambda_U r}(z)\Subset U,
\]
every nonnegative \(L_\omega\)-harmonic function on \(B_{\Lambda_U r}(z)\) satisfies the Harnack inequality with constant \(H_U\) on \(B_r(z)\) and \(B_{2r}(z)\setminus \overline{B_r(z)}\).

We first treat \(\delta=0\). For \(B_{2r}(p)\Subset\Omega\) and \(0<r<r_U\), we have the local capacity estimate
\begin{equation}\label{eq:ball-capacity-section3}
   C_U^{-1}r^{m-2}
   \le
   \operatorname{Cap}_\omega(\overline{B_r(p)},\Omega)
   \le
   C_Ur^{m-2}.
\end{equation}
Indeed, the upper bound follows from a Lipschitz cutoff in \(B_{2r}(p)\), while the lower bound follows from the local Sobolev inequality and the Ahlfors lower bound.

Let \(G=G_{\Omega,0}^\omega(p,\cdot)\). Moreover, we have
\begin{equation}\label{eq:green-level-identity-section3}
   t\,\operatorname{Cap}_\omega(\{G\ge t\},\Omega)=1,
   \qquad t>0 .
\end{equation}
This is the \(p=2\) case of the superlevel-set identity in \cite[Lemma~3.4 and Theorem~9.3]{BjornBjornLehrback2020}. 
Let \(y\in B_R(x_0)\cap\mathcal R\), \(y\ne p\), and set
\[
   \rho=d(p,y),
   \qquad
   A_\rho(p)=B_{2\rho}(p)\setminus\overline{B_{\rho/2}(p)} .
\]
Write
\[
   M_\rho=\sup_{A_\rho(p)}G,
   \qquad
   m_\rho=\inf_{A_\rho(p)}G .
\]
Since \(B_{4\rho}(p)\subset B_{9R}(x_0)\Subset\Omega\), the Harnack inequality gives \(M_\rho\le H_Um_\rho\). The maximum principle on \(\Omega\setminus B_{2\rho}(p)\) gives \(\{G>M_\rho\}\subset B_{2\rho}(p)\). Using \eqref{eq:ball-capacity-section3} and \eqref{eq:green-level-identity-section3},
\[
   \frac1{M_\rho}
   =
   \operatorname{Cap}_\omega(\{G>M_\rho\},\Omega)
   \le
   \operatorname{Cap}_\omega(B_{2\rho}(p),\Omega)
   \le
   C_U\rho^{m-2}.
\]
Thus \(M_\rho\ge c_U\rho^{2-m}\), and the Harnack inequality gives
\[
   G(y)\ge m_\rho\ge H_U^{-1}M_\rho\ge c_U\rho^{2-m}.
\]

For the upper bound, the minimum principle on \(B_{\rho/2}(p)\setminus\{p\}\) gives
\[
   B_{\rho/2}(p)\subset\{G\ge m_\rho\}
\]
up to a set of zero capacity. Hence
\[
   \frac1{m_\rho}
   =
   \operatorname{Cap}_\omega(\{G\ge m_\rho\},\Omega)
   \ge
   \operatorname{Cap}_\omega(B_{\rho/2}(p),\Omega)
   \ge
   c_U\rho^{m-2}.
\]
Therefore \(m_\rho\le C_U\rho^{2-m}\), and the Harnack inequality gives
\[
   G(y)\le M_\rho\le H_Um_\rho\le C_U\rho^{2-m}.
\]
This proves \eqref{eq:local-green-estimate-section3} for \(\delta=0\).

The same argument gives
\[
   G_{\Omega,0}^\omega(q,z)
   \le
   C_Ud(q,z)^{2-m}
\]
for \(q\in B_R(x_0)\) and \(z\in\Omega\cap\mathcal R\setminus\{q\}\). Indeed, the preceding estimate applies when \(d(q,z)<2R\), while the maximum principle on \(\Omega\setminus\overline{B_R(q)}\) gives \(G_{\Omega,0}^\omega(q,z)\le C_UR^{2-m}\) when \(d(q,z)\ge R\). Since \(d(q,z)\le(\Lambda_U+1)R\), the claim follows after enlarging \(C_U\).

Next we deal with \(\delta>0\). By the maximum principle \(0<G_{\Omega,\delta}^\omega(p,y)\le G_{\Omega,0}^\omega(p,y)\), so the upper bound follows from the case \(\delta=0\).

For the lower bound, the resolvent identity for the Green functions constructed in Lemma~\ref{Lem:Green representation} gives
\[
   G_{\Omega,\delta}^\omega(p,y)
   =
   G_{\Omega,0}^\omega(p,y)
   -
   \delta
   \int_\Omega
      G_{\Omega,0}^\omega(p,z)
      G_{\Omega,\delta}^\omega(z,y)\,d\mu_\omega(z).
\]
Since \(G_{\Omega,\delta}^\omega\le G_{\Omega,0}^\omega\), symmetry and the preceding upper bound give
\[
   G_{\Omega,\delta}^\omega(p,y)
   \ge
   G_{\Omega,0}^\omega(p,y)
   -
   C_U\delta
   \int_\Omega
      d(p,z)^{2-m}d(z,y)^{2-m}\,d\mu_\omega(z).
\]

Decompose
\[
   \Omega=E_p\cup E_y,
   \qquad
   E_p=\{z\in\Omega:\ d(p,z)\le d(y,z)\},
   \qquad
   E_y=\Omega\setminus E_p .
\]
On \(E_p\), the triangle inequality gives \(d(y,z)\ge \frac{\rho}{2}\) and on \(E_y\) it gives \(d(p,z)\ge \frac{\rho}{2}\). Hence
\[
\begin{aligned}
&\int_\Omega
   d(p,z)^{2-m}d(z,y)^{2-m}\,d\mu_\omega(z)  \\
&\qquad\le
C\rho^{2-m}
\left(
   \int_\Omega d(p,z)^{2-m}\,d\mu_\omega(z)
   +
   \int_\Omega d(y,z)^{2-m}\,d\mu_\omega(z)
\right).
\end{aligned}
\]
Note that if \(q\in B_R(x_0)\), then
\[
   \Omega\subset B_{(\Lambda_U+1)R}(q).
\]
By the Ahlfors upper bound and a dyadic decomposition around \(q\),
\[
\begin{aligned}
   \int_\Omega d(q,z)^{2-m}\,d\mu_\omega(z)
   &\le
   \sum_{j=0}^\infty
   (2^{-j-1}(\Lambda_U+1)R)^{2-m}
   \mu_\omega\bigl(B_{2^{-j}(\Lambda_U+1)R}(q)\bigr) \\
   &\le
   C_U
   \sum_{j=0}^\infty
   (2^{-j}R)^2
   \le
   C_U R^2 .
\end{aligned}
\]
Applying this with \(q=p\) and \(q=y\) gives
\[
   \int_\Omega
      d(p,z)^{2-m}d(z,y)^{2-m}\,d\mu_\omega(z)
   \le
   C_U R^2 d(p,y)^{2-m}.
\]

Hence the \(\delta=0\) case gives
\[
   G_{\Omega,\delta}^\omega(p,y)
   \ge
   C_U^{-1}d(p,y)^{2-m}
   -
   C_U\delta R^2d(p,y)^{2-m}.
\]
After decreasing \(r_U\) so that \(C_U\delta r_U^2\le\frac12C_U^{-1}\) and using \(R<r_U\), we obtain \(G_{\Omega,\delta}^\omega(p,y)\ge c_Ud(p,y)^{2-m}\). This proves the lemma.
\end{proof}

In the following, we construct global Green functions for $P_{\omega,\delta}$ by exhaustion.

\begin{proposition}\label{prop:global-resolvent-green-section3}
Assume that $\mathcal S\cap U$ has zero $2$-capacity for any open $U\Subset X$. For any \(\delta>0\), there exists a positive Green function
\[
   G_\delta^\omega(p,y)
\]
for \(P_{\omega,\delta}\) on \(X\).  For every \(p\in X\), we have \[P_{\omega,\delta}G_\delta^\omega(p,\cdot)=\boldsymbol\delta_p\] in the weak sense, and \(G_\delta^\omega(p,\cdot)\) is smooth on \(\mathcal R\setminus\{p\}\).
Moreover, for every compact \(K\Subset X\), there exist constants \(r_K>0\) and \(C_K\ge1\) such that
\begin{equation}\label{eq:global-green-local-asymp-section3}
   C_K^{-1}d(p,y)^{2-m}
   \le
   G_\delta^\omega(p,y)
   \le
   C_Kd(p,y)^{2-m}
\end{equation}
whenever
\[
   p\in K,\qquad y\in X,\qquad 0<d(p,y)<r_K .
\]
Finally, if \(K\Subset W\Subset X\), then there is a constant \(C_{K,W}<\infty\) such that
\begin{equation}\label{eq:global-green-exterior-bound-section3}
   G_\delta^\omega(p,y)\le C_{K,W}
\end{equation}
for all $p\in K$ and $y\in X\setminus W$.
\end{proposition}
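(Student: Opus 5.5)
We construct \(G_\delta^\omega\) by exhaustion with monotone limits, taking the local two-sided bounds from Lemma~\ref{lem:local-green-estimate-section3} and the far-field bound from the zeroth-order term \(\delta>0\). Fix a base point \(o\) and an exhaustion of \(X\) by bounded open sets \(\Omega_j=B_{R_j}(o)\), \(R_j\uparrow\infty\). Since \(X\) is complete and locally Ahlfors regular, closed balls are compact, so each \(\Omega_j\Subset X\). On each \(\Omega_j\) the Lax--Milgram construction of Lemma~\ref{Lem:Green representation} produces a nonnegative symmetric Dirichlet Green function \(G_j=G^\omega_{\Omega_j,\delta}\). The construction there uses only coercivity of \(\mathcal E_{\Omega,\delta}\), the local Sobolev inequality and De Giorgi--Nash--Moser theory; for \(\delta>0\) coercivity holds on any bounded domain, and the small-ball restriction enters only through uniform local constants. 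By the weak maximum principle, \(G_j\le G_{j+1}\) on \(\Omega_j\times\Omega_j\): their difference is \(P_{\omega,\delta}\)-harmonic in \(\Omega_j\) (the poles cancel) and nonpositive on \(\partial\Omega_j\). Set \(G_\delta^\omega=\lim_j G_j\).

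\textbf{Local bounds.} The lower bound in \eqref{eq:global-green-local-asymp-section3} follows by monotonicity. For \(p\in K\), choose a small ball \(\Omega=B_{\Lambda_U R}(x_0)\ni p\) as in Lemma~\ref{lem:local-green-estimate-section3}, with \(U\) a neighborhood of \(K\). Since \(G_j\ge G^\omega_{\Omega,\delta}\) for large \(j\), again by the maximum principle, we get \(G_\delta^\omega(p,y)\ge C^{-1}d(p,y)^{2-m}\). Covering \(K\) by finitely many such balls gives uniform constants.

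\textbf{Upper bound: the main obstacle.} The main difficulty is a \(j\)-independent upper bound, which forces finiteness of the limit. Split \(G_j(p,\cdot)=G^\omega_{\Omega,\delta}(p,\cdot)+h_j\) with \(\Omega=B_{\Lambda_U R}(x_0)\) as above. Then \(h_j\ge0\) is \(P_{\omega,\delta}\)-harmonic in \(\Omega\), and the local term is bounded by \(Cd^{2-m}\). By the maximum principle it suffices to bound \(\sup_{\partial\Omega'}G_j(p,\cdot)\) uniformly in \(j\), for an intermediate sphere \(\partial\Omega'\) at distance \(\sim R\) from \(p\). Use the integral identity \(\delta\int_{\Omega_j}G_j(p,y)\,d\mu_\omega(y)\le 1\), obtained by testing the weak equation against cutoffs tending to \(1\); note the boundary flux has the good sign because \(G_j\ge0\) vanishes on \(\partial\Omega_j\). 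This gives a uniform \(L^1\) bound \(\int G_j(p,\cdot)\le\delta^{-1}\). Away from the pole, \(G_j(p,\cdot)\) is a nonnegative \(P_{\omega,\delta}\)-subsolution, hence a nonnegative subsolution of \(-L_\omega\), so the local mean value (Moser) inequality on unit-scale balls converts the \(L^1\) bound into \(\sup_{B_r(y)}G_j(p,\cdot)\le C\,r^{-m}\delta^{-1}\) for balls \(B_{2r}(y)\) avoiding \(p\).

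\textbf{Conclusions.} Taking \(r\sim R\) on the annulus around \(p\) gives the uniform bound on \(\partial\Omega'\), and hence the upper bound in \eqref{eq:global-green-local-asymp-section3}. For \(y\in X\setminus W\), with \(K\Subset W\), take \(r=\min(1,\tfrac12 d(K,X\setminus W))\) and use local Ahlfors and Moser constants on the compact set \(\overline{B_r(y)}\). The constants must be uniform in \(y\), possibly noncompact; I would handle this by bounding \(G_j\) on \(\partial W'\) for \(K\Subset W'\Subset W\) using compactness of \(\partial W'\), and then applying the maximum principle on \(\Omega_j\setminus\overline{W'}\), where \(G_j\le\sup_{\partial W'}G_j\). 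This yields \eqref{eq:global-green-exterior-bound-section3}.

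\textbf{Weak equation and smoothness.} With the uniform bounds in hand, \(G_j\to G_\delta^\omega\) locally uniformly away from the pole, by Harnack and Hölder estimates together with monotone convergence. The weak identity \(\int\langle\nabla G,\nabla\varphi\rangle+\delta\int G\varphi=\varphi(p)\) passes to the limit using the \(W^{1,s}\) bounds from Lemma~\ref{Lem:Green representation}, since \(\varphi\) has compact support. Smoothness on \(\mathcal R\setminus\{p\}\) follows from elliptic regularity for the smooth operator \(-L_\omega+\delta\) on the manifold \(\mathcal R\). Positivity follows from the lower bound and the Harnack chain along the connected regular part, which has full measure.
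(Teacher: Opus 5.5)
Your proposal is correct and follows essentially the same route as the paper. Both arguments use a monotone exhaustion by Dirichlet Green functions and the uniform bound $\int G_j(p,\cdot)\,d\mu_\omega\le\delta^{-1}$, combined with local mean-value/Harnack estimates and the maximum principle, to get $j$-independent bounds away from the pole. Both then compare with the small-ball Green function of Lemma~\ref{lem:local-green-estimate-section3} for the two-sided local estimate, and prove the exterior bound by the maximum principle outside an intermediate set $W'$.

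The only real difference is in how the $L^1$ bound is obtained. The paper uses the torsion function $h_\ell$, with $P_{\omega,\delta}h_\ell=1$ and $0\le h_\ell\le\delta^{-1}$, together with the Green representation. Your boundary-flux argument with generic cutoffs is only heuristic in the metric setting. It becomes rigorous if you test with $\min\{G_j,\varepsilon\}/\varepsilon$: this gives $\int\langle\nabla G_j,\nabla\varphi_\varepsilon\rangle\ge0$, and you then let $\varepsilon\downarrow0$.
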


\begin{proof}
Choose an exhaustion by precompact open sets
\[
   V_1\Subset V_2\Subset\cdots,
   \qquad
   \bigcup_{\ell=1}^{\infty}V_\ell=X.
\]
Let \(G_{\ell,p}\) be the Dirichlet Green function for \(P_{\omega,\delta}\) on \(V_\ell\), with pole \(p\in V_\ell\).  By maximum principle we have \( G_{\ell,p}\le G_{\ell+1,p}\). Define
\[
   G_\delta^\omega(p,y)
   :=
   \lim_{\ell\to\infty}G_{\ell,p}(y).
\]

As a preparation, we establish some uniform estimates for $G_{\ell,p}$.
Let \(h_\ell\in W^{1,2}_0(V_\ell)\) solve
\[
   P_{\omega,\delta}h_\ell=1
   \qquad\text{in }V_\ell .
\]
The maximum principle gives \( 0\le h_\ell\le\delta^{-1}\). By the Green representation formula from Lemma \ref{Lem:Green representation}, we have
\[
   h_\ell(p)
   =
   \int_{V_\ell}G_{\ell,p}(y)\,d\mu_\omega(y),
\]
and so
\begin{equation}\label{eq:global-green-l1-bound-section3}
   \int_{V_\ell}G_{\ell,p}(y)\,d\mu_\omega(y)
   \le
   \delta^{-1}.
\end{equation}
This \(L^1\)-bound, together with Harnack inequalities, gives local uniform bounds for \(G_{\ell,p}\) away from the pole $p$. By maximum principle, we have
$$G_{\ell_0,p}\leq G_{\ell,p}\leq G_{\ell_0,p}+C\quad\text{ in }V_{\ell_0}$$
for all $\ell\geq \ell_0$,
and
$$G_{\ell,p}\leq C_r\quad\text{ in } V_\ell\setminus B_r(p),\quad \forall r>0,$$
where $C>0$ is a constant independent of $\ell_0$ and $\ell$, and  $C_r>0$ is a constant depending only on $\delta$, $r$, and AM-PI constants around $p$. Since AM-PI constants are uniform around any compact subset $K$, we actually have
\begin{equation}\label{Eq: upper bound Glp}
    \sup_{p\in K}\sup_{y\in V_\ell\setminus B_r(p)}G_{\ell,p}(y)\leq C_r,
\end{equation}
where $C_r>0$ is a constant depending only on $\delta$, $r$, and $K$.

We claim that 
$$\|G_{\ell,p}\|_{W^{1,\theta}(U)}\leq C_U \qquad \theta\in(1,\frac{m}{m-1})$$ for any open $U\Subset X$. Indeed, the previous discussion yields a uniform constant $C$ such that $\tilde G_{\ell,p}:=(G_{\ell,p}-C)_+$ is supported in $U$ for all $\ell$. From the same argument in the proof of Lemma \ref{Lem:Green representation}, we obtain $\|G_{\ell,p}\|_{W^{1,\theta}(U)}\leq C_U$.

Let us verify the desired properties of $G_\delta^\omega(p,\cdot)$. For any Lipschitz function $\eta$ with compact support, we take bounded $U\Subset X$ such that $\eta$ is supported in $U$. From the claim above, $G_{\ell,p}(\cdot)$ converges to $G_\delta^\omega(p,\cdot)$ weakly in $W^{1,\theta}(U)$. In particular, we have
\[
\begin{split}
   & \int_X\langle\nabla_y G_\delta^\omega(p,y),\nabla_y\eta(y)\rangle_g+\delta G_\delta^\omega(p,y)\eta(y)\,d\mu_\omega\\
   =& \lim_{\ell\to+\infty}\int_X\langle\nabla_y G_{\ell,p}(y),\nabla_y\eta(y)\rangle_g+\delta G_{\ell,p}(y)\eta(y)\,d\mu_\omega\\
   =&\lim_{\ell\to+\infty}\eta(p)=\eta(p).
\end{split}
\]
This verifies $P_{\omega,\delta}G_\delta^\omega(p,\cdot)=\boldsymbol\delta_p$ in the weak sense. The smoothness of $G_\delta^\omega(p,\cdot)$ on $\mathcal R\setminus\{p\}$ comes from the smooth convergence of $G_{\ell,p}$ due to uniform bound and standard elliptic estimates.

Next we focus on \eqref{eq:global-green-local-asymp-section3}. For any \(K\Subset X\), we fix $V_{\ell_0}\supset K$. Choose \(r_K>0\) so small such that Lemma~\ref{lem:local-green-estimate-section3} applies on
\(B_{4r_K}(p)\subset V_{\ell_0}\) for every \(p\in K\).
By the maximum principle, for $\ell\geq \ell_0$ we have 
\[
  G_{\ell,p}(y)\ge G_{\ell_0,p}(y) \ge  C_K^{-1}d(p,y)^{2-m},
   \qquad 0<d(p,y)<r_K .
\]
Letting \(\ell\to\infty\) gives the lower bound in \eqref{eq:global-green-local-asymp-section3}. On the other hand, from the upper bound \eqref{Eq: upper bound Glp} and the maximum principle, we have
$$G_{\ell,p}\leq G_{\ell_0,p}+C_{r_K}.$$
Combined with
$$  G_{\ell_0,p}(y)
   \le
   C_Kd(p,y)^{2-m},$$
   we conclude
   $$G_{\ell,p}\leq C_Kd(p,y)^{2-m}$$
   after increasing $C_K$ slightly. Letting \(\ell\to\infty\) proves the upper bound in \eqref{eq:global-green-local-asymp-section3}.

   Finally we deal with \eqref{eq:global-green-exterior-bound-section3}. Given $K\Subset W\Subset X$, we take $W_0$ such that $K\Subset W_0\Subset W$. Since $K$ and $X\setminus W_0$ have positive distance, from \eqref{Eq: upper bound Glp} we have 
\[
   G_{\ell,p}\le C_{K,W}
   \qquad
   \text{on }\partial W_0,\quad p\in K .
\]
Passing to the limit gives the exterior bound \eqref{eq:global-green-exterior-bound-section3}.

\end{proof}

\begin{proposition}\label{prop:global-packing-green-blowup-section3}
Let \(X=\mathcal R\sqcup\mathcal S\) be an \(m\)-dimensional AM--PI space, and let \(\omega\in\mathcal W_{\rm adm}(X)\).   Assume
\[
   0<\tau<\tau_{\rm pack}(\mathcal S)=\operatorname{codim}_A(\mathcal S)-2,
   \qquad
   \alpha=\frac2\tau .
\]
Then there exists a nonnegative function
\[
   \Theta\in C^\infty(\mathcal R)
\]
and constants \(A>0\), \(B\ge0\) such that
\begin{equation}\label{eq:global-packing-green-diff-section3}
   -L_\omega\Theta
   \ge
   A\Theta^{1+\alpha}-B
\end{equation}
on \(\mathcal R\).  Moreover, for every compact \(K\Subset X\) there exist \(c_K>0\) and \(r_K>0\) such that
\begin{equation}\label{eq:global-packing-green-growth-section3}
   \Theta(x)
   \ge
   c_K\operatorname{dist}(x,\mathcal S)^{-\tau}
\end{equation}
whenever \(x\in K\cap\mathcal R\) and \(0<\operatorname{dist}(x,\mathcal S)<r_K\).
\end{proposition}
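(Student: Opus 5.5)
The plan is to build \(\Theta\) as a Green potential of an explicit singular density. Then the equation \(P_{\omega,\delta}\Theta=h\) holds exactly, and the differential inequality \eqref{eq:global-packing-green-diff-section3} reduces to the two-sided pointwise estimate \(\Theta\approx h^{\tau/(\tau+2)}\). Since \(1+\alpha=(\tau+2)/\tau\), this is exactly what \(h\gtrsim\Theta^{1+\alpha}\) requires.

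\textbf{Construction.} Fix \(\delta>0\). Lemma~\ref{lem:packing-zero-capacity-section3} applies, because \(\tau>0\) forces \(\mathsf P_{m-2}(\mathcal S)\), so \(\mathcal S\cap U\) has zero capacity for every \(U\Subset X\). Hence Proposition~\ref{prop:global-resolvent-green-section3} provides \(G_\delta^\omega\).

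Let \(\rho\in C^\infty(\mathcal R)\) be a regularized distance, comparable to \(\min\{\operatorname{dist}(\cdot,\mathcal S),1\}\) on compact sets (a Whitney-type smoothing). Put
\[
   h=\rho^{-\tau-2},\qquad
   \Theta(x)=\int_X G_\delta^\omega(x,y)\,h(y)\,d\mu_\omega(y).
\]
If \(X\) is noncompact, I would replace \(h\) by \(\sum_i\varepsilon_i\chi_i\rho^{-\tau-2}\), where \(\{\chi_i\}\) is a partition of unity subordinate to an exhaustion. The small coefficients \(\varepsilon_i\) are chosen, using the exterior bound \eqref{eq:global-green-exterior-bound-section3}, so that the far-field contributions sum to a locally bounded function. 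The resulting additive errors are then absorbed into \(B\).

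Once \(\Theta\) is locally finite, it is a weak solution of \(-L_\omega\Theta+\delta\Theta=h\), and it is smooth on \(\mathcal R\) by elliptic regularity, since \(h\) is smooth there.

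\textbf{Key estimate.} For \(x\in K\cap\mathcal R\) with \(r=\operatorname{dist}(x,\mathcal S)\) small, I will show \(c\,r^{-\tau}\le\Theta(x)\le C\,r^{-\tau}+C\).

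For the lower bound, restrict the integral to \(B_{r/2}(x)\). There \(h\approx r^{-\tau-2}\), and by \eqref{eq:global-green-local-asymp-section3} we have \(G\gtrsim d(x,y)^{2-m}\). The Ahlfors bound \eqref{eq:local-ahlfors-section3} then gives \(\int_{B_{r/2}(x)}d(x,y)^{2-m}\,d\mu_\omega\gtrsim r^2\), hence \(\Theta(x)\gtrsim r^{-\tau}\). This is \eqref{eq:global-packing-green-growth-section3}.

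For the upper bound, decompose \(y\) into dyadic shells \(\{d(x,y)\approx2^{i}r\}\) and the sets \(\{\rho(y)\approx2^{-j}\}\). Fix \(d\) with \(d_{\rm pack}(\mathcal S)<d<m-2-\tau\). Covering by \(2^{-j}\)-separated nets of \(\mathcal S\) and using \(\mathsf P_d\) together with the Ahlfors upper bound gives
\[
   \mu_\omega\bigl(B_R(x)\cap\{\rho<s\}\bigr)\lesssim s^{m-d}R^{d}\qquad (s\le R).
\]
The near region \(d(x,y)<r/2\) contributes \(\lesssim r^{-\tau}\) as in the lower bound. On a shell of radius \(R\ge r\), split by \(s=2^{-j}\le R\): each piece contributes \(\lesssim R^{2-m}s^{-\tau-2}s^{m-d}R^d\). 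The exponent of \(s\) is \(m-d-\tau-2>0\), so the sum over \(s\le R\) is dominated by \(s\approx R\). This leaves \(\lesssim R^{-\tau}\), which is summable over \(R=2^ir\ge r\) and totals \(\lesssim r^{-\tau}\). The region \(d(x,y)\ge r_K\) is bounded by \eqref{eq:global-green-exterior-bound-section3}. Away from \(\mathcal S\), the same argument shows \(\Theta\) is locally bounded.

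\textbf{Conclusion and main obstacle.} From \(\Theta\le C\rho^{-\tau}+C\) we get \(h=\rho^{-\tau-2}\ge A\Theta^{1+\alpha}-B\). We also have \(\delta\Theta\le\tfrac12h+C\), since \(\rho^{-\tau}\ll\rho^{-\tau-2}\) as \(\rho\to0\). Therefore
\[
   -L_\omega\Theta=h-\delta\Theta\ge\tfrac12h-C\ge A\Theta^{1+\alpha}-B .
\]

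The main obstacle is making \(A\) and \(B\) global when \(X\) is noncompact. The Green estimates and the packing constants are only uniform on compact sets, so the weights \(\varepsilon_i\) in the exhaustion must be tuned. They have to make the upper bound \(\Theta\lesssim\rho^{-\tau}\) hold with a uniform constant relative to the local density \(\varepsilon_i\rho^{-\tau-2}\). I would try choosing \(\varepsilon_i\) decreasing rapidly and proving \(\Theta\le C\,(\sum_i\varepsilon_i\chi_i)^{\tau/(\tau+2)}\rho^{-\tau}+C\) shell by shell. Here the factor \(\varepsilon^{\tau/(\tau+2)}\) is precisely the scaling that makes \(h\ge A\Theta^{1+\alpha}\) independent of \(i\).
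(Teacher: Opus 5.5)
Your argument is sound when the Green, Ahlfors and packing constants are uniform, for instance for compact $X$. The density $\rho^{-\tau-2}$ is locally integrable exactly because $d<m-2-\tau$, and your near-region and shell/tube estimates give $c\rho^{-\tau}\le\Theta\le C\rho^{-\tau}+C$. This is a continuous version of the paper's construction, which discretizes the same density by ball potentials $s_{i,j}^{-\tau-2}u_{i,j,z}$ over dyadic nets of $\mathcal S$.

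The gap is the globalization. The proposition is stated for arbitrary complete AM--PI spaces, and the paper needs that generality: in the descent it is applied to noncompact spaces coming from $\mu$-bubbles in complete blown-up bands. You flag this step but do not resolve it, and the bound you propose is not automatic.

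Cutting off the \emph{density} by $\chi_i$ while keeping $\rho=\dist(\cdot,\mathcal S)$ creates an edge effect. The potential of $\chi_i\rho^{-\tau-2}$ does not vanish where $\chi_i$ does. Take a cutoff vanishing linearly with slope $L_i$, and let $\mathcal S$ cross the edge of $\operatorname{supp}\chi_i$ with tube volumes saturating the packing bound. Then at points near $\mathcal S$ just outside $\operatorname{supp}\chi_i$ this potential is of order $L_i\rho^{1-\tau}$. That is unbounded when $\tau>1$, while its own source there is zero. So no piece satisfies the nonlinear inequality by itself. For the sum, the bound $\Theta\le C\epsilon^{\tau/(\tau+2)}\rho^{-\tau}+C$ at such points forces a lower bound on $\varepsilon_{i+1}$ by a power of $L_i\varepsilon_i$. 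This ties neighbouring weights and the cutoff slopes together, and choosing the $\varepsilon_i$ rapidly decreasing does nothing to address it. Likewise, the far-field errors must be \emph{globally} bounded to be absorbed into $B$; local boundedness is not enough.

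The missing idea is to localize the singular set rather than the density, so that each piece satisfies the inequality on all of $\mathcal R$, and then to superpose by H\"older. This is how the paper closes the argument. Take a locally finite cover $\{U_i\}$ of $\mathcal S$ and set $S_i=\mathcal S\cap\overline U_i$. The paper's $\Psi_i$, built on nets $P_{i,j}\subset S_i$, satisfies $P_{\omega,\delta}\Psi_i\gtrsim\dist(\cdot,S_i)^{-\tau-2}$ and $\Psi_i\lesssim\dist(\cdot,S_i)^{-\tau}$ on $\{\dist(\cdot,S_i)<s_{i,0}\}$. Both bounds are measured from the same set $S_i$, and $\Psi_i$ is bounded on the complement by the off-diagonal and exterior Green bounds. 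Hence $P_{\omega,\delta}\Psi_i\ge A_i\Psi_i^{1+\alpha}-B_i$ on $\mathcal R$, and H\"older gives
\[
   \Bigl(\sum_i\varepsilon_i\Psi_i\Bigr)^{1+\alpha}
   \le
   E_A^{\alpha}\sum_i\varepsilon_iA_i\Psi_i^{1+\alpha},
   \qquad
   E_A=\sum_i\varepsilon_iA_i^{-\tau/2}.
\]
Therefore $P_{\omega,\delta}\Theta_0\ge E_A^{-\alpha}\Theta_0^{1+\alpha}-\sum_i\varepsilon_iB_i$. Small $\varepsilon_i$ only help, no compatibility between neighbours is needed, and $\delta\Theta_0$ is absorbed by Young's inequality.

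The same repair works in your continuous framework. Replace $\chi_i\rho^{-\tau-2}$ by a regularized $\dist(\cdot,S_i)^{-\tau-2}$, cut off at distance about $s_{i,0}$ from $S_i$, where its potential is bounded anyway. Your near and shell estimates then go through with $\dist(\cdot,S_i)$ in place of $\rho$, and you finish with the same H\"older superposition.
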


\begin{proof}
Choose \(d_*\) with \(d_*<m-2-\tau\) such that \(\mathsf P_{d_*}(\mathcal S)\) holds, and set
\[
   \theta=m-2-\tau-d_*>0 .
\]
Since \(d_*<m-2\), Lemma~\ref{lem:packing-zero-capacity-section3} implies that \(\mathcal S\cap U\) has zero capacity for any 
open $U\Subset X$, and then    Proposition~\ref{prop:global-resolvent-green-section3} applies, where we fix some constant \(\delta>0\) below.

Since AM-PI spaces are locally compact and paracompact, we can choose a locally finite family of precompact open sets \(U_i\Subset X\) such that
\[
   \mathcal S\subset \bigcup_i U_i,
   \qquad
   U_i\cap\mathcal S\ne\emptyset .
\]
Moreover, we can choose compact sets \(K_i\Subset X\) such that
\[
   \overline U_i\Subset K_i^\circ,
\]
and such that the family \(\{K_i\}\) is locally finite as well.  Put
\[
   S_i=\mathcal S\cap\overline U_i .
\]

Let
\[
   r_i>0,\qquad C_i\ge1
\]
be the constants from Proposition~\ref{prop:global-resolvent-green-section3} applied to \(K_i\).  After decreasing $r_i$ and increasing \(C_i\), we can assume that the Ahlfors estimates holds for all balls $B_r(y)$ with $y\in K_i$ and $r<r_i$, and that we have the off-diagonal bound
\begin{equation}\label{Eq: off-diagonal}
    G_\delta^\omega(p,y)\le C_i
\end{equation}
whenever $p,y\in K_i$ and $d(p,y)\geq r_i/4$.  From the definition of \(\mathsf P_{d_*}(\mathcal S)\), by increasing $C_i$ and decreasing $r_i$ properly, we can assume the packing estimate \eqref{eq:local-packing-condition-section3} holds on $K_i$ with constants $C_i$ and $r_i$. In particular, after shrinking $U_i$ with diameter less than $r_i/2$, we have the packing estimate
\begin{equation}\label{Eq: packing}
    \#(P_s\cap B_R(x))\leq C_i\left(\frac{R}{s}\right)^{d_*}
\end{equation}
whenever $x\in K_i$ with $\dist(x,U_i)<r_i/2$, $0<s<\min\{R,r_i\}$, and $P_s\subset \mathcal S_i$ is $s$-separated. In the following, we always use $C_i$ to denote a constant depending only on $i$, whose value may change from line to line.

Choose \(s_{i,0}\in(0,r_i/80)\) small enough so that
\[
   \bigcup_{z\in \mathcal S_i}
   B_{20s_{i,0}}(z)\subset K_i .
\]
Set \[s_{i,j}=2^{-j}s_{i,0}.\]  For each \(j\), choose a maximal \(s_{i,j}\)-separated set
\[
   P_{i,j}\subset S_i .
\]
For each \(z\in P_{i,j}\), choose
\[
   \eta_{i,j,z}\in C^\infty(\mathcal R),
   \qquad
   0\le\eta_{i,j,z}\le1,
\]
such that \(\eta_{i,j,z}=1\) on \(B_{10s_{i,j}}(z)\cap\mathcal R\), and $\eta_{i,j,z}=0$ outside $B_{20s_{i,j}}(z)\cap\mathcal R$.

\vspace{3mm}Define the ball potential
\[
   u_{i,j,z}(x)
   =
   \int_{\mathcal R}
   G_\delta^\omega(y,x)\eta_{i,j,z}(y)\,d\mu_\omega(y).
\]
Then we have \[P_{\omega,\delta}u_{i,j,z}=\eta_{i,j,z}\quad\text{ on }\mathcal R.\]

\vspace{3mm}
\noindent{\bf Step 1: Estimate for each $u_{i,j,z}$.}
\vspace{3mm}

\noindent{\bf Claim:} We have the estimates
\begin{equation}\label{eq:global-source-upper-section3}
   u_{i,j,z}(x)
   \le
   C_i s_{i,j}^m
   \bigl(d(x,z)+s_{i,j}\bigr)^{2-m},\quad \forall x\in K_i\cap \mathcal R,
\end{equation}
and
\begin{equation}\label{eq:global-source-inner-section3}
   c_i s_{i,j}^2
   \le
   u_{i,j,z}(x)
   \le
   C_i s_{i,j}^2
   \qquad
   \text{if }d(x,z)\le5s_{i,j}.
\end{equation}

\vspace{3mm} For simplicity,  we write
\[
   s=s_{i,j},\qquad\eta=\eta_{i,j,z},\qquad u=u_{i,j,z},\qquad R=d(x,z),
\]
where \(x\in K_i\cap\mathcal R\). 

First we show \eqref{eq:global-source-upper-section3} by dividing the discussion into three cases.

\vspace{3mm} {\it Case 1. $R\leq 40s$.} 

In this case, we have \(\operatorname{spt}\eta \subset B_{60s}(x)\). By \eqref{eq:global-green-local-asymp-section3} and the Ahlfors estimates, we have
\[
\begin{aligned}
   u(x)
   &\le
   C_i\int_{B_{60s}(x)}
      d(x,y)^{2-m}\,d\mu_\omega(y)  \\
   &\le
   C_i\sum_{\ell=0}^{\infty}
      (2^{-\ell}s)^{2-m}
      \mu_\omega(B_{2^{-\ell}60s}(x))
   \le
   C_i s^2 .
\end{aligned}
\]
Since \(s\le R+s\le41s\), this gives \eqref{eq:global-source-upper-section3}.

\vspace{3mm} {\it Case 2. $40s\leq R<r_i/2$.}

In this case, we have 
\[\frac12R\le d(x,y)\le\frac32R<r_i,\quad\forall y\in\operatorname{spt}\eta. \] 
By \eqref{eq:global-green-local-asymp-section3} and the Ahlfors estimates, we have
\[
   u(x)
   \le
   C_i R^{2-m}\mu_\omega(B_{20s}(z))
   \le
   C_i s^m R^{2-m}
   \le
   C_i s^m(R+s)^{2-m},
\]
establishing \eqref{eq:global-source-upper-section3}.

\vspace{3mm}{\it Case 3. $R\geq r_i/2$.}

In this case, we have
\[
   d(x,y)\ge R-20s\ge \frac14r_i
   \qquad
   \text{for }y\in\operatorname{spt}\eta.
\]
The off-diagonal bound \eqref{Eq: off-diagonal} and the Ahlfors estimates imply \[u(x)\le C_i\mu_\omega(B_{20s}(z))\le C_i s^m.\] Since \(x,z\in K_i\), the quantity \(R+s\) is uniformly bounded from above.  After further increasing \(C_i\), we again get \eqref{eq:global-source-upper-section3}.

\vspace{3mm}
It remains to show \eqref{eq:global-source-inner-section3}. Since we have \(d(x,z)\le5s\), the upper bound of \eqref{eq:global-source-inner-section3} follows from \eqref{eq:global-source-upper-section3} directly. For the lower bound, note that we have
\(B_{s/4}(x)\subset B_{10s}(z)\) when \(d(x,z)\le5s\), and so \(\eta=1\) on \(B_{s/4}(x)\cap\mathcal R\).  In particular, the lower bound in \eqref{eq:global-green-local-asymp-section3} and the Ahlfors estimates give
\[
   u(x)\ge
   c_i\int_{B_{s/4}(x)}
      d(x,y)^{2-m}\,d\mu_\omega(y)
   \ge
   c_i s^{2-m}
   \mu_\omega\bigl(B_{s/4}(x)\bigr)
   \ge
   c_i s^2 .
\]
We complete the proof of the claim.

\vspace{3mm}
\noindent{\bf Step 2: Estimate for weighted summation of $u_{i,j,z}$.}
\vspace{3mm}

Define the weighted summation
\[
   \Psi_i
   =
   \sum_j\sum_{z\in P_{i,j}}
   s_{i,j}^{-\tau-2}u_{i,j,z}.
\]
Since \(P_{\omega,\delta}u_{i,j,z}=\eta_{i,j,z}\), we have
\begin{equation}\label{eq:block-source-equation-section3}
   P_{\omega,\delta}\Psi_i
   =
   \sum_j\sum_{z\in P_{i,j}}
   s_{i,j}^{-\tau-2}\eta_{i,j,z}
   \ge0 .
\end{equation}

\noindent{\bf Claim:} there are constants $A_i>0$ and $B_i>0$ such that
\begin{equation}\label{eq:block-P-diff-section3}
   P_{\omega,\delta}\Psi_i
   \ge
   A_i\Psi_i^{1+2/\tau}-B_i
   \qquad\text{on }\mathcal R.
\end{equation}

\vspace{3mm} 
For \(x\in K_i\cap\mathcal R\) and a fixed scale \(s=s_{i,j}\), write
\[
   \Psi_{i,s}(x)
   =
   \sum_{z\in P_{i,j}}s^{-\tau-2}u_{i,j,z}(x).
\]
Using \eqref{eq:global-source-upper-section3}, we obtain
\[
   \Psi_{i,s}(x)
   \le
   C_i\sum_{z\in P_{i,j}}
   s^{m-2-\tau}\bigl(d(x,z)+s\bigr)^{2-m}.
\]

Write \[r=\operatorname{dist}(x,S_i)<r_i/2.\]
We investigate the estimates for $\Psi_{i,s}$ as follows. 
 If \(s\le r\), we make the decomposition
 $$P_{i,j}=\bigcup_{\ell=0}^{+\infty}P_{i,j}\cap \mathcal A_\ell,\quad \mathcal A_\ell:=B_{2^{\ell+1}r}(x)\setminus B_{2^\ell r}(x).$$
The packing condition \eqref{Eq: packing} gives
\[
\sum_{z\in P_{i,j}\cap \mathcal A_{\ell}}
   s^{m-2-\tau}\bigl(d(x,z)+s\bigr)^{2-m}
   \le C_i2^{-\ell(\tau+\theta)}s^\theta r^{-\tau-\theta}.
\]
Summing over \(\ell\) yields
\begin{equation}\label{Eq: Psi is r large}
    \Psi_{i,s}(x)\le C_i s^\theta r^{-\tau-\theta},\quad s\leq r.
\end{equation}
If \(s\ge r\), the same estimate with \(s\) in place of \(r\) gives 
\begin{equation}\label{Eq: Psi is s large}
    \Psi_{i,s}(x)\le C_i s^{-\tau},\quad s\geq r.
\end{equation}

We are ready to estimate $\Psi_i$.
First we show
\begin{equation}\label{eq:block-upper-section3}
   c_i\tau^{-\tau}\leq\Psi_i(x)\le C_i r^{-\tau},\quad r\in (0,s_{i,0}).
\end{equation}
The upper bound follows directly from summing $\Psi_{i,s}$ over \(j\) and using the estimates \eqref{Eq: Psi is r large} and \eqref{Eq: Psi is s large}. For the lower bound, we choose $j$ such that \(s_{i,j+1}\le r<s_{i,j}\). Pick \(z_0\in S_i\) with \(d(x,z_0)\le2r\).  By the maximality of \(P_{i,j}\), there is \(p_0\in P_{i,j}\) such that \(d(z_0,p_0)\le s_{i,j}\). Thus we have
\[
   d(x,p_0)\le2r+s_{i,j}<3s_{i,j}<5s_{i,j}.
\]
From \eqref{eq:global-source-inner-section3} we obtain
\[
   \Psi_i(x)
   \ge
   s_{i,j}^{-\tau-2}u_{i,j,p_0}(x)
   \ge
   c_i s_{i,j}^{-\tau}
   \ge
   c_i r^{-\tau}.
\]
On the other hand, since we have \(\eta_{i,j,p_0}(x)=1\), it follows from \eqref{eq:block-source-equation-section3} that
\begin{equation}\label{Eq: Psi is P value}
    P_{\omega,\delta}\Psi_i(x)
   \ge
   s_{i,j}^{-\tau-2}
   \ge
   c_i r^{-\tau-2},\quad r\in (0,s_{i,0}).
\end{equation}

From \eqref{eq:block-upper-section3} and \eqref{Eq: Psi is P value} there is a constant $A_i>0$ such that
\begin{equation*}
   P_{\omega,\delta}\Psi_i
   \ge
   A_i\Psi_i^{1+2/\tau}
\end{equation*}
on  \(\mathcal R\cap\{0<\operatorname{dist}(\cdot,S_i)<s_{i,0}\}\).
The estimate \eqref{eq:block-upper-section3}, the off-diagonal bound for \(G_\delta^\omega\), and interior elliptic regularity imply that \(\Psi_i\in C^\infty(\mathcal R)\).  Moreover, \(\Psi_i\) is bounded on
\[
   \mathcal R\cap
   \{\operatorname{dist}(\cdot,S_i)\ge s_{i,0}\}.
\]
Together with \(P_{\omega,\delta}\Psi_i\ge0\), this implies the existence of a constant $B_i>0$ such that \eqref{eq:block-P-diff-section3} holds.

\vspace{3mm}
\noindent{\bf Step 3: Construction of the function $\Theta$.}
\vspace{3mm}

Choose positive coefficients \(\varepsilon_i\) decreasing sufficiently fast so that
\[
   E_A:=\sum_i\varepsilon_iA_i^{-\tau/2}<\infty,
   \qquad
   E_B:=\sum_i\varepsilon_iB_i<\infty,
\]
and \(\Theta_0=\sum_i\varepsilon_i\Psi_i\) converges in \(C^\infty_{\rm loc}(\mathcal R)\).  Since \(\alpha=2/\tau\), H\"older's inequality gives
\[
   \Theta_0^{1+\alpha}
   \le
   E_A^\alpha
   \sum_i\varepsilon_iA_i\Psi_i^{1+\alpha}.
\]
Therefore we have \(P_{\omega,\delta}\Theta_0\ge E_A^{-\alpha}\Theta_0^{1+\alpha}-E_B\). By the definition of \(P_{\omega,\delta}\), we get
\[
   -L_\omega\Theta_0
   \ge
   E_A^{-\alpha}\Theta_0^{1+\alpha}
   -E_B-\delta\Theta_0 .
\]
Absorbing the linear term into the superlinear term gives
\[
   -L_\omega\Theta_0
   \ge
   A\Theta_0^{1+\alpha}-B
\]
for some \(A>0\) and \(B\ge0\).  Set \(\Theta=\Theta_0\).  This proves \eqref{eq:global-packing-green-diff-section3}.

Finally, we prove \eqref{eq:global-packing-green-growth-section3}. Let \(K\Subset X\).  Choose \(K'\Subset X\) with \(K\Subset (K')^\circ\).  Choose \(r_K>0\) so small that
\[
   \{x\in X:\operatorname{dist}(x,K)<2r_K\}\subset K' .
\]
Let \(I_K\) be the finite set of indices with \(U_i\cap K'\ne\emptyset\). 
Decrease \(r_K\) further so that
\[
   2r_K<s_{i,0}
   \qquad
   \text{for all }i\in I_K .
\]
If \(K'\cap\mathcal S=\emptyset\), decrease \(r_K\) once more so that \(\operatorname{dist}(K,\mathcal S)>r_K\). Then there is nothing to prove.  Otherwise set
\[
   c_K=
   2^{-\tau}\min\{\varepsilon_i c_i:\ i\in I_K\}.
\]
Let
\[
   x\in K\cap\mathcal R,
   \qquad
   0<\operatorname{dist}(x,\mathcal S)<r_K .
\]
Choose \(z\in\mathcal S\) with \(d(x,z)\le2\operatorname{dist}(x,\mathcal S)\). Then \(z\in K'\), and hence \(z\in U_i\) for some \(i\in I_K\).  Since \(z\in S_i\),
\[
   \operatorname{dist}(x,S_i)
   \le
   d(x,z)
   \le
   2\operatorname{dist}(x,\mathcal S)
   <
   2r_K
   <
   s_{i,0} .
\]
Using \eqref{eq:block-upper-section3}, we obtain
\[
   \Theta(x)
   \ge
   \varepsilon_i\Psi_i(x)
   \ge
   \varepsilon_i c_i\operatorname{dist}(x,S_i)^{-\tau}
   \ge
   c_K\operatorname{dist}(x,\mathcal S)^{-\tau}.
\]
We complete the proof.
\end{proof}

\subsection{Packing exponents and conformal blow-up}
\label{subsec:packing-exponents-conformal-blowup}

\begin{lemma}
\label{lem:conformal-absorption-section3}
Let
\[
   \alpha>0,\qquad
   \mathfrak K>0,\qquad
   A>0,\qquad
   B\ge0,
\]
and let \(\sigma\in\mathbb R\).  For every \(\zeta>0\), there exists \(\varepsilon_0>0\), depending only on
\[
   \alpha,\mathfrak K,A,B,\sigma,\zeta,
\]
such that, for all \(0<\varepsilon<\varepsilon_0\) and all \(T\ge0\), if
\[
   w=1+\varepsilon T ,
\]
then
\begin{equation}\label{eq:absorption-qf-section3}
   \mathfrak K A\varepsilon T^{1+\alpha}w^{-1}
   -
   \mathfrak K B\varepsilon w^{-1}
   -
   \frac12\sigma(w^\alpha-1)
   +
   \frac{\zeta}{2}w^\alpha
   \ge0 .
\end{equation}
\end{lemma}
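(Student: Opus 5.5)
The plan is to prove \eqref{eq:absorption-qf-section3} by elementary calculus, rescaling with \(y=\varepsilon T\ge0\) so that \(w=1+y\) and \(T=\varepsilon^{-1}y\). Only the terms \(-\mathfrak K B\varepsilon w^{-1}\) and \(-\tfrac12\sigma(w^\alpha-1)\) can be negative. We use throughout that \(w\ge1\) and \(w^\alpha\ge1\); in particular the nonnegative term \(\tfrac{\zeta}{2}w^\alpha\) is at least \(\tfrac{\zeta}{2}\). The first term becomes
\[
   \mathfrak K A\varepsilon T^{1+\alpha}w^{-1}
   =
   \mathfrak K A\,\varepsilon^{-\alpha}\,\frac{y^{1+\alpha}}{1+y}.
\]

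The first step is to dispose of the \(B\)-term and the case \(\sigma\le0\). Since \(w\ge1\), we have \(\mathfrak K B\varepsilon w^{-1}\le\mathfrak K B\varepsilon\), and we will require \(\varepsilon<\zeta/(4\mathfrak K B+1)\), so that this term is absorbed by \(\tfrac{\zeta}{4}w^\alpha\). If \(\sigma\le0\), then \(-\tfrac12\sigma(w^\alpha-1)\ge0\), and the inequality already follows. So assume \(\sigma>0\). It then remains to show that
\[
   \mathfrak K A\,\varepsilon^{-\alpha}\frac{y^{1+\alpha}}{1+y}
   +\frac{\zeta}{4}(1+y)^\alpha
   \ge
   \frac{\sigma}{2}\bigl((1+y)^\alpha-1\bigr)
   \qquad\text{for all } y\ge0 .
\]

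Next, fix \(y_0>0\) depending only on \(\alpha,\sigma,\zeta\) so that \(\tfrac{\sigma}{2}\bigl((1+y_0)^\alpha-1\bigr)\le\tfrac{\zeta}{4}\), and split into two regimes.

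\emph{Small regime, \(0\le y\le y_0\).} The right-hand side is at most \(\tfrac{\zeta}{4}\le\tfrac{\zeta}{4}(1+y)^\alpha\), so the inequality holds. Here the smallness of \(\varepsilon\) is not used at all.

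\emph{Large regime, \(y\ge y_0\).} Write
\[
   \frac{y^{1+\alpha}}{1+y}
   =
   \Bigl(\frac{y}{1+y}\Bigr)^{1+\alpha}(1+y)^\alpha .
\]
The factor \(y/(1+y)\) is increasing in \(y\), so for \(y\ge y_0\) this is at least \(c_0(1+y)^\alpha\), where \(c_0=\bigl(y_0/(1+y_0)\bigr)^{1+\alpha}\). Hence the first term is at least \(\mathfrak K A c_0\varepsilon^{-\alpha}w^\alpha\). This dominates \(\tfrac{\sigma}{2}w^\alpha\ge\tfrac{\sigma}{2}(w^\alpha-1)\) as soon as \(\varepsilon^\alpha\le 2\mathfrak K A c_0/\sigma\).

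Finally, take \(\varepsilon_0\) to be the minimum of the two thresholds \(\zeta/(4\mathfrak K B+1)\) and \((2\mathfrak K A c_0/\sigma)^{1/\alpha}\). It depends only on \(\alpha,\mathfrak K,A,B,\sigma,\zeta\), and for \(0<\varepsilon<\varepsilon_0\) both regimes give \eqref{eq:absorption-qf-section3} for all \(T\ge0\).

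The one point needing care is the large regime when \(\alpha\) is large. There the negative term grows like \(w^\alpha\), so it cannot be absorbed linearly in \(T\); the superlinear power \(T^{1+\alpha}\) must be compared with \(w^\alpha\) itself. Working in the variable \(y=\varepsilon T\), which reduces the comparison to the factor \(\bigl(y/(1+y)\bigr)^{1+\alpha}\), handles this uniformly.
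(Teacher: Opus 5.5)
Your proof is correct, but it argues differently from the paper. Both proofs work with the same quantity: the paper substitutes $z=\varepsilon T/(1+\varepsilon T)$, which is exactly your factor $y/(1+y)$. The paper, however, does not split into regimes. It writes the first term as $\mathfrak K A\varepsilon^{-\alpha}z^{1+\alpha}w^\alpha$ and bounds $w^\alpha-1=w^\alpha\bigl(1-(1-z)^\alpha\bigr)\le C_\alpha z\,w^\alpha$. It then applies the Young-type bound $\sup_{z\ge0}\{az-Mz^{1+\alpha}\}\le C_\alpha a^{1+1/\alpha}M^{-1/\alpha}$ with $M=\mathfrak KA\varepsilon^{-\alpha}$. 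This gives one uniform estimate: the $T^{1+\alpha}$-term and the $\sigma$-term together are $\ge -C\varepsilon w^\alpha$, with $C=C_\alpha\sigma_+^{1+1/\alpha}(\mathfrak KA)^{-1/\alpha}$ independent of $\zeta$. This and the $B$-term are then absorbed into $\frac\zeta2 w^\alpha$.

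You instead spend the slack $\frac\zeta4$ on the bounded regime $y\le y_0$ and use the superlinear term only for $y\ge y_0$. Your route is more elementary, since it needs no Legendre/Young step. The cost is that your $\varepsilon_0$ depends on $\zeta$ through $y_0$ and $c_0$, roughly $\varepsilon_0\sim\zeta^{1+1/\alpha}$ as $\zeta\to0$. The paper's argument gives an $O(\varepsilon)$ error with $\varepsilon_0$ linear in $\zeta$. The paper quotes that $\zeta$-independent bound again in the proof of Proposition~\ref{prop:packing-conformal-blowup-scalar}. Only the existence of a threshold is needed there, though, so your version would suffice. One cosmetic point: your final formula for $\varepsilon_0$ should drop the second threshold when $\sigma\le0$.
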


\begin{proof}
Set \( z=\frac{\varepsilon T}{1+\varepsilon T}\). Then \(0\le z<1\), \(w=(1-z)^{-1}\), and
\[
   \varepsilon T^{1+\alpha}w^{-1}
   =
   \varepsilon^{-\alpha}z^{1+\alpha}w^\alpha .
\]
Also
\[
   w^\alpha-1
   =
   w^\alpha\bigl(1-w^{-\alpha}\bigr)
   =
   w^\alpha\bigl(1-(1-z)^\alpha\bigr).
\]
Since \(w^\alpha-1\ge0\), writing \(\sigma_+=\max\{\sigma,0\}\), we have
\[
\begin{split}
&\mathfrak K A\varepsilon T^{1+\alpha}w^{-1}
-\frac12\sigma(w^\alpha-1) \\
\ge&
w^\alpha
\left[
   \mathfrak K A\varepsilon^{-\alpha}z^{1+\alpha}
   -
   \frac12\sigma_+
   \bigl(1-(1-z)^\alpha\bigr)
\right].
\end{split}
\]
Using
\[
   1-(1-z)^\alpha\le C_\alpha z,
   \qquad 0\le z<1,
\]
and
\[
   \sup_{z\ge0}\{az-Mz^{1+\alpha}\}
   \le
   C_\alpha a^{1+1/\alpha}M^{-1/\alpha},
\]
with \(a=\frac{1}{2}C_\alpha\sigma_+\) and \(M=\mathfrak K A\varepsilon^{-\alpha}\), we get
\[
   \mathfrak K A\varepsilon T^{1+\alpha}w^{-1}
   -
   \frac12\sigma(w^\alpha-1)
   \ge
   -C\varepsilon w^\alpha .
\]
Since \(w\ge1\),
\[
   \mathfrak K B\varepsilon w^{-1}
   \le
   \mathfrak K B\varepsilon w^\alpha .
\]
Therefore the left hand side of \eqref{eq:absorption-qf-section3} is bounded below by
\[
   \left(
      \frac{\zeta}{2}
      -C\varepsilon
      -\mathfrak K B\varepsilon
   \right)w^\alpha .
\]
Choosing \(\varepsilon_0>0\) sufficiently small proves the lemma.
\end{proof}

\begin{lemma}
\label{lem:conformal-completeness-section3}
Let \(\alpha>0\), let \(\Theta\ge0\), and suppose that for every compact \(K\Subset X\) there exist \(c_K>0\) and \(r_K>0\) such that
\[
   \Theta(x)
   \ge
   c_K\operatorname{dist}(x,\mathcal S)^{-2/\alpha}
\]
whenever
\[
   x\in K\cap\mathcal R,
   \qquad
   0<\operatorname{dist}(x,\mathcal S)<r_K .
\]
If
\[
   w=1+\varepsilon\Theta,
   \qquad
   \widetilde g=w^\alpha g,
\]
then \((\mathcal R,\widetilde g)\) is complete.
\end{lemma}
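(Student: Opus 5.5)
Completeness of \((\mathcal R,\widetilde g)\) will follow from showing that every \(\widetilde g\)-Cauchy sequence (equivalently, every \(\widetilde g\)-bounded closed set) in \(\mathcal R\) stays in a compact subset of \(\mathcal R\). Since \(w\ge1\) and \(\alpha>0\), we have \(\widetilde g\ge g\), so \(\widetilde g\)-lengths dominate \(g\)-lengths. By the almost-manifold convention, \(g\)-lengths of curves in \(\mathcal R\) dominate \(d\)-distances. Hence a \(\widetilde g\)-Cauchy sequence \((x_k)\) is \(d\)-Cauchy and converges in the complete space \(X\) to some \(x_\infty\). If \(x_\infty\in\mathcal R\), then near \(x_\infty\) the metrics \(\widetilde g\) and \(g\) are uniformly comparable on a small \(g\)-ball. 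In that case \(x_k\to x_\infty\) in \(\widetilde g\) as well, and we are done. The remaining task is to exclude \(x_\infty\in\mathcal S\).

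To exclude it, I would show that the \(\widetilde g\)-distance from a fixed regular point to points near \(\mathcal S\) is infinite. Fix \(K\) a compact neighborhood (closed \(d\)-ball) of \(x_\infty\), with constants \(c_K,r_K\) from the hypothesis. On \(K\cap\mathcal R\) with \(r(x):=\operatorname{dist}(x,\mathcal S)<r_K\), we have \(w\ge\varepsilon c_K r^{-2/\alpha}\), hence \(w^{\alpha/2}\ge (\varepsilon c_K)^{\alpha/2} r^{-1}\). Take a curve \(\gamma\) in \(\mathcal R\cap K\) that goes from a point with \(r=r_1\) to a point with \(r=r_2<r_1<r_K\). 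The function \(r\) is \(1\)-Lipschitz for \(d\), and \(d\le\) the \(g\)-length along curves in \(\mathcal R\). Therefore
\[
L_{\widetilde g}(\gamma)=\int w^{\alpha/2}\,|\dot\gamma|_g\ge c\int \frac{|\tfrac{d}{dt}r(\gamma(t))|}{r(\gamma(t))}\,dt\ge c\log\frac{r_1}{r_2}.
\]
This estimate requires that \(r\circ\gamma\) is absolutely continuous with \(|(r\circ\gamma)'|\le|\dot\gamma|_g\). That follows from the local identification \(d_{\ell,d}^U=d_g^U\), since on short pieces \(|r(\gamma(t))-r(\gamma(s))|\le d\le L_g\).

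Now suppose \(x_k\to x_\infty\in\mathcal S\) while \((x_k)\) is \(\widetilde g\)-Cauchy. Then \(r(x_k)\to0\). Fix \(k_0\) with \(x_{k_0}\) deep inside \(K\) and \(r(x_{k_0})=:r_1\) small. Any curve from \(x_{k_0}\) to \(x_k\) either stays in \(K\) or exits \(K\). If it exits, its \(g\)-length, and hence its \(\widetilde g\)-length, is bounded below by a fixed positive constant \(\operatorname{dist}(x_{k_0},X\setminus K)\). That alone is not enough, so I would refine as follows. Restrict to the final segment of the curve after its last exit from a smaller ball \(B\subset K\) around \(x_\infty\). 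Along this segment, \(r\) goes from at least \(\min(r_K,\operatorname{dist}(\partial B,\mathcal S))\)-ish values, or from \(r_1\), down to \(r(x_k)\). The log estimate then gives \(\widetilde d(x_{k_0},x_k)\ge c\log(r_*/r(x_k))-C\to\infty\). This contradicts boundedness of a Cauchy sequence.

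The main obstacle is this bookkeeping: making sure the relevant portion of the curve lies in \(K\) and in \(\{r<r_K\}\). I would handle it by choosing the ball \(B=B_\rho(x_\infty)\) with \(2\rho<r_K\), so that \(B\subset K\) and \(r<r_K\) on \(B\). Any curve ending at \(x_k\in B_{\rho/2}\) either lies entirely in \(B\) (the starting point \(x_{k_0}\in B\) too), or has a last entry into \(B\). In either case the final subarc runs inside \(B\), starting with \(r\ge\min(r_1,\rho/2)\)-type lower bound. More cleanly, the subarc starts from \(r\) bounded below by a constant, or simply from \(r(x_{k_0})\). This gives the logarithmic divergence and completes the proof.
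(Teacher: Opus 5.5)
Your overall scheme is sound. A $\widetilde g$-Cauchy sequence is $d$-Cauchy and converges in $X$, so the only issue is a limit $x_\infty\in\mathcal S$. The pointwise bound $w^{\alpha/2}\ge(\varepsilon c_K)^{\alpha/2}r^{-1}$ on $B=B_\rho(x_\infty)$ with $2\rho<r_K$ is also correct, as is your case where the connecting curve stays in $B$.

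\textbf{The gap.} It is in the case where a curve from $x_{k_0}$ to $x_k$ leaves $B$. The final subarc then starts at a point $y$ with $d(y,x_\infty)=\rho$, and your estimate only gives $L_{\widetilde g}\ge c\log\bigl(r(y)/r(x_k)\bigr)$. Nothing bounds $r(y)=\operatorname{dist}(y,\mathcal S)$ from below. As soon as $\mathcal S$ reaches distance $\rho$ from $x_\infty$ (e.g.\ $\mathcal S$ contains a curve through $x_\infty$, which is the typical situation for the singular sets in this paper), $\partial B$ meets $\mathcal S$. Then $r(y)$ can be as small as, or smaller than, $r(x_k)$, and the bound says nothing. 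Neither ``$r\ge\min(r_1,\rho/2)$'' nor ``start from $r(x_{k_0})$'' is justified. A logarithm of the ratio of the endpoint values of $r$ cannot detect the divergence when both endpoints are close to $\mathcal S$.

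\textbf{The fix.} The missing idea is the one in the paper's proof: bound $r$ along the arc by the remaining $g$-length, not by its endpoint values. Parametrize the final subarc by $g$-arclength $s\in[0,L]$. Since $d\le L_g$ and $r$ is $1$-Lipschitz, $r(\gamma(s))\le r(x_k)+(L-s)$, and $L\ge d(y,x_k)\ge\rho/2$. Hence
\[
L_{\widetilde g}\ge c\int_0^L\frac{ds}{r(x_k)+L-s}=c\log\Bigl(1+\frac{L}{r(x_k)}\Bigr)\ge c\log\Bigl(1+\frac{\rho}{2r(x_k)}\Bigr)\longrightarrow\infty ,
\]
which closes the missing case. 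The paper applies exactly this to a single curve approaching $\mathcal S$: there $\rho(\gamma(s))\le S-s$, and $\int ds/(S-s)=\infty$.

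Alternatively, you can keep your logarithmic estimate by changing the setup instead. Concatenate curves of $\widetilde g$-length at most $2^{-j}$ joining consecutive terms of a subsequence of $(x_k)$ into one curve of finite $\widetilde g$-length converging to $x_\infty$. Its tail eventually lies in $B$ and has a fixed starting point with $r>0$, so $c\log\bigl(r(\gamma(t_0))/r(\gamma(t))\bigr)\to\infty$ contradicts finite length.
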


\begin{proof}
Let
\[
   \rho(x)=\operatorname{dist}(x,\mathcal S).
\]
On each compact region near \(\mathcal S\),
\[
   w^\alpha\ge(\varepsilon c)^\alpha\rho^{-2},
   \qquad
   ds_{\widetilde g}\ge(\varepsilon c)^{\alpha/2}\rho^{-1}ds_g .
\]
Let \(\gamma:[s_0,S)\to\mathcal R\) be a curve approaching \(\mathcal S\), parametrized by \(g\)-arclength on a final finite-length segment.  Since \(\rho(\gamma(s))\to0\) as \(s\to S\), and since \(\rho\) is \(1\)-Lipschitz,
\[
   \rho(\gamma(s))\le S-s .
\]
Therefore
\[
   L_{\widetilde g}(\gamma)
   \ge
   (\varepsilon c)^{\alpha/2}
   \int_{s_0}^{S}\frac{ds}{\rho(\gamma(s))}
   \ge
   (\varepsilon c)^{\alpha/2}
   \int_{s_0}^{S}\frac{ds}{S-s}
   =
   \infty .
\]
\end{proof}

We use the notation from Section~\ref{sec:weighted-scalar-curvature}.  Set
\begin{equation}\label{eq:tau-m-lambda-section3}
   \tau_m(\lambda)
   :=
   \frac2{\alpha_m(\lambda)}
   =
   \frac{m-2}{1+q_m(\lambda)}
\end{equation}
for the pointwise conformal formula, and
\begin{equation}\label{eq:qf-tau-m-lambda-section3}
   \overline\tau_m(\lambda)
   :=
   \frac2{\overline\alpha_m(\lambda)}
   =
   \frac{m-2}{1+\overline q_m(\lambda)}
\end{equation}
for the quadratic-form conformal formula.
\begin{proposition}
\label{prop:packing-conformal-blowup-scalar}
Assume that \(X=\mathcal R\sqcup\mathcal S\) is an \(m\)-dimensional AM--PI space, and that
\[
   L_{f,\lambda}^{(m)}=L_\omega
\]
for some \(\omega\in\mathcal W_{\rm adm}(X)\).  Assume
\[
   \lambda<\frac1m,
   \qquad
   \tau_m(\lambda)<\tau_{\rm pack}(\mathcal S),
\]
and
\[
   \Scal_f^{m,\lambda}(g)\ge\sigma
   \qquad\text{on }\mathcal R ,
\]
where \(\sigma\) is constant.  Then, for every \(\zeta>0\), there exists \(w\in C^\infty_{\rm loc}(\mathcal R)\), \(w>0\), such that
\[
   \widetilde g=w^{\alpha_m(\lambda)}g,
   \qquad
   \widetilde f=f+\beta_m(\lambda)\log w
\]
satisfy
\[
   \Scal_{\widetilde f}^{m,\lambda}(\widetilde g)
   \ge
   \sigma-\zeta
   \qquad\text{on }\mathcal R .
\]
Moreover, \((\mathcal R,\widetilde g)\) is complete toward \(\mathcal S\).
\end{proposition}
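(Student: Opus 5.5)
Set \(\alpha=\alpha_m(\lambda)\), \(\beta=\beta_m(\lambda)\), \(\mathfrak K=\mathcal K_m(\lambda)\) and \(\tau=\tau_m(\lambda)=2/\alpha\). By hypothesis \(0<\tau<\tau_{\rm pack}(\mathcal S)\); positivity holds because \(q_m(\lambda)>1\) and \(m\ge3\). So Proposition~\ref{prop:global-packing-green-blowup-section3} applies to the admissible weight \(\omega\) with this \(\tau\) and exponent \(\alpha=2/\tau\). It gives a nonnegative \(\Theta\in C^\infty(\mathcal R)\) and constants \(A>0\), \(B\ge0\) with
\[
-L_\omega\Theta\ge A\Theta^{1+\alpha}-B,
\]
together with the local lower bound \(\Theta\ge c_K\operatorname{dist}(\cdot,\mathcal S)^{-\tau}=c_K\operatorname{dist}(\cdot,\mathcal S)^{-2/\alpha}\) near \(\mathcal S\).

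I will take \(w=1+\varepsilon\Theta\), with \(\varepsilon>0\) small, to be fixed later. This \(w\) is smooth and positive on \(\mathcal R\). Since \(L^{(m)}_{f,\lambda}=L_\omega\) and \(L_\omega\) is linear and annihilates constants, \(L^{(m)}_{f,\lambda}w=\varepsilon L_\omega\Theta\). Proposition~\ref{prop:lambda-conformal-formula} then gives
\[
w^\alpha\Scal_{\widetilde f}^{m,\lambda}(\widetilde g)
=\Scal_f^{m,\lambda}(g)-\mathfrak K w^{-1}\varepsilon L_\omega\Theta
\ge \sigma+\mathfrak K A\varepsilon\Theta^{1+\alpha}w^{-1}-\mathfrak K B\varepsilon w^{-1}.
\]
We want \(w^\alpha\Scal_{\widetilde f}^{m,\lambda}(\widetilde g)\ge(\sigma-\zeta)w^\alpha\), that is,
\[
\sigma-(\sigma-\zeta)w^\alpha+\mathfrak K A\varepsilon\Theta^{1+\alpha}w^{-1}-\mathfrak K B\varepsilon w^{-1}\ge0.
\]
Since \(\sigma-(\sigma-\zeta)w^\alpha=-\sigma(w^\alpha-1)+\zeta w^\alpha\), this is Lemma~\ref{lem:conformal-absorption-section3} applied pointwise with \(T=\Theta(x)\) and with \(\zeta\) in place of \(\zeta/2\); equivalently, one may apply the lemma with \(2\zeta\) and with \(2\mathfrak K\) in place of \(\mathfrak K\). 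The factor-of-two bookkeeping needs a small check. The lemma's inequality has \(\tfrac12\sigma(w^\alpha-1)\) and \(\tfrac\zeta2 w^\alpha\), so multiplying our inequality by \(\tfrac12\) matches it exactly, with \(\mathfrak K/2\) in place of \(\mathfrak K\). Hence for \(\varepsilon<\varepsilon_0(\alpha,\mathfrak K/2,A,B,\sigma,\zeta)\) the desired bound \(\Scal_{\widetilde f}^{m,\lambda}(\widetilde g)\ge\sigma-\zeta\) holds on all of \(\mathcal R\). A key point is that \(\varepsilon_0\) is independent of \(T\), so the estimate is uniform even though \(\Theta\) is unbounded.

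Completeness toward \(\mathcal S\) then follows directly from Lemma~\ref{lem:conformal-completeness-section3}. Its hypothesis is exactly the growth estimate \eqref{eq:global-packing-green-growth-section3}, since \(\tau=2/\alpha\), and \(\widetilde g=w^\alpha g\) with \(w=1+\varepsilon\Theta\).

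The main obstacle is entirely contained in the earlier construction of \(\Theta\) via Proposition~\ref{prop:global-packing-green-blowup-section3}. The only delicate points here are two. First, one must check that the drift operator coincides with \(L_\omega\), which is assumed. Second, one must check that the exponent from the differential inequality, \(2/\tau\), coincides with the conformal exponent \(\alpha_m(\lambda)\), so that the superlinear term dominates \(\sigma(w^\alpha-1)\) in the absorption lemma; this holds by the definition of \(\tau_m(\lambda)\).
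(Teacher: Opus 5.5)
Your proof is correct and follows the paper's argument. It uses the same $w=1+\varepsilon\Theta$ from Proposition~\ref{prop:global-packing-green-blowup-section3}, the pointwise conformal formula, absorption of $\sigma(w^\alpha-1)$ by the superlinear term, and Lemma~\ref{lem:conformal-completeness-section3} for completeness. The only difference is cosmetic: you invoke Lemma~\ref{lem:conformal-absorption-section3} directly with $\mathfrak K=\mathcal K_m(\lambda)/2$, whereas the paper repeats that lemma's estimate inline to reach $\Scal_{\widetilde f}^{m,\lambda}(\widetilde g)\ge\sigma-C\varepsilon-\mathcal K_m(\lambda)B\varepsilon$. The choice $\mathfrak K=\mathcal K_m(\lambda)/2$ is the right bookkeeping, so delete the stray remark about using $2\zeta$ and $2\mathfrak K$.
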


\begin{proof}
Put \(\alpha=\alpha_m(\lambda)\), \(\beta=\beta_m(\lambda)\), and \(\tau=\tau_m(\lambda)=2/\alpha\).  By Proposition~\ref{prop:global-packing-green-blowup-section3}, there are \(\Theta\in C^\infty_{\rm loc}(\mathcal R)\), \(A>0\), and \(B\ge0\) such that
\begin{equation}\label{eq:pointwise-blowup-theta-section3}
   -L_\omega\Theta\ge A\Theta^{1+\alpha}-B,
   \qquad
   \Theta(x)\ge c_K\operatorname{dist}(x,\mathcal S)^{-2/\alpha}
\end{equation}
for \(x\in K\cap\mathcal R\) sufficiently close to \(\mathcal S\), for every \(K\Subset X\).

For \(\varepsilon>0\), set
\[
   w=1+\varepsilon\Theta,
   \qquad
   \widetilde g=w^\alpha g,
   \qquad
   \widetilde f=f+\beta\log w .
\]
The pointwise conformal formula gives
\[
   w^\alpha
   \Scal_{\widetilde f}^{m,\lambda}(\widetilde g)
   =
   \Scal_f^{m,\lambda}(g)
   -
   \mathcal K_m(\lambda)w^{-1}L_{f,\lambda}^{(m)}w .
\]
Since
\[
   L_{f,\lambda}^{(m)}=L_\omega,
   \qquad
   L_\omega w=\varepsilon L_\omega\Theta,
\]
we get from \eqref{eq:pointwise-blowup-theta-section3} and the fact $w\geq 1$ that
\[
   \Scal_{\widetilde f}^{m,\lambda}(\widetilde g)
   \ge
   \sigma w^{-\alpha}
   +
   \mathcal K_m(\lambda)A\varepsilon\Theta^{1+\alpha}w^{-1-\alpha}
   -
   \mathcal K_m(\lambda)B\varepsilon .
\]
As in the proof of Lemma~\ref{lem:conformal-absorption-section3}, we have
\[
   \sigma_+\bigl(1-w^{-\alpha}\bigr)
   -
   \mathcal K_m(\lambda)A
   \varepsilon\Theta^{1+\alpha}w^{-1-\alpha}
   \le C\varepsilon .
\]
Therefore \( \Scal_{\widetilde f}^{m,\lambda}(\widetilde g)\ge \sigma-C\varepsilon-\mathcal K_m(\lambda)B\varepsilon\). Choosing \(\varepsilon>0\) sufficiently small gives \( \Scal_{\widetilde f}^{m,\lambda}(\widetilde g)\ge\sigma-\zeta\). Completeness follows from the growth estimate in \eqref{eq:pointwise-blowup-theta-section3} and Lemma~\ref{lem:conformal-completeness-section3}.
\end{proof}

\begin{proposition}
\label{prop:qf-conformal-blowup-section3}
Assume that \(X=\mathcal R\sqcup\mathcal S\) is an \(m\)-dimensional AM--PI space, and that
\[
   \overline L_{f,\lambda}^{(m)}=L_\omega
\]
for some \(\omega\in\mathcal W_{\rm adm}(X)\).  Assume
\[
   \lambda<\frac1{m+1},
   \qquad
   \overline\tau_m(\lambda)<\tau_{\rm pack}(\mathcal S),
\]
and assume that
\[
   Q_{a_{m+1}(\lambda),\sigma}^{g,f}(\varphi)\ge0
   \qquad
   \text{for all }\varphi\in C_c^\infty(\mathcal R),
\]
where \(\sigma\) is constant.  Then, for every \(\zeta>0\), there exists \(w\in C^\infty_{\rm loc}(\mathcal R)\), \(w>0\), such that
\[
   \widetilde g=w^{\overline\alpha_m(\lambda)}g,
   \qquad
   \widetilde f=f+\overline\beta_m(\lambda)\log w
\]
satisfy
\[
   Q_{a_{m+1}(\lambda),\,\sigma-\zeta}^{\widetilde g,\widetilde f}(\psi)\ge0
   \qquad
   \text{for all }\psi\in C_c^\infty(\mathcal R).
\]
Moreover, \((\mathcal R,\widetilde g)\) is complete.
\end{proposition}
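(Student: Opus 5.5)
The plan is to mirror the proof of Proposition~\ref{prop:packing-conformal-blowup-scalar}, replacing the pointwise conformal formula by the quadratic-form identity of Proposition~\ref{prop:qf-conformal-formula}. We put \(\alpha=\overline\alpha_m(\lambda)\), \(\beta=\overline\beta_m(\lambda)\), \(s=\overline s_m(\lambda)\), \(\mathfrak K=\overline{\mathcal K}_m(\lambda)\) and \(\tau=\overline\tau_m(\lambda)=2/\alpha\). Since \(\tau<\tau_{\rm pack}(\mathcal S)\), Proposition~\ref{prop:global-packing-green-blowup-section3}, applied with the admissible weight \(\omega\), produces \(\Theta\in C^\infty(\mathcal R)\), \(\Theta\ge0\), and constants \(A>0\), \(B\ge0\) with
\[
-L_\omega\Theta\ge A\Theta^{1+\alpha}-B ,
\]
together with the growth bound \(\Theta\ge c_K\operatorname{dist}(\cdot,\mathcal S)^{-2/\alpha}\) near \(\mathcal S\). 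We then set \(w=1+\varepsilon\Theta\), where \(\varepsilon>0\) will be fixed at the end, and define \(\widetilde g=w^{\alpha}g\) and \(\widetilde f=f+\beta\log w\). Completeness of \((\mathcal R,\widetilde g)\) toward \(\mathcal S\) then follows directly from Lemma~\ref{lem:conformal-completeness-section3}.

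For the quadratic form, take \(\psi\in C_c^\infty(\mathcal R)\) and write \(\psi=w^{-s}\varphi\), so that \(\varphi=w^{s}\psi\in C_c^\infty(\mathcal R)\); this is a bijection of \(C_c^\infty(\mathcal R)\). Changing \(\sigma\) to \(\sigma-\zeta\) in the zero-order term adds
\[
\frac{\zeta}{2}\int e^{-\widetilde f}(w^{-s}\varphi)^2\,d\mu_{\widetilde g}=\frac{\zeta}{2}\int e^{-f}w^{\alpha}\varphi^2\,d\mu_g ,
\]
by the same bookkeeping \(\frac{\alpha m}{2}-\beta-2s=\alpha\) used in that proof. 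Combining this with \eqref{eq:qf-conformal-formula} gives
\[
Q^{\widetilde g,\widetilde f}_{a_{m+1}(\lambda),\sigma-\zeta}(\psi)
=Q^{g,f}_{a_{m+1}(\lambda),\sigma}(\varphi)
+\int e^{-f}\Bigl[-\tfrac{\mathfrak K}{2}w^{-1}L_\omega w-\tfrac12\sigma(w^\alpha-1)+\tfrac{\zeta}{2}w^\alpha\Bigr]\varphi^2\,d\mu_g .
\]
Here we have used the hypothesis \(\overline L^{(m)}_{f,\lambda}=L_\omega\). The first term is nonnegative by assumption. In the bracket, \(-L_\omega w=-\varepsilon L_\omega\Theta\ge\varepsilon(A\Theta^{1+\alpha}-B)\), so the bracket is bounded below by
\[
\tfrac12\Bigl[\mathfrak K A\varepsilon\Theta^{1+\alpha}w^{-1}-\mathfrak K B\varepsilon w^{-1}-\sigma(w^\alpha-1)+\zeta w^\alpha\Bigr].
\]
This is exactly the left-hand side of \eqref{eq:absorption-qf-section3} with \(T=\Theta\), up to harmless constant factors: one can replace \(\sigma,\zeta\) by \(2\sigma,2\zeta\), or apply the lemma directly to the expression with the factor \(\tfrac12\) inserted. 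Lemma~\ref{lem:conformal-absorption-section3} then shows the bracket is pointwise nonnegative once \(\varepsilon<\varepsilon_0\). Hence the new form is nonnegative on all of \(C_c^\infty(\mathcal R)\).

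The main point to check carefully is that \(\mathfrak K=\overline{\mathcal K}_m(\lambda)>0\), since Lemma~\ref{lem:conformal-absorption-section3} requires \(\mathfrak K>0\), and that \(\alpha>0\). Both follow from \(\overline q>0\) and \(2a_{m+1}(\lambda)-1>0\) when \(\lambda<1/(m+1)\). A second point is that \(w\) is smooth and positive and that the integration by parts in Proposition~\ref{prop:qf-conformal-formula} is legitimate. This holds because \(\varphi\) has compact support in \(\mathcal R\), away from \(\mathcal S\), so only the smooth theory on \(\mathcal R\) is involved. No capacity argument is needed at this stage; the packing hypothesis enters only through the construction of \(\Theta\).
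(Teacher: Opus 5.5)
Your proposal is correct and follows the paper's proof essentially step for step: the same $\Theta$ from Proposition~\ref{prop:global-packing-green-blowup-section3}, $w=1+\varepsilon\Theta$, the identity \eqref{eq:qf-conformal-formula} with the $\zeta$-term split off via $\frac{\overline\alpha m}{2}-\overline\beta-2\overline s=\overline\alpha$, Lemma~\ref{lem:conformal-absorption-section3} (which the paper applies directly with $\mathfrak K=\overline{\mathcal K}_m(\lambda)/2$), the bijection $\varphi\mapsto w^{-\overline s}\varphi$, and Lemma~\ref{lem:conformal-completeness-section3}. One small correction: $\overline{\mathcal K}_m(\lambda)=\frac{m(1+\overline q)}{m-2}-\frac{\overline q}{2a_{m+1}(\lambda)-1}$ is a difference, so its positivity does not follow from $\overline q>0$ and $2a_{m+1}(\lambda)-1>0$ alone; it does hold, because $2a_{m+1}(\lambda)-1=\frac{1-(m-1)\lambda}{1-(m+1)\lambda}$ is increasing in $\lambda$ and hence exceeds $\frac{m-1}{m+1}$, which gives $\frac{\overline q}{2a_{m+1}(\lambda)-1}<\frac{(m+1)\overline q}{m-1}<\frac{m(1+\overline q)}{m-2}$.
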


\begin{proof}
Put
\[
   \alpha=\overline\alpha_m(\lambda),\qquad
   \beta=\overline\beta_m(\lambda),\qquad
   s=\overline s_m(\lambda),
   \qquad
   \tau=\overline\tau_m(\lambda)=2/\alpha .
\]
By Proposition~\ref{prop:global-packing-green-blowup-section3}, there are
\(\Theta\in C^\infty_{\rm loc}(\mathcal R)\), \(A>0\), and \(B\ge0\) such that
\begin{equation}\label{eq:qf-blowup-theta-section3}
   -L_\omega\Theta\ge A\Theta^{1+\alpha}-B,
   \qquad
   \Theta(x)\ge c_K\operatorname{dist}(x,\mathcal S)^{-2/\alpha}
\end{equation}
for \(x\in K\cap\mathcal R\) sufficiently close to \(\mathcal S\), for every
\(K\Subset X\).

For \(\varepsilon>0\), set
\[
   w=1+\varepsilon\Theta,
   \qquad
   \widetilde g=w^\alpha g,
   \qquad
   \widetilde f=f+\beta\log w .
\]
For \(\varphi\in C_c^\infty(\mathcal R)\), the quadratic-form conformal formula gives
\[
\begin{split}
Q_{a_{m+1}(\lambda),\sigma}^{\widetilde g,\widetilde f}
(w^{-s}\varphi)  = &
Q_{a_{m+1}(\lambda),\sigma}^{g,f}(\varphi)
-
\frac{\overline{\mathcal K}_m(\lambda)}2
\int_{\mathcal R} e^{-f}w^{-1}
\overline L_{f,\lambda}^{(m)}w\,\varphi^2\,d\mu_g  \\
&
-\frac12 \int_{\mathcal R} e^{-f}\sigma(w^\alpha-1)\varphi^2\,d\mu_g .
\end{split}
\]
Using
\[
   Q_{a_{m+1}(\lambda),\sigma}^{g,f}\ge0,
   \qquad
   \overline L_{f,\lambda}^{(m)}=L_\omega,
   \qquad
   L_\omega w=\varepsilon L_\omega\Theta,
\]
and \eqref{eq:qf-blowup-theta-section3}, we obtain
\[
\begin{split}
&Q_{a_{m+1}(\lambda),\sigma-\zeta}^{\widetilde g,\widetilde f}
(w^{-s}\varphi)  \\
\ge &
\int_{\mathcal R} e^{-f}
\left[
   \frac{\overline{\mathcal K}_m(\lambda)}2
   A\varepsilon\Theta^{1+\alpha}w^{-1}
   -
   \frac{\overline{\mathcal K}_m(\lambda)}2
   B\varepsilon w^{-1}
   -
   \frac12\sigma(w^\alpha-1)
   +
   \frac{\zeta}{2}w^\alpha
\right]\varphi^2\,d\mu_g .
\end{split}
\]
Applying Lemma~\ref{lem:conformal-absorption-section3} with \(\mathfrak K=\frac{\overline{\mathcal K}_m(\lambda)}2\) shows that the bracket is nonnegative for \(\varepsilon>0\) sufficiently small. Hence
\[
   Q_{a_{m+1}(\lambda),\sigma-\zeta}^{\widetilde g,\widetilde f}
   (w^{-s}\varphi)\ge0
   \qquad
   \text{for all }\varphi\in C_c^\infty(\mathcal R).
\]
Since multiplication by \(w^{-s}\) is a bijection on \(C_c^\infty(\mathcal R)\), the required quadratic-form inequality follows. Completeness follows from \eqref{eq:qf-blowup-theta-section3} and Lemma~\ref{lem:conformal-completeness-section3}.
\end{proof}

\begin{remark}
\label{rem:codimension-thresholds-section3}
Suppose that \(\mathcal S\) satisfies the local packing condition of codimension \(c\).  Then \(\tau_{\rm pack}(\mathcal S)\ge c-2\).

For the pointwise blow-up, the condition \(\tau_m(\lambda)<c-2\) is automatic for all \(\lambda\leq 1/m\) if \(c\ge (m+2)/2\).  If \(3-2/m<c<(m+2)/2\), then it is equivalent to
\[
   \lambda<\lambda_{m,c}^{\rm pt},
   \qquad
   \lambda_{m,c}^{\rm pt}
   =
   \frac{c^2-2c-m+2}{(c-1)(m(c-3)+2)} .
\]

For the quadratic-form blow-up, the condition \(\overline\tau_m(\lambda)<c-2\) is automatic for all \(\lambda\leq 1/(m+1)\) if \(c\ge (m+2)/2\).  If
\[
   2+\frac{m-2}{1+\sqrt{m(m-1)/2}}<c<\frac{m+2}{2},
\]
then it is equivalent to
\[
   \lambda<\lambda_{m,c}^{\rm qf},
   \qquad
   \lambda_{m,c}^{\rm qf}
   =
   \frac{c^2-2m}{(m+1)c^2-4mc+2m}.
\]
In particular, for \(c=7\), the condition is automatic when \(m\le12\), while for \(m>12\) it becomes
\[
   \lambda<\lambda_{m,7}^{\rm qf}
   =
   \frac{49-2m}{23m+49}.
\]
\end{remark}

\section{Cube inequality}
\label{sec:cube-inequality}

Throughout this section, we denote \(I=[-1,1]\).

Let \(Y^r\) be either a point, in which case \(r=0\), or a connected complete oriented smooth manifold without boundary. Let \(k\geq 0\) and set \(n=r+k\). Let \((X^n,\partial X,g)\) be a connected complete oriented smooth Riemannian manifold with boundary, and let \(F^0=(\eta,\theta_1,\ldots,\theta_k):X\to Y\times I^k\) be a proper continuous map, smooth on the interior, such that \(F^0:(X,\partial X)\to (Y\times I^k, Y\times\partial I^k)\) has nonzero degree. If \(Y\) is a point, we omit the factor \(\eta\).

For \(i=1,\ldots,k\), set
\[
C_i^\pm=(F^0)^{-1}(Y\times I^{i-1}\times\{\pm 1\}\times I^{k-i}),\qquad d_i=\operatorname{dist}_g(C_i^-,C_i^+).
\]
Fix auxiliary numbers \(0<\bar{d}_i<d_i\). We choose smooth functions \(\tau_i: X\to [-1,1]\), smooth on the interior, such that \(\tau_i=\pm 1\) on a neighborhood of \(C_i^\pm\) and \(\operatorname{Lip}_g(\tau_i)\leq 2/\bar{d}_i\). 

Set \(F=(\eta,\tau_1,\ldots,\tau_k):X\to Y\times I^k\). Then \(F\) is proper, and the straight-line homotopy in the cubical coordinates from \(F^0\) to \(F\) is a proper homotopy of pairs, hence \(F\) has the same nonzero degree as \(F^0\).  Since we let \(\bar{d}_i\uparrow d_i\) in the final step for the cube inequality, we rename \(\bar{d}_i\) by \(d_i\) for simplicity.

Unless explicitly stated otherwise, all closures in this section are taken in the ambient space \(X\), with respect to the topology induced by \(g\).

\subsection{Descent data}
For \(m\geq r\), set \(B_m=Y\times I^{m-r}\). If \(r=0\), this means \(B_m=I^m\). We write \(\partial B_m=Y\times \partial I^{m-r}\), and when \(m=r\), we use \(I^0=\{\operatorname{pt}\}\), so \(B_r=Y\) and \(\partial B_r=\emptyset\). We write
\[
   F_m=(\eta,\tau_1,\ldots,\tau_{m-r}):X\to B_m.
\]
By a harmless abuse of notation, the same symbol \(F_m\) denotes its restriction to any subset of \(X\) whenever the domain is clear. 

We use the usual locally finite relative degree for proper maps to possibly noncompact manifolds with boundary.
\begin{definition}
     If a map of pairs
\[
   G:(M,\partial M)\to (N,\partial N)
\]
is given and \(U\subset N\) is open, we say that \(G\) is \emph{proper over} \(U\) if the restricted map
\[
   G:G^{-1}(U)\to U
\]
is proper, with the relative boundary understood as \(\partial M\cap G^{-1}(U)\to \partial N\cap U\).  When \(U\) is connected and oriented, the degree of \(G\) over \(U\), denoted \(\deg_U(G)\), is defined by
\[
   G_*[G^{-1}(U),\partial M\cap G^{-1}(U)]_{\rm lf}
   =
   \deg_U(G)\cdot\,[U,\partial N\cap U]_{\rm lf}.
\]
\end{definition}

 The degree has a natural interpretation in differential topology as well.
For a regular value \(y\in U\), we write
\[
   \deg_y(G)=\sum_{p\in G^{-1}(y)}\operatorname{sgn}_p(\mathrm dG).
\]
If \(G\) is proper over \(U\), this number is finite and equals the degree over \(U\). For background on locally finite homology, see, for example, \cite[Chs.~10--11]{Geoghegan2008}.

\begin{definition}
    Let $m\geq r$. An $m$-dimensional datum controlled over \(B_m\) consists of a smooth embedded \(m\)-manifold with boundary \(X_m^\circ\subset X\), together with a metric \(g_m\), a smooth function \(f_m\), and a constant \(\sigma_m\).
    We set
\[
   X_m=\overline{X_m^\circ},
   \qquad
   D_m=X_m\setminus X_m^\circ .
\]
The datum is required to satisfy the following conditions:
\begin{itemize}
    \item \(X_m\cap \tau_i^{-1}(\pm 1)=\emptyset\) for all \(i>m-r\), and
    \[
       \dim_{\mathcal H}D_m\le m-7 .
    \]    
    \item $X_m^\circ$ is properly embedded in $X\setminus D_m$. 
 
    \item The metric \(g_m\) is smooth and complete on \(X_m^\circ\), and
    \[
       \Scal^{m,\lambda}_{f_m}(g_m)\geq \sigma_m .
    \]
    Moreover, for every point \(p\in D_m\) there are \(c_p,r_p>0\) such that
    \[
       g_m\geq c_p d_g(\cdot,p)^{-2}g
       \qquad \text{on}\qquad
       \{x\in X_m^\circ:d_g(x,p)<r_p\}.
    \]
      
    \item The restriction
\[
   F_m:(X_m^\circ,\partial X_m^\circ)\to(B_m,\partial B_m)
\]
is proper over \(B_m\setminus F_m(D_m)\) and has nonzero (locally finite relative) degree over this open set.
    \item We have
    \[
       \operatorname{Lip}_{g_m}(\tau_i)\leq \frac2{d_i},
       \qquad 1\leq i\leq m-r .
    \]
\end{itemize}
It follows from the preceding assumptions that \(F_m(D_m)\) is closed in \(B_m\), and
\[
   \dim_{\mathcal H}F_m(D_m)\le m-7 .
\]
\end{definition}

\subsection{Finite-width localization}

We shall use the following fact.
\begin{lemma}\label{lem:good-coordinate-levels}
Let \(U\) be a smooth manifold, let \(A\subset U\) be closed with \(\dim_{\mathcal H}A\le d\), and let \(\tau:U\to\mathbb R^q\) be smooth.  Then for almost every
\(a\in\mathbb R^q\),
\[
   \dim_{\mathcal H}
   \bigl(A\cap\tau^{-1}(a)\bigr)
   \le d-q .
\]
\end{lemma}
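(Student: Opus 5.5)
This is a Marstrand/Mattila-type slicing statement. I will prove it via the Eilenberg (coarea) inequality for Hausdorff measures, after localizing to charts where \(\tau\) is Lipschitz. If \(d<q\), the claim reads \(A\cap\tau^{-1}(a)=\varnothing\) for a.e. \(a\), with negative dimension meaning empty. This follows once we show \(\mathcal H^q(\tau(A))=0\): a Lipschitz image of a set with \(\dim_{\mathcal H}<q\) has \(\mathcal H^q\)-measure zero. So from now on we assume \(d\ge q\) and fix \(s>d\).

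\emph{Localization.} Cover \(U\) by countably many coordinate charts \(\phi_j:V_j\to\mathbb R^N\) with compact closures \(K_j=\overline{V_j}\). On each \(K_j\), the map \(\tau\) is \(L_j\)-Lipschitz with respect to a background metric, and the chart identifies Hausdorff measures up to bi-Lipschitz constants. Hausdorff dimension is unaffected by bi-Lipschitz changes and by countable unions. Moreover, a countable union of null sets of parameters \(a\) is null. It therefore suffices to prove the claim for \(A_j=A\cap K_j\) and a Lipschitz map \(\tau\), where \(A_j\) is a compact subset of a metric space.

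\emph{Eilenberg inequality.} For a set \(E\) in a metric space, a Lipschitz map \(\tau\) into \(\mathbb R^q\), and \(s\ge q\), we have
\[
\int^*_{\mathbb R^q}\mathcal H^{s-q}\bigl(E\cap\tau^{-1}(a)\bigr)\,d\mathcal L^q(a)\le C(s,q)\,\operatorname{Lip}(\tau)^q\,\mathcal H^s(E).
\]
If the proof of this should be supplied, it runs as follows. Cover \(E\) by sets \(E_i\) of diameter \(r_i<\delta\) with \(\sum r_i^s\le\mathcal H^s_\delta(E)+\eta\). Then for each \(a\),
\[
\mathcal H^{s-q}_\delta\bigl(E\cap\tau^{-1}(a)\bigr)\le \sum_i r_i^{s-q}\mathbf 1_{\tau(E_i)}(a).
\]
Integrating in \(a\), using \(\mathcal L^q(\tau(E_i))\le C\,(Lr_i)^q\), and applying Fatou's lemma as \(\delta\to0\) gives the inequality.

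Applying this with \(E=A_j\) and \(s>d\), we have \(\mathcal H^s(A_j)=0\). Hence \(\mathcal H^{s-q}(A_j\cap\tau^{-1}(a))=0\) for a.e. \(a\), so \(\dim_{\mathcal H}(A_j\cap\tau^{-1}(a))\le s-q\) for those \(a\). Now take \(s=d+1/k\) with \(k\in\mathbb N\). Since the intersection over \(k\) of the corresponding full-measure sets still has full measure, we obtain \(\dim_{\mathcal H}\le d-q\) for a.e. \(a\).

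The main obstacle is only the measurability issue in the Eilenberg inequality, and it is handled by using the upper integral throughout: a nonnegative function with upper integral zero vanishes almost everywhere. Closedness of \(A\) is not really needed, but it makes \(A_j\) compact, which keeps the covering argument clean.
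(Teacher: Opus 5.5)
Your proof is correct and follows the paper's route. The paper simply cites Mattila's Theorem~7.7, which is the Eilenberg inequality \(\int^*\mathcal H^{s-q}(A\cap\tau^{-1}(a))\,d\mathcal L^q(a)\le C\operatorname{Lip}(\tau)^q\,\mathcal H^s(A)\) that you reprove and then apply with \(s=d+1/k\), after localizing to compact pieces where \(\tau\) is Lipschitz.
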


\begin{proof}
This is the standard slicing theorem for Hausdorff dimension; see Mattila~\cite[Theorem~7.7]{Mattila1995}.
\end{proof}

\begin{corollary}\label{cor:slicing}
Let an \(m\)-dimensional datum be controlled over \(B_m=Y\times I^{m-r}\), and suppose \(m>r\).  Put \(j=m-r\).  Then for almost every \(a\in(-1,1)\), the set \(X_m\cap\tau_j^{-1}(a)\) has the regular-singular decomposition
\[
   X_m\cap\tau_j^{-1}(a)
   =
   (X_m^\circ\cap\tau_j^{-1}(a))
   \sqcup
   (D_m\cap\tau_j^{-1}(a)),
\]
where \(X_m^\circ\cap\tau_j^{-1}(a)\) is a smooth hypersurface in \(X_m^\circ\),
\[
   \overline{X_m^\circ\cap\tau_j^{-1}(a)}
   \subset
   X_m\cap\tau_j^{-1}(a)
   \quad\text{in }X,
\]
and \(\dim_{\mathcal H}(D_m\cap\tau_j^{-1}(a))\le m-8\).
\end{corollary}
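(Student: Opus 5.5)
The plan is to obtain the three assertions separately, each by applying Sard's theorem or Lemma~\ref{lem:good-coordinate-levels} to the coordinate function \(\tau_j\), and then to intersect the resulting full-measure sets of levels \(a\).

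\emph{Smoothness of the regular slice.} The function \(\tau_j|_{X_m^\circ}\) is smooth on the smooth \(m\)-manifold with boundary \(X_m^\circ\). Its boundary \(\partial X_m^\circ\) is mapped by \(F_m\) into \(\partial B_m\), so boundary points may have \(\tau_j=\pm1\). We apply Sard's theorem to \(\tau_j\) on the interior of \(X_m^\circ\) and to \(\tau_j|_{\partial X_m^\circ}\). For almost every \(a\in(-1,1)\), \(a\) is then a regular value of both, so \(X_m^\circ\cap\tau_j^{-1}(a)\) is a smooth hypersurface in \(X_m^\circ\), transverse to the boundary. This holds since \(X_m^\circ\) is second countable and Sard applies.

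\emph{Closure and decomposition.} The level set \(\tau_j^{-1}(a)\) is closed in \(X\), because \(\tau_j\) is continuous on \(X\). Hence
\[
\overline{X_m^\circ\cap\tau_j^{-1}(a)}\subset \overline{X_m^\circ}\cap\tau_j^{-1}(a)=X_m\cap\tau_j^{-1}(a).
\]
The disjoint decomposition follows from \(X_m=X_m^\circ\sqcup D_m\) by intersecting with \(\tau_j^{-1}(a)\). This step holds for every \(a\).

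\emph{Dimension of the singular slice.} We apply Lemma~\ref{lem:good-coordinate-levels} with \(q=1\), \(d=m-7\), and \(A=D_m\). Here \(D_m\) is closed in \(X\), since it equals \(X_m\setminus X_m^\circ\) and \(X_m^\circ\) is open in \(X_m\) because it is properly embedded in \(X\setminus D_m\). We work in the ambient smooth manifold \(U=X^\circ\) with the smooth function \(\tau_j\); the pieces of \(D_m\) on \(\partial X\) can be handled the same way in a collar, or noted to be empty when \(\tau_j\) is restricted appropriately. The lemma gives \(\dim_{\mathcal H}(D_m\cap\tau_j^{-1}(a))\le m-8\) for almost every \(a\). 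Hausdorff dimension is taken with respect to \(g\), and the slicing theorem applies locally in charts since \(\tau_j\) is smooth, hence locally Lipschitz.

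\emph{Main obstacle.} The only delicate point is the closedness of \(D_m\) and the applicability of the slicing lemma near \(\partial X\) or where \(X\) has boundary. I would address this by working in \(X^\circ\) together with a collar, and by using the datum condition that \(X_m\) avoids \(\tau_i^{-1}(\pm1)\) for \(i>j\). Taking the intersection of the full-measure sets from the three steps then finishes the argument.
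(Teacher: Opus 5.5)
Your proposal is correct and follows the argument the paper intends. The paper gives no separate proof: the corollary is an immediate consequence of Sard's theorem for the regular slice, continuity of \(\tau_j\) for the closure inclusion, and the slicing Lemma~\ref{lem:good-coordinate-levels} with \(A=D_m\), \(q=1\), \(d=m-7\) for the singular slice, which is exactly your plan. (One small point: your justification for the closedness of \(D_m\) is circular, since proper embeddedness in \(X\setminus D_m\) already presupposes that \(D_m\) is closed; argue instead that the embedded submanifold \(X_m^\circ\) is locally closed, hence open in its closure \(X_m\).)
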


From now on, fix such a good level \(a\in(-1,1)\), and set
\[
   \widetilde X_{m-1}=X_m\cap\tau_j^{-1}(a),\quad
   \widetilde X_{m-1}^\circ=X_m^\circ\cap\tau_j^{-1}(a),\quad
   \widetilde D_{m-1}=D_m\cap\tau_j^{-1}(a).
\]

We have the following lemma on finite-width localization.

\begin{lemma}\label{lem:finite-width-localization}
For every \(W>0\), the neighborhood
\[
   N_W=\{x\in X_m^\circ:
      d_{g_m}(x,\widetilde X_{m-1}^\circ)\le W\}
\]
satisfies \(\overline{N_W}\cap D_m\subset\widetilde D_{m-1}\).
\end{lemma}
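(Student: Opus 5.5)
Take \(p\in\overline{N_W}\cap D_m\); the plan is to show \(p\in\widetilde D_{m-1}\), which amounts to \(\tau_j(p)=a\) since \(p\in D_m\) already. Choose points \(x_k\in N_W\) with \(x_k\to p\) in the \(g\)-topology. Each \(x_k\) then has a partner \(y_k\in\widetilde X_{m-1}^\circ\) and a curve \(\gamma_k\subset X_m^\circ\) from \(x_k\) to \(y_k\) with \(L_{g_m}(\gamma_k)\le W+1\).

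The first step uses the Lipschitz bound \(\operatorname{Lip}_{g_m}(\tau_j)\le 2/d_j\), which is one of the datum conditions because \(j=m-r\). Along \(\gamma_k\) it gives \(|\tau_j(x_k)-a|\le 2(W+1)/d_j\). That estimate only bounds the drift of \(\tau_j\) by a constant, so it cannot force \(\tau_j(p)=a\). The blow-up lower bound \(g_m\ge c_p d_g(\cdot,p)^{-2}g\) near \(p\) has to be brought in.

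The second step exploits that lower bound. Pick \(r<r_p\). If \(y_k\) stays outside \(B^g_r(p)\) while \(d_g(x_k,p)\to0\), then \(\gamma_k\) must cross the annulus \(\{d_g(x_k,p)<d_g(\cdot,p)<r\}\). Measured in \(g_m\), that crossing costs at least
\[
\sqrt{c_p}\int_{d_g(x_k,p)}^{r}\frac{dt}{t}=\sqrt{c_p}\,\log\frac{r}{d_g(x_k,p)}\longrightarrow\infty ,
\]
which contradicts \(L_{g_m}(\gamma_k)\le W+1\). The estimate uses only the fact that \(d_g(\cdot,p)\) is \(1\)-Lipschitz for \(g\), exactly as in Lemma~\ref{lem:conformal-completeness-section3}. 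So for every \(r>0\) we get \(d_g(y_k,p)<r\) for all large \(k\), that is, \(y_k\to p\) in \(X\).

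The conclusion then follows from continuity. The function \(\tau_j\) is continuous on \(X\), so \(\tau_j(p)=\lim\tau_j(y_k)=a\). Hence \(p\in D_m\cap\tau_j^{-1}(a)=\widetilde D_{m-1}\). Equivalently, \(p\) lies in the closure of \(\widetilde X_{m-1}^\circ\), which Corollary~\ref{cor:slicing} places inside \(X_m\cap\tau_j^{-1}(a)\).

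The main obstacle is making the second step rigorous when the metric lower bound holds only on \(\{x\in X_m^\circ:d_g(x,p)<r_p\}\). I would note that \(\gamma_k\) can be chosen inside \(X_m^\circ\): the distance \(d_{g_m}\) is the intrinsic distance of \(X_m^\circ\), and \(\widetilde X_{m-1}^\circ\subset X_m^\circ\). The part of \(\gamma_k\) inside \(B^g_r(p)\) therefore lies in the region where the bound applies. The log divergence also needs the \(g\)-length to be finite, or an arclength reparametrization on the final segment; this holds because \(\gamma_k\) is a piecewise smooth curve on the smooth manifold \(X_m^\circ\).

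The set \(D_m\) may contain points other than \(p\), but these never interfere. The estimate near \(p\) is local, and the curve only needs to be in \(X_m^\circ\).
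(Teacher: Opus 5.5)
Your proof is correct and follows the paper's argument. The blow-up bound $g_m\ge c_p d_g(\cdot,p)^{-2}g$ makes any curve from $x_k$ that leaves $B^g_r(p)$ have $g_m$-length at least $\sqrt{c_p}\log\bigl(r/d_g(x_k,p)\bigr)$, so the partners $y_k\in\widetilde X_{m-1}^\circ$ must converge to $p$. Then $p\in\overline{\widetilde X_{m-1}^\circ}\cap D_m\subset\widetilde D_{m-1}$, and your appeal to continuity of $\tau_j$ gives the same conclusion.

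Your direct formulation (for each $r$, eventually $y_k\in B^g_r(p)$) is slightly cleaner than the paper's contradiction argument. The paper tacitly assumes the partner points converge to some $p\neq q$, which your version does not need.
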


\begin{proof}
Suppose \(q\in D_m\) and \(q_i\in N_W\) with \(q_i\to q\).  For each \(i\), there are \(p_i\in\widetilde X_{m-1}^\circ\) and a curve \(\gamma_i\subset X_m^\circ\) from \(q_i\) to \(p_i\) with \(L_{g_m}(\gamma_i)\le W+1\).  We claim $p_i\to q$ as $i\to \infty$. Once this is proved, then we have
    $$q\in \overline{\widetilde X_{m-1}^\circ} \cap D_m\subset\widetilde D_{m-1}.$$
     To prove the claim, we argue by contradiction. Suppose that $p_i\to p\neq q$. Let us parameterize the curve $$\gamma_i:[0,s_i]\to X_m^\circ$$ to have $g$-unit speed. For $i$ large enough, we have \(s_i\geq \frac{1}{2}d_g(p,q)\).  Recall that there are positive constants $c_q,r_q>0$ such that 
    \[g_m\geq c_q d_g(\cdot,q)^{-2}g\qquad\text{on}\qquad \{x\in X_m^\circ:d_g(x,q)<r_q\}.\]
    
    Without loss of generality, we may assume $r_q<d_g(p,q)/2$. Then for $i$ large enough we can compute
    \[
        L_{g_m}(\gamma_i)\geq \int_0^{r_q-d_g(q_i,q)} |\gamma_i'(s)|_{g_m}\,\mathrm ds
        \geq \sqrt {c_q}\log\frac{r_q}{2d_g(q_i,q)}>W.
    \]
    This gives the desired contradiction, and so the claim is proved.
\end{proof}

\subsection{The \(\mu\)-bubble in a non-compact band}\label{subsec:mu-bubble-noncompact-band}

Assume \(m>r\) and fix a good level \(a\in(-1,1)\) as above. For \(W>0\), set
\[
   N_W=\{x\in X_m^\circ\mid
      d_{g_m}(x,\widetilde X_{m-1}^\circ)\le W\}.
\]
After smoothing this finite-width metric band and the signed distance to \(\widetilde X_{m-1}^\circ\), for every \(L<W\) we obtain a smooth \(L\)-band \(\Omega\subset N_W\) and a smooth function
\[
   s:(\Omega,\partial_\pm\Omega)\to([-L,L],\pm L),\quad
   s^{-1}(\pm L)=\partial_\pm\Omega,\quad
   s^{-1}(0)=\widetilde X_{m-1}^\circ\cap\Omega,\quad
   |ds|_{g_m}\le1.
\]
The side boundary is
\[
   \partial_s\Omega
   =\partial\Omega\cap F_{m-1}^{-1}(\partial B_{m-1}),
\]
and the faces \(\partial_\pm\Omega\) meet \(\partial_s\Omega\) in acute inner angles.

Let \(a_L=\pi/(2L)\) and \(\Phi(s)=-\mu a_L\tan(a_Ls)\), where \(\mu=2(1-\lambda)\).  Then \(\Phi(s)\to+\infty\) as \(s\to -L\), and
\(\Phi(s)\to-\infty\) as \(s\to L\). Since \(|d\Phi|_{g_m}\le \mu a_L^2(1+\tan^2(a_Ls))\), for every unit vector \(\xi\), we have
\begin{equation}\label{eq:tangent-potential-new}
\begin{split}
2\langle\nabla\Phi,\xi\rangle+\frac1{1-\lambda}\Phi^2
&\ge
-2\mu a_L^2(1+\tan^2(a_Ls))
+\frac{\mu^2}{1-\lambda}a_L^2\tan^2(a_Ls)\\
&\ge -2\mu a_L^2 .
\end{split}
\end{equation}

Fix an exhaustion of $\Omega$  by compact subbands, denoted by
\[\Omega_1\Subset\Omega_2\Subset\cdots\Subset\Omega\]
with \(\bigcup_j\Omega_j=\Omega\). Moreover, we can require \[\partial\Omega_j=\Gamma_j^-\cup\Gamma_j^+\cup\Gamma_j^s,\] where \(\Gamma_j^\pm=\partial\Omega_j\cap\partial_\pm\Omega\), and \(\Gamma_j^s=\overline{\partial\Omega_j\setminus(\Gamma_j^+\cup\Gamma_j^-)}\) meets \(\Gamma_j^\pm\) in acute  inner angles.  Set
\[E_0=\Omega\cap\{s\leq 0\},\] and let \(\mathcal C\) be the collection of all locally finite perimeter sets \(E\) such that \((E\Delta E_0)\cap\Omega_j\Subset\Omega^\circ\) for all \(j\).

\begin{proposition}\label{prop:mububble-in-band}
There exists a locally finite perimeter set \(E\in\mathcal C\) which is a local minimizer of
\[
   \mathcal F(E)
   =
   \int_{\partial^*E}e^{-f_m}\,d\mathcal H^{m-1}_{g_m}
   -
   \int_\Omega(\chi_E-\chi_{E_0})\Phi e^{-f_m}\,d\mathcal H^m_{g_m}.
\]
Let \(\Sigma=\partial E\cap\Omega^\circ\).  With respect to the unit normal \(\nu\) pointing from \(E\) to its complement, on \(\Sigma_{\rm reg}=\partial^*E\) one has \(H_\Sigma-\langle\nabla f_m,\nu\rangle=\Phi\).  The weighted stability inequality holds on \(\Sigma_{\rm reg}\).  Moreover, $\Sigma_{\mathrm{reg}}$ is a properly embedded hypersurface in $\Omega$, \((\Sigma,d_{g_m}|_\Sigma,\mathcal H^{m-1}_{g_m})\) is a local AM--PI space of dimension \(m-1\), and the singular set \(D^\Sigma=\Sigma\setminus\Sigma_{\rm reg}\) satisfies the local scale-uniform codimension-seven packing estimate \(\mathsf P_{m-8}^{\Sigma}(D^\Sigma)\).  In particular \(\dim_{\mathcal H}D^\Sigma\le m-8\). If $\Scal^{m,\lambda}_{f_m}(g_m)\geq \sigma_m$, then the quadratic form
\[
   Q_{a_m(\lambda),\sigma_L}^{g_\Sigma,f_\Sigma}
\]
is nonnegative on \(C_c^\infty(\Sigma_{\rm reg})\), where $g_\Sigma$ is the induced metric on $\Sigma_{\rm reg}$ from $(\Omega,g_m)$, $f_\Sigma=f_m|_{\Sigma_{\rm reg}}$, and
$$\sigma_L=\sigma_m-(1-\lambda)\left(\frac{\pi}{L}\right)^2.$$

\end{proposition}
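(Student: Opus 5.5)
We build the minimizer by exhaustion. On each compact subband $\Omega_j$ we minimize $\mathcal F$ among finite-perimeter sets $E$ that agree with $E_0$ near $\Gamma_j^s$ and near $\partial\Omega_j\setminus\Omega^\circ$; we treat the side boundary with a fixed boundary condition, or with free boundary using the acute angles. Since $\Phi\to\pm\infty$ at $\partial_\mp\Omega$, the barrier argument of Gromov and Zhu applies. Near $\partial_-\Omega$, where $\Phi\to+\infty$, a minimizer contains a collar of $\partial_-\Omega$. Near $\partial_+\Omega$ it avoids a collar. So the minimizer $E_j$ stays away from $\partial_\pm\Omega$, uniformly in $j$.

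Comparing $E_j$ with $E_0$ on $\Omega_{j'}$ for $j'<j$ gives locally uniform weighted perimeter bounds. This uses that $e^{-f_m}$ and $g_m$ are smooth on the interior, together with the bound on $\Phi$ away from $\partial_\pm\Omega$. BV compactness and a diagonal argument then give a limit $E\in\mathcal C$. By the standard cut-and-paste argument, $E$ locally minimizes $\mathcal F$ among compactly supported perturbations in $\Omega^\circ$. The key point is that $\Omega\subset N_W\subset X_m^\circ$ with $(X_m^\circ,g_m)$ complete. Hence every compact set of $\Omega^\circ$ lies in the smooth region. Lemma~\ref{lem:finite-width-localization} says that $D_m$ touches $\overline\Omega$ only at $\widetilde D_{m-1}$, which is at infinite $g_m$-distance. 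So no singular ambient behaviour enters the variational problem.

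Regularity is classical. $E$ is an almost-minimizer of weighted perimeter in a smooth Riemannian manifold with smooth density, with a bounded volume term locally. Allard and De Giorgi regularity plus Federer dimension reduction give $\Sigma_{\rm reg}=\partial^*E$ smooth and $\dim_{\mathcal H}D^\Sigma\le m-8$. The Naber--Valtorta quantitative stratification (Minkowski/packing bounds for the singular set of stationary, almost-minimizing integral varifolds) upgrades this to the uniform packing estimate $\mathsf P^{\Sigma}_{m-8}(D^\Sigma)$. The first variation gives $H_\Sigma-\langle\nabla f_m,\nu\rangle=\Phi$ on $\Sigma_{\rm reg}$. The second variation, with compactly supported variations away from $D^\Sigma$ (allowed since $D^\Sigma$ has zero $2$-capacity by Lemma~\ref{lem:packing-zero-capacity-section3}), gives exactly \eqref{eq:weighted-stability-inequality}.

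For the AM--PI structure, local Ahlfors regularity comes from the monotonicity formula together with density bounds for almost-minimizers. The local Poincar\'e inequality on area-minimizing boundaries is the Bombieri--Giusti inequality, which holds for almost-minimizers with smooth density. We take the length-structure convention exactly as in Bi--Hao--He--Shi--Zhu. Properness of $\Sigma_{\rm reg}$ in $\Omega$ follows from closedness of $\partial E$ and its distance from $\partial_\pm\Omega$.

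Finally, given $\Scal^{m,\lambda}_{f_m}(g_m)\ge\sigma_m$, we apply Proposition~\ref{prop:smooth-descent-pointwise} with $\sigma=\sigma_m$. By \eqref{eq:tangent-potential-new} with $\xi=\nu$,
\[
   \sigma_\Sigma\ge\sigma_m-2\mu a_L^2=\sigma_m-4(1-\lambda)\frac{\pi^2}{4L^2}=\sigma_L ,
\]
and replacing $\sigma_\Sigma$ by the smaller constant $\sigma_L$ only increases $Q$, which gives the claimed nonnegativity. The main obstacle is the AM--PI and packing step for almost-minimizers with weight. We would reduce it to the unweighted case by a local conformal change $e^{-2f_m/(m-1)}g_m$, which turns weighted area into area, and cite Naber--Valtorta and Bombieri--Giusti there.
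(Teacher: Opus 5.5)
Your proposal follows essentially the same route as the paper: exhaustion by compact subbands, the Gromov--Zhu barrier keeping the minimizers off $\partial_\pm\Omega$, local perimeter bounds by comparison with $E_0$, compactness, first and second variation, Bombieri--Giusti for the Poincar\'e inequality, Naber--Valtorta for the packing estimate, and Proposition~\ref{prop:smooth-descent-pointwise} with \eqref{eq:tangent-potential-new} for the quadratic form. You leave the side boundary open, whereas the paper treats it as a free boundary (the class $\mathcal C_j$ constrains $E$ only near $\Gamma_j^\pm$) and uses Edelen's estimates for the relative case; also, the stability inequality is only claimed for test functions in $C_c^\infty(\Sigma_{\rm reg})$, so your capacity argument there is unnecessary.
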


\begin{proof}
For \(E\in\mathcal C_j:=\{E\in\mathcal C:
E\Delta E_0\Subset\Omega_j\setminus(\Gamma_j^+\cup\Gamma_j^-)\}\), define
\[
   \mathcal A_{f_m}(E;\Omega_j)
   =
   \int_{\partial^*E\cap\Omega_j^\circ}e^{-f_m}\,d\mathcal H^{m-1}_{g_m},
   \quad
   \mathcal V_{\Phi,f_m}(E;\Omega_j)
   =
   \int_{\Omega_j}(\chi_E-\chi_{E_0})\Phi e^{-f_m}\,d\mathcal H^m_{g_m}.
\]
We minimize
\[
   \mathcal F_j(E)=\mathcal A_{f_m}(E;\Omega_j)
   -
   \mathcal V_{\Phi,f_m}(E;\Omega_j).
\]
It follows from \cite[Proof of Theorem 2]{Gromov-Zhu} that there is a minimizer $E_j\in \mathcal C_j$ of the functional $\mathcal F_j$. By comparison, we know that for each fixed compact subband \(\Omega_l\) there is a positive constant \(c_l\), independent of \(j\), such that
\[P(E_j;\Omega_l)\leq c_l.\]
As a consequence, after passing to a subsequence, \(E_j\) converge to a locally finite perimeter set $E$.

Let us show $E\in \mathcal C$. It suffices to show that for any point $p\in \partial_\pm \Omega$ there is a neighborhood such that $\partial E_j$ does not enter for all $j$ large. To see this, we take a small geodesic ball $B_p$ of $p$ in $\partial\Omega$ and a smooth interior positive function $h_p$ on $B_p$ with $h_p=0$ on $\partial B_p$. Clearly, the $(th_p)$-graphs $L_t$ over $B_p$ for $t$ small form a smooth family of hypersurfaces satisfying
$$H_{L_t}-\langle \nabla f_m,\nu_{L_t}\rangle<\Phi\mbox{ if }p\in \partial_-\Omega$$
and
$$H_{L_t}-\langle \nabla f_m,\nu_{L_t}\rangle>\Phi\mbox{ if }p\in \partial_+\Omega.$$
From a barrier argument we conclude that for $j$ large enough $\partial E_j$ cannot intersect $L_t$ and so avoid a neighborhood of $p$. 

To sum up, we have constructed a locally finite perimeter set $E\in \mathcal C$ such that $E$ is a local minimizer of the functional 
\[
\mathcal F(E):=\int_{\partial^*E}e^{-f_m}\,\mathrm d\mathcal H^{m-1}_{g_m}-\int_{\Omega}(\chi_E-\chi_{E_0})\Phi e^{-f_m}\,\mathrm d\mathcal H^m_{g_m}.
\]
Then the first and second variation give the desired mean curvature equation and the weighted stability inequality on $\Sigma_{\rm reg}$. We remark that the barrier argument above also implies locally uniform bound for the mean curvature of $\partial E$. Combined with the local perimeter bound and the monotonicity formula, we conclude that $\Sigma_{\mathrm{reg}}$ is a properly embedded hypersurface in $\Omega$. The conclusion that \((\Sigma,d_{g_m}|_\Sigma,\mathcal H^{m-1}_{g_m}) \) is AM--PI follows from the argument of Bombieri--Giusti~\cite{BombieriGiusti1972}. Finally, the quantitative estimates of Naber--Valtorta~\cite[Theorems~1.3 and~1.6]{NaberValtorta2020} give the corresponding tubular-neighborhood volume bound for \(D^\Sigma\), and in the relative case we use Edelen~\cite{Edelen2020}.  Since \(\Sigma\) is locally Ahlfors \((m-1)\)-regular, this tubular estimate is equivalent to \(\mathsf P_{m-8}^{\Sigma}(D^\Sigma)\). The last statement follows from Proposition~\ref{prop:smooth-descent-pointwise} and the estimate for
\(\Phi\).
\end{proof}

\subsection{Descent}

We first record the admissible range of the parameter.  For a positive integer \(n\), set
\[
  \Lambda_n:=
  \begin{cases}
    \displaystyle \frac1n, & n\le13,\\[0.8em]
    \displaystyle \frac{51-2n}{23n+26}, & n\ge14 .
  \end{cases}
\]
This is the range needed for the conformal blow-up steps in the descent. Indeed, in the step \(m\to m-1\), the new \(\mu\)-bubble singular set lies in an  \((m-1)\)-dimensional AM--PI space and has codimension seven.  Thus the
quadratic-form blow-up is applied in dimension \(m-1\).  By Remark~\ref{rem:codimension-thresholds-section3}, the condition is automatic when \(m-1\le12\), while for \(m-1\ge13\) it is
\[
   \lambda
   <
   \lambda_{m-1,7}^{\rm qf}
   =
   \frac{51-2m}{23m+26}.
\]
The most restrictive case is \(m=n\).  Hence \(\lambda<\Lambda_n\) guarantees that every intermediate blow-up is allowed.

Let
\[
   m_0=
   \begin{cases}
      1, & r=0,\\
      r, & r\ge1 .
   \end{cases}
\]

\begin{proposition}\label{prop:controlled-descent}
Assume \(\lambda<\Lambda_n\) and
\[
   \Scal_f^{n,\lambda}(g)\ge \sigma
\]
on \(X\). For each \(m_0\le m\le n\) and every \(\varepsilon>0\), there is an \(m\)-dimensional datum controlled over
\[
   B_m=Y\times I^{m-r}
\]
such that
\[
   \sigma_m
   \ge
   \sigma
   -
   4\pi^2(1-\lambda)
   \sum_{i=m-r+1}^{k}d_i^{-2}
   -
   \varepsilon,
\]
where the sum is empty if \(m-r+1>k\).
\end{proposition}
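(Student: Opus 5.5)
Downward induction on \(m\) from \(m=n\) to \(m=m_0\). For \(m=n\) take \(X_n^\circ=X^\circ\), \(D_n=\varnothing\), \(g_n=g\), \(f_n=f\), \(\sigma_n=\sigma\). The conditions of a controlled datum are then immediate. Completeness and properness come from \(X\) being complete and \(F\) proper, and \(\operatorname{Lip}_g(\tau_i)\le 2/d_i\) holds by the choice of \(\tau_i\) (recall \(\bar d_i\) was renamed \(d_i\)). The accumulated loss at this stage is zero.

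\textbf{Inductive step.} Suppose an \(m\)-dimensional datum is given with \(m>m_0\), and put \(j=m-r\). Fix a good level \(a\) from Corollary~\ref{cor:slicing}, chosen also to be a regular value of \(\tau_j\) on \(X_m^\circ\). Take \(W>L\), where \(L\) is large and will be fixed later. Lemma~\ref{lem:finite-width-localization} shows that the band \(\Omega\subset N_W\) only sees \(D_m\) inside \(\widetilde D_{m-1}\), and \(\dim_{\mathcal H}\widetilde D_{m-1}\le m-8\).

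The width \(2L\) is available because \(g_m\) is complete and the datum satisfies \(\operatorname{Lip}_{g_m}(\tau_j)\le 2/d_j\). Hence the signed distance to \(\widetilde X^\circ_{m-1}\) can be taken up to \(L\) as long as the band stays inside \(\{|\tau_j|<1\}\). This forces \(L\le d_j/2\cdot(1-|a|)\) roughly, so the \(\mu\)-bubble loss in Proposition~\ref{prop:mububble-in-band} is \((1-\lambda)\pi^2/L^2\approx 4\pi^2(1-\lambda)d_j^{-2}\) once \(a\) is taken near \(0\) and \(\bar d_j\uparrow d_j\). I would rather use the band \(\tau_j\in(-1,1)\) directly, with \(s\) a rescaled \(\tau_j\) of length \(d_j\) and \(L=d_j/2\); this gives \(\pi^2/L^2=4\pi^2/d_j^2\) exactly up to \(\varepsilon\).

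Proposition~\ref{prop:mububble-in-band} then yields \(\Sigma\) with \(Q^{g_\Sigma,f_\Sigma}_{a_m(\lambda),\sigma_L}\ge0\) on \(\Sigma_{\rm reg}\), new singular set \(D^\Sigma\) of packing codimension seven in the \((m-1)\)-dimensional AM--PI space \(\Sigma\), and \(\overline\Sigma\cap D_m\subset\widetilde D_{m-1}\).

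\textbf{Blow-up and restoring the pointwise bound.} Apply Proposition~\ref{prop:qf-conformal-blowup-section3} in dimension \(m-1\) with \(\mathcal S=D^\Sigma\). The hypothesis \(\overline\tau_{m-1}(\lambda)<5\) is exactly what \(\lambda<\Lambda_n\) guarantees, by Remark~\ref{rem:codimension-thresholds-section3}. We also need \(\overline L^{(m-1)}_{f,\lambda}=L_\omega\) for an admissible \(\omega\). To get this, take \(\omega=e^{-\overline\gamma f_\Sigma}\), which is bounded above and below locally because \(f_m\) is smooth near the compact parts of \(\overline\Sigma\) away from \(D_m\); then apply Lemma~\ref{lem:bounded-weight-preserves-ampi-section3}.

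The result is a metric complete toward \(D^\Sigma\) with \(Q_{a_m,\sigma_L-\varepsilon'}\ge0\). Near the old points \(\widetilde D_{m-1}\), completeness is already supplied by \(g_m\ge c_p d_g^{-2}g\), and the blow-up factor \(w\ge1\) preserves this. A positive solution \(\psi\) of \(\mathcal L^{g,f}_{m,\lambda,\sigma}\psi=0\) then exists on the complete manifold by exhausting with Dirichlet problems, using nonnegativity of \(Q\) (strictly, first lower \(\sigma\) slightly to get positivity). Lemma~\ref{lem:positive-supersolution} then gives \(\mathscr S_{a_{m-1}}(f-\log\psi)\ge\sigma_{m-1}\), which is the pointwise datum. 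The metric lower bound \(\widetilde g\gtrsim d_g^{-2}g\) near \(D^\Sigma\) follows from \(\Theta\gtrsim\operatorname{dist}^{-\tau}\) and \(w^{\alpha}\), since \(\alpha\tau=2\).

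\textbf{Remaining conditions and the main obstacle.} The Lipschitz bounds \(\operatorname{Lip}(\tau_i)\le 2/d_i\) survive because \(w\ge1\) only enlarges the metric. Set \(D_{m-1}=D^\Sigma\cup\widetilde D_{m-1}\); its dimension is at most \((m-1)-7\).

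The degree condition is the main obstacle. One must show that \(F_{m-1}\) on \(\Sigma_{\rm reg}\) is proper over \(B_{m-1}\setminus F_{m-1}(D_{m-1})\) with degree equal to the degree of \(F_m\) over \(B_m\setminus F_m(D_m)\). The plan is to use \(E\in\mathcal C\): the set \(E\Delta E_0\) gives a locally finite relative homology between \(\Sigma_{\rm reg}\) and \(\widetilde X^\circ_{m-1}\) in \(\Omega\) over the open set where everything is proper. In turn, \(\widetilde X^\circ_{m-1}=F_m^{-1}(B_{m-1}\times\{a\})\) has degree \(\deg F_m\) over \(B_{m-1}\). This is checked by computing \(\deg_y\) at a generic regular value \(y\) avoiding the closed, measure-zero set \(F(D_{m-1})\), using that \(\partial E\) stays away from \(\partial_\pm\Omega\) and meets the side boundary only over \(\partial B_{m-1}\).
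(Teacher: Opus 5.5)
Your overall scheme matches the paper's proof. You slice at a good level \(a\) and build the band inside \(N_W=\{x\in X_m^\circ: d_{g_m}(x,\widetilde X_{m-1}^\circ)\le W\}\) with \(W<\frac{1-|a|}{2}d_j\). You then take the weighted \(\mu\)-bubble, blow up \(D^\Sigma\) by the quadratic-form conformal change in dimension \(m-1\), restore a pointwise bound via Allegretto--Piepenbrink and Lemma~\ref{lem:positive-supersolution}, and transfer the degree through \(E\in\mathcal C\). The paper does this last step with intersection numbers against the arcs \(\Gamma_y=\Omega\cap F_{m-1}^{-1}(y)\), after cutting off a collar \(\mathcal N\) of \(\partial X\). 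With that band, and letting \(a\to0\) and \(L\uparrow W\uparrow d_j/2\), your argument goes through and gives the stated loss.

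The variant you say you prefer does not work, and it breaks exactly the step you quote afterwards. That variant uses the whole region \(\{|\tau_j|<1\}\cap X_m^\circ\) with \(s=\frac{d_j}{2}\tau_j\).
\begin{itemize}
\item Lemma~\ref{lem:finite-width-localization} only controls sets at bounded \(g_m\)-distance from the slice.
\item The \(\tau_j\)-band contains the complete blown-up ends of \((X_m^\circ,g_m)\) over all of \(D_m\cap\{|\tau_j|<1\}\). These ends lie at infinite \(g_m\)-distance from \(\widetilde X_{m-1}^\circ\).
\item Nothing in the exhaustion construction of Proposition~\ref{prop:mububble-in-band} stops the noncompact \(\mu\)-bubble from running out along these ends.
\end{itemize}
So the inclusion \(\overline\Sigma\cap D_m\subset\widetilde D_{m-1}\) is no longer justified, and \(\overline\Sigma\) may meet \(D_m\) in a set of dimension up to \(m-7\). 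Several things then fail:
\begin{itemize}
\item \(D_{m-1}\) has only codimension six in \(\Sigma\), so \(\dim_{\mathcal H}D_{m-1}\le (m-1)-7\) fails.
\item Properness of \(F_{m-1}|_\Omega\) holds only off \(F_{m-1}(D_m\cap\overline\Omega)\).
\item Since the old singular set is never sliced, its codimension drops by one at every step and the induction collapses. Already the terminal conclusion \(D_1=\varnothing\) fails.
\end{itemize}
The transverse slice together with the finite-width band is precisely what makes the old singularities lose one dimension per step. The variant also gains nothing, since the finite-width band already yields \(4\pi^2(1-\lambda)d_j^{-2}\) in the limit.

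A minor point: in the base case take \(X_n^\circ=X\) rather than its interior. Otherwise \(D_n=\partial X\neq\varnothing\).
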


\begin{proof}
Let
\[
   c=\deg(F)\ne0 .
\]
For \(m=n\), take
\[
   X_n^\circ=X,
   \qquad
   g_n=g,
   \qquad
   f_n=f,
   \qquad
   \sigma_n=\sigma .
\]
Then \(D_n=\emptyset\), no target set is deleted from $B_n$, and \(F_n=F\) has degree \(c\).

Assume \(m>m_0\), put
\[
   j=m-r,
   \qquad
   B_m=Y\times I^j,
\]
and suppose that an \(m\)-dimensional datum controlled over \(B_m\) has been constructed with
\[
   \sigma_m
   \ge
   \sigma
   -
   4\pi^2(1-\lambda)
   \sum_{i=j+1}^{k}d_i^{-2}
   -
   \frac{\varepsilon}{4},
\]
and such that the restricted map in the datum has degree \(c\).  Since \(m>m_0\), one has \(j\ge1\).

\vspace{3mm}
\noindent {\bf Step 1: Slicing.}
\vspace{3mm}

Choose \(a\in(-1,1)\), with \(|a|\) later taken arbitrarily small, from the full-measure set of good levels given by Corollary~\ref{cor:slicing}.  Set
\[
   \widetilde X_{m-1}=X_m\cap\tau_j^{-1}(a),
   \qquad
   \widetilde X_{m-1}^\circ=X_m^\circ\cap\tau_j^{-1}(a),
   \qquad
   \widetilde D_{m-1}=D_m\cap\tau_j^{-1}(a).
\]
We identify the target slice \(\{t_j=a\}\subset Y\times I^j\) with \(B_{m-1}=Y\times I^{j-1}\).

\vspace{3mm}
{\bf Claim:} The restriction \[F_{m-1}|_{\widetilde X_{m-1}^\circ}\]  is proper over \(B_{m-1}\setminus F_{m-1}(\widetilde D_{m-1})\).
\vspace{3mm}

Indeed, for any compact \(K\subset B_{m-1}\setminus F_{m-1}(\widetilde D_{m-1})\), if \(K\times\{a\}\) met \(F_m(D_m)\) at some point, then the corresponding preimage in $\widetilde X_{m-1}^\circ$ would lie in \(D_m\cap\tau_j^{-1}(a)=\widetilde D_{m-1}\), yielding a contradiction. Hence \(K\times\{a\}\subset B_m\setminus F_m(D_m)\), and
\[
   \left(F_{m-1}|_{\widetilde X_{m-1}^\circ}\right)^{-1}(K)
   =
   X_m^\circ\cap F_m^{-1}(K\times\{a\})
\]
is compact by the properness of the map $F_m:(X_m^\circ,\partial X_m^\circ)\to (B_m,\partial B_m)$ over $B_m\setminus F_m(D_m)$ in the \(m\)-dimensional datum. We orient \(\widetilde X_{m-1}^\circ\) so that, for regular values \(y\notin F_{m-1}(\widetilde D_{m-1})\),
\[
   \deg_y\!\left(F_{m-1}|_{\widetilde X_{m-1}^\circ}\right)
   =
   \deg_{(y,a)}(F_m).
\] Thus
\[
   F_{m-1}:\widetilde X_{m-1}^\circ\to B_{m-1}
\]
has degree \(c\) over \(B_{m-1}\setminus F_{m-1}(\widetilde D_{m-1})\).

\vspace{3mm}
\noindent{\bf Step 2: Finite-width localization.}
\vspace{3mm}

Choose \(W<\frac{1-|a|}{2}d_j\) and set
\[
   N_W=\{x\in X_m^\circ:
   d_{g_m}(x,\widetilde X_{m-1}^\circ)\le W\}.
\]
Since \(\operatorname{Lip}_{g_m}(\tau_j)\le2d_j^{-1}\), the closure $\overline{N_W}$ of \(N_W\) is disjoint from the two faces \(\{\tau_j=\pm1\}\).

As in the beginning of Subsection~\ref{subsec:mu-bubble-noncompact-band}, for every \(L<W\) we choose a closed smooth \(L\)-band \(\Omega\subset N_W\), with side boundary \(\partial_s\Omega=\partial\Omega\cap F_{m-1}^{-1}(\partial B_{m-1})\), and a smooth function
\[
   s:(\Omega,\partial_\pm\Omega)\to([-L,L],\pm L),
   \qquad
   s^{-1}(0)=\widetilde X_{m-1}^\circ\cap\Omega,
   \qquad
   |ds|_{g_m}\le1 .
\]
The smoothing is chosen so that \(\partial_\pm\Omega\) meet \(\partial_s\Omega\) in acute inner angles.

\vspace{3mm}
{\bf Claim:} the restriction of \(F_{m-1}\) to the finite-width band \(\Omega\) is proper over \(B_{m-1}\setminus F_{m-1}(\widetilde D_{m-1})\).  
\vspace{3mm}

Let \(K\subset B_{m-1}\setminus F_{m-1}(\widetilde D_{m-1})\) be compact.  Every sequence in \(\Omega\cap F_{m-1}^{-1}(K)\) has a subsequence converging in \(X\), as it lies in the compact set \(F^{-1}(K\times I^{k-j+1})\).  The limit belongs to the  \(\overline{\Omega}\subset X_m\). 
If it were outside \(\Omega\), then it would lie in \(D_m\) for \(\Omega\) is closed in \(X_m^\circ\).  Since \(\Omega\subset N_W\), Lemma~\ref{lem:finite-width-localization} then forces such a limit point to lie in \(\widetilde D_{m-1}\), contradicting \( K\cap F_{m-1}(\widetilde D_{m-1})=\emptyset \). Thus the restriction of \(F_{m-1}\) to \(\Omega\) is proper over \(B_{m-1}\setminus F_{m-1}(\widetilde D_{m-1})\).  

\vspace{3mm}
\noindent{\bf Step 3. $\mu$-bubble construction for $(m-1)$-dimensional datum.}
\vspace{3mm}

By Proposition~\ref{prop:mububble-in-band}, there exists a locally finite perimeter set \(E\) in the band such that \(\Sigma=\partial E\cap\Omega^\circ\) is a weighted \(\mu\)-bubble. On \(\Sigma_{\rm reg}=\partial^*E\), one has
\[
   H_\Sigma-\langle\nabla f_m,\nu\rangle=\Phi,
\]
and the weighted stability inequality holds. Moreover, \(\Sigma=\Sigma_{\rm reg}\sqcup D^\Sigma\) is an \((m-1)\)-dimensional AM--PI space satisfying \(\mathsf P_{m-8}^{\Sigma}(D^\Sigma)\). The quadratic form
\[
   Q_{a_m(\lambda),\sigma_L}^{g_\Sigma,f_\Sigma},
   \qquad
   \sigma_L=\sigma_m-(1-\lambda)\left(\frac{\pi}{L}\right)^2,
\]
is nonnegative on \(C_c^\infty(\Sigma_{\rm reg})\), where \(g_\Sigma\) is induced by \((\Omega,g_m)\) and \(f_\Sigma=f_m|_{\Sigma_{\rm reg}}\).

\vspace{3mm}
{\bf Claim:} $\Sigma_{\mathrm{reg}}$ is a properly embedded hypersurface in $X\setminus (\widetilde D_{m-1}\cup D^\Sigma)$.
\vspace{3mm}

Let $K\subset X\setminus(\widetilde D_{m-1}\cup D^\Sigma)$ be compact. We have the relation
$$\Sigma_{\mathrm{reg}}\cap K=\Sigma_{\mathrm{reg}}\cap\left(X_m^\circ\cap(K\cap\Omega)\right).$$
Note that $K\cap\Omega$ is a compact set in $X\setminus D_m$. Indeed, from the same argument as in the previous step, we have $K\cap \Omega=K\cap \overline \Omega$. The proper embeddedness of $X_m^\circ\subset X\setminus D_m$ and $\Sigma_{\mathrm{reg}}\subset \Omega\subset X_m^\circ$ implies that $\Sigma_{\mathrm{reg}}\cap K$ is a compact subset of $\Sigma_{\mathrm{reg}}$ with respect to the topology from the induced Riemannian metric. \vspace{3mm}

Let $\mathcal N$ be an open tubular neighborhood of $\partial X$, for which $\partial \mathcal N\setminus \partial X$ is transverse to $\Sigma_{\mathrm{reg}}$. Set
$$X_{m-1}^\circ=\Sigma_{\mathrm{reg}}\setminus\mathcal N,\quad X_{m-1}=\overline{X_{m-1}^\circ},\quad D_{m-1}=X_{m-1}\setminus X_{m-1}^\circ.$$
Let us verify the desired properties of our descent datum related to $X_{m-1}^\circ$.
Since $X_{m-1}\subset \overline{N_W}\subset X_m$, we know that \(X_{m-1}\cap \tau_i^{-1}(\pm 1)=\emptyset\) for all \(i\geq j\).
Note that we have $D_{m-1}\subset \tilde D_{m-1}\cup D^\Sigma$, and so we have 
$$\dim_{\mathcal H}(D_{m-1})\leq m-8=m-1-7$$ as required. Let us check that $X_{m-1}^\circ$ is properly embedded in $X\setminus D_{m-1}$. For any compact $K$ in $X\setminus D_{m-1}$, we have
$$X_{m-1}^\circ\cap K=\Sigma_{\mathrm{reg}}\cap (X_{m-1}\cap K).$$
Since  $X_{m-1}\cap K$ is a compact set in $X\setminus (\widetilde D_{m-1}\cup D^\Sigma)$, the properness of $\Sigma_{\mathrm{reg}}$ above yields that $X_{m-1}^\circ\cap K$ is a compact subset of $X_{m-1}^\circ$.

Recall that \(\tau_i=\pm 1\) on a neighborhood $\tilde C_i^{\pm}$ of \(C_i^\pm\). The neighborhood $\mathcal N$ can be taken small enough such that we have 
$$F_{m-1}( \Sigma\cap \overline{\mathcal N})\subset F_{m-1}(\cup_{i=1}^{j-1}\tilde C_i^\pm)\subset \partial B_{m-1}.$$
From the properness of $X_{m-1}^\circ$ in $X\setminus D_{m-1}$, we see that
$$F_{m-1}:(X_{m-1}^\circ,\partial X_{m-1}^\circ)\to (B_{m-1},\partial B_{m-1} )$$
is proper over $B_{m-1}\setminus F_{m-1}(D_{m-1})$.

Next we compute the degree of this restriction. Choose
\[
   y\in(\operatorname{int}B_{m-1})\setminus F_{m-1}(D_{m-1})
\]
to be a common regular value of the restrictions of \(F_{m-1}\) to
\[
   \widetilde X_{m-1}^\circ,\qquad \Omega^\circ,\qquad X_{m-1}^\circ,
\]
and such that
\[
   \deg_y\!\left(F_{m-1}|_{\widetilde X_{m-1}^\circ}\right)=c.
\]

Put \(\Gamma_y=\Omega\cap F_{m-1}^{-1}(y)\). Then \(\Gamma_y\) is an oriented one-dimensional manifold, with boundary lying
on \(\partial_-\Omega\cup\partial_+\Omega\). If \(H\subset\Omega\) is an oriented hypersurface meeting \(\Gamma_y\) transversely in finitely many points, we denote the corresponding oriented intersection number by
\[
   I(\Gamma_y,H).
\]
Since \(y\in (\operatorname{int}B_{m-1})\setminus F_{m-1}(D_{m-1})\), \(\Gamma_y\) does not meet the singular set \(D^\Sigma\) and $\Sigma\cap\mathcal N$. Hence we have \(\Gamma_y\cap\Sigma=\Gamma_y\cap X_{m-1}^\circ\).

Recall \(E_0=\Omega\cap\{s\le0\}\). We have \(\partial E_0\cap\Omega^\circ=\widetilde X_{m-1}^\circ\cap\Omega\). Since \(E\in\mathcal C\), replacing \(E_0\) by \(E\) does not change the oriented intersection number of their boundaries with $\Gamma_y$.
Hence
\[
   I(\Gamma_y,X_{m-1}^\circ)
   =
   I(\Gamma_y,\widetilde X_{m-1}^\circ)
   =
   \deg_y\!\left(F_{m-1}|_{\widetilde X_{m-1}^\circ}\right)
   =
   c.
\]
This means the map $$F_{m-1}:(X_{m-1}^\circ,\partial X_{m-1}^\circ)\to (B_{m-1},\partial B_{m-1} )$$
has degree $c$ over $B_{m-1}\setminus F_{m-1}(D_{m-1})$.

\vspace{3mm}
\noindent{\bf Step 4. Conformal blow-up.}
\vspace{3mm}

If \(m\le7\), then \(D^\Sigma=\emptyset\) and we set \(w\equiv1\). If \(m\ge8\), we apply Proposition~\ref{prop:qf-conformal-blowup-section3} in dimension \(m-1\) to $X_{m-1}$ with \(\lambda\le\Lambda_n\). We point out that $X_{m-1}$ is separated from the side boundary \(\partial_s\Omega\). Hence the AM--PI and packing conclusions of Proposition~\ref{prop:mububble-in-band} are ordinary interior statements there, and the Green-function construction of Section~\ref{sec:conformal-blowup-exceptional} applies.  Thus, for every \(\delta>0\), there is a smooth positive function \(w\) on  $X_{m-1}^\circ$ such that the metric and the weight
\[
   \bar g_{m-1}
   =
   w^{\overline\alpha_{m-1}(\lambda)}g_\Sigma,
   \qquad
   \bar f_{m-1}
   =
   f_\Sigma+\overline\beta_{m-1}(\lambda)\log w .
\]
satisfy that \(\bar g_{m-1}\) is smooth and complete on $X_{m-1}^\circ$, and
\[
   Q_{a_m(\lambda),\,\sigma_L-\delta}^{\bar g_{m-1},\bar f_{m-1}}(\varphi)
   \ge0
   \quad\text{ for all}\quad \varphi \in C_c^\infty(X_{m-1}^\circ) .
\]
By the Allegretto--Piepenbrink principle \cite{Allegretto1974,Piepenbrink1974,FischerColbrieSchoen1980}, applied on an exhaustion of $X_{m-1}^\circ$, there exists a positive smooth function \(\psi\) on $X_{m-1}^\circ$ such that
\begin{equation}\label{Eq: psi}
    \mathcal L_{m,\lambda,\sigma_L-\delta}^{\bar g_{m-1},\bar f_{m-1}}\psi
   \ge0 .
\end{equation}
Strictly speaking, to ensure the positivity of \(\psi\), we need to shrink \(X_{m-1}^\circ\) slightly further. Since this can be done by the same argument as before, we omit this routine technicality.

Set
\[
   g_{m-1}=\bar g_{m-1},
   \qquad
   f_{m-1}=\bar f_{m-1}-\log\psi ,\qquad \sigma_{m-1}=\sigma_L-\delta.
\]
We are ready to verify the rest desired properties of $(m-1)$-dimensional datum. As mentioned before, $g_{m-1}$ is smooth and complete on $X_{m-1}^\circ$. From \eqref{Eq: psi} and
Lemma \ref{lem:positive-supersolution} we have
\[
   \Scal_{f_{m-1}}^{m-1,\lambda}(g_{m-1})
   \ge
   \sigma_{m-1}
\]
pointwise on $X_{m-1}^\circ$. Let us check the quadratic growth of $g_{m-1}$ near \(D_{m-1}\).  If \(p\in D^\Sigma\), the quadratic-form conformal blow-up gives, after shrinking \(r_p\),
\[
   g_{m-1}\ge c_p\,d_{g}(\cdot,p)^{-2}\,g|_{X_{m-1}^\circ}
\]
near \(p\), by \eqref{eq:qf-blowup-theta-section3}.  If \(p\in\widetilde D_{m-1}\), the datum at level \(m\) gives the corresponding quadratic lower bound for \(g_m|_{X_{m-1}^\circ}\).  Since \(g_{m-1}\ge g_m|_{X_{m-1}^\circ}\), the same bound holds for \(g_{m-1}\). Since $g_{m-1}\geq g|_{X_{m-1}^\circ}$, we have
$$\mathrm{Lip}_{g_{m-1}}\tau_i\leq \frac{2}{d_i},
       \qquad 1\leq i\leq j-1.$$
\vspace{3mm}
We have constructed the desired $(m-1)$-dimensional datum. Since we can make $L$ arbitrarily close to $d_j/2$ and $\delta$ arbitrarily close to $0$, we can guarantee
$$\sigma_{m-1}\geq \sigma_m-4\pi^2(1-\lambda)d_j^{-2}-\frac{3\varepsilon}{4}\geq \sigma
   -
   4\pi^2(1-\lambda)
   \sum_{i=j}^{k}d_i^{-2}
   -
   \varepsilon. $$
   This completes the induction and hence the proof of the proposition.
\end{proof}

\subsection{Terminal cases}

\begin{lemma}\label{lem:one-dimensional-case}
Let \(X_1^\circ\), together with \(g_1,f_1\), and \(\sigma_1\), be a one-dimensional datum controlled over \(I\), and suppose \(\operatorname{Lip}_{g_1}(\tau_1)\le2/d_1\).  Then
\[
   \sigma_1\le 4\pi^2(1-\lambda)d_1^{-2}.
\]
\end{lemma}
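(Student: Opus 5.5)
The plan is to reduce the one-dimensional datum to an ODE on a compact interval and then apply a Sturm/eigenvalue comparison.

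\textbf{Structure of the datum.} Here \(m=1\) and \(r=0\), so \(B_1=I\). The condition \(\dim_{\mathcal H}D_1\le 1-7<0\) forces \(D_1=\emptyset\). Hence \(X_1^\circ\) is a smooth complete one-dimensional manifold with boundary, properly embedded in \(X\), and \(\tau_1:(X_1^\circ,\partial X_1^\circ)\to(I,\partial I)\) is proper of nonzero degree.

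\textbf{Step 1: find a crossing arc.} Each component of \(X_1^\circ\) is a circle, a line, a ray or a compact arc. Circles contribute zero degree. Properness plus the boundary condition forces every noncompact component to have its ends go off to \(\tau_1^{-1}(\partial I)\) or to infinity, and such components also contribute zero by a standard regular-value count. The same holds for arcs with both endpoints on the same face. Since the degree is nonzero, there is a compact arc \(\gamma\) with one endpoint in \(\tau_1^{-1}(-1)\) and the other in \(\tau_1^{-1}(1)\). Parametrize \(\gamma\) by \(g_1\)-arclength on \([0,\ell]\). Since \(\tau_1\) changes by \(2\) along \(\gamma\) and \(\operatorname{Lip}_{g_1}(\tau_1)\le 2/d_1\), we get \(\ell\ge d_1\). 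I expect this topological bookkeeping to be the only mildly delicate point.

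\textbf{Step 2: ODE inequality.} In dimension one \(R_{g_1}=0\), and \(a_1(\lambda)=1/(1-\lambda)\). So along \(\gamma\), with \(f=f_1(t)\), the hypothesis reads
\[
2f''-\frac{1}{1-\lambda}(f')^2\ge\sigma_1 .
\]
Note that \(\lambda\le\Lambda_n\le 1/n<1\). Put \(u=e^{-f/(2(1-\lambda))}>0\). A direct computation gives
\[
u''=-\frac{u}{4(1-\lambda)}\Bigl(2f''-\frac{(f')^2}{1-\lambda}\Bigr)\le-\frac{\sigma_1}{4(1-\lambda)}\,u
\qquad\text{on }[0,\ell].
\]

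\textbf{Step 3: comparison.} If \(\sigma_1\le0\) there is nothing to prove. Otherwise set \(\phi(t)=\sin(\pi t/\ell)\), which is nonnegative and vanishes at \(0\) and \(\ell\). Integrating by parts twice gives
\[
\int_0^\ell u''\phi\,dt=-\frac{\pi^2}{\ell^2}\int_0^\ell u\phi\,dt-\frac{\pi}{\ell}\bigl(u(\ell)+u(0)\bigr)<-\frac{\pi^2}{\ell^2}\int_0^\ell u\phi\,dt ,
\]
using \(u>0\) at the endpoints. Combining this with Step 2 yields
\[
\frac{\sigma_1}{4(1-\lambda)}\int_0^\ell u\phi\,dt\le-\int_0^\ell u''\phi\,dt .
\]
Together these give \(\sigma_1<4\pi^2(1-\lambda)\ell^{-2}\). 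Since \(\ell\ge d_1\), this is at most \(4\pi^2(1-\lambda)d_1^{-2}\), which is the claim.
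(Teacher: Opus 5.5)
Your route is valid and differs from the paper's, but Step 3 as written contains a sign error that breaks the final deduction. Since \(\phi'(0)=\pi/\ell\) and \(\phi'(\ell)=-\pi/\ell\), the boundary term is \(-[u\phi']_0^\ell=+\frac{\pi}{\ell}\bigl(u(0)+u(\ell)\bigr)\). Hence
\[
\int_0^\ell u''\phi\,dt=-\frac{\pi^2}{\ell^2}\int_0^\ell u\phi\,dt+\frac{\pi}{\ell}\bigl(u(0)+u(\ell)\bigr).
\]
As a check, with \(u\equiv1\) both sides vanish.

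With your sign you would get \(-\int_0^\ell u''\phi\,dt>\frac{\pi^2}{\ell^2}\int_0^\ell u\phi\,dt\). That cannot be combined with \(\frac{\sigma_1}{4(1-\lambda)}\int_0^\ell u\phi\,dt\le-\int_0^\ell u''\phi\,dt\) to bound \(\sigma_1\), so ``together these give'' does not follow as written. The correct sign is the favorable one: it gives \(-\int_0^\ell u''\phi\,dt<\frac{\pi^2}{\ell^2}\int_0^\ell u\phi\,dt\). Since \(\int_0^\ell u\phi\,dt>0\), your chain then yields \(\sigma_1<4\pi^2(1-\lambda)\ell^{-2}\le 4\pi^2(1-\lambda)d_1^{-2}\).

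A minor point on Step 1: no lines or rays can occur. With \(D_1=\emptyset\), the datum makes \(\tau_1\) proper over the compact interval \(I\), so \(X_1^\circ\) is compact, a finite union of circles and compact arcs.

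The paper argues pointwise rather than by integration. It parametrizes the crossing arc by \(s\in(-\mathcal L/2,\mathcal L/2)\) and compares \(F'\) with the explicit Riccati barrier \(\Phi(s)=2(1-\lambda)a\tan(as)\), \(a=\pi/\mathcal L\), which tends to \(\mp\infty\) at the ends. At the first zero \(p\) of \(F'-\Phi\) one has \(F''(p)\le\Phi'(p)\), and the identity \(2\Phi'-\Phi^2/(1-\lambda)\equiv4(1-\lambda)a^2\) finishes the proof.

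The two arguments are dual. Under your substitution \(u=e^{-f/(2(1-\lambda))}\), the paper's barrier is exactly \(F'\) for \(F=-2(1-\lambda)\log\cos(as)\). That is, it comes from the first Dirichlet eigenfunction that you use as a test function. The paper's version is a one-dimensional instance of the \(\tan\)-potential \(\mu\)-bubble comparison used throughout the descent, and needs only boundedness of \(F'\) on the arc. Yours is the spectral (Barta/Allegretto--Piepenbrink) form: it linearizes the inequality, and uses the endpoint values of \(u\) to obtain a strict inequality.
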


\begin{proof}
Since \(\dim_{\mathcal H}D_r\le r-7\), we have \(D_1=\emptyset\).  The nonzero degree condition gives a connected component \(\Gamma\subset X_1\) carrying nonzero degree.  Thus \(\Gamma\) joins the two faces \(\tau_1^{-1}(-1)\) and \(\tau_1^{-1}(1)\).  If \(\mathcal L\) is the \(g_1\)-length of \(\Gamma\), then \(\mathcal L\ge d_1\).

Parametrize \(\Gamma\) by arclength \(s\in(-\mathcal L/2,\mathcal L/2)\), and set \(F=f_1|_\Gamma\).  The lower bound \(\Scal_{f_1}^{1,\lambda}(g_1)\ge\sigma_1\) gives \(2F''-(1-\lambda)^{-1}(F')^2\ge\sigma_1\). Put \(a=\pi/\mathcal L\) and \(\Phi(s)=2(1-\lambda)a\tan(as)\).  Since \(\Phi\to-\infty\) at the left end and \(\Phi\to+\infty\) at the right end, \(F'-\Phi\) has a first zero \(p\).  At this point \(F'(p)=\Phi(p)\) and \(F''(p)\le\Phi'(p)\).  Hence
\[
   \sigma_1
   \le
   2F''(p)-\frac1{1-\lambda}(F'(p))^2
   \le
   2\Phi'(p)-\frac1{1-\lambda}\Phi(p)^2
   =
   4\pi^2(1-\lambda)\mathcal L^{-2}
   \le
   4\pi^2(1-\lambda)d_1^{-2}.
\]
\end{proof}

\begin{corollary}\label{cor:pure-cube-inequality}
Assume \(Y=\{\mathrm{pt}\}\), so \(r=0\) and \(k=n\), and assume \(\lambda<\Lambda_n\).  If \(\Scal_f^{n,\lambda}(g)\ge\sigma\), then
\[
   \sigma
   \le
   4\pi^2(1-\lambda)\sum_{i=1}^{n}d_i^{-2}.
\]
\end{corollary}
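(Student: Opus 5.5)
The plan is to combine Proposition~\ref{prop:controlled-descent} with the terminal estimate of Lemma~\ref{lem:one-dimensional-case}. With \(Y=\{\mathrm{pt}\}\) we have \(r=0\), \(k=n\) and \(m_0=1\). Start from the given map \(F^0:(X,\partial X)\to(I^n,\partial I^n)\) of nonzero degree. Fix \(0<\bar d_i<d_i\) and replace \(F^0\) by the smoothed map \(F=(\tau_1,\ldots,\tau_n)\) with \(\operatorname{Lip}_g(\tau_i)\le 2/\bar d_i\), exactly as at the beginning of Section~\ref{sec:cube-inequality}. The degree is preserved by the proper straight-line homotopy.

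Apply Proposition~\ref{prop:controlled-descent} with \(m=1\), using the constants \(\bar d_i\) and an arbitrary \(\varepsilon>0\). This yields a one-dimensional datum \((X_1^\circ,g_1,f_1,\sigma_1)\) controlled over \(B_1=I\), with
\[
   \sigma_1\ge \sigma-4\pi^2(1-\lambda)\sum_{i=2}^{n}\bar d_i^{-2}-\varepsilon ,
\]
and, by the datum conditions, \(\operatorname{Lip}_{g_1}(\tau_1)\le 2/\bar d_1\). Here the hypothesis \(\lambda<\Lambda_n\) is exactly what Proposition~\ref{prop:controlled-descent} requires, since it guarantees that every intermediate quadratic-form blow-up is admissible.

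Next apply Lemma~\ref{lem:one-dimensional-case} (with \(\bar d_1\) in place of \(d_1\)). It gives \(\sigma_1\le 4\pi^2(1-\lambda)\bar d_1^{-2}\). Combining the two bounds,
\[
   \sigma\le 4\pi^2(1-\lambda)\sum_{i=1}^{n}\bar d_i^{-2}+\varepsilon .
\]
Let \(\varepsilon\to0\) and \(\bar d_i\uparrow d_i\) to obtain the claim. If some \(d_i=\infty\), the corresponding term is interpreted as zero and the same limiting argument applies.

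The only step needing care is the terminal one. The one-dimensional datum must really contain a component joining \(\tau_1^{-1}(-1)\) to \(\tau_1^{-1}(1)\), and \(D_1=\emptyset\) must hold. Both follow from the datum definition: \(\dim_{\mathcal H}D_1\le -6\) forces \(D_1=\emptyset\), and the nonzero degree over \(I\) gives the joining component. Everything else is bookkeeping of constants already established in the induction.
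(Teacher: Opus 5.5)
Your proposal is correct and is the paper's proof: apply Proposition~\ref{prop:controlled-descent} with \(m=1\), finish with Lemma~\ref{lem:one-dimensional-case}, and let \(\varepsilon\to0\). Your explicit limit \(\bar d_i\uparrow d_i\) matches the paper's convention at the start of Section~\ref{sec:cube-inequality}, where \(\bar d_i\) is renamed \(d_i\) on the understanding that this limit is taken at the end.
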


\begin{proof}
Apply Proposition~\ref{prop:controlled-descent} with \(m=1\), then apply Lemma~\ref{lem:one-dimensional-case}, and let \(\varepsilon\to0\).
\end{proof}

\begin{corollary}\label{cor:terminal-Y-datum}
Assume \(r\ge1\) and \(\lambda<\Lambda_n\).  For every \(\varepsilon>0\), the descent produces an \(r\)-dimensional datum controlled over \(Y\), with generated sets \(X_r\) and \(D_r\), such that \(\eta_r:=F_r:X_r^\circ\to Y\) is proper over \(Y\setminus F_r(D_r)\) and has nonzero degree there, and
\[
   \Scal_{f_r}^{r,\lambda}(g_r)
   \ge
   \sigma
   -
   4\pi^2(1-\lambda)\sum_{i=1}^{k}d_i^{-2}
   -
   \varepsilon.
\]
If \(r\le6\), then \(D_r=\emptyset\).  Consequently \(X_r=X_r^\circ\) is a smooth complete \(r\)-dimensional manifold, and \(\eta_r:X_r\to Y\) is proper and has nonzero degree.
\end{corollary}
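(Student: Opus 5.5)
This corollary is Proposition~\ref{prop:controlled-descent} evaluated at the terminal index \(m=m_0=r\), followed by a dimension count on the singular set. Since \(r\ge1\), we have \(m_0=r\). We apply Proposition~\ref{prop:controlled-descent} with \(m=r\) and the given \(\varepsilon\). This produces an \(r\)-dimensional datum controlled over \(B_r=Y\times I^0=Y\), with
\[
   \sigma_r\ge\sigma-4\pi^2(1-\lambda)\sum_{i=1}^{k}d_i^{-2}-\varepsilon,
\]
because at \(m=r\) the sum runs over \(i=1,\ldots,k\). The datum axioms give \(\Scal_{f_r}^{r,\lambda}(g_r)\ge\sigma_r\) on \(X_r^\circ\), which is the stated curvature bound.

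The properness and degree assertions are also read off the datum axioms. Here \(\partial B_r=\emptyset\), and \(F_r=\eta\) restricted to \(X_r^\circ\). The datum requires \(F_r:(X_r^\circ,\partial X_r^\circ)\to(Y,\emptyset)\) to be proper over \(Y\setminus F_r(D_r)\) with nonzero locally finite degree there. This is exactly the statement about \(\eta_r\).

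Now suppose \(r\le6\). The datum requires \(\dim_{\mathcal H}D_r\le r-7<0\), so \(D_r=\emptyset\). Hence \(X_r=\overline{X_r^\circ}=X_r^\circ\). The metric \(g_r\) is smooth and complete on \(X_r^\circ\) by the datum axioms, so \(X_r\) is a smooth complete \(r\)-manifold.

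It remains to check that \(\eta_r:X_r\to Y\) is proper with nonzero degree. Properness over \(Y\setminus F_r(\emptyset)=Y\) is properness over all of \(Y\), and the degree over \(Y\) is the nonzero degree \(c\) from the datum. One small point concerns \(\partial X_r^\circ\). It maps into \(\partial B_r=\emptyset\), so it must be empty. Thus \(X_r\) is a manifold without boundary and the relative degree is an ordinary proper degree.

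No step here is a real obstacle, since all the work sits in Proposition~\ref{prop:controlled-descent}. The only points needing care are the two boundary conventions at \(m=r\): that \(I^0\) is a point, so \(B_r=Y\), and that \(\partial B_r=\emptyset\), which forces \(\partial X_r^\circ=\emptyset\).
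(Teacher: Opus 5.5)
Your proposal is correct and follows the paper's proof. Both apply Proposition~\ref{prop:controlled-descent} at $m=r$, read the properness and degree off the terminal datum, and use $\dim_{\mathcal H}D_r\le r-7<0$ to force $D_r=\emptyset$. Your extra remark that $\partial X_r^\circ=\emptyset$, because it maps into $\partial B_r=\emptyset$, is correct and simply makes explicit a convention the paper leaves implicit.
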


\begin{proof}
The scalar lower bound is Proposition~\ref{prop:controlled-descent} with \(m=r\).  The degree and properness are part of the terminal controlled datum.  Since \(\dim_{\mathcal H}D_r\le r-7\), the condition \(r\le6\) forces \(D_r=\emptyset\).
\end{proof}

\begin{remark}
\label{rem:controlled-target-version}
We record a flexibility in the descent.  If \(N^n\) is an oriented manifold with boundary and \(A\subset N\) is closed, write \(A\in\mathcal N_q(N)\) when \(\mathcal H^{n-q}(A)=0\).  

At the \(m\)-dimensional stage, write
\[
   B_m=B_{m-1}\times I,
   \qquad
   F_m=(F_{m-1},\tau_j),
   \qquad
   j=m-r .
\]
Let \(A_m\in\mathcal N_q(B_m)\), \(q\le6\), be closed, with \( F_m(D_m)\subset A_m \). Assume that \(F_m\) is proper over \(B_m\setminus A_m\) and has nonzero locally finite relative degree there.  For a good level \(a\in(-1,1)\), put
\[
   H_a=B_{m-1}\times\{a\},
   \qquad
   A_{m-1}^{\rm old}
   =
   \{y\in B_{m-1}:(y,a)\in A_m\}.
\]
By \cite[Theorem~7.7]{Mattila1995}, for almost every such \(a\) we have \( A_{m-1}^{\rm old}\in\mathcal N_q(B_{m-1})\).

The key ingredient is finite-width localization.  Let \(Z_a=X_m^\circ\cap \tau_j^{-1}(a)\) and, for \(W<\infty\), let
\[
   N_W=N_W^{g_m}(Z_a)
   =
   \{x\in X_m^\circ:d_{g_m}(x,Z_a)\le W\}.
\]
Let \(\Omega\subset N_W\) be a smooth closed finite-width band used for the \(\mu\)-bubble construction.  We require
\(\overline{\Omega}\cap D_m\subset F_m^{-1}(A_m\cap H_a)\).
This implies that
\[
   F_{m-1}\big|_\Omega:\Omega\to B_{m-1}
\]
is proper over \(B_{m-1}\setminus A_{m-1}^{\rm old}\). For singularities produced in the conformal blow-up descent, this localization is supplied by Lemma~\ref{lem:finite-width-localization}.

After the \(\mu\)-bubble step, let \(D^\Sigma\) be the new singularities and define
\[
   A_{m-1}
   =
   A_{m-1}^{\rm old}
   \cup
   F_{m-1}\!\left(
      D^\Sigma\cap N_W\cap
      F_{m-1}^{-1}(B_{m-1}\setminus A_{m-1}^{\rm old})
   \right).
\]
As before, the finite-width localization condition makes \(A_{m-1}\) closed. Moreover, \(\dim_{\mathcal H}D^\Sigma\le m-8\).  Since \(q\le6\), this gives
\[
   A_{m-1}\in\mathcal N_q(B_{m-1}).
\]
The intersection argument used before then still gives a nonzero degree. Hence the descent is unchanged if \(F_m(D_m)\) is replaced by such target exceptional sets \(A_m\).

In particular, suppose that \(X_n\) is the closure of \(X_n^\circ\) in a metric space \((X,d)\), and write \(D_n=X_n\setminus X_n^\circ\). Assume that \(X_n^\circ\) is smooth with metric \(g\), and that \(d\) is locally compatible with \(g\) on \(X_n^\circ\) in the sense of the definition of AM--PI spaces. Assume also that \(F\) is Lipschitz on \(X_n\), that \(F(D_n)\subset A_n\), and that the complete metric \(g_n\) satisfies, for every \(q\in D_n\), a local conformal blow-up estimate
    \[
       g_n\ge c_q d(\cdot,q)^{-2}g
    \]
    near \(q\). Then the same argument as in the proof of Lemma~\ref{lem:finite-width-localization} gives the finite-width localization property required above.  Consequently, in the case \(B_n=I^n\), if \(A_n\in\mathcal N_1(I^n)\), then the cube inequality holds for \(X_n^\circ\).
\end{remark}

\begin{remark}\label{rem:cohomological-degree-descent}
The descent in this section can also be run over a lower-dimensional cubical target.  In that case the ordinary degree is replaced by a cohomological degree. Let \(F:(M^m,\partial M)\to(B^{m-r},\partial B)\) be proper over \(U=B\setminus A\), and let \(u\in H^r(M;\mathbb Z)\).  We say that \(F\) has nonzero \(u\)-degree over \(U\) if
\[
   F_*\bigl([F^{-1}(U),\partial F^{-1}(U)]_{\rm lf}\cap u\bigr)
   =
   c[U,\partial U]_{\rm lf}
\]
for some \(c\ne0\).  When \(r=0\) and \(u=1\), this is the locally finite degree used above.

If \(F:X\to B\) is proper over \(U\), then all locally finite classes are restricted to \(F^{-1}(U)\).  This restriction is compatible with cap product:
\[
   (\xi\cap u)|_{F^{-1}(U)}
   =
   \xi|_{F^{-1}(U)}\cap u|_{F^{-1}(U)} .
\]
Thus equality of locally finite classes over \(U\) remains true after capping with \(u\), and properness over \(U\) allows the resulting classes to be pushed forward to \(U\).

Suppose now that the initial datum has cubical part \(\tau=(\tau_1,\ldots,\tau_k):X_n^\circ\to B_k=I^k\), where \(k=n-r\), and let \(u\in H^r(X_n^\circ;\mathbb Z)\).  Assume that \(\tau\) has nonzero \(u\)-degree over \(B_k\).

At an \(m\)-dimensional step, the remaining target is \(B_{m-r}=I^{m-r}\).  Write \(B_{m-r}=B_{m-r-1}\times I\) and \(F_m=(F_{m-1},\tau_j)\).  After \(A_{m-r-1}\) has been defined, put \(U=B_{m-r-1}\setminus A_{m-r-1}\).
As before, finite-width localization gives properness of \(P=F_{m-1}|_\Omega\) over \(U\).  The finite-perimeter region used in the \(\mu\)-bubble replacement shows that the good slice and the $\mu$-bubble represent the same locally finite relative class in \(P^{-1}(U)\).  Capping with the restriction of \(u\) preserves this equality, and the properness of \(P\) allows us to push the resulting classes forward to \(U\).  Hence the \(u\)-degree is unchanged.

Iterating the descent gives an \(r\)-dimensional terminal datum with nonzero \(u\)-degree.  If \(\iota_r:X_r^\circ\hookrightarrow X_n^\circ\) denotes the inclusion map, then \(\left\langle\iota_r^*u,[X_r^\circ,\partial X_r^\circ]_{\rm lf}\right\rangle\ne0 \). The same argument works with coefficients in an abelian group.
\end{remark}

\section{Enlargeable bases and two-systoles}
\label{sec:enlargeable-bases}

\subsection{Proofs of Theorems~A,~B and~C}

We use the notation and the admissible constant \(\Lambda_n\) from Section~\ref{sec:cube-inequality}.  Recall the definition of enlargeability  from the introduction.

\begin{proof}[Proof of Theorem~B]
It suffices to prove the result when
\(\lambda<\Lambda_n\). Indeed, the case when $\lambda\leq \Lambda_n$ can be dealt with by approximation based on the fact $\Scal_f^{n,\lambda'}(g)\ge \Scal_f^{n,\lambda}(g)$ for any $\lambda'<\lambda$.
If \(Y=\{\mathrm{pt}\}\), then Theorem~B is exactly the cube inequality established in Corollary~\ref{cor:pure-cube-inequality}.  Therefore, we assume
\[
   r=\dim Y>0 .
\]

Fix \(\varepsilon>0\).  By enlargeability of \(Y\), choose a Riemannian cover
\[
   \widehat Y\to Y
\]
and a smooth \(\varepsilon\)-Lipschitz map
\[
   \phi:\widehat Y\to S^r
\]
which is constant outside a compact set and has nonzero degree.  Pull back \(X\) by the \(Y\)-component of \(F\), and write
\[
   \widehat X=X\times_Y\widehat Y .
\]
The lifted map
\[
   \widehat F:\widehat X\to\widehat Y\times I^k
\]
is proper and has the same nonzero  relative degree.

Choose a regular value \(y_0\in S^r\), different from the constant value around infinity, such that
\[
   \deg(\phi;y_0)\ne0 .
\]
Let \(C\Subset S^r\) be a small coordinate cube around \(y_0\), away from the constant value around infinity.  Then \(\phi^{-1}(C)\) is compact and the restriction map \(\phi:\phi^{-1}(C)\to C\) has nonzero relative degree.  
In particular, the preimage
\[
   X_C
   :=
   \widehat F^{-1}\bigl(\phi^{-1}(C)\times I^k\bigr)
\]
is compact, and the map
\[
   X_C\to C\times I^k
\]
has nonzero relative degree as well.  After identifying \(C\) with \(I^r\), this gives a cubical map
\[
   X_C\to I^r\times I^k=I^n .
\]

The \(k\) cubical widths coming from the original \(I^k\)-coordinates are at least the corresponding \(d_i\).  The additional \(r\) widths coming from \(C\subset S^r\) tend to infinity as \(\varepsilon\downarrow0\).  More precisely, let \(L_Y\) denote a Lipschitz constant for \(F_Y\), and \(c_C>0\) denote the minimum distance between opposite faces of \(C\), then the new widths are bounded below by \(\frac{c_C}{\varepsilon L_Y}\). Applying Corollary~\ref{cor:pure-cube-inequality} to the cubical map above
gives
\[
   \sigma
   \le
   4\pi^2(1-\lambda)
   \left(
      \sum_{i=1}^{k}d_i^{-2}
      +
      O(\varepsilon^2)
   \right).
\]
Letting \(\varepsilon\downarrow0\) gives
\[
   \sigma
   \le
   4\pi^2(1-\lambda)\sum_{i=1}^{k}d_i^{-2}.
\]
This proves Theorem~B.
\end{proof}

\begin{proof}[Proof of Theorem~A]
Take \(Y=\mathbb T^n\) and \(k=0\) in Theorem~B.
\end{proof}

\begin{proof}[Proof of Theorem~C]
As before, it suffices to deal with \(\lambda<\Lambda_n\).  Applying Corollary~\ref{cor:terminal-Y-datum} with \(Y=S^2\), we obtain a smooth closed oriented surface \((\Sigma,g_\Sigma)\), with a map \(\rho_\Sigma:\Sigma\to S^2\) of nonzero degree, and for every \(\eta>0\),
\[
   \Scal_{f_\Sigma}^{2,\lambda}(g_\Sigma)\ge\sigma_*-\eta .
\]
From $\lambda<\Lambda_n$ we have \(a_2(\lambda)>0\), and then it follows, on any component \(\Sigma_0\subset\Sigma\),
\[
\begin{split}
   (\sigma_*-\eta)\operatorname{Area}_{g_\Sigma}(\Sigma_0)
   &\le
   \int_{\Sigma_0}\Scal_{f_\Sigma}^{2,\lambda}(g_\Sigma)  \\
   &=
   \int_{\Sigma_0}
   \bigl(R_{g_\Sigma}
      +2\Delta f_\Sigma
      -a_2(\lambda)|df_\Sigma|^2\bigr)  \\
   &\le
   \int_{\Sigma_0}R_{g_\Sigma}
   =
   4\pi\chi(\Sigma_0).
\end{split}
\]
Since \(\rho_\Sigma\) has nonzero degree, some component \(\Sigma_0\) carries nonzero degree over \(S^2\).  For this component the left-hand side is positive, so we know \(\chi(\Sigma_0)>0\) and \(\Sigma_0\simeq S^2\).  Therefore
\[
   \operatorname{Area}_{g_\Sigma}(\Sigma_0)
   \le
   \frac{8\pi}{\sigma_*-\eta}.
\]
This component gives an integral \(2\)-cycle in \(X\) pairing nontrivially with \(u_\rho\).  Since the conformal factors in the descent are at least \(1\), its area with respect to the original metric is no larger than $\operatorname{Area}_{g_\Sigma}(\Sigma_0)$.  Letting \(\eta\downarrow0\) proves the estimate.
\end{proof}

\subsection{The \(S^2\)-factor over an enlargeable base}

\begin{proposition}\label{prop:enlargeable-s2-systole}
Let \(Y^{n-2}\) be a closed oriented enlargeable manifold, and let \(X^n\) be a closed oriented smooth manifold.  Suppose
\[
   F=(F_Y,\rho):X\to Y\times S^2
\]
has nonzero degree, and set \(u_\rho=\rho^*\omega_{S^2}\). If
\[
   \Scal_f^{n,\lambda}(g)\ge\sigma>0,
   \qquad
   \lambda\le\Lambda_n,
\]
then
\begin{equation}\label{eq:enlargeable-s2-systole}
   \operatorname{sys}_2(X,u_\rho;g)\le\frac{8\pi}{\sigma}.
\end{equation}
Moreover, if equality holds in \eqref{eq:enlargeable-s2-systole}, then
\[
   f\equiv\mathrm{constant},
   \qquad
   R_g\equiv\sigma,
   \qquad
   \operatorname{Ric}_g\ge0,
\]
and the universal cover splits isometrically as 
\[
   \widetilde X\cong S^2_\sigma\times\mathbb R^{n-2},
\]
where \(S^2_\sigma\) is the round \(2\)-sphere with scalar curvature \(\sigma\).
\end{proposition}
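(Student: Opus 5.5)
The inequality reduces to Theorem~C on a compact piece of a cover. The rigidity part is the main difficulty.

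\emph{Inequality.} Fix $\varepsilon>0$ and use enlargeability of $Y$ to choose a cover $\widehat Y\to Y$ and an $\varepsilon$-Lipschitz map $\phi:\widehat Y\to S^{n-2}$ of nonzero degree, constant outside a compact set. Pull back to $\widehat X=X\times_Y\widehat Y$ with the lifted metric. As in the proof of Theorem~B, take a small coordinate cube $C\ni y_0$ around a regular value of $\phi$. Identify $C\cong I^{n-2}$ and set $X_C=\widehat F^{-1}(\phi^{-1}(C)\times S^2)$. The map
\[
(\rho,\phi\circ\widehat F_Y):(X_C,\partial X_C)\to (S^2\times I^{n-2},S^2\times\partial I^{n-2})
\]
is then smooth in the interior, has nonzero relative degree, and has all widths $d_i\ge c_C/(\varepsilon L_Y)$. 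Here $L_Y$ is a Lipschitz constant of $F_Y$, which is finite because $X$ is closed.

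Theorem~C applies with $\sigma_*=\sigma-O(\varepsilon^2)$. It yields an integral $2$-cycle $\widehat T$ in $X_C$ with $\langle \widehat u_\rho,[\widehat T]\rangle\neq0$ and $\mathbf M(\widehat T)\le 8\pi/\sigma_*$. Push $\widehat T$ down by the covering projection, which is a local isometry. Mass does not increase, and since $u_\rho$ pulls back to $\widehat u_\rho$, the pairing is preserved. Letting $\varepsilon\to0$ gives \eqref{eq:enlargeable-s2-systole}.

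\emph{Rigidity.} Assume equality holds. First, every step of the descent must be almost sharp. I would follow the terminal surface more closely. In the proof of Theorem~C the surface component $\Sigma_0\simeq S^2$ satisfies
\[
\sigma_*\operatorname{Area}(\Sigma_0)\le 4\pi\cdot 2 - a_2(\lambda)\int|df_\Sigma|^2,
\]
and its original area is at most its descended area. Equality in the systole bound, in the limit $\varepsilon,\eta\to0$, therefore forces the discarded terms to vanish in the limit. These terms are the conformal factors ($w\to1$), $df_\Sigma$, the second fundamental forms, and the $\Phi$-terms. Feeding this back up the chain, the natural plan is to show that an area-minimizing $2$-cycle $T$ in the class realizing $\operatorname{sys}_2$ (it exists by compactness of integral currents on the closed manifold $X$, and is regular in dimension two) is infinitesimally rigid.

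Concretely, I would run a Bray--Brendle--Neves type argument. Take $T$ minimizing, and use stability together with $R_g+2\Delta f-a|df|^2\ge\sigma$ in the weighted form $Q$. Equality should force $T$ to be totally geodesic, with $\operatorname{Ric}(\nu,\nu)=0$ for every normal $\nu$, intrinsic Gauss curvature $\sigma/2$, and $f$ constant along $T$. Next, build the local foliation of nearby minimizers in the same homology class by the implicit function theorem. Area monotonicity plus equality makes every leaf a minimizer, which yields a local splitting $S^2_\sigma\times(-\delta,\delta)^{n-2}$. Openness and closedness then propagate this to a global statement, giving $df\equiv0$, $R_g\equiv\sigma$, $\operatorname{Ric}\ge0$, and a splitting on the universal cover. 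The splitting of $\widetilde X$ as $S^2_\sigma\times\mathbb R^{n-2}$ would then follow from the de Rham decomposition of the parallel $2$-plane field.

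\emph{Main obstacle.} The hard step is transferring equality from the whole descent to the original $X$. The descent uses $\mu$-bubbles with singularities and conformal blow-ups, so it does not directly produce a smooth object to perturb. My first attempt is to avoid the descent in the rigidity part altogether: use only the inequality, applied to small deformations of $g$ and $f$. If the equality structure failed somewhere, I would decrease $f$ locally, or perturb $g$ conformally, so as to raise $\Scal_f^{n,\lambda}$ to at least $\sigma'>\sigma$ while changing the systole only by $o(\sigma'-\sigma)$. That would contradict \eqref{eq:enlargeable-s2-systole}. This deformation argument should give $f$ constant and $R_g\equiv\sigma$ as in the standard rigidity arguments, and Ricci nonnegativity would follow from a Ricci-flow or variational perturbation in the same spirit.
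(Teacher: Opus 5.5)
Your proof of the inequality is the paper's argument. The rigidity part, however, has genuine gaps.

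\textbf{The Bray--Brendle--Neves route.} This route does not work here. A minimizer among cycles pairing nontrivially with $u_\rho$ has codimension $n-2$. In codimension $\ge2$, the second variation does not combine with Gauss--Bonnet into $\operatorname{Area}\le 8\pi/\min R$. For example, a complex line in $\mathbb{CP}^2$ with the Fubini--Study metric is area-minimizing, with area $3\cdot 8\pi/R$. So no infinitesimal rigidity, local foliation or parallel $2$-plane field is produced, and your de Rham step has nothing to act on.

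\textbf{The deformation route and $df\equiv0$.} The deformation argument you fall back on is the right idea, and it is the paper's route. As stated, though, it cannot give $df\equiv0$. At fixed $g$ and $\lambda$, every new weight has the form $f+\psi$ with $\psi=-2a^{-1}\log u$, $u>0$, $a=a_n(\lambda)$. Then $\Scal_{f+\psi}^{n,\lambda}(g)=u^{-1}P_{g,f}u$, where $P_{g,f}=-\frac4a(\Delta_g-a\langle\nabla f,\nabla\cdot\rangle_g)+\Scal_f^{n,\lambda}(g)$ is self-adjoint for $e^{-af}d\mu_g$.
\begin{itemize}
\item Taking $u$ to be a first eigenfunction does give $\Scal_f^{n,\lambda}(g)\equiv\sigma$. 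Otherwise the eigenvalue exceeds $\sigma$ while the metric, and hence the systole, is unchanged.
\item Once $\Scal_f^{n,\lambda}(g)\equiv\sigma$, constants are first eigenfunctions with eigenvalue exactly $\sigma$. No choice of weight then beats $\sigma$, and a conformal change of $g$ loses control of the systole.
\item The missing idea is to lower the parameter. The inequality still applies for $\lambda_-<\lambda$, and $a_n$ is strictly increasing. So $\Scal_f^{n,\lambda_-}(g)=\sigma+(a_n(\lambda)-a_n(\lambda_-))|df|_g^2$, which exceeds $\sigma$ wherever $df\ne0$. The eigenfunction step at $\lambda_-$ then gives the contradiction.
\end{itemize}

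\textbf{$\operatorname{Ric}_g\ge0$.} Asking the systole to move only by $o(\sigma'-\sigma)$ is not something a general perturbation delivers, since both quantities change at first order. Ricci flow gives no sign on $g_t-g$ either. What works is a one-sided perturbation:
\begin{itemize}
\item If $\operatorname{Ric}_g$ has a negative direction, take $h\ge0$ supported near it with $\int_X\langle\operatorname{Ric}_g,h\rangle_g\,d\mu_g<0$, and set $g_t=g+th$.
\item Then $g_t\ge g$ gives $\operatorname{sys}_2(X,u_\rho;g_t)\ge 8\pi/\sigma$ for free.
\item Since $f$ is constant and $R_g\equiv\sigma$, constants are first eigenfunctions of $-\frac4a\Delta_g+R_g$. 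The first eigenvalue has derivative $-\operatorname{Vol}_g(X)^{-1}\int_X\langle\operatorname{Ric}_g,h\rangle_g\,d\mu_g>0$.
\item The eigenfunction weight at small $t>0$ then contradicts the inequality.
\end{itemize}

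\textbf{The splitting.} This comes from Cheeger--Gromoll rather than from a plane field. One has $\widetilde X\cong\mathbb R^\ell\times Z$ with $Z$ compact and simply connected. Enlargeability of $Y^{n-2}$, via the argument of Lawson--Michelsohn, Prop.~IV.5.8, forces $\ell\ge n-2$. Since $R_g\equiv\sigma>0$, $\dim Z\ge2$, so $\ell=n-2$ and $Z$ is a $2$-sphere of constant scalar curvature $\sigma$.
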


\begin{proof}
We first prove the estimate \eqref{eq:enlargeable-s2-systole}.  It suffices to deal with \(\lambda<\Lambda_n\) as before.

Fix \(\varepsilon>0\).  Choose a cover \(\widehat Y\to Y\) and an \(\varepsilon\)-Lipschitz map \(\phi:\widehat Y\to S^{n-2}\), constant outside a compact set and of nonzero degree.  Pull back \(X\) to \(\widehat X=X\times_Y\widehat Y\), and write \(\widehat F=(\widehat F_Y,\widehat\rho)\).  Let \(C\Subset S^{n-2}\) be a small coordinate cube around a regular value of \(\phi\), disjoint from the constant value around infinity, such that
\[
   \phi:\phi^{-1}(C)\to C
\]
has nonzero relative degree.  Then
\[
   X_C:=\widehat F_Y^{-1}(\phi^{-1}(C))
\]
is compact, and
\[
   (\widehat\rho,\phi\circ\widehat F_Y):X_C\to S^2\times C
\]
has nonzero relative degree.  Identifying \(C\) with \(I^{n-2}\), we apply Theorem~C. Let \(D_j\) be the cubical widths coming from \(C\), then \(D_j\ge c_C/(\varepsilon L_Y)\), where \(L_Y\) denotes a Lipschitz constant for \(F_Y\), and \(c_C>0\) denotes the minimum distance between opposite faces of \(C\). Therefore, we have \(\sum_jD_j^{-2}=O(\varepsilon^2)\) and
\[
   \operatorname{sys}_2(X_C,\widehat u_\rho;g)
   \le
   \frac{8\pi}
   {\sigma-4\pi^2(1-\lambda)O(\varepsilon^2)} .
\]
Pushing forward cycles to \(X\) does not increase mass and preserves the pairing with \(u_\rho\).  Letting \(\varepsilon\downarrow0\) gives
\[
   \operatorname{sys}_2(X,u_\rho;g)\le\frac{8\pi}{\sigma}.
\]

Now we deal with the rigidity and assume the equality holds.  Put
\[
   a=a_n(\lambda),
   \qquad
   V=\Scal_f^{n,\lambda}(g).
\]
Consider
\[
   P_{g,f}
   =
   -\frac4a
   \bigl(\Delta_g-a\langle\nabla f,\nabla\cdot\rangle_g\bigr)
   +V,
\]
which is self-adjoint with respect to the weighted measure \(e^{-af}d\mu_g\). Denote its first eigenvalue by \(\theta(g,f)\).  

If \(V>\sigma\) somewhere, then \(\theta(g,f)>\sigma\). For a positive first eigenfunction \(u\), set \(\psi=-2a^{-1}\log u\).  Then
\[
   \Scal_{f+\psi}^{n,\lambda}(g)
   =
   u^{-1}P_{g,f}u
   =
   \theta(g,f)>\sigma .
\]
Applying the estimate \eqref{eq:enlargeable-s2-systole} to \((g,f+\psi,\lambda)\) gives
\[
   \operatorname{sys}_2(X,u_\rho;g)
   \le
   \frac{8\pi}{\theta(g,f)}
   <
   \frac{8\pi}{\sigma}.
\]
This leads to a contradiction, and so we have \(V\equiv\sigma\).

If \(df\not\equiv0\), choose \(\lambda_-<\lambda\).  Since \(a_n(\lambda)\) increases as $\lambda$ increases,
\[
   \Scal_f^{n,\lambda_-}(g)
   =
   \sigma+
   \bigl(a_n(\lambda)-a_n(\lambda_-)\bigr)|df|_g^2
\]
is no less than \(\sigma\), and greater than \(\sigma\) somewhere. Repeating the preceding argument with \(\lambda_-\) gives a weight \(\psi_-\) and a
constant \(\theta_->\sigma\) with \(\Scal_{f+\psi_-}^{n,\lambda_-}(g)=\theta_-\), again contradicting the estimate.  Thus we have \(df\equiv0\), and consequently
\[
   f\equiv\mathrm{constant},
   \qquad
   R_g\equiv\sigma .
\]

We next show \(\operatorname{Ric}_g\ge0\).  For a metric \(\bar g\), set
\[
   P_{\bar g}=-\frac4a\Delta_{\bar g}+R_{\bar g},
\]
and denote its first eigenvalue by \(\theta(\bar g)\).  Since \(R_g\equiv\sigma\), constant functions are first eigenfunctions with respect to the first eigenvalue
\(\theta(g)=\sigma\) for the operator $P_{\bar g}$.  If \(\operatorname{Ric}_g\) has a negative direction somewhere, then we can choose a $2$-tensor \(h\ge0\), supported in a small ball, with
\[
   \int_X\langle\operatorname{Ric}_g,h\rangle_g\,d\mu_g<0.
\]
For \(g_t=g+th\), one has \(g_t\ge g\) and
\[
   \operatorname{sys}_2(X,u_\rho;g_t)\ge\frac{8\pi}{\sigma}.
\]
On the other hand, the first variation formula gives
\[
   \left.\frac d{dt}\right|_{t=0}\theta(g_t)
   =
   -\frac1{\operatorname{Vol}_g(X)}
   \int_X\langle\operatorname{Ric}_g,h\rangle_g\,d\mu_g>0.
\]
Thus \(\theta(g_t)>\sigma\) for small \(t>0\).  Let $u_t$ be the first eigenfunction of the operator $P_{g_t}$ 
and \(\psi_t=-2a^{-1}\log u_t\), then we have
\[
   \Scal_{\psi_t}^{n,\lambda}(g_t)=\theta(g_t)>\sigma .
\]
Applying the estimate \eqref{eq:enlargeable-s2-systole} to \((g_t,\psi_t,\lambda)\) gives \(\operatorname{sys}_2(X,u_\rho;g_t) <\frac{8\pi}{\sigma}\), leading to a contradiction.  Therefore, we have \(\operatorname{Ric}_g\ge0\).

Finally, by Cheeger--Gromoll \cite{CheegerGromoll1971}, we know that the universal cover $\widetilde X$ splits isometrically as
\[
   \widetilde X\cong\mathbb R^\ell\times Z
\]
with \(Z\) compact and simply-connected. Since $Y^{n-2}$ is enlargeable, the argument in \cite[Proposition~IV.5.8]{LawsonMichelsohnSpinGeometry} (see also \cite[Proposition 3.3]{Gromov-Zhu}) gives \(\ell\ge n-2\).  Since \(R_g\equiv\sigma>0\), the compact factor has dimension at least \(2\), so 
we have \(\ell=n-2\) and \(\dim Z=2\).  The product structure of $\widetilde X$ yields that \(Z\) is the round sphere \(S^2_\sigma\) with constant scalar curvature $\sigma$. 
Thus
\[
   \widetilde X\cong S^2_\sigma\times\mathbb R^{n-2}.
\]
The proof is completed.
\end{proof}

\begin{remark}
The \(S^2\)-factor in Proposition~\ref{prop:enlargeable-s2-systole} can be replaced by a cohomology class.  Let \(u\in H^2(X;\mathbb Z)\), and assume that \(F_Y:X\to Y\) has nonzero \(u\)-degree over \(Y\), in the sense of Remark~\ref{rem:cohomological-degree-descent}.  

The only additional point in the proof is that this \(u\)-degree remains nonzero by passing to the enlargeability cover.  Indeed, if \(\widehat Y\to Y\) is such a cover, \(q:\widehat X\to X\) is the induced cover, and \(\widehat F_Y:\widehat X\to\widehat Y\) is the lifted map, then \(\widehat F_Y\) has nonzero \(q^*u\)-degree over \(\widehat Y\).  If \(C\subset S^{n-2}\) is the coordinate cube used in the proof and \(\phi:\phi^{-1}(C)\to C\) has relative degree \(d\ne0\), then
\[
   \deg^{q^*u}_{C}(\phi\circ\widehat F_Y)
   =
   d\,\deg^u_Y(F_Y)
   \ne0 .
\]
Thus the cubical descent of Remark~\ref{rem:cohomological-degree-descent} applies.  It gives a two-dimensional cycle pairing nontrivially with \(q^*u\). Pushing this cycle to \(X\) does not increase mass and preserves the pairing with \(u\).  Hence \(\operatorname{sys}_2(X,u;g)\le\frac{8\pi}{\sigma}\) under the same scalar-curvature assumptions. Moreover, the rigidity statement follows from the same proof.
\end{remark}

\section{Obstructions and rigidity for AM--PI spaces}
\label{sec:ampi-obstruction-rigidity}

Throughout this section, \( X=\mathcal R\sqcup\mathcal S\) is a compact connected oriented \(n\)-dimensional AM--PI space in the sense of Section~\ref{sec:conformal-blowup-exceptional}.  On \(\mathcal R\), we write the chosen admissible measure as \(e^{-f}d\mu_g \). We assume throughout this section that \(f\) is bounded on \(\mathcal R\) so that every measure \(e^{-af}d\mu_g\), \(a\in\mathbb R\), is again admissible.

\subsection{Positive weighted scalar curvature obstruction}

\begin{theorem}\label{thm:ampi-weighted-psc-obstruction}
Let \(X=\mathcal R\sqcup\mathcal S\) be compact, oriented, and enlargeable, and assume
\[
   c_A^X(\mathcal S)>3-\frac2n .
\]
Let \(\lambda\leq 1/n\).  Then \(\Scal_f^{n,\lambda}(g)\) cannot be nonnegative on \(\mathcal R\) and positive on a nonempty open subset of \(\mathcal R\).
\end{theorem}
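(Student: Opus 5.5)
Argue by contradiction: suppose \(\Scal_f^{n,\lambda}(g)\ge0\) on \(\mathcal R\) and \(>0\) on an open set \(U\subset\mathcal R\). First upgrade this to a uniform positive lower bound. Take the weighted conformal Laplacian--type operator
\[
P=-\tfrac4a\bigl(\Delta_g-a\langle\nabla f,\nabla\cdot\rangle_g\bigr)+\Scal_f^{n,\lambda}(g),\qquad a=a_n(\lambda),
\]
which is symmetric for the admissible measure \(e^{-af}d\mu_g\); this is legitimate because \(f\) is bounded. Its form domain is \(W^{1,2}(X)\); here \(\mathcal S\) has zero capacity, since \(c_A^X(\mathcal S)>2\) and Lemma~\ref{lem:packing-zero-capacity-section3} applies. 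By compactness and the PI structure (Rellich compactness on PI spaces), the first eigenvalue \(\theta\) is attained by a positive eigenfunction \(u\) that is smooth on \(\mathcal R\) and locally bounded above and below by Harnack/De Giorgi--Nash--Moser. Since the potential is nonnegative and positive on \(U\), we get \(\theta>0\): constants are not eigenfunctions. Setting \(\psi=-2a^{-1}\log u\) gives \(\Scal_{f+\psi}^{n,\lambda}(g)=\theta>0\). Moreover \(f+\psi\) is still bounded, so the measure stays admissible by Lemma~\ref{lem:bounded-weight-preserves-ampi-section3}. For \(\lambda=1/n\), first perturb to some \(\lambda'<1/n\), using monotonicity of \(a_n\) as in the proof of Theorem~B.

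Next, blow up the singular set. With \(c=c_A^X(\mathcal S)>3-2/n\), Remark~\ref{rem:codimension-thresholds-section3} shows that \(\tau_n(\lambda)<\tau_{\rm pack}(\mathcal S)\) holds for all \(\lambda\) below \(\lambda^{\rm pt}_{n,c}\). I would check that \(\lambda\le1/n\) (after slight decrease) lies in this range; at \(\lambda=1/n\) we have \(q_n=1\) and \(\tau=(n-2)/2\), so this is the place where the threshold \(3-2/n\) is used. If the range is restrictive, replace \(\lambda\) by a smaller value, which only increases \(\Scal\) since \(a_n\) is increasing. Choose the weight \(\omega\) so that \(L_{f,\lambda}^{(n)}=L_\omega\), namely \(\omega=e^{-\gamma_n(\lambda)(f+\psi)}\), which is admissible by boundedness. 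Proposition~\ref{prop:packing-conformal-blowup-scalar} with \(\zeta=\theta/2\) then yields a metric \(\widetilde g=w^{\alpha}g\) on \(\mathcal R\) that is complete toward \(\mathcal S\), with \(\Scal_{\widetilde f}^{n,\lambda}(\widetilde g)\ge\theta/2\), and with \(w\ge1\) and \(\widetilde g\ge c_q\,d(\cdot,q)^{-2}g\) near each \(q\in\mathcal S\).

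Finally, run the cubical descent in the form of Remark~\ref{rem:controlled-target-version}. Pass to the covers from enlargeability and compose with an \(\varepsilon\)-Lipschitz map to \(S^n\), restricted to a small cube \(C\cong I^n\) around a regular value. This gives a map \(F\) from a compact piece with \(X_n^\circ=\mathcal R\) into \(I^n\), with \(A_n=F(\mathcal S)\). The map is Lipschitz for \(d\) and hence for \(\widetilde g\ge g\), and it has widths \(\gtrsim 1/\varepsilon\). We need \(A_n\in\mathcal N_1(I^n)\), i.e.\ \(\mathcal H^{n-1}(F(\mathcal S))=0\); this follows because \(F\) is Lipschitz and \(\dim_{\mathcal H}\mathcal S\le\dim_A\mathcal S<n-3+2/n\le n-1\). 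The degree is nonzero over \(I^n\setminus A_n\) since \(\mathcal S\) does not disconnect the fundamental class. The remark then yields the cube inequality
\[
\theta/2\le4\pi^2(1-\lambda)\cdot nO(\varepsilon^2),
\]
and letting \(\varepsilon\to0\) gives a contradiction.

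The main obstacles are the spectral step, namely existence, positivity and regularity of \(u\) up to \(\mathcal S\), and verifying that the descent of Section~\ref{sec:cube-inequality} accepts the singular starting datum; both are the points where the paper's remarks must be invoked carefully.
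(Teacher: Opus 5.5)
Your overall route is the paper's: a spectral change of weight to get a uniform positive lower bound, lowering $\lambda$, the pointwise blow-up of Proposition~\ref{prop:packing-conformal-blowup-scalar} with weight $e^{-\gamma_n(\bar\lambda)(f+\psi)}$, and then enlargeability together with Remark~\ref{rem:controlled-target-version}. There is, however, a genuine gap in the spectral step.

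\textbf{The gap.} You take the full function $V=\Scal_f^{n,\lambda}(g)$ as the potential of $P$. Nothing in the hypotheses bounds $V$ near $\mathcal S$; it is only smooth on $\mathcal R$. A minimizer still exists, and it is bounded above, since $V\ge0$ makes it a nonnegative subsolution. But De Giorgi--Nash--Moser and Harnack give no positive lower bound up to $\mathcal S$ when the zeroth-order coefficient is unbounded. For example, take a potential like $c\,d(\cdot,p)^{-2}$ with $c>0$, as at a conical point of positive scalar curvature. There the eigenfunction behaves like $d(\cdot,p)^{\gamma}$ with $\gamma>0$, so $u\to0$. The consequences are:
\begin{itemize}
\item $\psi=-2a^{-1}\log u\to+\infty$, so your claim that $f+\psi$ is bounded fails;
\item the weight $e^{-\gamma_n(\lambda)(f+\psi)}$ degenerates at $\mathcal S$, and Lemma~\ref{lem:bounded-weight-preserves-ampi-section3} is no longer available;
\item you lose the admissibility that Proposition~\ref{prop:packing-conformal-blowup-scalar} requires, namely the Ahlfors bounds and the Green function estimates.
\end{itemize}

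\textbf{The fix.} The missing idea, which is what the paper does, is to use a bounded, compactly supported minorant as the potential. Pick a ball $B\Subset\mathcal R$ on which $V_0=\Scal_f^{n,\bar\lambda}(g)\ge b>0$. Set $W=b\chi$ with $\chi\in C_c^\infty(B)$, $0\le\chi\le1$, $\chi\not\equiv0$. The first eigenfunction $u$ of $-\frac4a(\Delta_g-a\langle\nabla f,\nabla\cdot\rangle_g)+W$ has eigenvalue $\eta_0>0$ and satisfies $0<c_0\le u\le C_0$ by Harnack. The same computation as yours then gives $\Scal_{f+\psi}^{n,\bar\lambda}(g)=\eta_0+(V_0-W)\ge\eta_0$, now with $\psi$ bounded.

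\textbf{Two smaller corrections.}
\begin{itemize}
\item The threshold $3-2/n$ does not appear at $\lambda=1/n$. There $\tau_n=(n-2)/2$, which would require $c>(n+2)/2$. It appears in the limit $\lambda\to-\infty$, where $q_n(\lambda)\uparrow n-1$ and hence $\tau_n(\lambda)\downarrow 1-2/n$. So your fallback of lowering $\lambda$, possibly by a lot, is what actually does the work.
\item The descent behind Remark~\ref{rem:controlled-target-version} needs $\bar\lambda<\Lambda_n$. For $n\ge14$ this is an extra constraint, and $\Lambda_n$ is negative once $n\ge26$. You should impose it when you lower $\lambda$.
\end{itemize}
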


\begin{proof}
Assume otherwise.  Choose
\[
   3-\frac2n<c<c_A^X(\mathcal S).
\]
By Remark~\ref{rem:codimension-thresholds-section3}, choose
\(\bar\lambda\le\lambda\) such that
\[
   \bar\lambda<\Lambda_n,
   \qquad
   \tau_n(\bar\lambda)<c-2 .
\]
Since \(a_n\) is increasing on \((-\infty,1/n)\), the function
\[
   V_0:=\Scal_f^{n,\bar\lambda}(g)
   =
   \Scal_f^{n,\lambda}(g)
   +
   \bigl(a_n(\lambda)-a_n(\bar\lambda)\bigr)|df|_g^2
\]
is nonnegative and positive on a nonempty open subset of \(\mathcal R\). Choose a smooth ball \(B\Subset\mathcal R\), a constant \(b>0\), and a function \(\chi\in C_c^\infty(B)\), \(0\le\chi\le1\), \(\chi\not\equiv0\), such that \(V_0\ge b\) on \(B\).  Put \(W=b\chi\).  Then
\(0\le W\le V_0\), \(W\not\equiv0\), and \(W\) is bounded with compact support in \(\mathcal R\).

Put \(a=a_n(\bar\lambda)\).  Since \(f\) is bounded, the measure \(e^{-af}d\mu_g\) is admissible.  Define
\[
   \eta_0
   =
   \inf_{\varphi\not\equiv0}
   \frac{
      \frac4a\int_{\mathcal R}|\nabla\varphi|_g^2e^{-af}\,d\mu_g
      +
      \int_{\mathcal R}W\varphi^2e^{-af}\,d\mu_g
   }{
      \int_{\mathcal R}\varphi^2e^{-af}\,d\mu_g
   } .
\]
The compact AM--PI hypothese guarantees a minimizer \(u\ge0\).  Since \(X\) is connected and \(W\ge0\) is not identically zero, \(\eta_0>0\).  The  Harnack inequalities give \(0<c_0\le u\le C_0<\infty\).  On \(\mathcal R\), elliptic regularity gives \(u\in C^\infty(\mathcal R)\) and
\[
   \left[
      -\frac4a
      \bigl(\Delta_g-a\langle\nabla f,\nabla\cdot\rangle_g\bigr)
      +W
   \right]u
   =
   \eta_0u .
\]
Set \(\psi=-2a^{-1}\log u\).  Then \(\psi\) is bounded.  Since \(W\le V_0\), a direct calculation gives
\[
\begin{aligned}
   \Scal_{f+\psi}^{n,\bar\lambda}(g)
   &=
   u^{-1}
   \left[
      -\frac4a
      \bigl(\Delta_g-a\langle\nabla f,\nabla\cdot\rangle_g\bigr)
      +V_0
   \right]u  \\
   &=
   \eta_0+(V_0-W)
   \ge \eta_0>0 .
\end{aligned}
\]
Moreover, since \(f\) and \(\psi\) are bounded, the weight \(e^{-\gamma_n(\bar\lambda)(f+\psi)}\) is comparable to \(e^{-f}\), and is therefore admissible by Lemma~\ref{lem:bounded-weight-preserves-ampi-section3}.

We now apply Proposition~\ref{prop:packing-conformal-blowup-scalar} to \((g,f+\psi)\) and obtain a complete smooth metric \(g_0\) on \(\mathcal R\) and a smooth density \(f_0\) such that \(\Scal_{f_0}^{n,\bar\lambda}(g_0)\ge\eta_1>0 \).

Let \(\widehat X\to X\) be a cover, and let \(\Phi:\widehat X\to S^n\) be \(\varepsilon\)-Lipschitz, constant outside a compact set, and of nonzero degree.  Choose a regular value \(y_0\in S^n\), different from the constant value at infinity, such that \(\deg(\Phi;y_0)\ne0\). Let \(C\Subset S^n\) be a small coordinate cube around \(y_0\), disjoint from the constant value at infinity.  Then \(\Phi^{-1}(C)\) is compact and \(\Phi:\Phi^{-1}(C)\to C\) has nonzero relative degree.

Let \(\widehat{\mathcal R}\) and \(\widehat{\mathcal S}\) denote the lifted regular and singular parts, and set
\[
   A=\Phi\bigl(\widehat{\mathcal S}\cap\Phi^{-1}(C)\bigr)\subset C .
\]
Since \(c_A^X(\mathcal S)>3-2/n\), one has \(\dim_A\mathcal S<n-1\), and hence \(\mathcal H^{n-1}(A)=0\).
The blown-up regular part maps properly to \(C\setminus A\) and has nonzero degree. Identifying \(C\) with \(I^n\), Remark~\ref{rem:controlled-target-version} gives
\[
   \eta_1
   \le
   4\pi^2(1-\bar\lambda)\sum_{i=1}^nD_i^{-2},
\]
where \(D_i\) are the cubical widths measured in \(g_0\).  By the construction of the pointwise conformal blow-up, \(g_0\ge g\).  Since \(\Phi\) is \(\varepsilon\)-Lipschitz with respect to \(g\), there is a constant \(c_C>0\), depending only on \(C\), such that \(D_i\ge c_C\varepsilon^{-1}\) for all \(i\).  Letting \(\varepsilon\downarrow0\) contradicts \(\eta_1>0\).
\end{proof}

\begin{corollary}\label{cor:ampi-weight-constant}
Under the hypotheses of Theorem~\ref{thm:ampi-weighted-psc-obstruction}, if
\[
   \Scal_f^{n,\lambda}(g)\ge0
\]
on \(\mathcal R\) for some \(\lambda\leq 1/n\), then \(df\equiv0\) and \(R_g\equiv0\) on \(\mathcal R\). 
\end{corollary}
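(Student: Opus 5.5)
The plan is to argue by contradiction, reducing each failure mode to Theorem~\ref{thm:ampi-weighted-psc-obstruction} by the same two perturbations used in the rigidity part of Proposition~\ref{prop:enlargeable-s2-systole}. Assume \(\Scal_f^{n,\lambda}(g)\ge0\) on \(\mathcal R\) with \(\lambda\le 1/n\).

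\emph{Step 1: \(df\equiv0\).} Suppose \(df\ne0\) at some point of \(\mathcal R\). Pick \(\lambda_-<\lambda\). If \(\lambda=1/n\) we only use \(\lambda_-<1/n\); in that case \(f\) is constant by convention, so assume \(\lambda<1/n\). Since \(a_n\) is strictly increasing on \((-\infty,1/n)\), we get
\[
   \Scal_f^{n,\lambda_-}(g)=\Scal_f^{n,\lambda}(g)+\bigl(a_n(\lambda)-a_n(\lambda_-)\bigr)|df|_g^2 .
\]
This is nonnegative on \(\mathcal R\). By continuity of \(df\) on the smooth manifold \(\mathcal R\), it is strictly positive on a nonempty open set. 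This contradicts Theorem~\ref{thm:ampi-weighted-psc-obstruction} applied with \(\lambda_-\le 1/n\). The hypotheses on \(X\) and \(f\) are unchanged, since only the parameter moved. Hence \(df\equiv0\) on \(\mathcal R\), and because \(\mathcal R\) is connected \(f\) is constant there. Connectedness of \(\mathcal R\) should follow because \(X\) is connected and \(\mathcal S\) has codimension greater than \(1\), so it cannot disconnect, for example via the Poincar\'e inequality. If only local constancy is available, that already suffices for \(df\equiv0\).

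\emph{Step 2: \(R_g\equiv0\).} With \(f\) constant, \(\Scal_f^{n,\lambda}(g)=R_g\ge0\). If \(R_g>0\) somewhere, then by continuity it is positive on an open subset of \(\mathcal R\). Theorem~\ref{thm:ampi-weighted-psc-obstruction} then gives a contradiction directly. Therefore \(R_g\equiv0\).

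The only delicate point is the boundary case \(\lambda=1/n\), where \(\Scal_f^{n,1/n}=R_g\) by convention and \(f\) is already constant, so only Step~2 is needed. The other point is the strict monotonicity of \(a_n\), which gives \(a_n(\lambda)-a_n(\lambda_-)>0\). Everything else is a direct reapplication of the theorem, so I expect no real obstacle.
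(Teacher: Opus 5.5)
Your proof is correct and is essentially the paper's argument. The paper first uses Theorem~\ref{thm:ampi-weighted-psc-obstruction} to get $\Scal_f^{n,\lambda}(g)\equiv0$, and then rules out $df\not\equiv0$ by the same shift to $\lambda'<\lambda$ using the strict monotonicity of $a_n$; you simply do the two steps in the opposite order, and your remark that $\lambda=1/n$ is handled by the convention that $f$ is constant is a point the paper leaves implicit.
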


\begin{proof}
Theorem~\ref{thm:ampi-weighted-psc-obstruction} gives \(\Scal_f^{n,\lambda}(g)\equiv0\). If \(df\not\equiv0\), choose \(\lambda'<\lambda\).  Since \(a_n\) is strictly increasing,
\[
   \Scal_f^{n,\lambda'}(g)
   =
   \Scal_f^{n,\lambda}(g)
   +
   \bigl(a_n(\lambda)-a_n(\lambda')\bigr)|df|_g^2
\]
is nonnegative and positive somewhere, contradicting Theorem~\ref{thm:ampi-weighted-psc-obstruction}.  Thus \(df\equiv0\), and then \(R_g=\Scal_f^{n,\lambda}(g)\equiv0\).
\end{proof}

\begin{proposition}\label{prop:ampi-ricci-flat-reduction}
Let \(X=\mathcal R\sqcup\mathcal S\) satisfy the hypotheses of Theorem~\ref{thm:ampi-weighted-psc-obstruction}.  If
\[
   \Scal_f^{n,\lambda}(g)\ge0
\]
on \(\mathcal R\) for some \(\lambda\leq 1/n\), then \(df\equiv0\) and \(\operatorname{Ric}_g\equiv0\) on \(\mathcal R\).
\end{proposition}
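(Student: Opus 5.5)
By Corollary~\ref{cor:ampi-weight-constant} we already know \(df\equiv0\) and \(R_g\equiv0\) on \(\mathcal R\). After shifting \(f\) by a constant, which changes nothing, the admissible measure is a constant multiple of \(d\mu_g\). It remains to show \(\operatorname{Ric}_g\equiv0\). I will adapt the Ricci-variation argument from the rigidity part of Proposition~\ref{prop:enlargeable-s2-systole}, with Theorem~\ref{thm:ampi-weighted-psc-obstruction} playing the role of the systolic estimate.

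Suppose \(\operatorname{Ric}_g(p)\ne0\) at some \(p\in\mathcal R\). Since \(R_g(p)=0\), the tensor \(\operatorname{Ric}_g(p)\) is trace-free and nonzero, so it has a negative eigendirection. Choose a symmetric \(2\)-tensor \(h\ge0\) with compact support in a small coordinate ball \(B\Subset\mathcal R\) around \(p\) such that
\[
   \int_{\mathcal R}\langle\operatorname{Ric}_g,h\rangle_g\,d\mu_g<0 .
\]
Set \(g_t=g+th\). Because \(h\) has compact support in \(\mathcal R\), the triple \((X,d_t,\mathcal R,g_t)\) is still a compact AM--PI space, with the same singular set and the same Assouad codimension. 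Indeed, \(g_t\) and \(g\) are bi-Lipschitz equivalent and agree near \(\mathcal S\), so the Ahlfors and Poincar\'e constants change only by bounded factors. Enlargeability is also preserved, since the identity map is bi-Lipschitz and covers lift accordingly.

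Next I take \(a=a_n(\lambda)\) (or any admissible \(\bar\lambda\le\lambda\)) and consider the conformal Laplacian-type operator \(P_{g_t}=-\frac4a\Delta_{g_t}+R_{g_t}\) on \(\mathcal R\) with the quadratic form used in the proof of Theorem~\ref{thm:ampi-weighted-psc-obstruction}. Its bottom eigenvalue \(\theta(g_t)\) is attained on the compact AM--PI space. At \(t=0\) the constants are eigenfunctions with \(\theta(g)=0\): here \(\mathcal S\) has zero capacity by Lemma~\ref{lem:packing-zero-capacity-section3}, and the space is compact, so constants lie in the form domain. The standard first-variation formula, with the variation localized in \(B\), gives
\[
   \left.\frac{d}{dt}\right|_{t=0}\theta(g_t)
   =-\frac{1}{\operatorname{Vol}_g}\int\langle\operatorname{Ric}_g,h\rangle\,d\mu_g>0 ,
\]
so \(\theta(g_t)>0\) for small \(t>0\).

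Now let \(u_t>0\) be the first eigenfunction and set \(\psi_t=-2a^{-1}\log u_t\). Harnack inequalities give two-sided bounds on \(u_t\), so \(\psi_t\) is bounded. Then
\[
   \Scal^{n,\lambda}_{\psi_t}(g_t)=u_t^{-1}P_{g_t}u_t=\theta(g_t)>0
\]
on \(\mathcal R\). The measure \(e^{-\psi_t}d\mu_{g_t}\) is admissible by Lemma~\ref{lem:bounded-weight-preserves-ampi-section3}, and \(\psi_t\) is bounded. This contradicts Theorem~\ref{thm:ampi-weighted-psc-obstruction} applied to \((g_t,\psi_t)\). Hence \(\operatorname{Ric}_g\equiv0\).

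The main obstacle is justifying the eigenvalue variation on the singular space. One must show that \(\theta(g_t)\) depends differentiably on \(t\) (or at least that the one-sided derivative exists at \(t=0\)), with the constant eigenfunction of multiplicity one. I would handle this with the test function \(\varphi\equiv1\) in the Rayleigh quotient to get an upper bound, and a compactness argument (uniform Sobolev and Poincar\'e inequalities under the bi-Lipschitz perturbation) to get a lower bound. Simplicity of the constant eigenfunction should follow from connectedness of \(\mathcal R\), which holds because \(\mathcal S\) has codimension greater than \(1\). If the derivative is delicate, an alternative is to bound \(\theta(g_t)\) from below directly by \(\theta(g)+t\cdot(\text{first-order term})-O(t^2)\), testing with the perturbed eigenfunctions.
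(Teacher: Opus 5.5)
Your proposal is correct and follows the paper's proof essentially step by step. Both arguments use the compactly supported deformation \(g_t=g+th\) in \(\mathcal R\), the first variation showing that the bottom eigenvalue of \(-\frac4a\Delta_{g_t}+R_{g_t}\) becomes positive, and the substitution \(\psi_t=-2a^{-1}\log u_t\) to contradict Theorem~\ref{thm:ampi-weighted-psc-obstruction}. The extra points you raise — the lower bound for \(\theta(g_t)\) via the perturbed eigenfunctions, preservation of enlargeability, and the option of using some \(\bar\lambda<1/n\) so that \(a\) is finite — fill in details the paper leaves implicit rather than departing from its argument.
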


\begin{proof}
By Corollary~\ref{cor:ampi-weight-constant}, \(df\equiv0\) and \(R_g\equiv0\) on \(\mathcal R\).  It remains to rule out
\(\operatorname{Ric}_g\not\equiv0\).

Assume \(\operatorname{Ric}_g\not\equiv0\).  Since \(R_g=0\), the Ricci tensor has a negative direction somewhere.  Choose
\(h\in C_c^\infty(\operatorname{Sym}^2T^*\mathcal R)\), supported in a smooth ball \(B\Subset\mathcal R\), with \(h\ge0\) and
\[
   \int_{\mathcal R}\langle\operatorname{Ric}_g,h\rangle_g\,d\mu_g<0 .
\]
For small \(t\ge0\), set \(g_t=g+th\).  Let \(a=a_n(\lambda)\), and let \(\theta(t)\) be the first eigenvalue of
\(P_{g_t}=-\frac4a\Delta_{g_t}+R_{g_t}\).  As in the deformation argument used in Proposition~\ref{prop:enlargeable-s2-systole}, constants realize \(\theta(0)=0\), and
\[
   \theta'(0)
   =
   -\frac1{\mu_g(X)}
   \int_{\mathcal R}\langle\operatorname{Ric}_g,h\rangle_g\,d\mu_g
   >0 .
\]
Hence \(\theta(t)>0\) for all small \(t>0\).

Let \(u_t>0\) be a first eigenfunction.  By the same compactness, regularity and Harnack argument as above, \(u_t\) is smooth on \(\mathcal R\) and \(0<c_t\le u_t\le C_t\).  With \(\psi_t=-2a^{-1}\log u_t\), we get
\[
   \Scal_{\psi_t}^{n,\lambda}(g_t)
   =
   u_t^{-1}P_{g_t}u_t
   =
   \theta(t)>0
   \quad\text{on }\mathcal R .
\]
The deformation is supported in \(B\Subset\mathcal R\), so \(g_t=g\) near \(\mathcal S\); in particular the AM--PI assumptions, the Assouad-codimension condition, and admissibility of the weight are unchanged.  Applying Theorem~\ref{thm:ampi-weighted-psc-obstruction} to \((X,g_t,\psi_t,\lambda)\) gives a contradiction.  Therefore \(\operatorname{Ric}_g\equiv0\) on \(\mathcal R\).
\end{proof}

\subsection{The QL property}

The argument in this subsection is inspired by Dai--Wang--Wang--Wei \cite{DaiWangWangWei2025} and Honda--Sun \cite{HondaSun2026}.

We use the following equivalent \(L^\infty\)-form of the QL condition of Honda--Sun~\cite[Definition~2.29]{HondaSun2026}; the equivalence follows from the standard local boundedness estimate for harmonic functions on PI spaces.
\begin{definition}
    We say that \((X,d,\mu_g)\) satisfies \emph{QL} if, for every compact \(K\Subset X\), there exist constants \(C_K,r_K>0\) such that, whenever \(x\in K\), \(0<r<r_K\), and \(u\) is bounded and weakly harmonic on \(B_{2r}(x)\), one has
\[
   \operatorname*{ess\,sup}_{B_r(x)\cap\mathcal R}|\nabla u|_g
   \le
   \frac{C_K}{r}\|u\|_{L^\infty(B_{2r}(x))}.
\]

\end{definition}

\begin{lemma}\label{lem:kato-removable-subsolution-ql-ampi}
Assume \(\operatorname{Ric}_g\ge0\) on \(\mathcal R\).  Let \(U\subset X\) be open, let \(u\) be harmonic on \(U\cap\mathcal R\), and set \(h=|\nabla u|_g\) and \(q_n=\frac{n-2}{n-1}\). For every \(q>q_n\), the function \(v=h^q\) satisfies, weakly on \(U\cap\mathcal R\),
\begin{equation}\label{eq:kato-subsolution-weak-ampi}
   \Delta_g v
   \ge
   \kappa_q v^{-1}|\nabla v|_g^2,
   \qquad
   \kappa_q=\frac{q-q_n}{q}>0 .
\end{equation}

Assume moreover that, for some \(\beta>0\),
\[
   v\le A\,\operatorname{dist}(\cdot,\mathcal S)^{-\beta}
   \qquad\text{on }U\cap\mathcal R,
\]
and \(c_A^X(\mathcal S)>2+\beta\). Then \(v\) is weakly subharmonic on \(U\), that is, for every \(\varphi\in\operatorname{Lip}_c(U)\), \(\varphi\ge0\),
\[
   \int_U\langle\nabla v,\nabla\varphi\rangle\,d\mu_g\le0 .
\]
\end{lemma}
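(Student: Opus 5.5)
The argument has two parts: a pointwise computation on the regular part, and then a capacity cutoff argument to remove the singular set.

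\emph{Step 1: the subsolution inequality on \(U\cap\mathcal R\).} Work first at points where \(h>0\). Bochner's formula with \(\operatorname{Ric}_g\ge0\) and \(\Delta u=0\) gives \(\frac12\Delta h^2\ge|\nabla^2u|^2\). The refined Kato inequality for harmonic functions, \(|\nabla^2u|^2\ge\frac{n}{n-1}|\nabla h|^2\), then yields
\[
   h\Delta h\ge\frac1{n-1}|\nabla h|^2 .
\]
For \(v=h^q\) one computes
\[
   \Delta v=qh^{q-1}\Delta h+q(q-1)h^{q-2}|\nabla h|^2\ge q\Bigl(q-1+\tfrac1{n-1}\Bigr)h^{q-2}|\nabla h|^2 .
\]
Since \(|\nabla v|^2=q^2h^{2q-2}|\nabla h|^2\), this gives \(\Delta v\ge\frac{q-q_n}{q}v^{-1}|\nabla v|^2\). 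Across the zero set of \(h\), I would apply the same computation to \((h^2+\epsilon)^{q/2}\) and let \(\epsilon\to0\). Because \(q>q_n\), the error terms have a sign, so the inequality passes to the weak limit. This is standard on the smooth part.

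\emph{Step 2: removability.} Fix \(\varphi\in\operatorname{Lip}_c(U)\) with \(\varphi\ge0\), and let \(K=\operatorname{spt}\varphi\). I would use a cutoff \(\chi_\epsilon\) with \(\chi_\epsilon=0\) on the \(\epsilon\)-neighbourhood of \(\mathcal S\), \(\chi_\epsilon=1\) off the \(2\epsilon\)-neighbourhood, and \(|\nabla\chi_\epsilon|\le C/\epsilon\). Testing Step 1 against \(\chi_\epsilon\varphi\) gives
\[
   \int\chi_\epsilon\langle\nabla v,\nabla\varphi\rangle\le-\int\varphi\langle\nabla v,\nabla\chi_\epsilon\rangle-\kappa_q\int\chi_\epsilon\varphi v^{-1}|\nabla v|^2 .
\]
To show the boundary term vanishes, I would first get a Caccioppoli-type bound for \(\nabla v\). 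Testing Step 1 with \(\chi^2\varphi\) and using Cauchy--Schwarz gives the weighted estimate
\[
   \kappa_q\int\chi^2\varphi\,v^{-1}|\nabla v|^2\le C\int v|\nabla(\chi\varphi)|^2/\varphi .
\]
One can also obtain an \(L^2\) gradient bound by testing with \(v\chi^2\). The convenient route is to test with \(\varphi\chi_\epsilon^2\) and bound the cross term by
\[
   2\int\varphi\chi_\epsilon|\nabla v||\nabla\chi_\epsilon|\le\frac{\kappa_q}{2}\int\varphi\chi_\epsilon^2v^{-1}|\nabla v|^2+C\int\varphi v|\nabla\chi_\epsilon|^2 .
\]
It remains to control \(\int_{K\cap\{\epsilon<\operatorname{dist}<2\epsilon\}}v\,\epsilon^{-2}\). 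The hypothesis bounds this by \(A\epsilon^{-\beta-2}\mu(\{\operatorname{dist}<2\epsilon\}\cap K)\). By the packing condition with \(d_*<n-2-\beta\) (this uses \(c_A^X(\mathcal S)>2+\beta\)) together with Ahlfors regularity, the tube volume is \(\le C\epsilon^{n-d_*}\), so the whole term is \(\le C\epsilon^{n-d_*-2-\beta}\to0\). Simultaneously, the weighted gradient \(\int\varphi v^{-1}|\nabla v|^2\) stays bounded.

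\emph{Step 3: passing to the limit.} Cauchy--Schwarz gives \(|\nabla v|\le (v^{-1}|\nabla v|^2)^{1/2}v^{1/2}\). Since \(v\in L^1_{\rm loc}\) (again because \(\beta<n-d_*\)), this shows \(\nabla v\in L^1_{\rm loc}\). The same Cauchy--Schwarz step also makes the boundary term vanish:
\[
   \int\varphi|\nabla v||\nabla\chi_\epsilon|\le\Bigl(\int\varphi v^{-1}|\nabla v|^2\Bigr)^{1/2}\Bigl(\int\varphi v|\nabla\chi_\epsilon|^2\Bigr)^{1/2}\to0 .
\]
Letting \(\epsilon\to0\) and using \(\mu(\mathcal S)=0\) then gives \(\int\langle\nabla v,\nabla\varphi\rangle\le0\). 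The main obstacle is the precise bookkeeping in this step: getting the local weighted gradient bound uniform in \(\epsilon\) near \(\mathcal S\) via the self-improving test function \(\varphi\chi_\epsilon^2\), and making sure the tube-volume estimate from the Assouad packing applies uniformly on \(K\).
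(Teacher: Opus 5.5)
Your proposal is correct and follows essentially the paper's argument. Both use the $\varepsilon$-regularized Bochner--Kato computation, then a Caccioppoli estimate for $\int v^{-1}|\nabla v|_g^2$ near $\mathcal S$ driven by the tube bound $\mu_g(N_\rho(\mathcal S)\cap K)\le C\rho^{c}$ with $c>2+\beta$, and finally test with $\chi_\epsilon\varphi$ and apply Cauchy--Schwarz. The paper extracts a decay rate $\int_{N_{2\rho}\cap K}v^{-1}|\nabla v|^2\le C\rho^{c-\beta-2}$ using an annular cutoff $\eta_\rho\theta_\delta$; you only need, and get, a uniform local bound plus dominated convergence, which is equally sufficient.

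One small fix: in the Caccioppoli step, test with $\psi^2\chi_\epsilon^2$ for an auxiliary cutoff $\psi\equiv1$ on $\operatorname{spt}\varphi$, not with $\varphi\chi_\epsilon^2$. Otherwise the cross term $\int\chi_\epsilon^2\langle\nabla v,\nabla\varphi\rangle$ cannot be absorbed for a general Lipschitz $\varphi\ge0$, since $|\nabla\varphi|^2/\varphi$ need not be integrable.
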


\begin{proof}
Put \[s=|\nabla u|_g^2.\]  For \(\varepsilon>0\), set \[v_\varepsilon=(s+\varepsilon)^{q/2}.\] The Bochner formula gives \(\frac12\Delta_gs\ge|\nabla^2u|^2\), and the improved Kato inequality gives
\[
   |\nabla s|^2\le \frac{4(n-1)}{n}s|\nabla^2u|^2 .
\]
Hence
\[
\begin{aligned}
   \Delta_gv_\varepsilon
   &=
   \frac q2(s+\varepsilon)^{q/2-1}\Delta_gs
   +
   \frac{q(q-2)}4(s+\varepsilon)^{q/2-2}|\nabla s|^2  \\
   &\ge
   q(s+\varepsilon)^{q/2-2}
   \left[
      (s+\varepsilon)|\nabla^2u|^2
      +\frac{q-2}{4}|\nabla s|^2
   \right]  \\
   &\ge
   \frac q4
   \left(q-\frac{n-2}{n-1}\right)
   (s+\varepsilon)^{q/2-2}|\nabla s|^2 .
\end{aligned}
\]
Since
\[
   v_\varepsilon^{-1}|\nabla v_\varepsilon|^2
   =
   \frac{q^2}{4}(s+\varepsilon)^{q/2-2}|\nabla s|^2,
\]
we get \[\Delta_gv_\varepsilon\ge\kappa_q v_\varepsilon^{-1}|\nabla v_\varepsilon|^2.\]
From integration by parts, the gradients $\nabla v_\epsilon$ are uniformly bounded in $L^2_{loc}(\mathcal R)$ and so we have $\nabla v_\epsilon\to \nabla v$ weakly as $\varepsilon\to 0$. Combined with the monotone convergence theorem applied to the right-hand side, we can obtain \eqref{eq:kato-subsolution-weak-ampi} for \(v=s^{q/2}=|\nabla u|^q\) by letting $\varepsilon\to 0$.

It remains to extend the weak subharmonicity across \(\mathcal S\).  Put \(d(x)=\operatorname{dist}(x,\mathcal S)\) and
\(N_\rho=N_\rho(\mathcal S)\).  Let \(K\Subset U\) and choose \(c\) with \(2+\beta<c<c_A^X(\mathcal S)\).  By the local Ahlfors regularity, and the bound \(v\le A d^{-\beta}\), we have, for small \(\rho\),
\[
   \mu_g(N_\rho\cap K)\le C\rho^c,\qquad
   \int_{N_\rho\cap K}v\,d\mu_g\le C\rho^{c-\beta}.
\]
Choose \(K'\Subset U\) with \(K\Subset K'\), and take \(\psi\in\operatorname{Lip}_c(U)\) with \(\psi\equiv1\) near \(K\) and \(\operatorname{spt}\psi\subset K'\).  Let \(\eta_\rho=1\) on \(N_{2\rho}\), \(\operatorname{spt}\eta_\rho\subset N_{4\rho}\), and \(|\nabla\eta_\rho|_g\le C\rho^{-1}\).  For \(0<\delta<\rho\), let \(\theta_\delta=0\) on \(N_\delta\), \(\theta_\delta=1\) outside \(N_{2\delta}\), and \(|\nabla\theta_\delta|_g\le C\delta^{-1}\).  Testing \eqref{eq:kato-subsolution-weak-ampi} with \(\zeta_{\rho,\delta}:=\psi\eta_\rho\theta_\delta\), which equals \(1\) on
\((N_{2\rho}\setminus N_{2\delta})\cap K\), gives
\[
\begin{aligned}
   \int_{(N_{2\rho}\setminus N_{2\delta})\cap K}
   v^{-1}|\nabla v|_g^2\,d\mu_g
   &\le
   C(\rho^{-2}+1)\int_{N_{4\rho}\cap K'}v\,d\mu_g
   +
   C\delta^{-2}\int_{N_{2\delta}\cap K'}v\,d\mu_g  \\
   &\le
   C\rho^{c-\beta-2}+C\delta^{c-\beta-2}.
\end{aligned}
\]
Since \(c-\beta-2>0\), letting \(\delta\downarrow0\) yields
\[
   \int_{N_{2\rho}\cap K}
   v^{-1}|\nabla v|_g^2\,d\mu_g
   \le C\rho^{c-\beta-2}.
\]

Now fix \(\varphi\in\operatorname{Lip}_c(U)\), \(\varphi\ge0\), and take \(K\Subset U\) with \(\operatorname{spt}\varphi\subset K\).  Let \(\chi_\rho=0\) on \(N_\rho\), \(\chi_\rho=1\) outside \(N_{2\rho}\), and \(|\nabla\chi_\rho|\le C\rho^{-1}\).  Since \(\chi_\rho\varphi\in\operatorname{Lip}_c(U\cap\mathcal R)\), the subharmonicity of $v$ on \(U\cap\mathcal R\) gives
\[
   \int_U\langle\nabla v,\nabla(\chi_\rho\varphi)\rangle\,d\mu_g\le0 .
\]
Hence
\[
\begin{aligned}
   \int_U\langle\nabla v,\nabla\varphi\rangle\,d\mu_g
   &\le
   \int_U(1-\chi_\rho)\langle\nabla v,\nabla\varphi\rangle\,d\mu_g
   -
   \int_U\varphi\langle\nabla v,\nabla\chi_\rho\rangle\,d\mu_g .
\end{aligned}
\]
By H\"older's inequality, the preceding estimates imply
\[
\left|
   \int_U(1-\chi_\rho)\langle\nabla v,\nabla\varphi\rangle\,d\mu_g
\right|
\le C\rho^{c-\beta-1}\to0,
\]
and
\[
\left|
   \int_U\varphi\langle\nabla v,\nabla\chi_\rho\rangle\,d\mu_g
\right|
\le C\rho^{c-\beta-2}\to0.
\]
Since \(c-\beta>2\), letting \(\rho\downarrow0\) yields \(\int_U\langle\nabla v,\nabla\varphi\rangle\,d\mu_g\le0\).
\end{proof}

\begin{proposition}\label{prop:QL-assouad-codim3-ampi}
Assume \(\operatorname{Ric}_g\ge0\) on \(\mathcal R\),
and
\[
   c_A^X(\mathcal S)>3-\frac1{n-1}.
\]
Then \((X,d,\mu_g)\) satisfies QL.
\end{proposition}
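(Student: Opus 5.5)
The plan is to obtain the QL gradient bound from a mean value inequality for a power of \(|\nabla u|_g\), and to justify that inequality across \(\mathcal S\) with Lemma~\ref{lem:kato-removable-subsolution-ql-ampi}. Fix a compact \(K\) and \(x\in K\), a small \(r\), and a bounded weakly harmonic \(u\) on \(B_{2r}(x)\). Normalize so that \(\|u\|_{L^\infty(B_{2r}(x))}=1\).

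\textbf{Step 1: an \(L^2\) gradient bound.} The Caccioppoli inequality, together with Lemma~\ref{lem:packing-zero-capacity-section3} (which makes \(\mathcal S\) removable for \(W^{1,2}\) test functions, since \(c_A>2\)), gives
\[
\int_{B_{3r/2}}|\nabla u|^2\,d\mu_g\le Cr^{m-2}.
\]

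\textbf{Step 2: an a priori singular bound on \(|\nabla u|\).} Take \(y\in\mathcal R\) with \(\rho=\operatorname{dist}(y,\mathcal S)\). Interior gradient estimates on the smooth ball \(B_{\rho/2}(y)\), where the geometry is controlled by \(\operatorname{Ric}_g\ge0\) and Cheng--Yau, give \(|\nabla u|(y)\le C\rho^{-1}\). This needs \(B_{\rho/2}(y)\) to be a genuine Riemannian ball of uniform size, which follows from the local compatibility of \(d\) with \(d_g\) and Ahlfors regularity. Hence \(v=|\nabla u|^q\) satisfies \(v\le C\operatorname{dist}(\cdot,\mathcal S)^{-q}\).

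\textbf{Step 3: removability.} Apply Lemma~\ref{lem:kato-removable-subsolution-ql-ampi} with \(\beta=q\). This requires some \(q\) with
\[
q_n=\frac{n-2}{n-1}<q \quad\text{and}\quad 2+q<c_A^X(\mathcal S).
\]
Such a \(q\) exists exactly because \(c_A^X(\mathcal S)>3-\frac1{n-1}=2+q_n\). The lemma then makes \(v\) weakly subharmonic on \(B_{2r}(x)\) as a function on the PI space.

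\textbf{Step 4: mean value inequality.} On the PI space, De Giorgi--Nash--Moser local boundedness for nonnegative subsolutions gives
\[
\sup_{B_r}v\le C\left(\frac{1}{\mu(B_{5r/4})}\int_{B_{5r/4}}v^{p}\,d\mu\right)^{1/p}
\]
for any \(p>0\). Since \(q<1\), take \(p=2/q\). Then \(v^p=|\nabla u|^2\), and Step 1 gives
\[
\sup_{B_r}v\le C\bigl(r^{-m}\cdot r^{m-2}\bigr)^{q/2}=Cr^{-q}.
\]
Taking the \(1/q\) power yields \(|\nabla u|\le C/r\) on \(B_r(x)\cap\mathcal R\), with \(C\) uniform in \(x\in K\). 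This is QL.

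\textbf{Main obstacle.} The delicate point is Step 2: getting the pointwise bound \(|\nabla u|\le C\,\operatorname{dist}(\cdot,\mathcal S)^{-1}\) with constants uniform up to \(\mathcal S\). This requires \(\operatorname{dist}\) measured in \(d\) to be comparable to the injectivity-scale size of the smooth balls in \(\mathcal R\). I would first try Cheng--Yau on the \(d_g\)-ball \(B_{\rho/2}(y)\), which lies in \(\mathcal R\) because of the length-metric compatibility, using \(|u|\le1\). This only needs \(\operatorname{Ric}\ge0\) and completeness of that ball inside \(\mathcal R\), not a uniform injectivity radius.
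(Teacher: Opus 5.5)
Your proposal is correct and follows essentially the same route as the paper. The steps match: a Cheng--Yau bound $|\nabla u|\le C\,\operatorname{dist}(\cdot,\mathcal S)^{-1}$ on regular balls, then removal of $\mathcal S$ for $v=|\nabla u|^q$ via Lemma~\ref{lem:kato-removable-subsolution-ql-ampi} with $q_n<q<c_A^X(\mathcal S)-2$, then a Moser mean value inequality fed by the Caccioppoli $L^2$ bound. Two differences are cosmetic. First, you apply the mean value inequality with exponent $p=2/q$ directly, whereas the paper uses $p=1$ and then H\"older (where it needs $q<2$). Second, as the paper does, you should restrict the Cheng--Yau step to points with $\operatorname{dist}(\cdot,\mathcal S)$ small compared to $r$, use the ordinary interior estimate elsewhere, and state the subharmonicity on a slightly smaller ball such as $B_{3r/2}$ rather than $B_{2r}$.
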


\begin{proof}
Let \(u\) be bounded and weakly harmonic on \(B_{2R}(p)\), and we are going to establish
\[\operatorname*{ess\,sup}_{B_R(p)}|\nabla u|_g\le\frac{C}{R}\|u\|_{L^\infty(B_{2R}(p))}.\]
Put
\[
   M=\|u\|_{L^\infty(B_{2R}(p))},
   \qquad
   \rho(x)=\operatorname{dist}(x,\mathcal S).
\]
The function \(u+2M\) is positive and harmonic on $B_{2R}(p)\mathcal R$.  If \(x\in B_{3R/2}(p)\cap\mathcal R\) and \(\rho(x)<R/8\), then \(B_{\rho(x)/4}(x)\subset B_{2R}(p)\cap\mathcal R\).
The Cheng--Yau estimate on this regular ball gives
\[
   |\nabla u|(x)\le C\rho(x)^{-1}M.
\]
Away from \(N_{R/8}(\mathcal S)\), the ordinary interior estimate gives the bound \(CR^{-1}M\).  Hence, after increasing \(C\), we have
\[
   |\nabla u|(x)\le C\rho(x)^{-1}M
   \qquad
   \text{on }B_{3R/2}(p)\cap\mathcal R.
\]

Choose
\[
   \frac{n-2}{n-1}<q<c_A^X(\mathcal S)-2.
\]
This is possible by the codimension assumption.  Set \(v=|\nabla u|^q\). The preceding gradient estimate gives
\(v\le C\rho^{-q}M^q\). Since \(c_A^X(\mathcal S)>2+q\), Lemma~\ref{lem:kato-removable-subsolution-ql-ampi} applies with \(\beta=q\), and so \(v\) is weakly subharmonic on \(B_{3R/2}(p)\). The Moser iteration then gives
\[
   \operatorname*{ess\,sup}_{B_R(p)}v
   \le
   C R^{-n}\int_{B_{3R/2}(p)}v\,d\mu_g .
\]
Since \(q<2\), Caccioppoli estimate for \(u\), H\"older inequality, and local Ahlfors regularity give
\[
   \int_{B_{3R/2}(p)}v\,d\mu_g
   \le
   C R^{n-q}M^q .
\]
Therefore, we arrive at \(\operatorname*{ess\,sup}_{B_R(p)}|\nabla u|_g\le\frac{C}{R}M\), which is QL.
\end{proof}

\subsection{Length-space replacement and rigidity}

\begin{lemma}
\label{lem:punctured-local-quasiconvexity}
Let \(X=\mathcal R\sqcup\mathcal S\) be an \(n\)-dimensional AM--PI space, and assume \(\mathsf P_{n-2}^{X}(\mathcal S)\). Then, for every compact \(K\Subset X\), there exist constants \(C_K,r_K>0\) such that, whenever \(x,y\in K\cap\mathcal R\) and \(R=d(x,y)<r_K\), there is a curve
\[
   \gamma \subset\mathcal R
\]
joining \(x\) to \(y\), with \(L_g(\gamma)\le C_KR\).
\end{lemma}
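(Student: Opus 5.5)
Our plan is to combine the local Poincaré inequality (through the standard fact that PI spaces are quasiconvex) with the packing condition \(\mathsf P_{n-2}^X(\mathcal S)\). The packing condition lets us push curves off \(\mathcal S\) at controlled cost, because a codimension-two set cannot block an \((n-1)\)-parameter family of curves. Concretely, we aim for a ``pencil of curves'' argument: we show that the \(\mathcal S\)-avoiding curves from a small ball around \(x\) to a small ball around \(y\) have positive modulus, comparable to that of all such curves.

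\textbf{Step 1: quasiconvexity and a family of curves.} Fix \(K\) and take the AM--PI constants at scales below \(r_K\). Since the space is locally Ahlfors regular and supports a \((1,2)\)-Poincaré inequality (hence a \((1,p)\) one for \(p\geq 2\)), the Semmes/Heinonen--Koskela theory gives the following. The curve family \(\Gamma\) joining \(B_{\epsilon R}(x)\) to \(B_{\epsilon R}(y)\) inside \(B_{CR}(x)\), with lengths at most \(C R\), has \(2\)-modulus
\[
   \operatorname{Mod}_2(\Gamma)\ge c R^{n-2}
\]
for a fixed small \(\epsilon\). This is the Loewner-type lower bound, obtained by testing the Poincaré inequality against \(\min\{1,\inf_\gamma\int_\gamma\rho\}\)-type functions. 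Here the gradient in the Poincaré inequality is \(|\nabla_g\varphi|\) on \(\mathcal R\). For a Lipschitz \(\varphi\) this dominates the upper gradient a.e., since \(\mu(\mathcal S)=0\) and \(d\) is locally the \(g\)-length metric on \(\mathcal R\).

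\textbf{Step 2: curves meeting \(\mathcal S\) are negligible.} Let \(\Gamma_{\mathcal S}\subset\Gamma\) be the subfamily of curves meeting \(\mathcal S\cap B_{CR}(x)\). We use the packing bound exactly as in Lemma~\ref{lem:packing-zero-capacity-section3}. Cover \(\mathcal S\cap B_{CR}(x)\) at scale \(s\) by \(N_s\le C(R/s)^{d}\) balls with \(d<n-2\); or, with \(d=n-2\) exactly, use the logarithmic cutoff. The admissible density \(\rho=\sum_j \chi_{\{r_\mathcal S\in(2^{-j-1},2^{-j}]\}}/(l\, r_\mathcal S)\) satisfies \(\int_\gamma\rho\ge 1\) for every curve that reaches \(\mathcal S\) from distance \(\epsilon R\). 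Its energy satisfies \(\int\rho^2\,d\mu\le C/l\to0\) by \eqref{eq:packing-tube-volume-section3}. Hence the subfamily of curves in \(\Gamma\) that meet \(\mathcal S\) has \(2\)-modulus zero. Note that curves starting at \(x\) itself are not in this family, but \(x\) and \(y\) have positive distance to \(\mathcal S\).

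\textbf{Step 3: conclusion.} Since \(\operatorname{Mod}_2(\Gamma)>0=\operatorname{Mod}_2(\Gamma_\mathcal S)\), some \(\gamma_0\in\Gamma\) lies in \(\mathcal R\) and has length at most \(CR\), joining \(x'\in B_{\epsilon R}(x)\) to \(y'\in B_{\epsilon R}(y)\). It remains to connect \(x\) to \(x'\), which is a problem at the smaller scale \(\epsilon R\). We iterate geometrically: replace \((x,x')\) by a pair at distance at most \(\epsilon R\) and repeat. This produces a chain in \(\mathcal R\) of total length at most \(CR\sum\epsilon^k\). The chain converges to \(x\), and since \(x\in\mathcal R\), once the scale drops below \(\frac12\operatorname{dist}(x,\mathcal S)\) we close the gap by a \(g\)-geodesic inside a regular convex ball. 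The case of \(y\) is identical.

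The main obstacle is Step 1: we need the modulus lower bound for curves of controlled length in the local AM--PI setting with only a \((1,2)\)-Poincaré inequality, and we need \(\mathcal R\)-gradients to serve as upper gradients. We would handle this by citing the Heinonen--Koskela--Shanmugalingam--Tyson theory, which gives quasiconvexity and a Loewner-type modulus bound in \(p\)-PI spaces (\(p=2\) here) of dimension \(n>2\). We would restrict the curves to length \(\le CR\) by discarding the longer ones, whose modulus is small by the standard length-truncation estimate.
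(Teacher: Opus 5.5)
Steps~1--2 follow the paper's modulus argument in spirit, but Step~3 has a genuine gap. When you apply Steps~1--2 to the pair \((x,x')\), you get a curve from a point near \(x\) to a point \emph{near} \(x'\), not to \(x'\) itself. No modulus argument can fix an endpoint, because the family of curves through a fixed point has zero \(2\)-modulus when \(n>2\). So every stage opens a new gap next to the previously chosen endpoint, and likewise at \(y'\). You have no control of \(\operatorname{dist}(x',\mathcal S)\) relative to the current scale, so you cannot close that gap by a short curve known to stay in \(\mathcal R\). Closing a gap by a \(g\)-geodesic in a regular ball requires the gap to be small compared with the distance of its endpoints to \(\mathcal S\). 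This eventually holds at the fixed point \(x\), but is never guaranteed at the newly created endpoints. Iterating at both ends gives an infinite tree of gaps whose accumulation points may lie in \(\mathcal S\), so you do not obtain a curve in \(\mathcal R\).

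The missing ingredient is quantitative control of the distance from intermediate endpoints to \(\mathcal S\). From \(\mathsf P_{n-2}^X(\mathcal S)\) and Ahlfors regularity one has \(\mu(N_s(\mathcal S)\cap B_r)\le Cr^{n-2}s^2\). Hence, for \(A\) large, the truncated balls \(B_{\kappa R}(x)\setminus N_{R/A}(\mathcal S)\) and \(B_{\kappa R}(y)\setminus N_{R/A}(\mathcal S)\) still have measure \(\ge cR^n\). The family joining them still has \(\operatorname{Mod}_2\ge cR^{n-2}\), and every curve in it starts and ends at distance \(\ge R/A\) from \(\mathcal S\), so the log-cutoff density applies directly to curves meeting \(N_\rho(\mathcal S)\).

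This truncation also repairs Step~2. As written, your density does not cover curves starting on or very near \(\mathcal S\), which occur whenever \(\operatorname{dist}(x,\mathcal S)<\epsilon R\). The modulus-zero conclusion could be rescued by a countable union over thresholds together with \(\mu(\mathcal S)=0\), but the truncation makes this unnecessary.

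The truncated argument proves the lemma for pairs with \(\operatorname{dist}(x,\mathcal S),\operatorname{dist}(y,\mathcal S)\ge\beta R\); the end gaps are closed by local quasiconvexity inside balls contained in \(\mathcal R\). The general case follows, as in the paper, by chaining through points \(x_j\in B_{r_j}(x)\) with \(\operatorname{dist}(x_j,\mathcal S)\ge\eta r_j\), where \(r_j=2^j\operatorname{dist}(x,\mathcal S)\) runs up to scale \(R\), and similarly from \(y\). Such points exist, again by the tube estimate. Your downward geometric iteration would also work once every new endpoint is chosen this way, at distance from \(\mathcal S\) comparable to the current scale.
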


\begin{proof}
We use the argument in the proof of annular quasiconvexity, cf.~\cite[Theorem~3.3]{Korte2007}, \cite[Section~9.4]{HKST2015}.

Take a larger compact set $K_*$ such that $K\Subset K_*^\circ$. Since $X$ is AM-PI and \(\mathsf P_{n-2}^{X}(\mathcal S)\), there are constants $\widetilde C\geq 1$, $\widetilde \lambda\geq 1$, and $\widetilde r>0$ such that the Ahlfors estimate \eqref{eq:local-ahlfors-section3}, the Poincar\'e inequality \eqref{eq:local-pi-section3}, and also the packing estimate \eqref{eq:local-packing-condition-section3} hold for $B_d(z,r)$ whenever $z\in \widetilde K$ and $r\in (0,\widetilde r)$. In particular, up to decreasing $\widetilde r$, we have
\begin{equation}\label{Eq: neighborhood estimate}
     \mu_g\bigl(N_s(\mathcal S)\cap B_d(z,r)\bigr)
   \le
   C_Kr^{n-2}s^2
\end{equation}
whenever \(z\in K_*\), \(0<s<r<\widetilde r\). Since AM-PI spaces are local PI spaces, we have local quasiconvexity for small balls. Namely, by increasing $\widetilde C$ and decreasing $\widetilde r$ properly, for any points $x,y\in B_d(z,r)$ we can find a curve $\gamma_{x,y}$ connecting $x$ and $y$ with length no greater than $\widetilde Cd(x,y)$, whenever $z\in K_*$ and $r\in (0,\widetilde r)$.

 Put $\delta(z)=d(z,\mathcal S)$ and denote $R:=d(x,y)$.

\vspace{3mm}
\noindent{\bf Claim:} the lemma holds for any $x,y\in K_*\cap \mathcal R$ with $R<\widetilde r$ under the extra assumptions $\delta(x)\geq \beta R$ and $\delta(y)\geq  \beta R$ for some $\beta>0$, where the constant in the length control depends on $\beta$.
\vspace{3mm}

Set $\kappa=\beta (2\widetilde C)^{-1}$. The proof of the claim will be divided into two steps.

\vspace{3mm}
{\it Step 1.} There is a constant $L>1$
 such that for any \(x,y\in K\cap\mathcal R\) with $R:=d(x,y)< \widetilde r$, there is a curve $$\gamma:[0,1]\to\mathcal R$$
connecting $B_d(x,\kappa R)$ to $ B_d(y,\kappa R)$   with $L_g(\gamma)\leq L R$.

\vspace{3mm}
For simplicity, we denote
\[
   E_0=B_d(x, \kappa R),
   \qquad
   F_0=B_d(y,\kappa R).
\]
With $A>0$ to be determined later, we put
\[
   E=E_0\setminus N_{R/A}(\mathcal S),
   \qquad
   F=F_0\setminus N_{R/A}(\mathcal S).
\]
Clearly, we have the measure estimates
$$\mu_g(E_0)\geq cR^n,\qquad \mu_g(F_0)\geq cR^n,$$
and
\[
   \mu_g(E_0\cap N_{R/A}(\mathcal S))
   +
   \mu_g(F_0\cap N_{R/A}(\mathcal S))
   \le CA^{-2}R^n.
\]
Here and in the following, $c$ and $C$ denote positive constants depending on $\widetilde C$, $\widetilde \lambda$, $\widetilde r$, $\kappa$ and $n$, whose value may change from line to line.
By taking $A$ large enough, we can guarantee \[\min\{\mu_g(E),\mu_g(F)\}\ge cR^n.\]

Denote $B=B_d(x,2\widetilde\lambda R)$. Let \(\Gamma\) be the family of curves in \(B\) joining \(E\) to \(F\). In the following, we use the notion of the modulus of a curve family. Recall that the $2$-modulus of $\Gamma$ is defined as
$$\operatorname{Mod}_2(\Gamma):=\inf\int_X\rho^2d\mu_g,$$
where the infimum is taken over all Borel functions $\rho:X\to [0,\infty]$ satisfying
$$\int_\gamma \rho \,ds\geq 1,\quad \forall \gamma\in \Gamma.$$
By \cite[Corollary 9.3.2]{HKST2015} and the  Poincar\'e inequality, we know
\[
   \operatorname{Mod}_2(\Gamma)\ge cR^{n-2}.
\]

In order to guarantee length control and singularity avoidance, we remove two bad subfamilies of $\Gamma$. First let
\[
   \Gamma_{\rm long}
   =
   \{\gamma\in\Gamma:\ L_g(\gamma)>LR\}.
\]
Since \((LR)^{-1}\mathbf 1_B\) is admissible for \(\Gamma_{\rm long}\), we obtain \[\operatorname{Mod}_2(\Gamma_{\rm long})\le CL^{-2}R^{n-2}.\]
Next let \(0<\rho<R/(2A)\), write \(N_s=N_s(\mathcal S)\), and set
\[
   \Gamma_{\mathcal S,\rho}
   =
   \{\gamma\in\Gamma:\gamma\cap N_\rho\ne\emptyset\}.
\]
Every curve in \(\Gamma_{\mathcal S,\rho}\) starts outside \(N_{R/A}\) and meets \(N_\rho\), hence crosses \(N_{R/A}\setminus N_\rho\).  Put
\[
   L_{\rho,A}=\log\frac{R}{A\rho},
\]
and define
\[
   \varrho_{\rho,A}
   =
   \frac{C}{L_{\rho,A}}\,
   \delta^{-1}\,
   \mathbf 1_{N_{R/A}\setminus N_\rho}.
\]
After increasing \(C\), this function is admissible for \(\Gamma_{\mathcal S,\rho}\).  Let \(s_j=2^{-j}R/A\), and choose \(J\) so that
\(s_{J+1}<\rho\le s_J\).  Then
\[
\begin{split}
   \operatorname{Mod}_2(\Gamma_{\mathcal S,\rho})
   &\le
   \int_B\varrho_{\rho,A}^2\,d\mu_g  \\
   &\le
   \frac{C}{L_{\rho,A}^2}
   \sum_{j=0}^{J}
   s_j^{-2}\mu_g(N_{s_j}\cap B)  \\
   &\le
   \frac{C}{L_{\rho,A}^2}
   \sum_{j=0}^{J}R^{n-2}
   \le
   \frac{CR^{n-2}}{\log(R/(A\rho))}.
\end{split}
\]
Choose \(L\) large and then \(\rho\) small so that
\[
   \operatorname{Mod}_2(\Gamma_{\rm long})
   +
   \operatorname{Mod}_2(\Gamma_{\mathcal S,\rho})
   <
   \operatorname{Mod}_2(\Gamma).
\]
Hence some curve $\gamma_*$ in \(\Gamma\) avoids \(N_\rho(\mathcal S)\) and has \(g\)-length at most \(LR\). 

\vspace{3mm}
{\it Step 2.} there is a curve $\gamma_{x,y}\subset \mathcal R$ connecting $x$ and $y$ with $g$-length no greater than $(L+\beta)R$.
\vspace{3mm}

Assume that $\gamma_*$ connects $x_*\in B_d(x,\kappa R)$ and $y_*\in B_d(y,\kappa R)$. By local quasiconvexity, $x$ and $x_*$ can be connected by a curve $\gamma_{x,x_*}$, and $y$ and $y_*$ can be connected by a curve $\gamma_{y,y_*}$, both with length no greater than $\beta R/2$. Since we have assumed $\delta(x)\geq \beta R$ and $\delta(y)\geq \beta R$, both $\gamma_{x,x_*}$ and $\gamma_{y,y_*}$ are contained in $\mathcal R$. The concatenation of the three curves gives us the desired curve connecting $x$ and $y$.

\vspace{3mm}

Finally, we prove the lemma based on scale transition.
From the Ahlfors estimate \eqref{eq:local-ahlfors-section3} and the neighborhood estimate \eqref{Eq: neighborhood estimate}, there is a constant $\eta>0$ such that for any $r\in (0,\widetilde r)$ there is a point $x_r\in B_d(x,r)$ with $\delta(x_r)\geq \eta r$.

Set $r_j=2^j\delta(x)$ and let $j_0$ be the first index such that $r_{j_0}\geq R$. From the previous discussion, we can take $x_j\in B_{d}(x,r_j)$ with $\delta(x_j)\geq \eta r_j$ for $j\geq 1$, and $x_0=x$. Clearly, we have
$$d(x_{j-1},x_j)\leq 2r_j,\quad \delta(x_{j-1})\geq \frac{\eta}{2}r_j,\quad \delta(x_j)\geq \eta r_j.$$
By taking $r_K$ small enough such that $N_{r_K}(K)\subset K_*$, we can apply the claim to $(x_{j-1},x_j)$ with $\beta=\eta/4$ and construct a curve $\gamma_j\subset \mathcal R$ connecting $x_{j-1}$ and $x_j$ with $g$-length no greater than $C r_j$. Denote $x_*=x_{j_0}$ The concatenation of these curves gives a curve $\gamma_{x,x_{*}}\subset \mathcal R$ connecting $x$ and $x_*$ with $g$-length no greater than $CR$, where $\delta(x_*)\geq \eta R$. Similarly, there is a curve $\gamma_{y,y_*} \subset \mathcal R$ connecting $y$ and $y_*$ with $g$-length no greater than $CR$, where $\delta(y_*)\geq \eta R$. Note that we have
$$d(x_*,y_*)\leq (2C+1)R,\quad \delta(x_*)\geq \eta R,\quad \delta(y^*)\geq \eta R.$$
By taking $r_K$ small enough such that $d(x_*,y_*)\leq (2C+1)r_K\leq \widetilde r$, we can apply the claim to $(x_*,y_*)$ with $\beta=\eta/(2C+1)$ to conclude that $x_*$ and $y_*$ can be connected by a curve $\gamma_*\subset \mathcal R$ with $g$-length no greater than $CR$. The  concatenation of $\gamma_{x,x_*}$, $\gamma_*$, and $\gamma_{y,y_*}$ gives the desired curve with $g$-length no greater than $C_*R$ for some constant $C_*>0$. The proof is completed by taking $C_K=C_*$.

\end{proof}

We recall the form of the Sobolev-to-Lipschitz property used below.  When \(c_A^X(\mathcal S)\geq 2\), Lemma~\ref{lem:packing-zero-capacity-section3} gives zero \(2\)-capacity of \(\mathcal S\).  Thus \(W^{1,2}\)-functions are
represented on the regular part, and their energy is computed by
\[
   \int_{\mathcal R}|\nabla^g u|_g^2\,d\mu_g .
\]
Let \(d_g^\ell\) be the intrinsic length distance of \((\mathcal R,g)\), and set
\[
   X^\ell=\overline{(\mathcal R,d_g^\ell)} .
\]
In this setting, Sobolev-to-Lipschitz means that whenever
\(u\in W^{1,2}(X^\ell)\) satisfies
\[
   |\nabla^g u|_g\le1
   \qquad\text{a.e. on }\mathcal R,
\]
then \(u\) has a \(1\)-Lipschitz representative on \(X^\ell\). 

\begin{lemma}\label{lem:length-replacement-ampi}
Assume \(\mathsf P_{n-2}^{X}(\mathcal S)\). Then the identity map on \(\mathcal R\) extends to a locally bi-Lipschitz homeomorphism
\[
   X^\ell\longrightarrow X .
\]
Moreover, \(X^\ell\) satisfies the above Sobolev-to-Lipschitz property.
\end{lemma}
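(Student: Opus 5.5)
The plan has three parts: compare \(d\) with \(d_g^\ell\) on \(\mathcal R\), extend the identity to the completions, and transfer the Sobolev-to-Lipschitz property by pulling Sobolev functions back through the regular part.

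\emph{Step 1: comparison of distances.} I will first prove that \(d\le d_g^\ell\) on \(\mathcal R\). For a \(g\)-rectifiable curve \(\gamma\subset\mathcal R\), cover it by finitely many neighborhoods \(U\) as in the almost-manifold convention, where \(d^U_{\ell,d}=d_g^U\). On each short piece the \(d\)-length is at most the \(g\)-length, so \(d(x,y)\le L_g(\gamma)\). In the converse direction, Lemma~\ref{lem:punctured-local-quasiconvexity} gives, for each compact \(K\Subset X\), constants \(C_K,r_K\) such that
\[
   d_g^\ell(x,y)\le C_K\,d(x,y)
   \qquad\text{whenever } x,y\in K\cap\mathcal R,\ d(x,y)<r_K .
\]
Hence the identity \(\iota:(\mathcal R,d_g^\ell)\to(\mathcal R,d)\) is \(1\)-Lipschitz, and on each compact piece its inverse is \(C_K\)-Lipschitz at small scales.

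\emph{Step 2: extension to a homeomorphism.} Since \(\iota\) is \(1\)-Lipschitz, it extends to a \(1\)-Lipschitz map \(\bar\iota:X^\ell\to\overline{\mathcal R}^d\). The closure \(\overline{\mathcal R}^d\) equals \(X\) because \(\mu(\mathcal S)=0\) and the Ahlfors lower bound forces \(\mathcal R\) to be dense. For surjectivity and injectivity I will argue through Cauchy sequences. A \(d\)-Cauchy sequence in \(\mathcal R\) converging to \(p\in X\) eventually lies in a compact \(K\) with mutual distances below \(r_K\), so by Step 1 it is also \(d_g^\ell\)-Cauchy. Two sequences with the same \(d\)-limit have vanishing \(d\)-distance, and therefore vanishing \(d_g^\ell\)-distance. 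Thus \(\bar\iota\) is bijective. The local bound \(d_g^\ell\le C_K d\) passes to limits, and this gives the local bi-Lipschitz property near each \(p\in X\). The main subtlety is choosing the compact neighborhoods uniformly, which requires \(X\) to be locally compact.

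\emph{Step 3: Sobolev-to-Lipschitz.} Let \(u\in W^{1,2}(X^\ell)\) satisfy \(|\nabla^g u|\le 1\) a.e. on \(\mathcal R\). By Step 2 and Lemma~\ref{lem:packing-zero-capacity-section3} (here \(\mathsf P_{n-2}\) gives zero capacity of \(\mathcal S\)), \(u\) is a Sobolev function on the PI space \(X\). Its upper gradient is then controlled by \(|\nabla^g u|\le 1\), because \(W^{1,2}\) on \(X\) is identified with the regular-part energy. Local PI spaces have the local Lipschitz property for Sobolev functions with bounded gradient: via Lebesgue points and the telescoping Poincaré argument, \(u\) has a continuous representative with \(|u(x)-u(y)|\le C\,d(x,y)\) locally. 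In particular \(u\) is continuous on \(\mathcal R\) and locally Lipschitz there. On smooth charts of \(\mathcal R\), \(|\nabla u|\le1\) a.e. together with local Lipschitz continuity gives \(|u(x)-u(y)|\le L_g(\gamma)\) along every \(C^1\) curve \(\gamma\subset\mathcal R\). Taking the infimum over curves yields \(|u(x)-u(y)|\le d_g^\ell(x,y)\) on \(\mathcal R\). Extending by density, \(u\) has a \(1\)-Lipschitz representative on \(X^\ell\).

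The main obstacle is the constant in Step 3. The PI argument only yields a \(C\)-Lipschitz bound with respect to \(d\), so the sharp constant \(1\) has to come from integrating along curves inside \(\mathcal R\). This works only because \(d_g^\ell\) is defined by curves avoiding \(\mathcal S\), and because the a.e. bound \(|\nabla u|\le1\) becomes a pointwise Lipschitz bound on the smooth part once \(u\) is known to be locally Lipschitz there.
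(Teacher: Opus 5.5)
Your proposal is correct and follows the paper's proof: the two distance comparisons ($d\le d_g^\ell$ via the local almost-manifold convention, and $d_g^\ell\le C_K d$ from Lemma~\ref{lem:punctured-local-quasiconvexity}), extension to the completions, and the sharp Sobolev-to-Lipschitz bound obtained by integrating $|\nabla^g u|_g\le 1$ along almost-minimizing curves in $\mathcal R$. Your Cauchy-sequence argument for bijectivity fills in details the paper leaves implicit, and the detour through PI-space Lipschitz regularity in Step~3 is unnecessary, since $|\nabla^g u|_g\le 1$ a.e.\ already makes $u$ locally Lipschitz on the smooth manifold $\mathcal R$ by working in charts.
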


\begin{proof}
Clearly, \(d(x,y)\le d_g^\ell(x,y)\) on \(\mathcal R\). Conversely, by Lemma~\ref{lem:punctured-local-quasiconvexity}, for every compact \(K\Subset X\) there are constants \(C_K,r_K>0\) such that, whenever \(x,y\in K\cap\mathcal R\) and \(d(x,y)<r_K\), there is a curve \(\gamma\subset\mathcal R\) joining \(x\) to \(y\) with \(L_g(\gamma)\le C_Kd(x,y)\).  Hence
\[
   d_g^\ell(x,y)\le C_Kd(x,y)
\]
for such \(x,y\).  Thus the identity map extends locally bi-Lipschitz to the two completions. 

It remains to prove the Sobolev-to-Lipschitz property.  Let \(u\in W^{1,2}(X^\ell)\) satisfy
\[
   |\nabla^g u|_g\le1
   \qquad\text{a.e. on }\mathcal R .
\]
For \(x,y\in\mathcal R\), choose a regular curve \(\gamma\) with \(L_g(\gamma)\le d^\ell(x,y)+\varepsilon\).  Then
\[
   |u(x)-u(y)|
   \le
   \int_\gamma |\nabla^g u|\,ds_g
   \le
   d^\ell(x,y)+\varepsilon .
\]
Letting \(\varepsilon\downarrow0\), and then using the density of \(\mathcal R\subset X^\ell\), gives a \(1\)-Lipschitz representative.
\end{proof}

\begin{theorem}\label{thm:ampi-flat-rigidity}
Let \(X=\mathcal R\sqcup\mathcal S\) be compact, oriented, and enlargeable.  Assume
\[
   c_A^X(\mathcal S)>3-\frac1{n-1}.
\]
Let \(\lambda\leq 1/n\).  If
\[
   \Scal_f^{n,\lambda}(g)\ge0
   \qquad\text{on }\mathcal R,
\]
where \(e^{-f}d\mu_g\) is the admissible measure and \(f\) is bounded on \(\mathcal R\), then \(f\) is constant, and the length space \(X^\ell\) is a compact flat manifold.
\end{theorem}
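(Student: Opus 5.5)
Since \(3-\frac1{n-1}>3-\frac2n\) for \(n\ge3\), the hypotheses of Theorem~\ref{thm:ampi-weighted-psc-obstruction} hold. Proposition~\ref{prop:ampi-ricci-flat-reduction} therefore gives \(df\equiv0\) and \(\operatorname{Ric}_g\equiv0\) on \(\mathcal R\). The remaining task is to upgrade Ricci flatness of the regular part to flatness of the whole length space \(X^\ell\). The tool will be harmonic functions with bounded, parallel gradients, coming from the enlargeability assumption.

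\emph{Step 1: the length space.} Choose \(c\) with \(3-\frac1{n-1}<c<c_A^X(\mathcal S)\). Then \(c>2\), so \(\mathsf P_{n-2}^X(\mathcal S)\) holds. Lemma~\ref{lem:length-replacement-ampi} shows that \(X^\ell\) is locally bi-Lipschitz to \(X\) and satisfies Sobolev-to-Lipschitz. Consequently \(X^\ell\) inherits the AM--PI structure, the codimension condition, and enlargeability. Proposition~\ref{prop:QL-assouad-codim3-ampi} applies because \(\operatorname{Ric}_g\ge0\), so \(X^\ell\) has the QL property. Following Honda--Sun and Dai--Wang--Wang--Wei, QL together with Sobolev-to-Lipschitz and \(\operatorname{Ric}_g\ge0\) on a full-measure regular part with \(\mathcal S\) of zero capacity should make \(X^\ell\) an \(\mathrm{RCD}(0,n)\) space. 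The intended route is to verify the Bakry--\'Emery inequality weakly. For this, Bochner's formula on \(\mathcal R\) is extended across \(\mathcal S\) by the cut-off argument of Lemma~\ref{lem:kato-removable-subsolution-ql-ampi}, and QL supplies the gradient bounds that control the boundary terms.

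\emph{Step 2: splitting.} Once \(X^\ell\) is a compact \(\mathrm{RCD}(0,n)\) space, pass to the universal cover. By Gigli's splitting theorem and its structure theory, the cover is isometric to \(\mathbb R^{k}\times Z\) with \(Z\) compact. Enlargeability forces \(k=n\), by the argument of \cite[Proposition~IV.5.8]{LawsonMichelsohnSpinGeometry} already used in Proposition~\ref{prop:enlargeable-s2-systole}: a nontrivial compact factor would bound the size of \(\varepsilon\)-Lipschitz nonzero-degree maps to \(S^n\), and in the \(\mathrm{RCD}\) setting this should follow from degree considerations on the cover. Hence the universal cover is \(\mathbb R^n\), and \(X^\ell\) is a quotient of \(\mathbb R^n\) by a discrete cocompact group of isometries. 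Orientability and \(X^\ell\) being a topological \(n\)-manifold-like space (bi-Lipschitz to \(X\), with \(\mathcal S\) of codimension \(>2\)) rule out fixed points, so \(X^\ell\) is a compact flat manifold.

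\emph{Main obstacle.} The main obstacle is Step 1, namely that \(X^\ell\) satisfies \(\mathrm{RCD}(0,n)\), or at least enough of it to run splitting. Specifically, one must check that \(\Gamma_2\ge0\) holds globally across \(\mathcal S\) in the weak sense. The first approach to try is to test Bochner's formula with the cut-offs \(\chi_\rho\) from Lemma~\ref{lem:kato-removable-subsolution-ql-ampi}. QL bounds \(|\nabla u|\) by \(C\rho^{-1}\) near \(\mathcal S\), and the codimension bound \(c>3-\frac1{n-1}\) makes the error terms vanish as \(\rho\to0\). If this fails, an alternative is to construct harmonic coordinates on the cover directly. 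Busemann-type functions built from the \(\varepsilon\)-Lipschitz maps would be used, their gradients shown to be subharmonic of bounded size via Lemma~\ref{lem:kato-removable-subsolution-ql-ampi}, and then shown to be parallel by Liouville and Bochner arguments, producing \(n\) parallel vector fields.
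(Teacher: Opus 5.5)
Your proposal follows the paper's proof essentially step for step. You use Proposition~\ref{prop:ampi-ricci-flat-reduction} for \(df\equiv0\) and \(\operatorname{Ric}_g\equiv0\), then Lemma~\ref{lem:length-replacement-ampi} and Proposition~\ref{prop:QL-assouad-codim3-ampi} for Sobolev-to-Lipschitz and QL on \(X^\ell\), and then the Honda--Sun criterion \cite[Theorem~5.5]{HondaSun2026} for \(\mathrm{RCD}(0,n)\); the paper simply cites that criterion, so the hand verification of Bakry--\'Emery that you flag as the main obstacle is not needed. After that come the splitting of the universal cover (the paper uses Mondino--Wei \cite[Theorem~3.6]{MondinoWei2019}, which also provides the existence of that cover) and the Lawson--Michelsohn argument forcing \(k=n\).

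One small correction: the action on \(\mathbb R^n\) is free because it is the deck group of a covering, not because of orientability, which by itself would not exclude fixed points.
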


\begin{proof}
By Proposition~\ref{prop:ampi-ricci-flat-reduction}, \(df\equiv0\) and \(\operatorname{Ric}_g\equiv0\) on \(\mathcal R \). Thus the admissible measure is a constant multiple of \(d\mu_g\). Replace \(X\) by \(X^\ell\).  By Lemma~\ref{lem:length-replacement-ampi}, the AM--PI structure and the Assouad codimension condition are preserved, and \(X^\ell\) satisfies the Sobolev-to-Lipschitz property.  By Proposition~\ref{prop:QL-assouad-codim3-ampi}, \(X^\ell\) satisfies QL.  Together with PI and Sobolev-to-Lipschitz, the Honda--Sun criterion \cite[Theorem~5.5]{HondaSun2026} gives \((X^\ell,d_g^\ell,\mu_g)\in RCD(0,n)\).

Since \(X^\ell\) is compact \(RCD(0,n)\), Mondino--Wei \cite[Theorem~3.6]{MondinoWei2019} shows its universal cover 
\[
   \widetilde X^\ell\cong Z\times\mathbb R^k,
\]
where \(Z\) is compact.  The argument in \cite[Proposition~IV.5.8]{LawsonMichelsohnSpinGeometry} rules out a nontrivial compact factor.  Hence \(Z\) is a point and \(k=n\).  Thus \(\widetilde X^\ell\cong\mathbb R^n \). Therefore \(X^\ell\) is a compact flat manifold.
\end{proof}

\begin{proof}[Proof of Theorem~D]
The first assertion is Theorem~\ref{thm:ampi-weighted-psc-obstruction}.  The second assertion is Theorem~\ref{thm:ampi-flat-rigidity}.  
\end{proof}

\begin{remark}
By a uniformly Euclidean \(L^\infty\)-metric on a closed smooth manifold \(M\), we mean a measurable positive definite symmetric \(2\)-tensor \(g\) such that, for some smooth background metric \(g_0\) and some constant
\(\Lambda\ge1\),
\[
   \Lambda^{-1}g_0(v,v)
   \le
   g(v,v)
   \le
   \Lambda g_0(v,v)
\]
for almost every \(x\in M\) and every \(v\in T_xM\).  

The theorem applies to such metrics.  Suppose \(M\) is closed and enlargeable, \(g\) is uniformly Euclidean on \(M\), and \(g\) is smooth on \(\Omega=M\setminus S\). Then the length completion associated with \((\Omega,g)\) is an AM--PI space. If, in addition,
\[
   c_A(S)>3-\frac1{n-1},
\]
then \(R_g\ge0\) on \(\Omega\) implies that this length space is a compact flat manifold.

This could be compared with the examples of Cecchini--Frenck--Zeidler~\cite{CecchiniFrenckZeidler}, which show that the positive scalar curvature obstruction for uniformly Euclidean \(L^\infty\)-metrics is not the same as in the smooth case.  
\end{remark}

\bibliographystyle{plain}
\bibliography{geroch}

@article{Assouad1983,
  author  = {Assouad, Patrice},
  title   = {Plongements lipschitziens dans {$\mathbb{R}^n$}},
  journal = {Bulletin de la Soci{\'e}t{\'e} Math{\'e}matique de France},
  volume  = {111},
  year    = {1983},
  pages   = {429--448},
  doi     = {10.24033/bsmf.1997}
}

@article{CecchiniFrenckZeidler,
  author        = {Cecchini, Simone and Frenck, Georg and Zeidler, Rudolf},
  title         = {Positive scalar curvature with point singularities},
  journal       = {Duke Mathematical Journal},
  note          = {To appear},
  year          = {2025},
  archivePrefix = {arXiv},
  eprint        = {2407.20163},
  primaryClass  = {math.DG},
  doi           = {10.48550/arXiv.2407.20163}
}

@article{Edelen2020,
  author  = {Edelen, Nick},
  title   = {A note on the singular set of area-minimizing hypersurfaces},
  journal = {Calculus of Variations and Partial Differential Equations},
  volume  = {59},
  number  = {1},
  year    = {2020},
  eid     = {18},
  pages   = {Paper No. 18},
  doi     = {10.1007/s00526-019-1660-7}
}

@article{Gromov2018,
  author  = {Gromov, Misha},
  title   = {Metric inequalities with scalar curvature},
  journal = {Geometric and Functional Analysis},
  volume  = {28},
  number  = {3},
  year    = {2018},
  pages   = {645--726},
  doi     = {10.1007/s00039-018-0453-z}
}

@article{GromovLawson1980,
  author  = {Gromov, Mikhail and Lawson, Jr., H. Blaine},
  title   = {Spin and scalar curvature in the presence of a fundamental group. {I}},
  journal = {Annals of Mathematics},
  series  = {2},
  volume  = {111},
  number  = {2},
  year    = {1980},
  pages   = {209--230},
  doi     = {10.2307/1971198}
}

@article{LesourdUngerYau,
  author  = {Lesourd, Martin and Unger, Ryan and Yau, Shing-Tung},
  title   = {The positive mass theorem with arbitrary ends},
  journal = {Journal of Differential Geometry},
  volume  = {128},
  number  = {1},
  year    = {2024},
  pages   = {257--293},
  doi     = {10.4310/jdg/1721075263}
}

@article{Lohkamp1999,
  author  = {Lohkamp, Joachim},
  title   = {Scalar curvature and hammocks},
  journal = {Mathematische Annalen},
  volume  = {313},
  number  = {3},
  year    = {1999},
  pages   = {385--407},
  doi     = {10.1007/s002080050266}
}

@article{NaberValtorta2020,
  author  = {Naber, Aaron and Valtorta, Daniele},
  title   = {The singular structure and regularity of stationary varifolds},
  journal = {Journal of the European Mathematical Society},
  volume  = {22},
  number  = {10},
  year    = {2020},
  pages   = {3305--3382},
  doi     = {10.4171/JEMS/987}
}

@article{SchoenYau1979a,
  author  = {Schoen, Richard and Yau, Shing-Tung},
  title   = {On the proof of the positive mass conjecture in general relativity},
  journal = {Communications in Mathematical Physics},
  volume  = {65},
  number  = {1},
  year    = {1979},
  pages   = {45--76},
  doi     = {10.1007/BF01940959}
}

@article{SchoenYau1979b,
  author  = {Schoen, Richard and Yau, Shing-Tung},
  title   = {On the structure of manifolds with positive scalar curvature},
  journal = {Manuscripta Mathematica},
  volume  = {28},
  number  = {1--3},
  year    = {1979},
  pages   = {159--183},
  doi     = {10.1007/BF01647970}
}

@incollection{SchoenYau2022,
  author        = {Schoen, Richard and Yau, Shing-Tung},
  title         = {Positive scalar curvature and minimal hypersurface singularities},
  booktitle     = {Surveys in Differential Geometry 2019. Differential Geometry, Calabi--Yau Theory, and General Relativity. Part 2},
  series        = {Surveys in Differential Geometry},
  volume        = {24},
  publisher     = {International Press},
  address       = {Boston, MA},
  year          = {2022},
  pages         = {441--480},
  eprint        = {1704.05490},
  archivePrefix = {arXiv},
  primaryClass  = {math.DG}
}

@article{ZhuArbitraryEnds,
  author  = {Zhu, Jintian},
  title   = {Positive mass theorem with arbitrary ends and its application},
  journal = {International Mathematics Research Notices},
  volume  = {2023},
  number  = {11},
  year    = {2023},
  pages   = {9880--9900},
  doi     = {10.1093/imrn/rnac117}
}

@book{LawsonMichelsohnSpinGeometry,
  author    = {Lawson, H. Blaine and Michelsohn, Marie-Louise},
  title     = {Spin Geometry},
  series    = {Princeton Mathematical Series},
  volume    = {38},
  publisher = {Princeton University Press},
  year      = {1989}
}

@book{Mattila1995,
  author    = {Mattila, Pertti},
  title     = {Geometry of Sets and Measures in Euclidean Spaces: Fractals and Rectifiability},
  series    = {Cambridge Studies in Advanced Mathematics},
  volume    = {44},
  publisher = {Cambridge University Press},
  year      = {1995}
}

@article {BjornBjornLehrback2020,
    AUTHOR = {Bj\"orn, Anders and Bj\"orn, Jana and Lehrb\"ack, Juha},
     TITLE = {Existence and almost uniqueness for {$p$}-harmonic {G}reen
              functions on bounded domains in metric spaces},
   JOURNAL = {J. Differential Equations},
  FJOURNAL = {Journal of Differential Equations},
    VOLUME = {269},
      YEAR = {2020},
    NUMBER = {9},
     PAGES = {6602--6640},
      ISSN = {0022-0396,1090-2732},
   MRCLASS = {31C45 (30L10 31C12 31C15 31E05 35J08 35J92 49Q20)},
  MRNUMBER = {4107063},
MRREVIEWER = {Benjamin\ Steinhurst},
       DOI = {10.1016/j.jde.2020.04.044},
       URL = {https://doi.org/10.1016/j.jde.2020.04.044},
}

@book {HKST2015,
    AUTHOR = {Heinonen, Juha and Koskela, Pekka and Shanmugalingam,
              Nageswari and Tyson, Jeremy T.},
     TITLE = {Sobolev spaces on metric measure spaces},
    SERIES = {New Mathematical Monographs},
    VOLUME = {27},
      NOTE = {An approach based on upper gradients},
 PUBLISHER = {Cambridge University Press, Cambridge},
      YEAR = {2015},
     PAGES = {xii+434},
      ISBN = {978-1-107-09234-1},
   MRCLASS = {30-02 (30L05 30L10 31E05 46E35)},
  MRNUMBER = {3363168},
MRREVIEWER = {David\ Matthew\ Freeman},
       DOI = {10.1017/CBO9781316135914},
       URL = {https://doi.org/10.1017/CBO9781316135914},
}

@book {BjornBjorn2011,
    AUTHOR = {Bj{\"o}rn, Anders and Bj{\"o}rn, Jana},
     TITLE = {Nonlinear potential theory on metric spaces},
    SERIES = {EMS Tracts in Mathematics},
    VOLUME = {17},
 PUBLISHER = {European Mathematical Society (EMS), Z{\"u}rich},
      YEAR = {2011},
     PAGES = {xii+403},
      ISBN = {978-3-03719-099-9},
   MRCLASS = {31C45 (30L99 49J45)},
  MRNUMBER = {2760645},
       DOI = {10.4171/099},
}

@article {BombieriGiusti1972,
    AUTHOR = {Bombieri, E. and Giusti, E.},
     TITLE = {Harnack's inequality for elliptic differential equations on
              minimal surfaces},
   JOURNAL = {Invent. Math.},
  FJOURNAL = {Inventiones Mathematicae},
    VOLUME = {15},
      YEAR = {1972},
     PAGES = {24--46},
      ISSN = {0020-9910,1432-1297},
   MRCLASS = {53A10 (35J99)},
  MRNUMBER = {308945},
MRREVIEWER = {E.\ F.\ Beckenbach},
       DOI = {10.1007/BF01418640},
       URL = {https://doi.org/10.1007/BF01418640},
}

@article {MondinoWei2019,
    AUTHOR = {Mondino, Andrea and Wei, Guofang},
     TITLE = {On the universal cover and the fundamental group of an {${\rm
              RCD}^*(K,N)$}-space},
   JOURNAL = {J. Reine Angew. Math.},
  FJOURNAL = {Journal f\"ur die Reine und Angewandte Mathematik. [Crelle's
              Journal]},
    VOLUME = {753},
      YEAR = {2019},
     PAGES = {211--237},
      ISSN = {0075-4102,1435-5345},
   MRCLASS = {53C23 (57M10)},
  MRNUMBER = {3987869},
MRREVIEWER = {Luis\ Guijarro},
       DOI = {10.1515/crelle-2016-0068},
       URL = {https://doi.org/10.1515/crelle-2016-0068},
}

@article {HondaSun2026,
    AUTHOR = {Honda, Shouhei and Sun, Song},
     TITLE = {From almost smooth spaces to {RCD} spaces},
   JOURNAL = {Calc. Var. Partial Differential Equations},
  FJOURNAL = {Calculus of Variations and Partial Differential Equations},
    VOLUME = {65},
      YEAR = {2026},
    NUMBER = {4},
     PAGES = {Paper No. 131, 45},
      ISSN = {0944-2669,1432-0835},
   MRCLASS = {53C21 (53C23)},
  MRNUMBER = {5045151},
       DOI = {10.1007/s00526-026-03297-2},
       URL = {https://doi.org/10.1007/s00526-026-03297-2},
}

@article {Gromov-Zhu,
    AUTHOR = {Gromov, Misha and Zhu, Jintian},
     TITLE = {Area and {G}auss-{B}onnet inequalities with scalar curvature},
   JOURNAL = {Comment. Math. Helv.},
  FJOURNAL = {Commentarii Mathematici Helvetici. A Journal of the Swiss
              Mathematical Society},
    VOLUME = {99},
      YEAR = {2024},
    NUMBER = {2},
     PAGES = {355--395},
      ISSN = {0010-2571,1420-8946},
   MRCLASS = {53C21 (53C23)},
  MRNUMBER = {4730349},
MRREVIEWER = {David\ J.\ Wraith},
       DOI = {10.4171/cmh/570},
       URL = {https://doi.org/10.4171/cmh/570},
}

@article {Korte2007,
    AUTHOR = {Korte, Riikka},
     TITLE = {Geometric implications of the {P}oincar\'e{} inequality},
   JOURNAL = {Results Math.},
  FJOURNAL = {Results in Mathematics},
    VOLUME = {50},
      YEAR = {2007},
    NUMBER = {1-2},
     PAGES = {93--107},
      ISSN = {1422-6383,1420-9012},
   MRCLASS = {46E35 (31C15)},
  MRNUMBER = {2313133},
MRREVIEWER = {Jana\ Bj\"orn},
       DOI = {10.1007/s00025-006-0237-x},
       URL = {https://doi.org/10.1007/s00025-006-0237-x},
}

@incollection{LohkampSecret,
  author        = {Lohkamp, Joachim},
  title         = {The Secret Hyperbolic Life of Positive Scalar Curvature},
  booktitle     = {Perspectives in Scalar Curvature},
  editor        = {Gromov, Mikhail L. and Lawson, Jr., H. Blaine},
  volume        = {1},
  chapter       = {5},
  pages         = {611--642},
  publisher     = {World Scientific},
  year          = {2023},
  doi           = {10.1142/9789811273223_0005},
  eprint        = {2203.16403},
  archivePrefix = {arXiv},
  primaryClass  = {math.DG}
}

@book{Geoghegan2008,
  author    = {Geoghegan, Ross},
  title     = {Topological Methods in Group Theory},
  series    = {Graduate Texts in Mathematics},
  volume    = {243},
  publisher = {Springer},
  address   = {New York},
  year      = {2008}
}

@article{Allegretto1974,
  author  = {Allegretto, Walter},
  title   = {On the Equivalence of Two Types of Oscillation for Elliptic Operators},
  journal = {Pacific Journal of Mathematics},
  volume  = {55},
  number  = {2},
  pages   = {319--328},
  year    = {1974},
  doi     = {10.2140/pjm.1974.55.319}
}

@article{Piepenbrink1974,
  author  = {Piepenbrink, John},
  title   = {Nonoscillatory Elliptic Equations},
  journal = {Journal of Differential Equations},
  volume  = {15},
  pages   = {541--550},
  year    = {1974}
}

@article{WangWangXie2026,
  author = {Wang, Jian and Wang, Jinmin and Xie, Zhizhang},
  title = {{$L^\infty$-metrics on tori and Schoen's conjecture}},
  journal = {arXiv preprint},
  year = {2026},
  note = {arXiv:2606.21325 [math.DG]}
}

@article{DaiWangWangWei2025,
  author = {Dai, Xianzhe and Wang, Changliang and Wang, Lihe and Wei, Guofang},
  title = {{Singular metrics with nonnegative scalar curvature and RCD}},
  journal = {arXiv preprint},
  year = {2025},
  note = {arXiv:2412.09185v2 [math.DG]}
}

@article{BiHaoHeShiZhu2026,
  author = {Bi, Yuchen and Hao, Tianze and He, Shihang and Shi, Yuguang and Zhu, Jintian},
  title = {A proof for the {Riemannian} positive mass theorem up to dimension 19},
  journal = {arXiv preprint},
  year = {2026},
  note = {arXiv:2603.02769 [math.DG]}
}

@article{BrendleWang2026,
  author = {Brendle, Simon and Wang, Yipeng},
  title = {A dimension descent scheme for the positive mass theorem in arbitrary dimension},
  journal = {arXiv preprint},
  year = {2026},
  note = {arXiv:2604.08473 [math.DG]}
}

@article{HeShiYu2026,
  author = {He, Shihang and Shi, Yuguang and Yu, Haobin},
  title = {Singularity removal rigidity theorems for minimal hypersurfaces in manifolds with nonnegative scalar curvature},
  journal = {arXiv preprint},
  year = {2026},
  note = {arXiv:2602.23705 [math.DG]}
}

@article{FischerColbrieSchoen1980,
  author  = {Fischer-Colbrie, Doris and Schoen, Richard},
  title   = {The Structure of Complete Stable Minimal Surfaces in 3-Manifolds of Nonnegative Scalar Curvature},
  journal = {Communications on Pure and Applied Mathematics},
  volume  = {33},
  number  = {2},
  pages   = {199--211},
  year    = {1980},
  doi     = {10.1002/cpa.3160330206}
}

@article {CheegerGromoll1971,
    AUTHOR = {Cheeger, Jeff and Gromoll, Detlef},
     TITLE = {The splitting theorem for manifolds of nonnegative {R}icci
              curvature},
   JOURNAL = {J. Differential Geometry},
  FJOURNAL = {Journal of Differential Geometry},
    VOLUME = {6},
      YEAR = {1971/72},
     PAGES = {119--128},
      ISSN = {0022-040X,1945-743X},
   MRCLASS = {53C20},
  MRNUMBER = {303460},
MRREVIEWER = {J.\ R.\ Vanstone},
       URL = {http://projecteuclid.org/euclid.jdg/1214430220},
}

@article{Smale1993,
  author  = {Smale, Nathan},
  title   = {Generic regularity of homologically area minimizing hypersurfaces in eight dimensional manifolds},
  journal = {Communications in Analysis and Geometry},
  volume  = {1},
  number  = {2},
  year    = {1993},
  pages   = {217--228},
  doi     = {10.4310/CAG.1993.v1.n2.a2}
}

@article{ChodoshMantoulidisSchulze,
  author  = {Chodosh, Otis and Mantoulidis, Christos and Schulze, Felix},
  title   = {Generic regularity for minimizing hypersurfaces in dimensions 9 and 10},
  journal = {Publications Math{\'e}matiques de l'IH{\'E}S},
  volume  = {143},
  year    = {2026},
  pages   = {143--188},
  doi     = {10.5802/pmihes.25},
  eprint  = {2302.02253},
  archivePrefix = {arXiv},
  primaryClass  = {math.DG}
}

@article{ChodoshMantoulidisSchulzeWang,
  author  = {Chodosh, Otis and Mantoulidis, Christos and Schulze, Felix and Wang, Zhihan},
  title   = {Generic regularity for minimizing hypersurfaces in dimension 11},
  journal = {arXiv preprint},
  year    = {2025},
  note    = {arXiv:2506.12852 [math.DG]}
}

@incollection{ChodoshICM,
  author    = {Chodosh, Otis},
  title     = {Minimal surfaces and comparison geometry},
  booktitle = {Proceedings of the International Congress of Mathematicians 2026},
  editor    = {Friedlander, Susan and Tschinkel, Yuri},
  volume    = {4},
  pages     = {82--102},
  publisher = {Society for Industrial and Applied Mathematics},
  year      = {2026},
  doi       = {10.1137/25M1804340}
}

\end{document}